\title{Coarse sheaf cohomology}
\author{Elisa Hartmann\thanks{Department of Mathematics, Karlsruhe Institute of Technology}} 
\begin{document}

\allowdisplaybreaks

\maketitle

\begin{abstract}
A certain Grothendieck topology assigned to a metric space gives rise to a sheaf cohomology theory which sees the coarse structure of the space. Already constant coefficients produce interesting cohomology groups. In degree 0 they see the number of ends of the space. In this paper a resolution of the constant sheaf via cochains is developed. It serves to be a valuable tool for computing cohomology. In addition coarse homotopy invariance of coarse cohomology with constant coefficients is established. This property can be used to compute cohomology of Riemannian manifolds. The Higson corona of a proper metric space is shown to reflect sheaves and sheaf cohomology. Thus we can use topological tools on compact Hausdorff spaces in our computations. In particular if the asymptotic dimension of a proper metric space is finite then higher cohomology groups vanish. We compute a few examples. As it turns out finite abelian groups are best suited as coefficients on finitely generated groups.
\end{abstract}

\tableofcontents

\section{Introduction}

The sheaf-theoretic approach to coarse metric spaces has been applied in many different contexts \cite{Bunke2017,Roe2013,Schmidt1999}. Sheaf-theoretic methods play an important role in our paper. We also present three other computational tools. Cochain complexes assigned to a filtration of Vietoris-Rips complexes have not just been used in the coarse setting \cite{Hausmann1995}. Many well-known coarse (co-)homology theories are coarse homotopy invariant \cite{Higson1994,Mitchener2001,Wulff2020}. The cohomology of the Higson corona is of course as a composition of functors a coarse invariant which has been studied before \cite{Keesling1994}. Even in combination with other computational methods \cite{Higson1993} coarse invariants are hard to compute for the spaces one is most interested in which include Riemannian manifolds and finitely generated groups. 

Coarse sheaf cohomology has been designed by the author in her thesis. Aside from an agenda to present new computational methods which may be suitable for a large number of spaces there are two immediate results:

\begin{thm}
 If $M$ is a non-positively curved closed Riemannian $n$-manifold and $A$ a finite abelian group then 
 \[
  \check H_{ct}^q(M,A)=H_{sing}^q(S^{n-1};A)
 \]
the left side denotes coarse sheaf cohomology with values in the constant sheaf $A$ and the right side denotes singular cohomology with values in the group $A$.
\end{thm}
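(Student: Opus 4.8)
The plan is to reduce the computation to the geometry of the sphere at infinity of the universal cover. Since $M$ is closed and non-positively curved, its universal cover $X:=\tilde M$ is, by the Cartan--Hadamard theorem, diffeomorphic to $\mathbb R^n$ and carries a complete Hadamard metric; this $X$ is the coarse space whose coarse sheaf cohomology computes the left-hand side, the compact manifold $M$ being coarsely a point, so that the invariant must be read off the cocompact $\pi_1 M$-space $\tilde M$. The geometric input I would exploit is that the geodesic rays emanating from a basepoint $o\in X$ give a homeomorphism between the unit tangent sphere $S^{n-1}\subset T_oX$ and the visual boundary $\partial_\infty X$, and that this radial structure organizes the large-scale geometry of $X$. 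For each $x$ write $r(x)\in S^{n-1}$ for the initial direction of the geodesic from $o$ to $x$.

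First I would invoke coarse homotopy invariance of coarse sheaf cohomology with constant coefficients to replace $X$ by a model adapted to its boundary. Along each geodesic ray the flow $(x,t)\mapsto\gamma_{r(x)}(t\,d(o,x))$ furnishes a coarse deformation that fixes the direction $r(x)$ while rescaling the radial coordinate, so that $\check H_{ct}^*(X,A)$ depends only on the decomposition $X\setminus\{o\}\approx S^{n-1}\times[0,\infty)$ and not on the precise, possibly variable, curvature. This step must be carried out using only the visual boundary and the geodesic structure, since a general Hadamard manifold is \emph{not} quasi-isometric to $\mathbb R^n$ (for instance $\mathbb H^n$ is Gromov hyperbolic); the argument therefore cannot pass through a Euclidean model and must be intrinsic to the cone-like structure over $S^{n-1}$.

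Next I would compute the cohomology of this model. Choosing a finite good cover $\{U_i\}$ of $S^{n-1}$ by contractible pieces, the coarse sectors $V_i:=\{x\in X: r(x)\in U_i\}$ cover $X$, and their multiple intersections are, up to bounded error, sectors over the intersections $U_{i_0}\cap\dots\cap U_{i_k}$. A sector over a contractible set is coarsely connected, i.e. one-ended, so its coarse cohomology is $A$ in degree $0$; being of finite asymptotic dimension, its higher coarse cohomology vanishes, so it is coarsely acyclic. Feeding $\{V_i\}$ into the cochain resolution of the constant sheaf $A$ and the resulting \v{C}ech--Mayer--Vietoris spectral sequence, the nerve of $\{V_i\}$ coincides with the nerve of the good cover $\{U_i\}$, whence $\check H_{ct}^q(X,A)\cong H_{sing}^q(S^{n-1};A)$. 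Equivalently one may run the argument through the Higson corona reflection: the radial map $\nu X\to\partial_\infty X=S^{n-1}$ should induce an isomorphism on sheaf cohomology with the constant coefficients $A$, and reflection identifies the source with $\check H_{ct}^*(X,A)$.

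The main obstacle I expect is the honest execution of the second and third steps: establishing that the geodesic radial deformation is a genuine coarse homotopy, and that the resulting model's coarse cohomology is governed purely by the boundary sphere, uniformly across all non-positively curved closed manifolds. Concretely, the delicate points are to show that a sector over a contractible piece of $S^{n-1}$ is coarsely acyclic and that the intersection pattern of the sectors faithfully reproduces the nerve of the cover of $S^{n-1}$, despite the metric distortion between the cone model and $X$ itself. Once this acyclicity-plus-nerve statement is in place, the identification with $H_{sing}^q(S^{n-1};A)$ is formal.
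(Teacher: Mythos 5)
Your reduction collapses at exactly the point you yourself flag as delicate, and the one tool you offer there does not work. You claim a sector over a contractible piece of the visual sphere is coarsely acyclic because it is one-ended and ``being of finite asymptotic dimension, its higher coarse cohomology vanishes.'' But finite asymptotic dimension only kills cohomology \emph{above} the dimension: Theorem~\ref{thm:asdimvanish} gives $\check H^q_{ct}=0$ only for $q>\asdim$, and your sectors inside an $n$-dimensional Hadamard manifold have asymptotic dimension $n$, leaving the degrees $1,\ldots,n-1$ --- precisely the range Lemma~\ref{lem:leraycover} needs you to control on every finite intersection --- untouched. The inference ``finite $\asdim$ implies acyclic'' is false on its face: $\R^n$ has finite asymptotic dimension and $\check H^{n-1}_{ct}(\R^n;A)=A\not=0$. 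In the paper's framework what actually forces acyclicity is coarse contractibility: Lemma~\ref{lem:flasqueacyclic} shows CAT(0) cones $X\times\R_{\ge0}$ are coarsely contractible, and coarsely contractible spaces are acyclic for \emph{finite} coefficients by Theorem~\ref{thm:homotopyinvariance} combined with the computation for $\Z_{\ge0}$ (Lemma~\ref{lem:h1z} in degree $1$, the Higson corona plus $\asdim(\Z_{\ge0})=1$ in higher degrees, Theorem~\ref{thm:hqztree}); note how much nontrivial input even the acyclicity of $\Z_{\ge0}$ requires, and that this is also where finiteness of $A$ enters. A sector over an arbitrary contractible subset of $\partial_\infty X$ carries no evident coarse contraction, so this step is a genuine hole, not a routine verification. (You also assert without proof that the sectors form a \emph{coarse} cover --- a set cover is not enough --- and that a sector over a contractible set is one-ended; these are plausibly repairable by angle-comparison estimates, but they too are missing.)

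Moreover, the premise by which you rule out the paper's route conflates quasi-isometry with coarse homotopy equivalence. It is true that hyperbolic $n$-space is not quasi-isometric to $\R^n$, but it --- and every Hadamard manifold --- \emph{is} coarsely homotopic to $\R^n$, radially via the exponential map at a basepoint; this is the observation of Roe quoted in the introduction, and coarse homotopy invariance (Theorem~\ref{thm:homotopyinvariance}) is exactly the tool that legitimizes the Euclidean model. The paper's intended proof is accordingly short: pass from the closed manifold $M$ to its universal cover (Cartan--Hadamard and Milnor--\v{S}varc, a reinterpretation you do make correctly, since $M$ itself is bounded), deform it coarsely homotopically to $\R^n$, and compute $\check H^q_{ct}(\R^n;A)$ by the Leray cover of $2n$ half-spaces $U_i^{\pm}$ --- which are precisely your sectors over the hemispheres of $S^{n-1}$ --- whose finite intersections are coarsely contractible by Lemma~\ref{lem:flasqueacyclic} and whose nerve coincides with that of the standard good cover of $S^{n-1}$, so that Lemma~\ref{lem:leraycover} yields $H^q_{sing}(S^{n-1};A)$. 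Your sector cover is thus a generalization of the cover the paper actually uses; but having refused the homotopy onto $\R^n$, you owe an intrinsic proof of coarse contractibility (or at least acyclicity with finite coefficients) of sectors over contractible boundary sets, and no such proof, nor a plausible substitute for one, appears in your proposal.
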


This result can be immediately applied to define a coarse version of mapping degree associated to a coarse map between manifolds.

\begin{thm}
 If $T$ is a simplicial tree with infinitely many ends and $A$ is a finite abelian group then
 \[
  \check H_{ct}^q(T;A)=\begin{cases}
                        \bigoplus_\N A & q=0\\
                        0 & \mbox{otherwise.}
                       \end{cases}
 \]
\end{thm}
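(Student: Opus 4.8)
The plan is to pass to the Higson corona, reduce everything to the Čech cohomology of a compact Hausdorff space, and then play the asymptotic dimension bound against the finiteness of $A$. Since $T$ is a locally finite (hence proper) simplicial tree, the reflection result for the Higson corona gives $\check H_{ct}^q(T;A)\cong \check H^q(\nu T;A)$, so it suffices to understand the topology of the corona $\nu T$. Fixing a root and writing $B_n$ for the ball of radius $n$, the complement $T\setminus B_n$ has finitely many unbounded components indexed by a finite set $S_n$; the surjections $S_{n+1}\to S_n$ make these into an inverse system whose limit is the space of ends $\mathcal{E}(T)=\varprojlim_n S_n$, a compact totally disconnected space. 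The first step is to record that the connected components of $\nu T$ are exactly the ends, so that the component space of $\nu T$ is $\mathcal{E}(T)$.

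For degree $0$, a continuous map $\nu T\to A$ with $A$ discrete is constant on components and so descends to a locally constant function on $\mathcal{E}(T)$; as every such function factors through some finite quotient $S_n$, one obtains $\check H^0(\nu T;A)=C(\mathcal{E}(T),A)=\varinjlim_n A^{S_n}$. Because each $S_n$ is finite with $|S_n|\to\infty$ (the tree has infinitely many ends) and the transition maps $A^{S_n}\to A^{S_{n+1}}$ are split injections (pullback along the surjections $S_{n+1}\to S_n$), this colimit is the countable direct sum $\bigoplus_\N A$, which is precisely the degree-$0$ ``number of ends'' output.

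For the higher degrees I would argue in two stages. First, a tree has asymptotic dimension at most one, so the finite-asymptotic-dimension vanishing theorem applied to $T$ gives $\check H_{ct}^q(T;A)=0$ for all $q\geq 2$. The remaining and genuinely delicate case is $q=1$: the corona $\nu T$ really is one-dimensional, since near a single end it looks like the corona of a ray, which by Keesling \cite{Keesling1994} carries nonzero first Čech cohomology with integer coefficients, so degree $1$ cannot vanish for dimension reasons alone. This is exactly where the hypothesis that $A$ is finite enters. I would realize $\check H^1(\nu T;A)$ through the finite-stage approximations supplied by the cochain resolution (equivalently, the nerves of the covers cut out by the $B_n$), where each stage beyond $B_n$ is a disjoint union of subtrees and hence contractible, so that the only possible contribution to $\check H^1$ comes through a $\varprojlim^1$ term, and then annihilate that term using that inverse systems of finite abelian groups are Mittag–Leffler.

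The main obstacle is precisely this last point: controlling the degree-$1$ contribution and showing that the divisibility and torsion-free phenomena responsible for the nonvanishing of $\check H^1(\nu T;\,\cdot\,)$ with integer coefficients are killed once the coefficient group is finite. Coarse homotopy invariance is a convenient auxiliary tool here, reducing the per-end analysis to the single ray $[0,\infty)$, after which the Mittag–Leffler property of the finite-coefficient systems finishes the argument and yields $\check H_{ct}^q(T;A)=0$ for every $q\geq 1$.
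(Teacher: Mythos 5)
Your outline agrees with the paper where the theorem is easy: degree $0$ is the ends computation $\check H_{ct}^0(T;A)=\bigoplus_\N A$, and degrees $q\ge 2$ follow from $\asdim(T)\le 1$ together with Theorem~\ref{thm:asdimvanish} (via $\dim(\nu T)\le \asdim(T)$ and Godement), which is exactly how the paper disposes of them. The gap is in degree $1$ --- the one step the finiteness hypothesis on $A$ exists for --- and your sketch there does not close. First, there is no $\varprojlim^1$ term to annihilate: \v Cech cohomology of a compact Hausdorff space is the \emph{direct} limit of the cohomology of nerves over all covers, and direct limits of abelian groups are exact, so no Milnor-type sequence appears; the inverse system $S_{n+1}\to S_n$ computes only the end space $\mathcal E(T)=\varprojlim_n S_n$, which is the component space of $\nu T$ (a quotient), not $\nu T$ itself, and the vanishing of $\check H^1$ of a totally disconnected compactum says nothing about $\check H^1(\nu T;A)$. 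Second, the claim that ``each stage beyond $B_n$ is a disjoint union of subtrees and hence contractible'' is false in the relevant sense: an unbounded component of $T\setminus B_n$ is again a tree, typically itself with infinitely many ends, so it is not coarsely contractible and the argument threatens circularity; and even at the level of nerves, the covers cut out by the $B_n$ are not cofinal among covers of $\nu T$ --- your own citation of Keesling refutes their adequacy, since for the ray $[0,\infty)$ these covers have one-point nerves at every stage while $\check H^1(\nu[0,\infty);\Z)\ne 0$. Third, ``per-end analysis'' cannot be assembled afterwards: $\nu T$ has infinitely many components, is not their topological coproduct, and its cohomology does not decompose over them, so coarse homotopy invariance applied ``near a single end'' has no footing.

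What is actually needed in degree $1$, and what the paper does, is an explicit cochain argument in which finiteness of $A$ enters concretely. For the ray (Lemma~\ref{lem:h1z}) one integrates a cocycle: $\tilde\varphi(x)=\sum_{i=0}^{x-1}\varphi(i,i+1)$; for a tree (Theorem~\ref{thm:h1tree}) one fixes a root $t_0$ and sums the cocycle along the unique geodesic from $t_0$ to $s$, $\tilde\varphi(s)=\sum_i \varphi(a_i,a_{i+1})$. Finiteness of $A$ is used at exactly one point: it guarantees $\tilde\varphi$ takes only finitely many values, hence is \emph{blocky} and a legitimate element of $CY_b^0$; the cocycle condition then forces $\varphi-d_0\tilde\varphi$ to have cocontrolled support, so $\varphi$ is a coboundary. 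No Mittag--Leffler property is invoked anywhere, and nothing in your sketch substitutes for this construction, so as written the $q=1$ case --- the heart of the theorem --- remains unproved. (A minor further point: the paper's tree lemma needs no properness, whereas your corona-based route requires $T$ locally finite throughout, an assumption not in the statement.)
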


There are many interesting cohomology theories on coarse metric spaces. The most prominent examples are Roe's coarse cohomology \cite{Roe1993,Roe2003,Hair2010} and controlled operator $K$-theory \cite{Roe1996,Higson2000,Yu1995,Yu2000}. If two metric spaces $X,Y$ have the same coarse type then specifying a coarse equivalence $X\to Y$ is a proof. If on the other hand $X,Y$ do not have the same coarse type then a coarse invariant which does not have the same values on $X$ and $Y$ gives a proof. In general a well designed cohomology theory delivers a rich source of invariants which are easy to compute. To this date cohomology of finitely generated free abelian groups has been calculted for Roe's coarse cohomology and also controlled operator $K$-theory. There is still a gap in knowledge about cohomology of other finitely generated groups. Riemannian manifolds on the other hand do not show interesting cohomology groups since every Riemannian $n$-manifold with nonpositive sectional curvature is coarsely homotopic to $\R^n$ and most coarse cohomology theories are coarse homotopy invariant \cite{Roe1993}.

Our coarse cohomology theory $\check H^q_{ct}(\cdot;\cdot)$ is a sheaf cohomology theory on a Grothendieck topology $X_{ct}$ assigned to a metric space $X$ \cite{Hartmann2017a}. If $A$ is an abelian group then for the constant sheaf $A$ on $X$ we obtain in dimension $0$ a copy of $A$ for every end of $X$ or an infinite direct sum of copies of $A$ if $X$ does not have finitely many ends \cite{Hartmann2017a}.

In this paper we design a cochain complex $(CY^q_b(X;A))_q$ assigned to a metric space $X$ and abelian group $A$. The functor $U\s X\mapsto CY^q_b(U,A)$ forms a flabby sheaf on $X_{ct}$. The sequence of sheaves
\[
 0\to A_X\to CY^0_b(\cdot,A)\to CY^1_b(\cdot,A)\to CY^2_b(\cdot,A)\to\cdots
\]
is exact. Thus cohomology of
\[
 0\to CY^0_b(X,A)\to CY^1_b(X,A)\to CY^2_b(X,A)\to \cdots
\]
computes coarse sheaf cohomology of $X$ with values in $A_X$.

\begin{thma}
\label{thm:introCochains}
 If $X$ is a metric space then there is a flabby resolution $CY^q_b(\cdot;A)$ of the constant sheaf $A_X$ on $X_{ct}$. We can compute sheaf cohomology with values in the constant sheaf using cochain complexes:
 \[
  \check H_{ct}^q(X;A_X)=HY^q_b(X,A).
 \]
\end{thma}

For $q\ge 1$ there is a comparison map $HY_b^q(X;A)\to HX^{q+1}(X;A)$ with Roe coarse cohomology. This map is neither injective nor surjective though. The main difference is that our cochains are defined as maps that need to be ``blocky'' while coarse cochains do not have this restriction. Thus general statements on cohomology are easier to prove for Roe coarse cohomology. While we hope that combinatorical computations are easier realized using blocky cochains.

There are several notions of homotopy on the coarse category which are all equivalent in some way. The homotopy theory we are going to employ uses the asymptotic product as coarse substitute for a product and the first quadrant in $\R^2$ equipped with the Manhattan metric as a coarse substitute for an interval \cite{Hartmann2019b}. In effect this homotopy theory and the other coarse homotopy theories are only of use if one wants to compute cohomology of $\R^n$ and maybe Riemannian manifolds. Nonetheless we prove that coarse sheaf cohomology is a coarse homotopy invariant using the resolution via cochains.

\begin{thma}
\label{thm:introHomotopy}
 If two coarse maps $\alpha,\beta:X\to Y$ between metric spaces are coarsely homotopic then they induce the same map
 \[
  \alpha^*,\beta^*:\check H_{ct}^q(Y;A)\to \check H_{ct}^q(X;A)
 \]
 in cohomology with values in a constant sheaf $A$.
\end{thma}

A coarse map $\alpha:X\to Y$ between metric spaces induces a cochain map $\alpha^*:CY_b^q(Y;A)\to CY_b^q(X;A)$ which in turn induces a homomorphism $\alpha^*:HY_b^q(Y;A)\to HY_b^q(X;A)$. Conversely the inverse image functor maps the constant sheaf $A_Y$ on $Y_{ct}$ to the constant sheaf $A_X$ on $X_{ct}$. Thus there is an induced homomorphism $\alpha^*$ in cohomology. One may wonder if both homomorphisms $\alpha^*,\alpha^*$ coincide. And indeed they do.

To a proper metric space $X$ we can assign a compact Hausdorff topological space $\nu(X)$, the Higson corona of $X$. This version of boundary reflects sheaf cohomology in the following way: There is a functor $\cdot ^\nu$ which maps a sheaf $\mathcal F$ on $X_{ct}$ to a sheaf $\mathcal F^\nu$ on $\nu(X)$. Conversely the functor $\hat\cdot$ maps a sheaf $\mathcal G$ on $\nu(X)$ to a sheaf $\hat{\mathcal G}$ on $X_{ct}$, Together they provide an equivalence of categories between ``reflective'' sheaves on $X_{ct}$ and sheaves on $\nu(X)$. In particular the constant sheaf $A_X$ on $X_{ct}$ is reflective and mapped to the constant sheaf $A_{\nu(X)}$ on $\nu(X)$. We can compute cohomology with constant coefficients either way:

\begin{thma}
\label{thm:introHigson}
 If $X$ is a proper metric space and $A$ an abelian group then
 \[
  \check H^q_{ct}(X;A_X)=\check H(\nu(X);A_{\nu(X)})
 \]
the $q$th cohomology of $X$ with values in the constant sheaf $A$ on $X_{ct}$ is isomorphic to the $q$th sheaf cohomology of the Higson corona $\nu(X)$ of $X$ with values in the constant sheaf $A_{\nu(X)}$.

Moreover if $\asdim(X)\le n$ then $\check H_{ct}^q(X,A_X)=0$ for $q>n$.
\end{thma}

This paper provides enough computational methods to compute metric cohomology of finitely generated groups. Vanishing of $\check H_{ct}^1(\Z,A)=0$ for finite $A$ can be computed directly using cochains. Then our result on the Higson corona implies that $\Z$ is acyclic for finite coefficients. The same method can be employed to show that trees are acyclic for finite coefficients. Thus we computed metric cohomology of the free group $F_n$ with $n<\infty$ generators. Computing cohomology of the free abelian groups $\Z^n$ with $n<\infty$ is more challenging. A coarse homotopy equivalence $\Z^{n-1}\times \Z_{\ge0}\to \Z_{\ge0}$ provides a Leray cover of $\Z^n$ which has the same combinatorical information as the nerve of a Leray cover of the topological space $S^{n-1}$. Thus cohomology with finite coefficients can be derived.

\begin{thma}
\label{thm:introZn}
 If $A$ is a finite abelian group then
 \[
  \check H^q_{ct}(\Z^n;A)=\begin{cases}
                      A\oplus A & n=1,q=0\\
                      A &n\not=1,q=0\vee q=n-1\\
                      0 &\mbox{otherwise}.
                     \end{cases}
 \]
\end{thma}

There is a more general notion of coarse space which includes the class of coarse metric spaces. Most of our concepts work in more generality. We restrict our attention to metric spaces only since a wider audience (than coarse geometers) is interested in this class of coarse spaces only. The coarse sheaf cohomology theory is defined on coarse spaces with connected coarse structure. The resolution via cochains also works for this class of spaces. The homotopy theory is only defined for metric spaces and the results on the Higson corona work for proper metric spaces and coarse structures generated by a compactification of a paracompact, locally compact Hausdorff space.

This article is organized in 10 chapters. Some can be read independently but there are also a few dependencies as depicted in the following diagram.
\[
 \xymatrix{
  1\ar[d]& & & \\
  2\ar[d] & & & \\
   3\ar[d]\ar[dr]\ar[drr]\ar[drrr]&&&\\
   4
   &5\ar[d]\ar[dr]
   &7
   &9\ar[d]\\
   &6
   &8\ar[r]
   &10
 }
\]

The final chapter uses every aspect so far discussed. Sheaf-theoretic methods, the resolution via cochains, coarse homotopy and the Higson corona are employed in the computation of metric cohomology of $\Z^n$.

\section{Coarse cohomology by Roe and the Higson corona}
This chapter introduces terminology and concepts which are well known to coarse geometers.

If $X$ is a metric space then a subset $E\s X\times X$ is called an \emph{entourage} if
\[
 \sup_{(x,y)\in E}d(x,y)<\infty
\]
The set of entourages forms the \emph{coarse structure} of $X$. If $R\ge 0$ then the set 
\[
 \Delta_R:=\{(x,y)\in X\times X|d(x,y)\le R\}
\]
is an entourage. If $E\s X\times X,B\s X$ are two subsets then
\[
 E[B]:=\{x\in X\mid (x,y)\in E, y\in B\}
\]
A subset $B\s X$ is called \emph{bounded} if there exists $x_0\in X$ and $R\ge 0$ such that $\Delta_R[x_0]\z B$.

A map $\alpha:X\to Y$ between metric spaces is called \emph{coarsely uniform} if for every $R\ge 0$ there exists $S\ge 0$ such that $d(x,y)\le R$ in $X$ implies $d(\alpha(x),\alpha(y))\le S$ in $Y$. The map $\alpha$ is called \emph{coarsely proper} if for every bounded set $B\s Y$ the set $\alpha^{-1}(B)$ is bounded in $X$. The map $\alpha$ is called \emph{coarse} if $\alpha$ is both coarsely uniform and coarsely proper. Two maps $\alpha,\beta:X\to Y$ between metric spaces are called \emph{close} if the set $\alpha\times\beta(\Delta_0)$ is an entourage in $Y$. The \emph{coarse category} consists of metric spaces as objects and coarse maps modulo close as morphisms. Isomorphisms in this category are called \emph{coarse equivalences}.

This paper presents a resolution of the constant sheaf which consists of cochains which closely resemble coarse cochains of Roe's coarse cohomology. For this purpose we give a quick  introduction to coarse cohomology by Roe which was invented by Roe in \cite{Roe1993,Roe2003}.

If $X$ is a metric space then the \emph{set of $q$-simplices of the $R$-Vietoris-Rips complex of $X$} is defined as
\[
 \Delta^q_R:=\{(x_0,\ldots,x_q)\mid d(x_i,x_j)\le R\forall i,j\}.
\]
A subset $B\s X^{q+1}$ is called \emph{bounded} if the projection to every factor is bounded. Then a subset $C\s X^{q+1}$ is called \emph{cocontrolled} if for every $R\ge 0$ the set $C\cap \Delta^q_R$ is bounded.

\begin{defn}
 If $X$ is a metric space and $A$ an abelian group then the \emph{coarse cochains} $CX^q(X;A)$ is the set of functions $X^{q+1}\to A$ with cocontrolled support. It is a group by pointwise addition. The coboundary map $\partial_q:CX^q(X;A)\to CX^{q+1}(X;A)$ is defined by
 \[
  (\partial_q\varphi)(x_0,\ldots,x_{q+1})=\sum_{i=0}^{q+1}(-1)^i\varphi(x_0,\ldots,\hat x_i,\ldots,x_{q+1}).
 \]
This makes $(CX^q(X;A),\partial_q)$ a cochain complex. Its homology is called \emph{coarse cohomology by Roe} and denoted by $HX^*(X;A)$.
\end{defn}

If $\alpha:X\to Y$ is a coarse map then it induces a cochain map 
\begin{align*}
 \alpha^q:CX^q(Y,A)&\to CX^q(X,A)\\
 \varphi&\mapsto \varphi\circ\alpha^{\times(q+1)}
\end{align*}
Two coarse maps which are close induce the same map in cohomology. If $A=\R$ and $X=\R^n$ then
\[
 HX^q(\R^n,\R)=\begin{cases}
                \R & q=n\\
                0 &\mbox{otherwise}.
               \end{cases}
\]

A section in this paper transfers sheaves on a proper metric space to sheaves on its Higson corona. For this purpose we give a definition of the Higson corona which is equivalent to the usual one \cite{Hartmann2019c}. 

Let $X$ be a metric space. Two subsets $A,B\subseteq X$ are called \emph{close} (or \emph{not coarsely disjoint}) if there exists an unbounded sequence $(a_i,b_i)_i\subseteq A\times B$ and some $R\ge 0$ such that $d(a_i,b_i)\le R$ for every $i$. We write $A\curlywedge B$ in this case.

Let $R>0$ be a real number. A metric space $X$ is called \emph{$R$-discrete} if $d(x,y)\ge R$ for every $x\not=y$. If $X$ is a metric space an $R$-discrete for some $R>0$ subspace $S\subseteq X$ is called a \emph{Delone set} if the inclusion $S\to X$ is coarsely surjective. Every metric space contains a Delone set. If it is in addition proper then the finite sets of the Delone set are exactly the bounded sets.

\begin{defn}
\label{defn:Higson corona}
Let $X$ be a proper metric space and $S\subseteq X$ a Delone subset. Denote by $\hat S$ the set of nonprincipal ultrafilters on $S$. If $A\subseteq S$ is a subset define
\[
\closedop A:=\{\mathcal F\in \hat S:A\in \mathcal F\}.
\]
Then define a relation $\curlywedge$ on subsets of $\hat S$: $\pi_1 \curlywedge \pi_2$ if for every $A,B\subseteq S$ the relations $\pi_1\subseteq \closedop A,\pi_2\subseteq \closedop B$ imply $A \curlywedge B$.

The relation $\curlywedge$ on subsets of $\hat S$ determines a Kuratowski closure operator
\[
\bar \pi=\{\mathcal F\in \hat S:\{\mathcal F\} \curlywedge \pi\}.
\]
Now define a relation $\lambda$ on $\hat S$: $\mathcal F \lambda \mathcal G$ if 
$A\in \mathcal F,B\in \mathcal G$ implies $A \curlywedge B$. 

 Now the \emph{Higson corona $\nu(X)$} of $X$ is defined $\nu(X)=\hat S/\lambda$ as the quotient by $\lambda$.
\end{defn}

 If $A\subseteq X$ is a subset of a metric space then $\closedop A=\bar A\cap \nu(X)$ where the closure is taken in the Higson compactification. We call $(\closedop A^c)_{A\subseteq X}$ the basic open sets and $(\closedop A)_{A\subseteq X}$ the basic closed sets. There are two observations: If $A,B\subseteq X$ are two subsets then
 \begin{itemize}
  \item $\closedop A\cap \closedop B=\emptyset$ if and only if $A\not\curlywedge B$ are not close
  \item $\closedop A\cup \closedop B=\closedop{A\cup B}$.
 \end{itemize}

\section{Coarse sheaf cohomology, a survey}
This chapter gives a survey on coarse sheaf cohomology or coarse cohomology with twisted coefficients as we call it \cite{Hartmann2017a}. There are several facts on sheaf cohomology on topological spaces which hold in more generality for sheaf cohomology defined on a Grothendieck topology. Since the literature does not provide every aspect we are going to prove these facts by hand.

Let $U\s X$ be a subset of a metric space. A finite family of subsets $U_1,\ldots,U_n\s U$ forms a \emph{coarse cover} of $U$ if for every entourage $E\s U\times U$ the set
\[
 E[U_1^c]\cap\cdots\cap E[U_n^c]
\]
is bounded. This is equivalent to saying that the set
\[
 (U\times U)\cap(\bigcup_i U_i\times U_i)^c
\]
is a cocontrolled subset of $X^2$.

To a metric space $X$ we associate a Grothendieck topology $X_{ct}$ in the following way. The underlying category $Cat(X_{ct})$ is the poset of subsets of $X$. Subsets $(U_i)_i$ form a covering of $U\s X$ if they coarsely cover $U$.

A contravariant functor $\sheaff$ on subsets of $X$ is a \emph{sheaf on $X_{ct}$} if for every coarse cover $U_1,\ldots,U_n\s U$ of a subset of $X$ the following diagram is an equalizer
\[
 \sheaff(U)\to\prod_i\sheaff(U_i)\rightrightarrows\prod_{i,j
 }\sheaff(U_i\cap U_j).
\]
If $\sheaff$ is a sheaf on $X_{ct}$ then the right derived functor of the global sections functor is called \emph{coarse sheaf cohomology}, written $\check H_{ct}^*(X;\sheaff)$.

If 
\[
0\to \mathcal F \to \mathcal G\to \mathcal H\to 0
\]
is a short exact sequence of sheaves on $X_{ct}$ then there is a long exact sequence in cohomology
\begin{align*}
 0&\to \check H_{ct}^0(X,\mathcal F)\to \check H_{ct}^0(X,\mathcal G)\to \check H_{ct}^0(X,\mathcal H)\\
 &\to \check H_{ct}^1(X,\mathcal F)\to\cdots\\
 \cdots&\to \check H_{ct}^q(X,\mathcal F)\to \check H_{ct}^q(X,\mathcal G)\to \check H_{ct}^q(X,\mathcal H)\\
 &\to \check H_{ct}^{q+1}(X,\mathcal F)\to\cdots
\end{align*}

If $A$ is an abelian group the sheafification of the constant presheaf $A$ on $X_{ct}$ is called the \emph{constant sheaf $A_X$ on $X$}. In this paper we are interested in the computation of $\check H_{ct}^q(X;A_X)$ in higher dimension. The zeroth cohomology group is related to the number of ends $e(X)$ of the metric space $X$:
\[
 \check H_{ct}^0(X;A_X)=A_X(X)=\begin{cases}
                            A^{e(X)} & e(X)<\infty\\
                            \bigoplus_{\N}A & e(X)=\infty.
                           \end{cases}
\]

A sheaf $\mathcal F$ on $X_{ct}$ is called \emph{acyclic} if $\check H_{ct}^q(X,\mathcal F)=0$ for $q>0$. A sequence of sheaves 
\[
 \cdots \to \mathcal F_i\xrightarrow{\varphi_i} \mathcal F_{i+1}\xrightarrow{\varphi_{i+1}} \mathcal F_{i+2}\to \cdots
\]
is exact if $\im \varphi_i=\ker \varphi_{i+1}$. Here $\im \varphi$ is the sheafification of the image presheaf $U\mapsto \im(\varphi(U))$.

\begin{lem}
 If 
 \[
  0\to \mathcal F\to \mathcal F_0\to \mathcal F_1\to \cdots
 \]
 is an acyclic resolution of sheaves on $X_{ct}$ then
 \[
  \check H^q_{ct}(X,\mathcal F)=H^q(0\to \mathcal F_0(X)\to \mathcal F_1(X)\to\cdots), 
 \]
 cohomology of $\mathcal F$ can be computed by taking homology of the cocomplex
 \[
  0\to \mathcal F_0(X)\to \mathcal F_1(X)\to \cdots.
 \]
\end{lem}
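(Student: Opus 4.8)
The plan is to run the standard dimension-shifting argument inside the abelian category of sheaves of abelian groups on $X_{ct}$. First I would splice the resolution into short exact sequences. Writing $d_i\colon \mathcal{F}_i\to\mathcal{F}_{i+1}$ for the differentials and $\epsilon\colon\mathcal{F}\to\mathcal{F}_0$ for the augmentation, I set $\mathcal{K}_0:=\ker d_0=\im\epsilon\cong\mathcal{F}$ and $\mathcal{K}_i:=\ker d_i=\im d_{i-1}$ for $i\ge1$, where $\ker$ and $\im$ denote the sheaf-theoretic kernel and the sheafified image used in the definition of exactness above. Exactness of the resolution then delivers, for every $i\ge0$, a short exact sequence of sheaves
\[
 0\to\mathcal{K}_i\to\mathcal{F}_i\to\mathcal{K}_{i+1}\to0,
\]
whose first map is the inclusion and whose second map is the corestriction of $d_i$ onto its image.

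Next I would feed each such sequence into the long exact cohomology sequence recorded earlier in this chapter. Since every $\mathcal{F}_i$ is acyclic, $\check H^q_{ct}(X,\mathcal{F}_i)=0$ for $q>0$, so the long exact sequence breaks into isomorphisms
\[
 \check H^q_{ct}(X,\mathcal{K}_{i+1})\xrightarrow{\ \cong\ }\check H^{q+1}_{ct}(X,\mathcal{K}_i),\qquad q\ge1,
\]
together with the exact piece $0\to\mathcal{K}_i(X)\to\mathcal{F}_i(X)\to\mathcal{K}_{i+1}(X)\to\check H^1_{ct}(X,\mathcal{K}_i)\to0$ at the bottom, where I use that $\check H^0_{ct}(X,-)$ is the left-exact global-sections functor $\mathcal{G}\mapsto\mathcal{G}(X)$. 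Iterating the dimension shift $q-1$ times and then reading off the bottom piece gives, for every $q\ge1$,
\[
 \check H^q_{ct}(X,\mathcal{F})=\check H^q_{ct}(X,\mathcal{K}_0)\cong\check H^1_{ct}(X,\mathcal{K}_{q-1})\cong\operatorname{coker}\bigl(\mathcal{F}_{q-1}(X)\to\mathcal{K}_q(X)\bigr).
\]

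It then remains to match this cokernel with the $q$-th cohomology of the complex $0\to\mathcal{F}_0(X)\to\mathcal{F}_1(X)\to\cdots$ of global sections. Because $\check H^0_{ct}(X,-)$ is left exact it preserves kernels, so $\ker\bigl(\mathcal{F}_q(X)\to\mathcal{F}_{q+1}(X)\bigr)=\mathcal{K}_q(X)$; moreover the map $\mathcal{F}_{q-1}(X)\to\mathcal{F}_q(X)$ factors through the inclusion $\mathcal{K}_q(X)\hookrightarrow\mathcal{F}_q(X)$, so its image coincides with the image of $\mathcal{F}_{q-1}(X)\to\mathcal{K}_q(X)$. Hence the $q$-th cohomology of the global-sections complex is exactly $\operatorname{coker}\bigl(\mathcal{F}_{q-1}(X)\to\mathcal{K}_q(X)\bigr)$, matching the display above. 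The case $q=0$ would follow immediately from left exactness: $\check H^0_{ct}(X,\mathcal{F})=\mathcal{F}(X)=\ker\bigl(\mathcal{F}_0(X)\to\mathcal{F}_1(X)\bigr)$.

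The whole argument is formal once two foundational points are secured, and these are where I expect the only genuine care to be needed. First, I must know that sheaves of abelian groups on $X_{ct}$ form an abelian category in which the quoted long exact sequence applies to the spliced short exact sequences; this is exactly what legitimizes treating $\im$ (the sheafified image) and $\ker$ as categorical image and kernel and guarantees that $\mathcal{F}_i\to\mathcal{K}_{i+1}$ is an epimorphism of sheaves, so that each $0\to\mathcal{K}_i\to\mathcal{F}_i\to\mathcal{K}_{i+1}\to0$ really is short exact. Second, I must know that $\check H^0_{ct}(X,-)$ agrees with the left-exact functor $\mathcal{G}\mapsto\mathcal{G}(X)$ of global sections, which is what drives both the cokernel computation of $\check H^1_{ct}$ and the comparison of kernels with global sections of the kernel sheaves.
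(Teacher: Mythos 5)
Your proposal is correct and takes essentially the same route as the paper: the paper also splices the resolution into short exact sequences $0\to\varepsilon_i\to\mathcal F_i\to\varepsilon_{i+1}\to 0$ with $\varepsilon_i=\ker d_i$ (your $\mathcal K_i$), uses acyclicity of the $\mathcal F_i$ to dimension-shift $\check H^q_{ct}(X,\mathcal F)\cong\check H^1_{ct}(X,\varepsilon_{q-1})$, and reads off this group as $\varepsilon_q(X)/\im(d_{q-1}(X))=H^q$ of the global-sections complex. Your extra care in justifying $(\ker d_q)(X)=\ker(d_q(X))$ via left exactness of global sections simply makes explicit a step the paper uses tacitly.
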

\begin{proof}
 Suppose
 \[
   0\to \mathcal F\xrightarrow{i} \mathcal F_0\xrightarrow{d_0} \mathcal F_1\xrightarrow{d_1}\mathcal F_2\xrightarrow{d_2} \cdots
 \]
is an exact sequence with $\mathcal F_i$ acyclic for every $i$.

For every $i=1,2,\ldots$ define $\varepsilon_i=\ker d_i$. 
The exact sequence
\[
 0\to\mathcal F\to \mathcal F_0\to \varepsilon_1\to 0
\]
gives rise to a long exact sequence
\begin{alignat*}{2}
0&\to \check H_{ct}^0(X,\mathcal F)\to \check H_{ct}^0(X,\mathcal F_0)&\to \check H_{ct}^0(X,\mathcal \varepsilon_1)\\
 &\to \check H_{ct}^1(X,\mathcal F)\to 0&\to\cdots\qquad\quad\\
 &&0\to \check H_{ct}^q(X,\varepsilon_1)\\
 &\to \check H_{ct}^{q+1}(X,\mathcal F)\to0
 \end{alignat*}
 Thus $\check H_{ct}^{q+1}(X,\mathcal F)=\check H^q(X,\varepsilon_1)$ for $q\ge 1$ and 
 \begin{align*}
 \check H^1_{ct}(X,\mathcal F)
 &=\varepsilon_1(X)/\im (d_0(X))\\
 &=(\ker d_1)(X)/\im(d_0(X))\\
 &=H^1(0\to \sheaff_0(X)\to \sheaff_1(X)\to\cdots)
 \end{align*}
 The inclusion $\varepsilon_i\to \mathcal F_i$ and the corestriction of $d_i$ to $\im d_i=\varepsilon_{i+1}$ combine to an exact sequence
\[
 0\to \varepsilon_i\to \mathcal F_i\to \varepsilon_{i+1}\to 0.
\]
This sequence gives rise to a long exact sequence
\begin{alignat*}{2}
0&\to \check H_{ct}^0(X,\varepsilon_i)\to \check H_{ct}^0(X,\mathcal F_i)&\to \check H_{ct}^0(X,\mathcal \varepsilon_{i+1})\\
 &\to \check H_{ct}^1(X,\varepsilon_i)\to\cdots&\\
 &&0\to \check H_{ct}^q(X,\varepsilon_{i+1})\\
 &\to \check H_{ct}^{q+1}(X,\varepsilon_i)\to0
 \end{alignat*}
which reads $\check H_{ct}^{q+1}(X,\varepsilon_i)=\check H^q_{ct}(X,\varepsilon_{i+1})$ for $q\ge 1$.
If $q\ge 2$ then we obtain inductively
\begin{align*}
 \check H_{ct}^q(X,\sheaff)&=\check H_{ct}^{q-1}(X,\varepsilon_1)=\cdots=\check H_{ct}^1(X,\varepsilon_{q-1})\\
 &=H^1(0\to \sheaff_{q-1}(X)\to \sheaff_q(X)\to\cdots)=H^q(0\to \sheaff_0(X)\to\sheaff_1(X)\to \cdots).
\end{align*}
\end{proof}

\begin{lem}
\label{lem:injective->flabby}
 If $X$ is a metric space every injective sheaf on $X_{ct}$ is flabby.
\end{lem}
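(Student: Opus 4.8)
The plan is to transport the classical argument ``injective sheaves are flabby'' to the Grothendieck topology $X_{ct}$, the only new ingredient being a supply of corepresenting objects for the sections functor. Recall that $\mathcal F$ is \emph{flabby} when every restriction $\mathcal F(V)\to\mathcal F(U)$ with $U\subseteq V$ is surjective; since $X$ is the terminal object of $Cat(X_{ct})$ this is equivalent to surjectivity of $\mathcal F(X)\to\mathcal F(U)$ for all $U$. So it suffices, for each inclusion $U\subseteq V$ of subsets of $X$, to realize the restriction map as the image under $\operatorname{Hom}(-,\mathcal F)$ of a monomorphism of sheaves, and then to invoke injectivity.

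First I would build, for every subset $U\subseteq X$, a sheaf $\Z_U$ corepresenting sections over $U$. Let $h_U$ be the representable presheaf on the poset $Cat(X_{ct})$, so $h_U(V)=\{*\}$ if $V\subseteq U$ and $h_U(V)=\emptyset$ otherwise, and let $\Z[h_U]$ be the free-abelian-group presheaf it generates, with $\Z[h_U](V)=\Z$ for $V\subseteq U$ and $0$ otherwise. Define $\Z_U$ to be the sheafification of $\Z[h_U]$ on $X_{ct}$. The heart of the argument is the natural isomorphism
\[
 \operatorname{Hom}_{\mathrm{Sh}(X_{ct})}(\Z_U,\mathcal F)\cong\mathcal F(U),
\]
which I would obtain by composing three adjunctions: sheafification is left adjoint to the inclusion of sheaves into presheaves, giving $\operatorname{Hom}_{\mathrm{Sh}}(\Z_U,\mathcal F)=\operatorname{Hom}_{\mathrm{PSh}}(\Z[h_U],\mathcal F)$; the free abelian group functor is left adjoint to the forgetful functor, turning this into $\operatorname{Hom}_{\mathrm{PSh(Set)}}(h_U,\mathcal F)$; and the Yoneda lemma evaluates the latter as $\mathcal F(U)$. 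Naturality in $U$ then identifies the map $\Z_U\to\Z_V$ induced by $U\subseteq V$, after applying $\operatorname{Hom}(-,\mathcal F)$, with the restriction $\mathcal F(V)\to\mathcal F(U)$.

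Next I would verify that $\Z_U\to\Z_V$ is a monomorphism whenever $U\subseteq V$. On presheaves the map $\Z[h_U]\to\Z[h_V]$ is componentwise either the identity $\Z\to\Z$ (when $W\subseteq U$) or the inclusion $0\hookrightarrow\Z$ (when $W\subseteq V$ but $W\not\subseteq U$), hence a monomorphism of presheaves. Since sheafification on a Grothendieck topology is exact, it preserves monomorphisms, so $\Z_U\to\Z_V$ is a monomorphism of sheaves.

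Finally, suppose $\mathcal F$ is injective and fix $U\subseteq V$ together with a section $s\in\mathcal F(U)$. Under the corepresentation isomorphism $s$ corresponds to a morphism $\Z_U\to\mathcal F$; injectivity of $\mathcal F$ applied to the monomorphism $\Z_U\hookrightarrow\Z_V$ extends it to a morphism $\Z_V\to\mathcal F$, that is, a section $t\in\mathcal F(V)$, and the naturality noted above forces $t|_U=s$. Hence $\mathcal F(V)\to\mathcal F(U)$ is surjective and $\mathcal F$ is flabby. I expect the main obstacle to be the monomorphism step, specifically confirming that sheafification on the coarse-cover topology $X_{ct}$ is exact and that the componentwise-injective presheaf map is genuinely a presheaf monomorphism; the corepresentation identity, though the conceptual core, follows formally once $\Z_U$ is set up correctly.
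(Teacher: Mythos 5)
Your proposal is correct and follows essentially the same route as the paper: you corepresent sections via the sheafification of the presheaf sending $W$ to $\Z$ when $W\subseteq U$ and to $0$ otherwise (the paper's $\Z_{U,X}^\#$), compute $\operatorname{Hom}_{\mathrm{Sh}}(\Z_U,\mathcal F)\cong\mathcal F(U)$ through the sheafification and free--forgetful adjunctions, and apply injectivity to the monomorphism $\Z_U\hookrightarrow\Z_V$. Your extra care with the Yoneda step, the exactness of sheafification, and the naturality giving $t|_U=s$ fills in details the paper leaves implicit, but the argument is the same.
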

\begin{proof}
 Let $\mathcal I$ be an injective sheaf on $X_{ct}$ and let $V\s U$ be an inclusion of subsets. We define a presheaf $\Z_{U,X}$ on $X_{ct}$ by 
 \[
  W\mapsto\begin{cases}
               \Z & W\s U\\
               0 &\mbox{otherwise}
              \end{cases}
 \]
 Denote by $\Z_{U,X}^\#$ the sheafification. In a similar way we define $\Z_{V,X}$ and $\Z_{V,X}^\#$. Then $\Z_{V,X}^\#\le \Z_{U,X}^\#$ is a subsheaf in a canonical way. Thus we have an exact sequence
\[
 0\to \Z_{V,X}^\#\to \Z_{U,X}^\#
\]
Since $\mathcal I$ is an injective object the sequence
\[
 Hom_{Sh}(\Z_{U,X}^\#,\mathcal I)\to Hom_{Sh}(\Z_{V,X}^\#,\mathcal I)\to 0
\]
is exact. Now 
\[
Hom_{Sh}(\Z_{U,X}^\#,\mathcal I)=Hom_{PSh}(\Z_{U,X},\mathcal I)=\mathcal I(U) 
\]
and
\[
Hom_{Sh}(\Z_{V,X}^\#,\mathcal I)=Hom_{PSh}(\Z_{V,X},\mathcal I)=\mathcal I(V)
\]
which proves the claim.
\end{proof}

\begin{lem}
 If $X$ is a metric space then flabby sheaves on $X_{ct}$ are acyclic.
\end{lem}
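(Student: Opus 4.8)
The plan is to run the classical dimension-shifting argument that flasque sheaves are acyclic, now inside the Grothendieck topology $X_{ct}$. Since the category of abelian-group-valued sheaves on the site $X_{ct}$ is a Grothendieck abelian category it has enough injectives, so a given flabby sheaf $\mathcal F$ embeds in an injective sheaf $\mathcal I$, yielding a short exact sequence $0\to\mathcal F\to\mathcal I\to\mathcal Q\to 0$ with $\mathcal Q=\mathcal I/\mathcal F$. By Lemma \ref{lem:injective->flabby} the sheaf $\mathcal I$ is flabby, and the whole argument rests on two auxiliary facts about short exact sequences $0\to\mathcal F'\to\mathcal G\to\mathcal G''\to 0$ in which $\mathcal F'$ is flabby: first, that the sections sequence $0\to\mathcal F'(U)\to\mathcal G(U)\to\mathcal G''(U)\to 0$ is again exact for every $U$; and second, that if both $\mathcal F'$ and $\mathcal G$ are flabby then so is $\mathcal G''$. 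Granting these, $\mathcal Q$ is flabby, and the long exact sequence recorded in the survey together with the vanishing $\check H^q_{ct}(X,\mathcal I)=0$ for $q>0$ gives $\check H^1_{ct}(X,\mathcal F)=0$ (using surjectivity of $\mathcal I(X)\to\mathcal Q(X)$ from the first fact) and isomorphisms $\check H^q_{ct}(X,\mathcal F)\cong\check H^{q-1}_{ct}(X,\mathcal Q)$ for $q\ge 2$; an induction on $q$ using that $\mathcal Q$ is again flabby then finishes the proof.

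The real content is the first auxiliary fact, specifically the surjectivity of $\mathcal G(U)\to\mathcal G''(U)$; left exactness is automatic since the sections functor is left exact and $\mathcal F'=\ker(\mathcal G\to\mathcal G'')$. Here I would use that epimorphisms of sheaves on $X_{ct}$ are exactly the maps whose image presheaf sheafifies onto the target: given $s\in\mathcal G''(U)$ there is a coarse cover $U_1,\dots,U_n$ of $U$ and sections $t_i\in\mathcal G(U_i)$ mapping to $s|_{U_i}$. On each overlap the difference $c_{ij}:=t_i|_{U_i\cap U_j}-t_j|_{U_i\cap U_j}$ dies in $\mathcal G''$, hence lies in $\mathcal F'(U_i\cap U_j)$, and the family $(c_{ij})$ satisfies the \v{C}ech cocycle identity on triple overlaps. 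Flabbiness of $\mathcal F'$ lets me write this cocycle as a coboundary: by induction on the number $n$ of covering sets, repeatedly using that the restriction maps $\mathcal F'(U_i)\to\mathcal F'(U_i\cap U_j)$ are surjective, I produce $b_i\in\mathcal F'(U_i)$ with $c_{ij}=b_i|_{U_i\cap U_j}-b_j|_{U_i\cap U_j}$. Then the corrected lifts $t_i-b_i$ agree on all pairwise overlaps and glue, via the $n$-ary equalizer axiom for the coarse cover, to a section $t\in\mathcal G(U)$ mapping to $s$.

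The second auxiliary fact follows from the first by a short chase: for $V\subseteq U$ and $s''\in\mathcal G''(V)$, lift $s''$ to $t\in\mathcal G(V)$ by the first fact, extend $t$ to $\tilde t\in\mathcal G(U)$ using flabbiness of $\mathcal G$, and observe that the image of $\tilde t$ in $\mathcal G''(U)$ restricts to $s''$. I expect the main obstacle to be precisely the gluing step above: whereas the topological proof invokes Zorn's lemma to extend lifts piece by piece, here I must exploit that coarse covers are \emph{finite} in order to replace the transfinite induction by an induction on the cover size, and I must check carefully that the cocycle $(c_{ij})$ attached to a coarse cover can be trivialized using only surjectivity of the restriction maps of $\mathcal F'$. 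The structural input I am taking for granted — that sheaves on $X_{ct}$ form a category with enough injectives and that sheafification is exact — is exactly what makes both the embedding $\mathcal F\hookrightarrow\mathcal I$ and the above description of epimorphisms available.
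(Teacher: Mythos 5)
Your global architecture is identical to the paper's: embed the flabby $\mathcal F$ in an injective $\mathcal I$ (flabby by Lemma~\ref{lem:injective->flabby}), note the quotient is flabby and the sections sequence is exact, then dimension-shift along the long exact sequence and induct. The paper simply cites the two auxiliary facts (``general theory on flabby sheaves'', ``a standard argument''), whereas you try to prove them, and your proof of the essential one --- surjectivity of $\mathcal G(U)\to\mathcal G''(U)$ when the kernel $\mathcal F'$ is flabby --- has a genuine gap at exactly the step you flagged. In your induction on the cover size $n$, once the inductive hypothesis has fixed $b_1,\ldots,b_{n-1}$, the step to $n$ requires a \emph{single} section $b_n\in\mathcal F'(U_n)$ whose restriction to $U_i\cap U_n$ equals $d_i:=b_i|_{U_i\cap U_n}-c_{in}$ for \emph{all} $i<n$ simultaneously. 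Flabbiness only extends one section from one subset, so you would first have to glue the pairwise-compatible family $(d_i)_{i<n}$ over $\bigcup_{i<n}(U_i\cap U_n)$; but on $X_{ct}$ gluing is licensed only along coarse covers, and a finite family is in general \emph{not} a coarse cover of its union: for $V_1=2\Z$, $V_2=2\Z+1$ one has $\Delta_1[V_1^c]\cap\Delta_1[V_2^c]=\Z$ unbounded. This is not a hypothetical worry. Take $U=U_3=\Z$, $U_1=2\Z$, $U_2=2\Z+1$ (a legitimate coarse cover, since $U_3=U$), let $\mathcal F'=\mathcal C_h(\cdot,\R)$ (flabby, by the paper), and take the cocycle $c_{12}=0$ (empty overlap), $c_{13}=0$, $c_{23}=-1$. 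Your scheme first trivializes on the subfamily $(U_1,U_2)$, where $U_1\cap U_2=\emptyset$ forces nothing, say $b_1=b_2=0$; it then needs $b_3\in\mathcal C_h(\Z,\R)$ with $b_3|_{2\Z}=0$ and $b_3|_{2\Z+1}=1$ modulo bounded support, which the Higson condition forbids ($|b_3(2k)-b_3(2k+1)|=1$ for all large $k$). Note the cocycle \emph{is} a coboundary ($b_1=0$, $b_2=-1$, $b_3=0$), so it is your method, not the statement, that breaks: the induction fixes the earlier $b_i$ with no mechanism to revise them.

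The classical argument you are adapting does not survive on finiteness alone: both the Zorn's-lemma proof and the finite-cover induction in topology repeatedly glue two lifts into a lift over $U_1\cup U_2$, and in $X_{ct}$ that union is generally not coarsely covered by $U_1,U_2$, so the reduction of the cover size is equally blocked. What does survive of your proposal: the case $n=2$ is correct (extend $c_{12}$ to $U_1$ and correct one lift --- this is all the paper's Mayer--Vietoris proof uses); the final gluing of the corrected lifts $t_i-b_i$ is legitimate because the original $(U_i)_i$ is a coarse cover of $U$; and the dimension-shifting frame, including deducing flabbiness of the quotient from the sections fact, is sound. But exactness of the sections sequence over coarse covers of arbitrary finite size is precisely the content the paper imports from elsewhere, and as written your induction does not establish it; a repair would need genuinely more structure of coarse covers than surjectivity of restriction maps --- for instance shrinkings in the spirit of Lemma~\ref{lem:refinement}, or an argument in which $b_n$ is chosen before the data it must match --- or else an explicit citation, as in the paper.
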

\begin{proof}
 We mimic the proof of \cite[Proposition~2.5]{Hartshorne1977}.
 
 If $\mathcal F$ is a flabby sheaf then it can be embedded in an injective sheaf $\mathcal I$. The quotient of this inclusion is denoted $\mathcal G$. Then we have an exact sequence
 \begin{align}
 \label{eq:exact1}
  0\to \mathcal F\to \mathcal I\to \mathcal G\to 0
 \end{align}
with $\sheaff$ flabby, $\mathcal I$ flabby by Lemma~\ref{lem:injective->flabby} and $\sheafg$ is flabby by a standard argument. General theory on flabby sheaves also implies that the sequence 
\begin{align}
 \label{eq:exactX}
 0\to \sheaff(X)\to \mathcal I(X)\to \mathcal G(X)\to 0
\end{align}
is exact. Then the long exact sequence in cohomology to the short exact sequence~\ref{eq:exact1}, the exactness of~\ref{eq:exactX} and $\check H_{ct}^q(X,I)=0$ for $q\ge 1$ implies $\check H_{ct}^1(X,\sheaff)=0$ and
\begin{align}
 \label{eq:shift}
 \check H_{ct}^q(X,\sheaff)=\check H_{ct}^{q-1}(X,\sheafg)
\end{align}
for $q\ge 2$. Since $\mathcal G$ satisfies the requirements for this Lemma we obtain the result for $q\ge 2$ using inductively $\mathcal G$ and the isomorphism~\ref{eq:shift}.
\end{proof}

\section{Standard resolution}
This chapter proves Theorem~\ref{thm:introCochains}.

Let $A$ be an abelian group. If $A_1\sqcup\cdots\sqcup A_n=U$ is a disjoint union of a subset $U\s X$ of a metric space then
\[
 C^q_{A_1,\ldots,A_n}(U,A)=\{\varphi:U^{q+1}\to A\mid\varphi|_{A_{i_0}\times\cdots \times A_{i_q}}\mbox{ constant } \forall i_0,\ldots,i_q\in\{1,\ldots,q\}\}
\]
 Then we define 
 \[
 C^q(U,A)=\varinjlim_{A_1\sqcup\cdots \sqcup A_n=U}C^q_{A_1,\ldots,A_n}(U,A)
 \]
 where $\varphi\in C^q_{A_1,\ldots,A_n}(U,A)$ is equivalent to $\psi\in C^q_{B_1,\ldots,B_m}(U,A)$ if
 \[
  \varphi|_{A_{i_0}\cap B_{j_0}\times\cdots\times A_{i_q}\cap B_{j_q}}=\psi|_{A_{i_0}\cap B_{j_0}\times\cdots\times A_{i_q}\cap B_{j_q}}
 \]
for every $i_0,\ldots i_q\in \{1,\ldots,n\},j_0,\ldots,j_q\in \{1,\ldots,m\}$. We equip $C^q(U;A)$ with a group operation by pointwise addition. The elements of $C^q(U;A)$ are called \emph{blocky} functions with blocks $A_1,\ldots, A_n$. They can be compared with the group of \emph{all} functions $U^{q+1}\to A$.

A differential on $C^q(U,A)$ is defined by
\begin{align*}
 d_q:C^q(U,A)&\to \mathcal C^{q+1}(U,A)\\
 \varphi&\mapsto((x_0,\ldots,x_{q+1})\mapsto\sum_{i=0}^{q+1}(-1)^i\varphi(x_0,\ldots,\hat x_i,\ldots,x_{q+1}).
\end{align*}

Now $CX^q_b(U,A)$ defines the subcomplex of functions in $C^q(U,A)$ with cocontrolled support. Then we define $CY^q_b(U,A):=C^q(U,A)/CX^q_b(U,A)$.

\begin{defn}
 If $X$ is a metric space, $A$ an abelian group and $q\ge 0$ then \emph{coarse cohomology} $HY^q_b(X,A)$ is defined to be the $q$th homology of the coarse cochain complex $(CY^q_b(X,A),d_q)_{q\ge 0}$.
\end{defn}

Subsets $U_1,\ldots,U_n$ of a subset $U\s X$ of a metric space form a \emph{coarse disjoint union of $U$} if they coarsely cover $U$ and every two elements are disjoint.

\begin{lem}
\label{lem:h0}
 If $U\s X$ is a subset of a metric space and $A$ an abelian group then
 \[
  HY^0_b(U,A)=\ker d_0=A(U)
 \]
\end{lem}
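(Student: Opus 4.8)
The plan is to first dispose of the trivial equality and then identify the kernel with the constant sheaf. Since $HY^0_b(U,A)$ is the homology of $(CY^q_b(U,A),d_q)_q$ in degree $0$ and there is no incoming differential, we have at once $HY^0_b(U,A)=\ker\bigl(d_0\colon CY^0_b(U,A)\to CY^1_b(U,A)\bigr)$. The whole content is therefore the second equality $\ker d_0=A(U)=A_X(U)$.

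Next I would unwind $\ker d_0$. An element is represented by a blocky function $\varphi\colon U\to A$, and $[\varphi]$ lies in $\ker d_0$ exactly when $d_0\varphi\in CX^1_b(U,A)$, i.e. when $(x_0,x_1)\mapsto\varphi(x_1)-\varphi(x_0)$ has cocontrolled support. For $q=1$ this says that for every $R\ge 0$ the set of $R$-close pairs on which $\varphi$ jumps is bounded. As $\varphi$ is blocky I may merge the blocks carrying a common value to obtain a finite partition $U=B_1\sqcup\cdots\sqcup B_m$ with $\varphi|_{B_k}\equiv a_k$ and the $a_k$ pairwise distinct; the cocontrol condition then reads precisely $B_k\not\curlywedge B_l$ for $k\neq l$, so the blocks are pairwise coarsely disjoint. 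Passing to $CY^0_b=C^0/CX^0_b$ means these data are considered up to bounded modification.

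The geometric heart of the argument is that a finite partition into pairwise coarsely disjoint sets is a coarse cover: for the entourage $\Delta_R$ a point of $B_j$ lies in $\bigcap_k\Delta_R[B_k^c]$ iff it is $R$-close to some block other than its own, and the union over $j$ of such points is bounded because each cross-block set of $R$-close pairs has bounded projection. Granting this, the constant values $a_k$ assemble into a matching family for the coarse cover $\{B_k\}$ (the blocks being disjoint, all overlap conditions are vacuous), hence into a section of the constant sheaf $A_X(U)$; conversely every section of $A_X(U)$ is, by construction of the sheafification, represented by constants glued over some coarse cover, which after refining to a coarsely disjoint partition produces a blocky representative of a class in $\ker d_0$. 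One may package this as a morphism $A_X\to\ker d_0$ induced by the inclusion of constant functions, to be checked an isomorphism on $U$.

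The main obstacle is verifying that this correspondence is a well-defined bijection, i.e. that the identifications built into the sheafification of the constant presheaf match exactly the relation ``equal up to bounded support''. Surjectivity rests on the coarse-cover statement above. For injectivity I would use that every section over a bounded set vanishes, since the empty family coarsely covers a bounded set, so that both $A_X$ and $\ker d_0$ are $0$ there: if a glued family of constants has bounded support then its value on each unbounded block must be $0$ and the bounded blocks contribute nothing, forcing the original section to be $0$. Finally one can record the concrete upshot that a blocky, coarsely locally constant function is the same datum as an eventually constant assignment of values to the ends of $U$, which recovers the stated description $A_X(U)=A^{e(U)}$, respectively $\bigoplus_\N A$, via the noncanonical re-indexing $A\oplus\bigoplus_\N A\cong\bigoplus_\N A$.
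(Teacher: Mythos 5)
Your proposal is correct and follows essentially the same route as the paper's proof of Lemma~\ref{lem:h0}: you identify a cocycle in $CY^0_b(U,A)$ with a blocky function whose merged level sets form a finite pairwise coarsely disjoint partition of $U$ (considered modulo bounded support, discarding bounded blocks), and match this data with the coarse-disjoint-union description of sections of the constant sheaf, in both directions. The only differences are elaborations of points the paper leaves implicit --- your verification that such a partition is in fact a coarse cover, and the injectivity check via vanishing of sections over bounded sets --- which fill in rather than alter the argument.
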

\begin{proof}
We compute $HY_b^0(U,A)$. Let $\varphi\in \ker d_0$ be a cocycle. Then $x\sim y$ if $d_0\varphi(x,y)=0$ defines an equivalence relation on $X$ with equivalence classes $(\varphi^{-1}(k))_{k\in A}$. The $\varphi^{-1}(k)$ form a coarse disjoint union since $d_0\varphi$ has cocontrolled support. We can assume all $\varphi^{-1}(k)$ are not bounded otherwise we substract a cochain with bounded (cocontrolled) support. Thus $\varphi$ is an element of $A(U)$. 

If we are given a coarse disjoint union $U_1,\ldots,U_n$ of $U$ and $(a_i)_{U_i}\in A(U)$ then we can assume the $U_i$ are disjoint and not bounded. Then
\begin{align*}
 \varphi:U&\to A\\
 x&\mapsto \{ a_i \qquad x\in U_i
\end{align*}
defines a cocycle in $CY^0_b(U,A)$.
\end{proof}

If $\alpha:X\to Y$ is a coarse map between metric spaces and $\varphi\in C^q(Y,A)$ a cochain then $\alpha^*(\varphi):= \varphi\circ \alpha^{\times q+1}$ defines a cochain in $C^q(X,A)$, specifically $C^q_{A_1,\ldots,A_n}(Y,A)$ is mapped to $C^q_{\alpha^{-1}(A_1),\ldots,\alpha^{-1}(A_n)}(X,A)$. If $\varphi$ has cocontrolled support so does $\alpha^*\varphi$. Thus there is a well-defined cochain map $\alpha^*:CY^q_b(Y,A)\to CY^q_b(X,A)$.

In particular an inclusion $U\s V$ of subsets induces a restriction map $i^*:CY^q_b(V,A)\to CY^q_b(U,A)$. Thus $CY^q_b(\cdot, A)$ forms a presheaf on $X_{ct}$.

\begin{lem}
\label{lem:cqsheaf}
 If $X$ is a metric space the presheaf $CY^q_b(\cdot,A)$ is sheaf on $X_{ct}$. 
 \end{lem}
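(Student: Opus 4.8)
The plan is to verify the sheaf axiom in its two classical halves: \emph{separation} (the map $CY^q_b(U,A)\to\prod_i CY^q_b(U_i,A)$ is injective) and \emph{gluing} (every compatible family lies in its image), for an arbitrary coarse cover $U_1,\ldots,U_n$ of a subset $U\subseteq X$. Throughout I would represent a class in $CY^q_b(U,A)$ by an honest blocky function $\varphi\colon U^{q+1}\to A$, using that two such functions represent the same class on a subset $V$ exactly when their difference has cocontrolled support on $V^{q+1}$, and that restriction along $U_i\hookrightarrow U$ is literal restriction $\varphi\mapsto\varphi|_{U_i^{q+1}}$. The one ambient fact I invoke repeatedly is that a finite union of cocontrolled sets is cocontrolled.

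The crux of separation is the following estimate, which is where the coarse-cover hypothesis is genuinely consumed: for every $R\ge0$ the set $\Delta^q_R\cap U^{q+1}\cap(\bigcup_i U_i^{q+1})^c$ is bounded. To prove it I would fix a tuple $\mathbf x=(x_0,\ldots,x_q)$ in this set; for each index $i$, since $\mathbf x\notin U_i^{q+1}$ some coordinate lies in $U_i^c$, and as every coordinate is $R$-close to $x_0$ this forces $x_0\in\Delta_R[U_i^c]$. Letting $i$ range over $\{1,\ldots,n\}$ gives $x_0\in\bigcap_i\Delta_R[U_i^c]$, which is bounded precisely because the $U_i$ coarsely cover $U$; the remaining coordinates are $R$-close to a point of a bounded set, so the whole set is bounded. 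Granting this, separation is immediate: if $\varphi$ restricts to $0$ on every $U_i$ then each $\mathrm{supp}(\varphi)\cap U_i^{q+1}$ is cocontrolled, and $\mathrm{supp}(\varphi)\cap\Delta^q_R$ splits into its part inside $\bigcup_i U_i^{q+1}$ (a finite union of cocontrolled pieces met with $\Delta^q_R$) and its part outside (bounded by the estimate), so $\mathrm{supp}(\varphi)$ is cocontrolled and $\varphi=0$.

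For gluing I would start from blocky representatives $\varphi_i$ on $U_i^{q+1}$ whose pairwise differences $\varphi_i-\varphi_j$ have cocontrolled support on $(U_i\cap U_j)^{q+1}$, and define $\varphi$ on $U^{q+1}$ by a smallest-index rule: $\varphi(\mathbf x)=\varphi_{i(\mathbf x)}(\mathbf x)$ where $i(\mathbf x)$ is the least $i$ with $\mathbf x\in U_i^{q+1}$, and $\varphi(\mathbf x)=0$ if no such $i$ exists. Blockiness follows by partitioning $U$ into the at most $2^n$ membership types $W_T=\{\,x:\{i:x\in U_i\}=T\,\}$: on each product $W_{T_0}\times\cdots\times W_{T_q}$ the index $i(\mathbf x)$ is constant, equal to $\min(T_0\cap\cdots\cap T_q)$, so refining this partition by the blocks of the finitely many $\varphi_i$ exhibits $\varphi$ as blocky. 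To check $[\varphi]|_{U_k}=[\varphi_k]$ I would note that on $U_k^{q+1}$ one has $i(\mathbf x)\le k$ and $\mathbf x\in(U_{i(\mathbf x)}\cap U_k)^{q+1}$, whence $\mathrm{supp}(\varphi-\varphi_k)\cap U_k^{q+1}\subseteq\bigcup_{i\le k}\mathrm{supp}(\varphi_i-\varphi_k)\cap(U_i\cap U_k)^{q+1}$, a finite union of cocontrolled sets and so cocontrolled.

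The main obstacle I anticipate is the combinatorial estimate of the second paragraph: it is the single place where coarse covering, rather than an ordinary cover, has to be converted into boundedness, and the decisive move is to read $\mathbf x\notin U_i^{q+1}$ as producing, for \emph{every} $i$ simultaneously, a witness in $U_i^c$ lying within $R$ of $x_0$. Everything else — blockiness of the glued section and stability of cocontrolled supports under finite unions and restriction — is bookkeeping once this estimate is available. I would also record at the outset the elementary observation that $U\setminus\bigcup_i U_i$ is bounded (apply the cover condition with the entourage $\Delta_0$), which confirms that the smallest-index rule discards only a bounded, hence cocontrolled, part of $U^{q+1}$.
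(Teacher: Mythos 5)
Your proposal is correct and takes essentially the same route as the paper: your key estimate that $\Delta_R^q\cap U^{q+1}\cap(\bigcup_i U_i^{q+1})^c$ is bounded is precisely the paper's Lemma~\ref{lem:cccocontrolled} (your argument anchored at $x_0$ is a streamlined version of the paper's decomposition into the sets $A_f$, both consuming the coarse-cover hypothesis through the boundedness of $\bigcap_i\Delta_R[U_i^c]$), and your smallest-index rule is the paper's gluing construction. Your membership-type partition $W_T$ refined by the blocks of the $\varphi_i$, and the explicit verification that $\mathrm{supp}(\varphi-\varphi_k)\cap U_k^{q+1}$ is a finite union of cocontrolled overlap supports, in fact supply details the paper compresses into a stated block system and an ``as can easily be seen.''
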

 \begin{proof}
  Let $U_1,\ldots,U_n$ be a coarse cover of a subset $U\s X$. We show the identity axiom. Let $\varphi\in CY_b^q(U,A)$ be a section with $\varphi_R|_{U_i}=0$ for every $i$. By Lemma~\ref{lem:cccocontrolled} the set $V\coloneqq(U_1^{q+1}\cup\cdots\cup U_n^{q+1})^c$ is cocontrolled. Then 
  \[
  \varphi=\varphi|_{U_1}+\cdots+\varphi|_{U_n}+\varphi|_V
  \]
  as a finite sum of functions with cocontrolled support has cocontrolled support.
  
  We show the gluing axiom. Suppose $\varphi_i\in CY^q_b(U_i,A)$ are functions with $\varphi_i|_{U_j}=\varphi_j|_{U_i}$ for every $i,j$. Define a function 
  \begin{align*}
  \varphi:U^{q+1}&\to A\\
  (x_0,\ldots,x_q)&\mapsto\begin{cases}
                           \varphi_1(x_0,\ldots,x_q) & (x_0,\ldots,x_q)\in U_1^{q+1}\\
                           \varphi_2(x_0,\ldots,x_q) & (x_0,\ldots,x_q)\in U_2^{q+1}\setminus U_1^{q+1}\\
                           \vdots &\vdots\\
                           \varphi_n(x_0,\ldots,x_q) & (x_0,\ldots,x_q)\in U_n^{q+1}\cap (U_1^{q+1}\cup \cdots \cup U_{n-1}^{q+1})^c\\
                           0&\mbox{otherwise}.
                          \end{cases}
  \end{align*}
  If $\varphi_i\in C_{A_{i1},\ldots,A_{in_1}}(U_i,A)$ then  $\varphi\in C_{A_{11},\ldots,A_{1n_1},A_{21}\setminus U_1,\ldots, A_{n,n_n}\setminus(U_1\cup\cdots U_{n-1}),(U_1\cup \cdots \cup U_n)^c}(U,A)$. As can easily be seen the cochain $\varphi$ restricts to $\varphi_i$ for every $i$.
 \end{proof}
\begin{lem}
\label{lem:cccocontrolled}
 If $R\ge 0$ and $U_1,\ldots,U_n$ are a coarse cover of a subset $U\s X$ of a metric space then
 \[
  (U_1^{q+1}\cup\cdots\cup U_n^{q+1})^c\cap \Delta_R^q
 \]
is bounded.
\end{lem}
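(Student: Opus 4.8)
The plan is to show that every coordinate projection of the set
\[
W:=U^{q+1}\cap(U_1^{q+1}\cup\cdots\cup U_n^{q+1})^c\cap\Delta_R^q
\]
is bounded, where the complement is understood inside $U^{q+1}$. Since any two coordinates of a tuple in $\Delta_R^q$ lie at distance at most $R$, a single bounded projection forces all the others to lie in a bounded set as well; hence it suffices to bound the image of the first projection $\pi_0(W)$.

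The key step is to establish the inclusion
\[
\pi_0(W)\subseteq \Delta_R[U_1^c]\cap\cdots\cap\Delta_R[U_n^c].
\]
To verify it, I would fix a tuple $(x_0,\ldots,x_q)\in W$ and an index $j$. Because the tuple does not lie in $U_j^{q+1}$, some coordinate $x_{i_j}$ lies in $U_j^c$. As all coordinates are pairwise $R$-close we have $d(x_0,x_{i_j})\le R$, so $(x_0,x_{i_j})\in\Delta_R$ with $x_{i_j}\in U_j^c$; by the definition of $\Delta_R[\cdot]$ this means $x_0\in\Delta_R[U_j^c]$. Since $j$ was arbitrary, $x_0$ lies in the displayed intersection.

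It then remains to invoke the coarse cover hypothesis. Taking the entourage $E=\Delta_R$ on $U$, the definition of a coarse cover says precisely that $E[U_1^c]\cap\cdots\cap E[U_n^c]$ is bounded. Combined with the inclusion above this shows $\pi_0(W)$ is bounded, and therefore so is $W$. Note that the argument is uniform in $q$ and in particular covers $q=0$, where $W=U\cap(U_1\cup\cdots\cup U_n)^c$ and each point is its own witness in $U_j^c$.

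The main thing to get right is the translation between the simplicial non-containment condition (no single $U_j$ contains the whole simplex) and the pairwise entourage condition built into the definition of a coarse cover; once one observes that failure of containment in $U_j$ produces, for \emph{every} vertex, a witness in $U_j^c$ within distance $R$, the statement collapses onto the $n$-fold intersection appearing in the coarse cover axiom. A secondary point worth flagging explicitly is that the complement must be taken inside $U^{q+1}$ (equivalently, one works in $U$ equipped with its subspace metric and views $\Delta_R^q$ as the Vietoris--Rips complex of $U$): tuples with a coordinate outside $U$ would otherwise make the set unbounded whenever $X\setminus U$ is unbounded.
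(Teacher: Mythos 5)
Your proof is correct and takes essentially the same route as the paper: both arguments extract, for each $j$, a witness coordinate in $U_j^c$ from the failure of containment in $U_j^{q+1}$, use the pairwise $R$-closeness of tuples in $\Delta_R^q$ to place the projections inside $\Delta_R[U_1^c]\cap\cdots\cap\Delta_R[U_n^c]$, and then invoke the coarse cover axiom with the entourage $E=\Delta_R$. The only difference is organizational: the paper first decomposes the set into the product pieces $A_f$ indexed by functions $f\colon\{1,\ldots,n\}\to\{0,\ldots,q\}$ and bounds every projection of each piece, whereas you transport all witnesses to the $0$th coordinate and observe that one bounded projection suffices --- a leaner bookkeeping of the same idea.
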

\begin{proof}
 If $f:\{1,\ldots,n\}\to\{0,\ldots,q\}$ is a function then denote
 \[
  A_f\coloneqq\bigcap_{i\in f^{-1}(0)}U_i^c\times\cdots\times\bigcap_{i\in f^{-1}(q)}U_i^c.
 \]
 Here the empty intersection denotes $U$.
Then
\begin{align*}
(U_1^{q+1}\cup\cdots\cup U_n^{q+1})^c\cap U^{q+1}
&=\{(x_0,\ldots,x_q)\in U^{q+1}:\forall i\in\{1,\ldots,n\}\exists j\in\{0,\ldots,q\}x_j\not\in U_i\}\\
&=\bigcup_{f:\{1,\ldots,n\}\to \{0,\ldots,q\}}A_f.
\end{align*}
Now the projection to the $i$th factor of $A_f\cap\Delta_R^q$ is
\begin{align*}
(A_f\cap\Delta_R^q)_i
&\s\{x\in U: d(x,\bigcap_{i\in f^{-1}(0)}U_i^c)\le R,\ldots,d(x,\bigcap_{i\in f^{-1}(q)}U_i^c)\le R\}\\
&=\Delta_R[\bigcap_{i\in f^{-1}(0)}U_i^c]\cap\cdots\cap \Delta_R[\bigcap_{i\in f^{-1}(q)}U_i^c]\\
&\s\bigcap_{i=1}^n\Delta_R[U_i^c]
\end{align*}
bounded. Since $(U_1^{q+1}\cup\cdots\cup U_n^{q+1})^c\cap U^{q+1}$ is a finite union of the $A_f$ this proves the claim.
\end{proof}

\begin{lem}
\label{lem:flabby}
 The sheaf $CY^q_b(\cdot,A)$ on $X_{ct}$ is flabby.
\end{lem}
\begin{proof}
 If $U\s V$ is an inclusion of subsets and $\varphi\in C^q(U,A)$ then there is a disjoint union $A_1\sqcup\cdots\sqcup A_n=U$ with $\varphi\in C_{A_1,\ldots,A_n}(U,A)$. Define 
 \begin{align*}
  \tilde \varphi:V^{q+1}&\to A\\
  (x_0,\ldots,x_q)&\mapsto\begin{cases}
                           0 &\exists i\in\{0,\ldots,q\}:x_i\not\in U\\
                           \varphi(x_0,\ldots,x_q) &\mbox{otherwise}.
                          \end{cases}
 \end{align*}
Then $\tilde\varphi\in C_{A_1,\ldots,A_n,U^c}(V,A)$ restricts to $\varphi$ on $U$.
\end{proof}

\begin{lem}
 The homology of $C^q(X,A)$ is concentrated in degree zero.
\end{lem}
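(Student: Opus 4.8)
The plan is to produce the same contracting homotopy that proves acyclicity of the Alexander--Spanier complex, the only extra work being to confirm that this homotopy stays inside the blocky cochains. Assuming $X\neq\emptyset$ (the empty case being vacuous), I would fix a basepoint $x_*\in X$ and define, for each $q\ge 1$, a homomorphism
\[
 h_q:C^q(X,A)\to C^{q-1}(X,A),\qquad (h_q\varphi)(x_0,\ldots,x_{q-1})=\varphi(x_*,x_0,\ldots,x_{q-1}),
\]
i.e.\ insertion of the basepoint into the first slot. The goal is then to show $d_{q-1}h_q+h_{q+1}d_q=\mathrm{id}$ on $C^q(X,A)$ for every $q\ge 1$, which forces every cocycle of positive degree to be a coboundary.

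The step I would be most careful about, although it turns out to be immediate, is that $h_q$ is well defined, that is, that it maps blocky functions to blocky functions. If $\varphi\in C^q_{A_1,\ldots,A_n}(X,A)$ and $x_*$ lies in the block $A_k$, then for any choice of blocks $A_{j_0},\ldots,A_{j_{q-1}}$ the function $h_q\varphi$ restricted to $A_{j_0}\times\cdots\times A_{j_{q-1}}$ equals the constant value of $\varphi$ on $A_k\times A_{j_0}\times\cdots\times A_{j_{q-1}}$. Hence $h_q\varphi$ is blocky with the \emph{same} blocks $A_1,\ldots,A_n$. Because $h_q$ is literally a restriction at the level of functions $X^{q+1}\to A$, it commutes with passage to finer partitions and therefore descends to the direct limit defining $C^{q-1}(X,A)$; this is the one place where the combinatorial structure of our cochains enters, and it is what distinguishes the statement from the classical acyclicity result.

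The homotopy identity itself is a routine reindexing that I would not grind through in full: expanding $d_{q-1}h_q\varphi$ gives $\sum_{i=0}^{q}(-1)^i\varphi(x_*,x_0,\ldots,\hat x_i,\ldots,x_q)$, while $h_{q+1}d_q\varphi$ is $d_q\varphi$ evaluated at $(x_*,x_0,\ldots,x_q)$, whose $i=0$ term is $\varphi(x_0,\ldots,x_q)$ and whose remaining terms cancel the previous sum after shifting the summation index by one. Adding the two composites yields the identity. From $d_{q-1}h_q+h_{q+1}d_q=\mathrm{id}$ it follows at once that $H^q(C^\bullet(X,A))=0$ for all $q\ge 1$, while in degree zero the kernel of $d_0$ consists of the globally constant functions and so equals $A$. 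Thus the homology is concentrated in degree zero, as claimed. The entire difficulty is the blockiness check of the second paragraph; everything else is the standard bar-type contraction.
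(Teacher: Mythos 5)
Your proof is correct and follows essentially the same route as the paper: the paper also uses the basepoint-insertion map $\tilde\varphi(x_0,\ldots,x_{q-1})=\varphi(0,x_0,\ldots,x_{q-1})$, notes it preserves the blocks $A_1,\ldots,A_n$, and verifies $d_{q-1}\tilde\varphi=\varphi$ for a cocycle $\varphi$, which is just your homotopy identity $d_{q-1}h_q+h_{q+1}d_q=\mathrm{id}$ specialized to $d_q\varphi=0$. Your explicit attention to blockiness and compatibility with the direct limit matches (and slightly elaborates on) the paper's one-line remark to the same effect.
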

\begin{proof}
 We compute $H^q(C^*(X,A))=\begin{cases}
                           A & q=0\\
                           0 & \mbox{otherwise}
                          \end{cases}$. If $q=0$ then
\begin{align*}
 \ker d_0
 &=\{\varphi:X\to A:d_0\varphi=0\}\\
 &=\{\varphi:X\to A:\varphi(x_1)-\varphi(x_0)=0\forall x_1,x_0\in X\}\\
 &=\{\varphi:X\to A \mbox{ constant}\}\\
 &=A
\end{align*}
If $q\ge 1$ and $\varphi\in\ker d_q$ then define the cochain
\begin{align*}
 \tilde\varphi:X^q&\to Z\\
 (x_0,\ldots,x_{q-1})&\mapsto \varphi(0,x_0,\ldots,x_{q-1}).
 \end{align*}
Here $0\in X$ is a fixed point. If $\varphi\in C^q_{A_1,\ldots,A_n}(X,A)$ then $\tilde\varphi\in C^{q-1}_{A_1,\ldots,A_n}(X,A)$. Then
\begin{align*}
d_{q-1}\tilde \varphi(x_0,\ldots,x_q)
&=\sum_{i=0}^q(-1)^i\varphi(0,x_0,\ldots,\hat x_i,\ldots,x_q)\\
&=\sum_{i=1}^{q+1}(-1)^{i+1}\varphi(0,x_0,\ldots,\hat x_{i-1},\ldots,x_q)+d_q\varphi(0,x_0,\ldots,x_q)\\
&=\varphi(x_0,\ldots,x_q)
\end{align*}
Thus higher homology of $C^q(X,Z)$ vanishes.
\end{proof}

We note an exact sequence of cochain complexes:
\[
 \xymatrix{
  \cdots\ar[r]
  &CX_b^{q-1}(U;A)\ar[r]\ar[d]
  & CX_b^q(U;A)\ar[r]\ar[d]
  & CX_b^{q+1}(U;A)\ar[r]\ar[d]
  &\cdots\\
  \cdots\ar[r]
  &C^{q-1}(U;A)\ar[r]\ar[d]
  & C^q(U;A)\ar[r]\ar[d]
  & C^{q+1}(U;A)\ar[r]\ar[d]
  &\cdots\\
  \cdots\ar[r]
  &CY_b^{q-1}(U;A)\ar[r]
  & CY_b^q(U;A)\ar[r]
  & CY_b^{q+1}(U;A)\ar[r]
  &\cdots
 }
\]

\begin{lem}
\label{lem:cycxexact}
 If $q\ge 1$ and for every subset $U\s X$ and $\psi\in \ker dX_{q+1}^b$ there exists a coarse cover $U_i$ of $U$ and $\psi_i\in CX_b^q(U_i;A)$ with $d_q(\psi_i)=\psi|_{U_i}$ then $CY^*_b(\cdot;A)$ is exact at $q$.  
\end{lem}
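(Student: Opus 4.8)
The plan is to play the quotient complex $CY_b^*(\cdot;A)$ off against the complex $C^*(\cdot;A)$, whose homology we have just computed to be concentrated in degree $0$. For each subset $U\s X$ the vertical columns recorded in the diagram above assemble into a short exact sequence of cochain complexes of abelian groups
\[
 0\to CX_b^*(U;A)\to C^*(U;A)\to CY_b^*(U;A)\to 0,
\]
exact by the very definition $CY_b^q(U;A)=C^q(U;A)/CX_b^q(U;A)$. Its long exact sequence in cohomology reads
\[
 \cdots\to H^q(C^*(U;A))\to H^q(CY_b^*(U;A))\xrightarrow{\delta} H^{q+1}(CX_b^*(U;A))\to H^{q+1}(C^*(U;A))\to\cdots.
\]
Since $H^q(C^*(U;A))=0$ and $H^{q+1}(C^*(U;A))=0$ for $q\ge 1$ (applying the lemma that the homology of $C^*(\cdot;A)$ is concentrated in degree zero to each nonempty $U$), the connecting map $\delta$ is an isomorphism $H^q(CY_b^*(U;A))\cong H^{q+1}(CX_b^*(U;A))$ for all $q\ge 1$. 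Conceptually this already explains the role of the hypothesis: the cohomology of the quotient complex is governed entirely by $CX_b^*$.

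Next I would reformulate the desired exactness. By definition, exactness of the sheaf sequence at $q$ asks that the image presheaf $U\mapsto\im\big(d_{q-1}\colon CY_b^{q-1}(U;A)\to CY_b^q(U;A)\big)$ sheafify to the subsheaf $\ker d_q$. Since $\ker d_q$ is already a sheaf, this is equivalent to the purely local assertion that for every $U\s X$ and every cocycle $\psi\in(\ker d_q)(U)$ there is a coarse cover $(U_i)_i$ of $U$ with each restriction $\psi|_{U_i}$ a coboundary in $CY_b^*(U_i;A)$. This is the statement I would prove, and it is here that the hypothesis enters.

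To unwind $\delta$ concretely, lift $\psi$ to a blocky cochain $\tilde\psi\in C^q(U;A)$. Then $d_q\tilde\psi$ maps to $d_q\psi=0$ in $CY_b^{q+1}(U;A)$, so $d_q\tilde\psi$ lies in $CX_b^{q+1}(U;A)$; call this cocycle $\psi'$. Applying the hypothesis to $\psi'$ produces a coarse cover $(U_i)_i$ of $U$ and $\psi_i\in CX_b^q(U_i;A)$ with $d_q\psi_i=\psi'|_{U_i}$. Now $\tilde\psi|_{U_i}-\psi_i\in C^q(U_i;A)$ satisfies $d_q(\tilde\psi|_{U_i}-\psi_i)=\psi'|_{U_i}-\psi'|_{U_i}=0$, so it is a $q$-cocycle of $C^*(U_i;A)$ with $q\ge 1$. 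By the degree-$0$ concentration of the homology of $C^*$ on each $U_i$ there is $\eta_i\in C^{q-1}(U_i;A)$ with $d_{q-1}\eta_i=\tilde\psi|_{U_i}-\psi_i$. Projecting to the quotient $CY_b^*(U_i;A)$ and using that $\psi_i\in CX_b^q(U_i;A)$ dies there while $\tilde\psi|_{U_i}$ maps to $\psi|_{U_i}$, I obtain $d_{q-1}\bar\eta_i=\psi|_{U_i}$. Hence $\psi|_{U_i}$ is a coboundary, which is exactly the local exactness required.

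The argument is essentially a diagram chase, so the main obstacle is bookkeeping rather than a genuine difficulty: one must keep the three complexes and their restriction maps straight and verify that lifting, restricting, and projecting all commute, so that $\tilde\psi|_{U_i}$ really represents $\psi|_{U_i}$ and $\psi_i$ really vanishes in the quotient. The one point needing a word of care is the appeal to the homology of $C^*$ on each cover member, which requires a basepoint in $U_i$; this is harmless, since if $U$ is bounded then $CY_b^q(U)=0$ and there is nothing to prove, while empty members of a coarse cover are redundant and may simply be discarded.
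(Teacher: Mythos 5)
Your proposal is correct and follows essentially the same route as the paper's proof: lift the cocycle to a blocky representative in $C^q(U;A)$, observe that its coboundary is a cocycle in $CX_b^{q+1}(U;A)$, apply the hypothesis to get the cover and the $\psi_i$, and then use the concentration of the homology of $C^*$ in degree zero on each $U_i$ to exhibit $\psi|_{U_i}$ as a coboundary in the quotient. The long-exact-sequence framing and the explicit care about empty or bounded cover members (the basepoint needed for the acyclicity of $C^*(U_i;A)$) are additions the paper leaves implicit, but the underlying diagram chase is identical.
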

\begin{proof}
 Let $\varphi\in \ker dY^b_q\s CY_b^q(U;A)$ be an element. Then $\psi:=d_q\varphi$ has cocontrolled support, thus is an element of $CX^{q+1}_b(U;A)$. Since $d_{q+1}\psi=d_{q+1}d_q\varphi=0$ the element $\psi$ is even a cocycle in $CX^{q+1}_b(X;A)$. Then there exist a coarse cover $(U_i)_i$ and elements $\psi_i\in CX_b^q(U_i;A)$ with $d_q(\psi_i)=(d_q\varphi)|_{U_i}$. Then $\varphi|_{U_i}-\psi_i$ is a cocycle in $C^q(U_i;A)$ for every $i$. Thus there exists $\varphi_i\in C^{q-1}(U_i;A)$ with $d_{q-1}\varphi_i=\varphi|_{U_i}-\psi_i$. Thus $\varphi|_{U_i}$ represents a coboundary.
\end{proof}

\begin{lem}
\label{lem:cocontrolledcc}
 If $q\ge 1$ and $(U_1^{q+1}\cup\cdots\cup U_n^{q+1})^c$ is cocontrolled in $U^{q+1}$ then $U_1,\ldots,U_n$ is a coarse cover of $U$.
\end{lem}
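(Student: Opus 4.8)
This is precisely the converse of Lemma~\ref{lem:cccocontrolled}, so the plan is to deduce the two-variable coarse-cover condition from the $(q+1)$-variable cocontrolledness hypothesis. Recall that $U_1,\ldots,U_n$ coarsely cover $U$ exactly when $(U_1^{2}\cup\cdots\cup U_n^{2})^c\cap U^{2}$ is cocontrolled in $X^2$, i.e. when $(U_1^{2}\cup\cdots\cup U_n^{2})^c\cap\Delta_R^{1}$ is bounded for every $R\ge 0$. So it suffices to bound this pair set for each fixed $R$.

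The key idea is to turn a \emph{bad} pair into a \emph{bad} $(q+1)$-tuple of the same diameter by repetition. First I fix $R\ge 0$ and take any $(x_0,x_1)\in(U_1^{2}\cup\cdots\cup U_n^{2})^c\cap\Delta_R^{1}$, and form the degenerate tuple $\tau=(x_0,x_1,x_1,\ldots,x_1)\in U^{q+1}$ consisting of $x_0$ followed by $q$ copies of $x_1$. Every pairwise distance occurring in $\tau$ is either $0$ or $d(x_0,x_1)\le R$, so $\tau\in\Delta_R^{q}$. Moreover, for each $i$ the tuple $\tau$ lies in $U_i^{q+1}$ iff both $x_0\in U_i$ and $x_1\in U_i$, i.e. iff $(x_0,x_1)\in U_i\times U_i$; since $(x_0,x_1)\notin U_i^{2}$ for every $i$ by assumption, we get $\tau\in(U_1^{q+1}\cup\cdots\cup U_n^{q+1})^c$.

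Now I invoke the hypothesis: $(U_1^{q+1}\cup\cdots\cup U_n^{q+1})^c\cap\Delta_R^{q}$ is bounded, so each of its coordinate projections is bounded. In particular the projections to the first two factors, call them $B_0$ and $B_1$, are bounded, and the construction gives $x_0\in B_0$, $x_1\in B_1$. Letting $(x_0,x_1)$ range over the whole pair set shows that the first projection of $(U_1^{2}\cup\cdots\cup U_n^{2})^c\cap\Delta_R^{1}$ lies in $B_0$ and the second in $B_1$, hence this pair set is bounded. As $R\ge 0$ was arbitrary, $(U_1^{2}\cup\cdots\cup U_n^{2})^c\cap U^{2}$ is cocontrolled, which is the coarse-cover condition.

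There is no serious obstacle here; the entire content is the repetition trick $(x_0,x_1)\mapsto(x_0,x_1,\ldots,x_1)$, which reduces the $(q+1)$-fold statement to the $2$-fold one. The only points requiring care are bookkeeping ones: that \emph{bounded} for a subset of $X^{q+1}$ means every coordinate projection is bounded (so controlling the first two factors suffices), and that the assumption $q\ge 1$ is exactly what guarantees $\tau$ carries at least the two distinct entries $x_0$ and $x_1$, so that membership of $\tau$ in $U_i^{q+1}$ collapses to the pair condition $(x_0,x_1)\in U_i\times U_i$.
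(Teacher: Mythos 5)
Your proof is correct and is essentially the paper's own argument: the paper's proof is exactly the repetition trick, sending $(x,y)\in(\bigcup_i U_i^2)^c\cap\Delta_R$ to $(x,y,\ldots,y)\in(\bigcup_i U_i^{q+1})^c\cap\Delta_R^q$ and invoking boundedness of the latter set. Your version merely spells out the bookkeeping (diameter check, membership collapse to the pair condition, coordinate projections) that the paper leaves implicit.
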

\begin{proof}
 Let $R\ge 0$ be a number. If $(x,y)\in (\bigcup_iU^2)^c\cap \Delta_R$ then $(x,y,\ldots,y)\in (\bigcup_iU_i^{q+1})^c\cap \Delta_R^q$. Since this set is bounded $(x,y)$ must be contained in a bounded set too.
\end{proof}

\begin{lem}
\label{lem:exactoncx}
 If $\psi\in CX_b^q(U,A)$ is a cocycle then there exists a coarse cover $U_1,\ldots,U_n$ of $U$ with $\psi|_{U_i}=0$ for every $i=1,\ldots,n$.
\end{lem}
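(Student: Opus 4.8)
The plan is to build the cover directly from the block structure of $\psi$, invoking only that its support is cocontrolled (the cocycle hypothesis will not actually be needed). Fix a disjoint decomposition $A_1\sqcup\cdots\sqcup A_m=U$ witnessing $\psi\in C^q_{A_1,\ldots,A_m}(U,A)$, so that $\psi$ is constant, say with value $\psi_{i_0\ldots i_q}$, on each block product $A_{i_0}\times\cdots\times A_{i_q}$. Call a subset $S\subseteq\{1,\ldots,m\}$ a \emph{cluster} if for some $R\ge 0$ the set of tuples $(x_i)_{i\in S}\in\prod_{i\in S}A_i$ with $d(x_i,x_j)\le R$ for all $i,j\in S$ is unbounded. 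The key observation is that a block pattern carrying a nonzero value cannot be a cluster: if $\psi_{i_0\ldots i_q}\ne 0$ then the whole product $A_{i_0}\times\cdots\times A_{i_q}$ is contained in $\operatorname{supp}\psi$ and is therefore cocontrolled, which (padding coordinates by repetition) is readily seen to be equivalent to the block set $\{i_0,\ldots,i_q\}$ not being a cluster. Equivalently, \emph{$\psi$ vanishes on every tuple whose block set is a cluster.}

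Next I would note that clusters are downward closed: projecting an unbounded family of pairwise-$R$-close tuples to a sub-collection of coordinates stays unbounded (were it bounded, the remaining coordinates, being within $R$, would force the whole family bounded), so a subset of a cluster is again a cluster. Hence there are finitely many maximal clusters $S_1,\ldots,S_n$, and I set $U_j:=\bigcup_{i\in S_j}A_i\subseteq U$. For each $j$, every tuple in $U_j^{q+1}$ has its block set contained in $S_j$, which is a cluster, so by the observation above $\psi$ vanishes on it; thus $\psi|_{U_j}=0$, as required.

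It remains to check that $U_1,\ldots,U_n$ is a coarse cover, for which by Lemma~\ref{lem:cocontrolledcc} it suffices to show that $(U_1^{q+1}\cup\cdots\cup U_n^{q+1})^c$ is cocontrolled in $U^{q+1}$. Fix $R\ge 0$ and suppose this complement intersected with $\Delta_R^q$ were unbounded. Partitioning it by block pattern (finitely many patterns), some pattern class would be an unbounded set of pairwise-$R$-close tuples; its block set $S$ is then a cluster, hence contained in some maximal cluster $S_j$, so those tuples lie in $U_j^{q+1}$ — contradicting membership in the complement. Therefore the complement meets every $\Delta_R^q$ in a bounded set and is cocontrolled, and the proof is complete.

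The step I expect to be the main obstacle is the coarse-cover verification, and in particular getting the bookkeeping of clusters exactly right: the pigeonhole reduction of an unbounded pairwise-close family to a single cluster pattern, together with downward closure of clusters (which makes finitely many maximal clusters suffice and simultaneously forces $\psi|_{U_j}=0$). One genuine technical caveat to handle explicitly is \emph{bounded} blocks: such a block lies in no cluster and may be covered by no $U_j$, but all bounded blocks together form a bounded subset of $U$, which contributes only a bounded piece to the complement above and is therefore harmless (alternatively one adjoins this bounded set to any single $U_j$ without disturbing $\psi|_{U_j}=0$).
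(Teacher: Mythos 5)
Your argument is correct and is essentially the paper's own proof in dual bookkeeping: the paper records the list $\mathcal C$ of index sets on which $\psi$ takes a nonzero value (these block patterns are cocontrolled, i.e.\ precisely your non-clusters) and covers by all unions of blocks avoiding every subset in $\mathcal C$, whereas you take unions over maximal clusters; in both cases the vanishing of $\psi$ on each piece and the cocontrolledness of the uncovered part of $U^{q+1}$, hence the coarse-cover property via Lemma~\ref{lem:cocontrolledcc}, follow from the same padding-by-repetition and finite-pattern-partition combinatorics, and neither proof uses the cocycle hypothesis. The one slip is your parenthetical alternative for bounded blocks: adjoining them to some $U_j$ can destroy $\psi|_{U_j}=0$, since cocontrolled support allows $\psi$ to be nonzero on bounded patterns (e.g.\ $\psi$ supported on a single diagonal tuple $(a,\ldots,a)$ of a bounded block), so keep your primary route --- leaving the bounded blocks uncovered is harmless because a coarse cover need not actually cover $U$.
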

\begin{proof}
 Suppose $\psi\in C_{A_1,\ldots,A_n}(U,A)$ and fix $a_i\in A_i$ for every $i$. Then $A_i$ is bounded if $\psi(a_i,\ldots,a_i)\not=0$. We add $i$ to a list $\mathcal C$. Likewise $A_i\times A_j$ is cocontrolled if there exists a map $f:\{0,\ldots,q\}\to\{i,j\}$ with $\psi(a_{f(0)},\ldots,a_{f(q)})\not=0$. We add the set $\{i,j\}$ to the list $\mathcal C$. We proceed likewise with $A_i\times\cdots\times A_j$ of up to $q+1$ factors. We then define
 \[
  \mathcal U=\{A_{i_1}\cup\cdots\cup A_{i_m}\mid S\not\in \mathcal C\forall S\s \{i_1,\ldots,i_m\}\}
 \]
We show $\mathcal U$ is the desired coarse cover.

If $V\in \mathcal U$ then it is of the form $V=A_{i_1}\cup \cdots\cup A_{i_m}$. Let $f:\{0,\ldots,q\}\to\{i_1,\ldots,i_m\}$ be a function. Since $\{f(0),\ldots,f(q)\}\s \{i_1,\ldots,i_m\}$ we have $\{f(0),\ldots, f(q)\}\not\in \mathcal C$. Thus $\psi(a_{f(0)},\ldots,a_{f(q)})=0$. Since $f$ was arbitrary this implies $\psi|_V=0$.

If $(a_{i_0},\ldots,a_{i_q})\in (\bigcup_{V\in\mathcal U}V^{q+1})^c$ then $A_{i_0}\cup\cdots\cup A_{i_q}\not\in \mathcal U$. Thus there exists a subset $S\s\{i_0,\ldots,i_q\}$ with $S\in \mathcal C$. Which implies that $A_{i_0}\times\cdots\times A_{i_q}$ is cocontrolled. This way we showed that $(\bigcup_{V\in\mathcal U}V^{q+1})^c$ is cocontrolled in $U^{q+1}$. By Lemma~\ref{lem:cocontrolledcc} we can conclude that $\mathcal U$ is a coarse cover.
\end{proof}

\begin{thm}
 If $X$ is a metric space and $A$ an abelian group then the $CY^q_b(X,A)$ are a flabby resolution of the constant sheaf $A$ on $X_{ct}$. We can compute
 \[
  \cohomology q X A= HY_b^q(X,A).
 \]
for every $q\ge 0$.
\end{thm}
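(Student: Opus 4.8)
The plan is to assemble the preceding lemmas into a flabby resolution of the constant sheaf and then invoke the general machinery for computing cohomology from acyclic resolutions. First I would record that the two structural properties of the objects are already in hand: by Lemma~\ref{lem:cqsheaf} each $CY^q_b(\cdot,A)$ is a sheaf on $X_{ct}$, and by Lemma~\ref{lem:flabby} it is flabby; since flabby sheaves on $X_{ct}$ are acyclic, every term of the proposed resolution is acyclic. It therefore remains only to verify that the augmented sequence
\[
 0\to A_X\to CY^0_b(\cdot,A)\xrightarrow{d_0} CY^1_b(\cdot,A)\xrightarrow{d_1}\cdots
\]
is exact as a sequence of sheaves on $X_{ct}$.

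Second, I would establish exactness at the bottom of the complex. The augmentation $A_X\to CY^0_b(\cdot,A)$ sends a (locally constant) section to the associated blocky $0$-cochain and is injective, which gives exactness at $A_X$. For exactness at $CY^0_b(\cdot,A)$ one needs $\ker d_0=A_X$ as subsheaves, and this is exactly the content of Lemma~\ref{lem:h0}, which identifies $\ker d_0$ over each $U\s X$ with $A(U)=A_X(U)$, naturally in $U$.

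Third, and this is where the genuine work sits, I would prove exactness at $CY^q_b(\cdot,A)$ for every $q\ge 1$. Here I would appeal to Lemma~\ref{lem:cycxexact}: it suffices to check that for each $U\s X$ every cocycle $\psi\in CX^{q+1}_b(U;A)$ admits a coarse cover $(U_i)_i$ of $U$ together with $\psi_i\in CX^q_b(U_i;A)$ satisfying $d_q(\psi_i)=\psi|_{U_i}$. But Lemma~\ref{lem:exactoncx} supplies a coarse cover on which the cocycle $\psi$ restricts to $0$, so I may simply take $\psi_i=0$; the hypothesis of Lemma~\ref{lem:cycxexact} is then satisfied and exactness at $CY^q_b(\cdot,A)$ follows. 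Because exactness of sheaves on $X_{ct}$ is understood through the sheafified image, that is, as local exactness on coarse covers, this coarse-cover formulation is precisely the input needed, and it explains why the vanishing-on-a-coarse-cover statement of Lemma~\ref{lem:exactoncx} (rather than a global primitive for $\psi$) is the right tool.

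Finally, having exhibited a flabby, hence acyclic, resolution $0\to A_X\to CY^\bullet_b(\cdot,A)$, I would invoke the earlier Lemma on acyclic resolutions to conclude
\[
 \check H^q_{ct}(X;A_X)=H^q\bigl(0\to CY^0_b(X,A)\to CY^1_b(X,A)\to\cdots\bigr)=HY^q_b(X,A),
\]
the last equality being the definition of $HY^q_b(X,A)$. I expect the only real obstacle to be the higher-degree exactness of the third paragraph; the remaining steps are bookkeeping, and the one subtlety is that exactness must be tested locally in the Grothendieck topology, which is exactly the reason the coarse-cover lemmas are framed the way they are.
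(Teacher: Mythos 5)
Your proposal is correct and follows the paper's own proof exactly: sheafhood and flabbiness from Lemma~\ref{lem:cqsheaf} and Lemma~\ref{lem:flabby}, exactness in degree $0$ from Lemma~\ref{lem:h0}, exactness in degrees $q\ge 1$ by combining Lemma~\ref{lem:cycxexact} with Lemma~\ref{lem:exactoncx}, and the acyclic-resolution lemma to conclude. Your explicit observation that one takes $\psi_i=0$ on the cover supplied by Lemma~\ref{lem:exactoncx} is precisely how the paper's terse ``combine'' step works, spelled out.
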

\begin{proof}
 We prove that
 \[
  0\to A\to CY_b^0(X,A)\to CY_b^1(X,A)\to\cdots
 \]
 is a flabby resolution of $A$. By Lemma~\ref{lem:cqsheaf} the $CY_b^q(X,A)$ are sheaves. They are flabby by Lemma~\ref{lem:flabby}. By Lemma~\ref{lem:h0} the sequence is exact at $0$. If we combine Lemma~\ref{lem:cycxexact} and Lemma~\ref{lem:exactoncx} then we see that it is exact for $q\ge 1$.
\end{proof}

\section{Functoriality, graded ring structure and Mayer-Vietoris}
This chapter presents a few immediate applications of Theorem~\ref{thm:introCochains}.

\begin{lem}
 If two coarse maps $\alpha,\beta:X\to Y$ are close then they induce the same map in cohomology.
\end{lem}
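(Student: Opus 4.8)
The plan is to realize $\alpha^*$ and $\beta^*$ as chain homotopic cochain maps on the quotient complex $CY_b^*$, via an explicit prism operator. Since the preceding theorem identifies $HY_b^q(X,A)=\check H_{ct}^q(X;A_X)$, it suffices to show that $\alpha^*$ and $\beta^*$ agree on $HY_b^*$, and for this I would produce a homotopy $\bar h$ with $d\bar h+\bar h d=\alpha^*-\beta^*$.

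First I would define, for $q\ge 1$, the operator
\[
 h^q\colon C^q(Y,A)\to C^{q-1}(X,A),\qquad (h^q\varphi)(x_0,\ldots,x_{q-1})=\sum_{i=0}^{q-1}(-1)^i\varphi(\alpha(x_0),\ldots,\alpha(x_i),\beta(x_i),\ldots,\beta(x_{q-1})).
\]
The standard triangulation of the prism $\Delta^q\times[0,1]$ yields, by a purely formal sign computation that uses no coarse geometry, the exact identity $d_{q-1}h^q+h^{q+1}d_q=\alpha^*-\beta^*$ on $C^q(Y,A)$. One also checks that $h^q$ preserves blockiness: if $\varphi$ is constant on products of blocks $B_1,\ldots,B_m$ of $Y$, then each summand, hence $h^q\varphi$, is constant on products of the blocks $\alpha^{-1}(B_a)\cap\beta^{-1}(B_b)$ of $X$, so $h^q$ genuinely lands in $C^{q-1}(X,A)$.

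The crucial step is to show that $h^q$ carries $CX_b^q(Y,A)$ into $CX_b^{q-1}(X,A)$, so that it descends to $\bar h^q\colon CY_b^q(Y,A)\to CY_b^{q-1}(X,A)$ and the homotopy identity passes to the quotient. This is exactly where closeness enters. Fix $R\ge 0$ and take $(x_0,\ldots,x_{q-1})\in\Delta_R^{q-1}$ in $X$ with $(h^q\varphi)(x_0,\ldots,x_{q-1})\neq 0$, so some summand $i$ is nonzero. Coarse uniformity of $\alpha$ and $\beta$ bounds $d(\alpha(x_j),\alpha(x_k))$ and $d(\beta(x_j),\beta(x_k))$ by some $S=S(R)$, while closeness bounds $d(\alpha(x),\beta(x))\le R_0$ uniformly in $x$; by the triangle inequality the whole argument tuple lies in $\Delta_{S+R_0}^q$ in $Y$. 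Since $\varphi$ has cocontrolled support and $S+R_0$ is a fixed constant, the argument tuples producing a nonzero summand lie in a bounded subset of $Y^{q+1}$, so each of $\alpha(x_0),\ldots,\alpha(x_i)$ and $\beta(x_i),\ldots,\beta(x_{q-1})$ lies in a bounded subset of $Y$. Coarse properness of $\alpha$ and $\beta$ then forces every $x_j$ into a bounded subset of $X$, and as there are only finitely many summands the support of $h^q\varphi$ meets each $\Delta_R^{q-1}$ in a bounded set; that is, $h^q\varphi$ has cocontrolled support.

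With $\bar h$ in hand the descended identity $d\bar h+\bar h d=\alpha^*-\beta^*$ holds on $CY_b^*$, so $\alpha^*$ and $\beta^*$ are chain homotopic and therefore induce equal maps $HY_b^q(Y,A)\to HY_b^q(X,A)$, hence equal maps $\check H_{ct}^q(Y;A)\to\check H_{ct}^q(X;A)$. I expect the main obstacle to be precisely the support estimate above, where coarse uniformity, closeness, and coarse properness must be combined to conclude that each coordinate $x_j$ is bounded; the prism identity and the blockiness check are routine by comparison.
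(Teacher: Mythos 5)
Your proposal is correct and follows essentially the same route as the paper: the paper uses exactly this prism operator $h$ (citing Roe's Proposition~5.12 for the combinatorial identity $d h + h d = \beta^*-\alpha^*$, a harmless sign-convention difference from yours), notes the same blockiness statement $h\varphi\in C^{q-1}_{\alpha^{-1}(A_i)\cap\beta^{-1}(A_j)}(X,A)$, and asserts that closeness makes $h$ preserve cocontrolled support. Your explicit support estimate combining coarse uniformity, closeness, and coarse properness simply spells out the step the paper leaves implicit.
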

\begin{proof}
 The chain homotopy for coarse cohomology presented in \cite[Proposition~5.12]{Roe2003} can be costumized for our setting. Define a map $h:CY_b^q(Y,A)\to CY_b^{q-1}(X,A)$ by
 \[
  (h\varphi)(x_0,\ldots,x_{q-1})=\sum_{i=0}^{q-1}(-1)^i\varphi(\alpha(x_0),\ldots,\alpha(x_i),\beta(x_i),\ldots,\beta(x_{q-1})).
 \]
If $\varphi\in C^q_{A_1,\ldots,A_n}(Y,A)$ then $h\varphi\in C^{q-1}_{\alpha^{-1}(A_i)\cap\beta^{-1}(A_j)}(X,A)$. Since $\alpha,\beta$ are close, cocontrolled support of $\varphi$ implies cocontrolled support of $h\varphi$. Thus $h$ is well-defined.

The combinatorical calculation presented in the proof of \cite[Proposition~5.12]{Roe2003} shows that
\[
 d_{q-1}(h \varphi)+h(d_q\varphi)=\beta^*\varphi-\alpha^*\varphi.
\]
Thus $\alpha^*,\beta^*$ are cochain homotopic.
\end{proof}

Throughout this section $X$ denotes a metric space and $R$ a commutative ring.

We define a map on cochains
\begin{align*}
CY_b^p(X,R)\times CY_b^q(X,R)&\to CY_b^{p+q}(X,R)\\
(\phi,\psi)&\mapsto \phi\vee\psi
\end{align*}
with 
\[
(\phi\vee\psi)(x_0,\ldots,x_{p+q})=\phi(x_0,\ldots,x_p)\psi(x_p,\ldots,x_{p+q}).
\]

\begin{lem}
This product $\vee$ is well-defined. 
\end{lem}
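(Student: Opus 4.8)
The plan is to check well-definedness in two stages. A cochain in $CY_b^{p+q}(X,R)$ is an equivalence class modulo $CX_b^{p+q}(X,R)$ of a \emph{blocky} function, so I must verify first that $\phi\vee\psi$ is again blocky (so that it genuinely represents an element of $C^{p+q}(X,R)$ rather than an arbitrary function $X^{p+q+1}\to R$), and second that the class of $\phi\vee\psi$ does not depend on the representatives chosen for the classes of $\phi$ and $\psi$.

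For blockiness, suppose $\phi\in C^p_{A_1,\ldots,A_n}(X,R)$ and $\psi\in C^q_{B_1,\ldots,B_m}(X,R)$. I would show that $\phi\vee\psi$ is blocky with respect to the common refinement $(A_i\cap B_j)_{i,j}$. Fix one product block $(A_{k_0}\cap B_{l_0})\times\cdots\times(A_{k_{p+q}}\cap B_{l_{p+q}})$. On it the value $\phi(x_0,\ldots,x_p)$ depends only on the blocks $A_{k_0},\ldots,A_{k_p}$ and is hence constant, while $\psi(x_p,\ldots,x_{p+q})$ depends only on $B_{l_p},\ldots,B_{l_{p+q}}$ and is likewise constant; their product is therefore constant on the block. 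Thus $\phi\vee\psi\in C^{p+q}(X,R)$.

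Since $\vee$ is visibly $R$-bilinear at the level of functions, descent to the quotient reduces, via $\phi\vee\psi-\phi'\vee\psi'=(\phi-\phi')\vee\psi+\phi'\vee(\psi-\psi')$, to the single claim that $\phi\vee\psi$ has cocontrolled support as soon as \emph{one} of $\phi,\psi$ does. The starting point is the inclusion
\[
\operatorname{supp}(\phi\vee\psi)\subseteq\{(x_0,\ldots,x_{p+q}):\phi(x_0,\ldots,x_p)\neq 0\}\cap\{(x_0,\ldots,x_{p+q}):\psi(x_p,\ldots,x_{p+q})\neq 0\},
\]
which holds even if $R$ has zero divisors, because a vanishing factor forces the product to vanish. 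Assume $\operatorname{supp}\phi$ is cocontrolled and fix a radius $S\ge 0$. For $(x_0,\ldots,x_{p+q})\in\operatorname{supp}(\phi\vee\psi)\cap\Delta_S^{p+q}$ the truncation $(x_0,\ldots,x_p)$ lies in $\operatorname{supp}\phi\cap\Delta_S^p$, which is bounded; in particular $x_p$ ranges in a bounded set. As every coordinate is within distance $S$ of $x_p$, all coordinates then range in bounded sets, so $\operatorname{supp}(\phi\vee\psi)\cap\Delta_S^{p+q}$ is bounded. The symmetric argument using the last $q+1$ coordinates disposes of the case that $\operatorname{supp}\psi$ is cocontrolled.

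The only genuinely delicate point is this propagation of boundedness from the single overlapping coordinate $x_p$ to all the others through the $\Delta_S^{p+q}$-condition, together with the observation that the support inclusion survives zero divisors; the remainder is a direct unwinding of the definitions of blocky function and cocontrolled support, so I expect no further obstacles.
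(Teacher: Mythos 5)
Your proof is correct and takes essentially the same route as the paper: the paper likewise reduces everything to showing $\phi\vee\psi$ has cocontrolled support as soon as one factor does, bounding the coordinates in the overlapping block via $\supp\phi\cap\Delta_R^p$ and propagating that bound to the remaining coordinates through the $\Delta_R^{p+q}$-condition. Your extra verifications (blockiness with respect to the common refinement $(A_i\cap B_j)_{i,j}$, the bilinear reduction, and the symmetric case for $\psi$) are left implicit in the paper but are correct and complete the routine details.
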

\begin{proof}
We show $\phi\vee \psi$ has cocontrolled support if one of $\phi,\psi$ does.

Suppose $\phi$ has cocontrolled support. Then $\supp \phi\cap \Delta_R^p$ is bounded for every $R\ge 0$. This implies the $0$th factor $B:=(\supp \phi\cap\Delta_R^p)_0$ is bounded. If $p\le i\le p+q$ then the $i$th factor
\[
(\supp(\phi\vee\psi)\cap \Delta_R^{p+q})_i\s \{x\in X:d(x,B)\le R\}
\]
is bounded. This proves the claim.
\end{proof}

The formula 
\[
d_{p+q}(\phi\vee\psi)=d_p\phi\vee\psi + (-1)^p\phi\vee d_q\psi
\]
is easy to check. From that we deduce that the $\vee$-product of two cocycles is a cocycle and the product of a cocycle with a coboundary is a coboundary. Thus $\vee$ gives rise to a cup-product $\cup$ on $HY_b^*(X,R)$. Associativity and the distributive law can be checked on cochain level. This makes $(HY_b^*(X;R),+,\cup)$ a graded ring.

\begin{thm}
\label{thm:mv}
(Mayer-Vietoris) If $U_1,U_2$ is a coarse cover of a metric space $X$ then there is a long exact sequence in cohomology
 \begin{align*}
  0&\to\check H^0_{ct}(X;A)\to\check H^0_{ct}(U_1;A)\oplus \check H^0_{ct}(U_2;A)\to \check H^0_{ct}(U_1\cap U_2;A)\\
  &\to \check H^1_{ct}(X;A)\to\cdots\\
  &\to\check H^q_{ct}(X;A)\to\check H^q_{ct}(U_1;A)\oplus \check H^q_{ct}(U_2;A)\to \check H^0_{ct}(U_1\cap U_2;A)\\
  &\to \check H^{q+1}_{ct}(X;A)\to\cdots
 \end{align*}

\end{thm}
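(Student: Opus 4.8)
The plan is to realize the Mayer--Vietoris sequence as the long exact cohomology sequence of a short exact sequence of cochain complexes, and then to translate $HY_b^q$ into $\check H_{ct}^q$ via the resolution theorem (Theorem~\ref{thm:introCochains}). Concretely, for each $q\ge 0$ I would consider
\[
 0\to CY_b^q(X,A)\xrightarrow{r} CY_b^q(U_1,A)\oplus CY_b^q(U_2,A)\xrightarrow{s} CY_b^q(U_1\cap U_2,A)\to 0,
\]
where $r(\varphi)=(\varphi|_{U_1},\varphi|_{U_2})$ and $s(\psi_1,\psi_2)=\psi_1|_{U_1\cap U_2}-\psi_2|_{U_1\cap U_2}$. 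Since an inclusion of subsets induces a cochain map (the restriction maps commute with the differentials $d_q$, as noted in the construction of the presheaf $CY_b^q(\cdot,A)$), both $r$ and $s$ are morphisms of cochain complexes, so this is a short exact sequence of complexes provided it is exact in each degree $q$.

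First I would verify exactness at the two left-hand spots. Because $U_1,U_2$ is a coarse cover of $X$, the sheaf axiom for $CY_b^q(\cdot,A)$ (Lemma~\ref{lem:cqsheaf}) says that
\[
 CY_b^q(X,A)\to CY_b^q(U_1,A)\times CY_b^q(U_2,A)\rightrightarrows CY_b^q(U_1\cap U_2,A)
\]
is an equalizer. The identity axiom gives injectivity of $r$, and the gluing axiom says that a pair $(\psi_1,\psi_2)$ agreeing on $U_1\cap U_2$---that is, an element of $\ker s$---glues to a unique section over $X$, which is exactly $\im r=\ker s$. For surjectivity of $s$ I would invoke flabbiness (Lemma~\ref{lem:flabby}): the restriction $CY_b^q(U_1,A)\to CY_b^q(U_1\cap U_2,A)$ along the inclusion $U_1\cap U_2\subseteq U_1$ is onto, so any $\chi\in CY_b^q(U_1\cap U_2,A)$ lifts to some $\psi_1\in CY_b^q(U_1,A)$, and then $s(\psi_1,0)=\chi$. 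This establishes the short exact sequence of complexes.

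Finally I would apply the standard long exact sequence in cohomology attached to a short exact sequence of cochain complexes, obtaining a long exact sequence in the groups $HY_b^q(X,A)$, $HY_b^q(U_1,A)\oplus HY_b^q(U_2,A)$ and $HY_b^q(U_1\cap U_2,A)$, with connecting homomorphisms in the usual degrees. Rewriting each term through the isomorphism $\check H_{ct}^q(V;A)=HY_b^q(V,A)$ of Theorem~\ref{thm:introCochains} yields the asserted sequence. The one point demanding care---and the main obstacle I anticipate---is checking that this identification may be applied to the subspaces $U_1,U_2,U_1\cap U_2$: one must confirm that the sheaf $CY_b^q(\cdot,A)$ on $X_{ct}$, restricted to subsets of a fixed $U$, coincides with the cochain sheaf built intrinsically on $U_{ct}$, so that $HY_b^q(U,A)$ genuinely computes $\check H_{ct}^q(U;A)$. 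Granting this compatibility, what remains is the formal homological algebra of the connecting homomorphism.
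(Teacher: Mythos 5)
Your proposal is correct and follows essentially the same route as the paper: the identical short exact sequence of cochain complexes $0\to CY_b^q(X;A)\to CY_b^q(U_1;A)\oplus CY_b^q(U_2;A)\to CY_b^q(U_1\cap U_2;A)\to 0$, with exactness at the first two spots from the sheaf property (Lemma~\ref{lem:cqsheaf}) applied to the coarse cover $U_1,U_2$, surjectivity from flabbiness (Lemma~\ref{lem:flabby}), and then the long exact sequence in cohomology. The compatibility point you flag---that $HY_b^q(U,A)$ computes $\check H_{ct}^q(U;A)$ for the subspaces $U_1,U_2,U_1\cap U_2$---is left implicit in the paper but holds, since coarse covers of a subset $U$ in $X_{ct}$ are exactly the coarse covers of $U$ as a metric space, so the restricted sheaf agrees with the intrinsic one on $U_{ct}$.
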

\begin{proof}
 We examine a sequence of cocomplexes
 \begin{align*}
  0\to CY_b^q(X;A)\xrightarrow{\alpha} CY_b^q(U_1;A)\oplus CY_b^q(U_2;A)\xrightarrow{\beta} CY_b^q(U_1\cap U_2;A)\to 0.
 \end{align*}
 Here $\alpha$ is defined by $\varphi\mapsto (\varphi|_{U_1},\varphi|_{U_2})$ and $\beta$ by $(\varphi_1,\varphi_2)\mapsto \varphi_2|_{U_1}-\varphi_1|_{U_2}$.
 This sequence is exact at $CY_b^q(X;A)$ and $CY_b^q(U_1;A)\oplus CY_b^q(U_2;A)$ since $CY_b^q(\cdot;A)$ is a sheaf on $X_{ct}$ and $U_1,U_2$ are a coarse cover of $X$. It is exact at $CY_b^q(U_1\cap U_2)$ since $CY_b^q(\cdot;A)$ is a flabby sheaf.
 
 The result is obtained by taking the long exact sequence in cohomology of the exact sequence of cochain complexes.
\end{proof}

We denote by $HX_b^q(X;A)$ the $q$th homology group of the cochain complex $CX_b^q(X;A)$ given a metric space $X$ and an abelian group $A$.

\begin{prop}
Let $X$ be a metric space and $A$ an abelian group. Then 
\[
 HX_b^0(X,A)=\begin{cases}
              A & e(X)=0\\
              0 & e(X)>0;
             \end{cases}
\]

 \[
  HX_b^1(X,A)=\begin{cases}
                        0 & e(X)=0\\
                        A^{e(X)-1} & 0<e(X)<\infty\\
                        \bigoplus_\N A & e(X)=\infty
            \end{cases}
 \]
and
\[
 HX_b^{q+1}(X,A)=HY_b^q(X,A)
\]
for every $q\ge 1$.
\end{prop}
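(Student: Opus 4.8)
The plan is to feed the short exact sequence of cochain complexes
\[
0 \to CX_b^*(X;A) \to C^*(X;A) \to CY_b^*(X;A) \to 0
\]
recorded before the statement into its long exact sequence in cohomology, and to combine it with the two computations already available: the homology of $C^*(X;A)$ is $A$ concentrated in degree $0$, and $HY_b^0(X;A)=A(X)$ by Lemma~\ref{lem:h0}.

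First I would read off the top of the long exact sequence. For $q\ge 1$ both $H^q(C^*(X;A))$ and $H^{q+1}(C^*(X;A))$ vanish, so the connecting homomorphism yields an isomorphism $HY_b^q(X;A)\xrightarrow{\sim} HX_b^{q+1}(X;A)$, which is exactly the last assertion. The remaining information sits in the four-term exact sequence
\[
0 \to HX_b^0(X;A) \to A \xrightarrow{\phi} A(X) \to HX_b^1(X;A) \to 0,
\]
in which $\phi$ is induced by the quotient map $C^*(X;A)\to CY_b^*(X;A)$ and sends $a\in A$ to the class of the constant function with value $a$.

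Next I would compute $HX_b^0(X;A)$ directly rather than through $\phi$. In degree $0$ cocontrolled support is the same as bounded support, so $HX_b^0(X;A)$ is the kernel of $d_0$ on the blocky functions of bounded support, i.e. the constant functions of bounded support. If $X$ is bounded, that is $e(X)=0$, then every function has bounded support and $CX_b^*(X;A)=C^*(X;A)$, whence $HX_b^0=A$ and $HX_b^1=0$ immediately. If $X$ is unbounded, that is $e(X)>0$, then only the zero constant has bounded support, so $HX_b^0=0$; by exactness $\phi$ is then injective and $HX_b^1(X;A)=A(X)/\im\phi$.

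Finally I would identify the cokernel of $\phi$. Using the description of $A(X)$ from Lemma~\ref{lem:h0} --- finite coarse partitions of $X$ into unbounded pieces carrying $A$-labels, modulo bounded perturbation --- one presents $A(X)$ as a filtered colimit of finite products $A^{n}$, one factor per block of the partition, taken along the refinement maps that duplicate the label of a block when it is split. Each elementary refinement map is a split injection with cokernel $A$, and the constant function is the image of $A$ at the coarsest partition, so it lies in the common base summand and dies in the cokernel of every refinement step. Hence $\im\phi$ is a direct summand isomorphic to $A$ and $A(X)/\im\phi$ is the direct sum of the refinement cokernels: there are $e(X)-1$ of them when $0<e(X)<\infty$, giving $A^{e(X)-1}$, and countably infinitely many when $e(X)=\infty$, giving $\bigoplus_\N A$. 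In particular the a priori troublesome quotient $(\bigoplus_\N A)/A$ comes out as $\bigoplus_\N A$ precisely because the embedded copy of $A$ is a coordinate summand and not the (nonexistent) diagonal. I expect this identification of $\im\phi$ as a split coordinate summand to be the main obstacle, as it is exactly what blocks the naive diagonal picture in the infinite-ends case.
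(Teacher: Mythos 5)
Your proposal is correct and follows essentially the same route as the paper: the same short exact sequence $0\to CX_b^*(X;A)\to C^*(X;A)\to CY_b^*(X;A)\to 0$ and its long exact sequence, the direct identification of $HX_b^0$ with the constant functions of bounded support, the four-term sequence giving $HX_b^1(X;A)=A(X)/\im\phi$, and the vanishing of $H^q(C^*(X;A))$ for $q\ge 1$ producing the isomorphisms $HY_b^q(X;A)\cong HX_b^{q+1}(X;A)$. The only difference is one of detail: you justify the final quotient computation via the filtered-colimit presentation of $A(X)$ with split refinement maps (correctly handling the infinite-ends case, where the constants sit as a split summand rather than a diagonal), a step the paper dispatches with the bare remark that one mods out the constant functions.
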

\begin{proof}
Suppose $q=0$. An element of $\ker d_0$ is a constant function. If $X$ is bounded every constant function represents an element of $HX_b^0(X,A)=A$. If $X$ is not bounded then every constant function with bounded (cocontrolled) support must be zero.

 If $q>0$ we use the short exact sequence of cochain complexes
 \[
  0\to CX_b^q(X,A)\to C^q(X,A)\to CY_b^q(X,A)\to 0.
 \]
which splits since every element of $CY_b^q(X,A)$ is represented by a blocky map $\varphi:X^{q+1}\to A$.

Now we produce the long exact sequence in cohomology. The first few terms are
\[
 0\to HX_b^0(X,A)\to H^0(X,A)\to HY_b^0(X,A)\to HX^1(X,A)\to 0.
\]
If $X$ is bounded then this reads
\[
 0\to A\xrightarrow{id}A\xrightarrow{0}0\to HX^1(X,A)\to 0.
\]
Thus $HX_b^1(X,A)=0$. If $X$ is not bounded then the first few terms of the long exact sequence in cohomology read 
\[
 0\to 0\to A\to A(X)\to HX_b^1(X,A)\to 0
\]
Then 
\[
HX_b^1(X,A)=\mathcal C_f(X,A)/A=\begin{cases}
                                     A^{e(X)-1} & e(X)<\infty\\
                                     \bigoplus_\N A & e(X)=\infty
                                    \end{cases}
\]
In the middle term we mod out the constant functions.

For $q\ge 1$ the long exact in cohomology reads
\[
 0\to HY_b^q(X,Z)\to HX_b^{q+1}(X,Z)\to 0.
\]
Thus we proved the claimed results.
\end{proof}

\section{Computations}
\label{sec:computations}
Theorem~\ref{thm:introCochains} can also be applied to compute cohomology groups in a combinatorical manner.
\begin{lem}
\label{lem:h1z}
 If $A$ is a finite abelian group then $\check H_{ct}^1(\Z_{\ge 0},A)=0$.
\end{lem}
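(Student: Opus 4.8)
The plan is to invoke Theorem~\ref{thm:introCochains}, which identifies $\check H_{ct}^1(\Z_{\ge 0};A)$ with $HY^1_b(\Z_{\ge 0},A)$, and then to show directly that every cocycle in $CY^1_b(\Z_{\ge 0},A)$ is a coboundary. So I fix a representative $\varphi\colon\Z_{\ge 0}^2\to A$ of a class in $\ker dY^b_1$; this is a blocky function whose coboundary $d_1\varphi$ has cocontrolled support. The goal is to produce a $0$-cochain $\psi\in CY^0_b(\Z_{\ge 0},A)$ with $d_0\psi-\varphi\in CX^1_b(\Z_{\ge 0},A)$, i.e.\ with $\psi(y)-\psi(x)=\varphi(x,y)$ for all pairs $(x,y)$ lying close together and far out.

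The primitive is built by telescoping along the ray. I set
\[
 \psi(x)=\sum_{t=0}^{x-1}\varphi(t,t+1),
\]
a genuine element of $A$ since the sum is finite, with $d_0\psi(x,y)=\psi(y)-\psi(x)=\sum_{t=x}^{y-1}\varphi(t,t+1)$ for $x<y$. The key point, and the only place the hypothesis on $A$ enters, is that $\psi$ is automatically blocky: as $A$ is finite, $\psi$ has finite image, so its fibres $(\psi^{-1}(a))_{a\in A}$ form a finite partition of $\Z_{\ge 0}$ on which $\psi$ is constant. Hence $\psi$ is a legitimate element of $C^0(\Z_{\ge 0},A)$. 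For infinite $A$ the primitive could take infinitely many values and fail to be blocky, which is exactly why the statement is restricted to finite coefficients.

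It remains to compare $d_0\psi$ with $\varphi$ on close, far-out pairs. The mechanism is the elementary identity $\varphi(a,b)=\varphi(a,m)+\varphi(m,b)-d_1\varphi(a,m,b)$. Applying it repeatedly to split the interval between $x$ and $y$ into unit steps expresses $\varphi(x,y)$ as $\psi(y)-\psi(x)$ plus a sum of values of $d_1\varphi$ at triples all of whose entries lie in $[x,y]$. Now I fix $R\ge 0$ and restrict to $|x-y|\le R$: every correction triple then lies in $\Delta_R^2$, and since $d_1\varphi$ has cocontrolled support the set $\supp(d_1\varphi)\cap\Delta_R^2$ is bounded, hence contained in $[0,N(R)]^3$; so all correction terms vanish once $\min(x,y)>N(R)$. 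For $x>y$ I reduce to this case using antisymmetry at infinity, namely $\varphi(u,u)=0$ and $\varphi(u,u+1)=-\varphi(u+1,u)$ for large $u$, both of which follow from evaluating the cocycle condition on the degenerate triples $(u,u,u)$ and $(u,u+1,u)$ lying in $\Delta_1^2$. Consequently $\varphi(x,y)=d_0\psi(x,y)$ whenever $|x-y|\le R$ and $\min(x,y)$ is sufficiently large depending on $R$, so $\{\varphi\ne d_0\psi\}\cap\Delta_R$ is bounded for every $R$. That is $\varphi-d_0\psi\in CX^1_b(\Z_{\ge 0},A)$, so $[\varphi]=d_0[\psi]$ is a coboundary and $HY^1_b(\Z_{\ge 0},A)=0$.

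I expect the main obstacle to be bookkeeping rather than conceptual: verifying that the telescoping correction terms genuinely all lie in a single $\Delta_R^2$ and are therefore killed uniformly by cocontrolledness, and checking carefully that the finite-image argument really delivers a blocky $\psi$. The essential idea is simply that a ray is coarsely one-ended, so a $1$-cocycle can be integrated along it by partial sums, and finiteness of $A$ keeps the resulting integral blocky.
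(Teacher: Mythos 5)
Your proof is correct and takes essentially the same route as the paper's: the same telescoping primitive $\psi(x)=\sum_{t=0}^{x-1}\varphi(t,t+1)$, blocky precisely because $A$ is finite, with $\varphi-d_0\psi$ shown to have cocontrolled support by writing the discrepancy on pairs in $\Delta_R$ as a sum of values of $d_1\varphi$ on triples in $\Delta_R^2$ (this sum is exactly the paper's correction function $\varphi_n$). Your explicit handling of pairs with $x>y$ via the degenerate triples $(u,u,u)$ and $(u,u+1,u)$ is a bookkeeping detail the paper leaves implicit, not a difference in method.
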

\begin{proof}
  If $\varphi\in \ker dY^b_1$ then $d_1\varphi$ has cocontrolled support. This means for every $n\in \N$ the set $\supp(d_1\varphi)\cap \Delta_n^1$ is bounded. This amounts to saying that the function
 \[
  \varphi_n:x\mapsto\varphi(x,x+n)-\varphi(x,x+1)-\cdots-\varphi(x+n-1,x+n)
 \]
 has finite support $x_1,\ldots,x_{r_n}\in \Z_{\ge 0}$.
Then define
\begin{align*}
 \tilde \varphi:\Z_{\ge0}&\to A\\
 x&\mapsto \sum_{i=0}^{x-1}\varphi(i,i+1).
\end{align*}
This function is blocky since $\tilde\varphi$ assumes only finitely many values. Here we use that $A$ is finite. Moreover we have
\begin{align*}
 d_0\tilde\varphi(x,x+n)
 &=\sum_{i=0}^{x+n-1}\varphi(i,i+1)-\sum_{i=0}^{x-1}\varphi(i,i+1)\\
 &=\sum_{i=x}^{x+n-1}\varphi(i,i+1)\\
 &=\begin{cases}
    \varphi(x,x+n)-\varphi_n(x) & x=x_1,\ldots,x_{r_n}\\
    \varphi(x,x+n)&\mbox{otherwise}.
   \end{cases}
\end{align*}
This shows that $\varphi-d_0\tilde \varphi$ has cocontrolled support. Thus $\varphi$ is a coboundary in $CY_b^1(\Z_{\ge 0},A)$.
\end{proof}

\begin{thm}
\label{thm:h1tree}
 If $T$ is a tree and $A$ is a finite abelian group then
 \[
  \check H_{ct}^1(T;A)=0.
 \]
\end{thm}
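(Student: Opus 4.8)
The plan is to generalize the half-line computation of Lemma~\ref{lem:h1z} to an arbitrary tree, invoking Theorem~\ref{thm:introCochains} to replace $\check H_{ct}^1(T;A)$ by $HY_b^1(T,A)$ and then exhibiting every $1$-cocycle as a coboundary. Since close maps induce the same map in cohomology, the functor $\check H_{ct}^{\bullet}$ factors through the coarse category, so coarse equivalences induce isomorphisms; I would therefore first replace $T$ by its vertex set equipped with the combinatorial path metric. This is a Delone subset, hence coarsely equivalent to $T$, and lets me work with integer distances and unique vertex geodesics.

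So let $\varphi\in CY_b^1(T,A)$ be a cocycle, represented by a blocky function $\varphi\colon T^2\to A$ whose coboundary $d_1\varphi$ has cocontrolled support. Fix a root $x_0\in T$, and for a vertex $x$ let $\gamma(x_0,x)=\sum_i\varphi(w_i,w_{i+1})$ be the sum of $\varphi$ over consecutive pairs along the unique geodesic $x_0=w_0,\dots,w_k=x$; set $\tilde\varphi(x):=\gamma(x_0,x)$. Exactly as in Lemma~\ref{lem:h1z}, $\tilde\varphi$ is blocky because $A$ is finite, so $\tilde\varphi$ takes only finitely many values and its level sets partition $T$ into finitely many blocks; thus $\tilde\varphi\in C^0(T,A)$ and $d_0\tilde\varphi\in C^1(T,A)$. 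The goal is to show $\varphi-d_0\tilde\varphi$ has cocontrolled support, which makes $[\varphi]$ equal to the coboundary $[d_0\tilde\varphi]$ in $CY_b^1$ and finishes the proof.

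The heart of the argument is the claim that for each $R\ge 0$ the set of pairs $(x,y)$ with $d(x,y)\le R$ and $\varphi(x,y)\ne d_0\tilde\varphi(x,y)$ is bounded. To prove it I would record two consequences of the cocycle condition, each valid off a fixed bounded set $B_R\subseteq T$ obtained by projecting the bounded set $\{d_1\varphi\ne 0\}\cap\Delta_R^2$ to $T$: antisymmetry $\varphi(a,b)=-\varphi(b,a)$ (using $d_1\varphi(a,b,a)$ together with $\varphi(a,a)=d_1\varphi(a,a,a)=0$ off a bounded set), and the telescoping identity $\varphi(a,c)=\varphi(a,b)+\varphi(b,c)$ for triples of pairwise distance $\le R$. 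Writing $\gamma(x,y)$ for the $\varphi$-sum along the geodesic from $x$ to $y$, telescoping along that geodesic (whose length is $\le R$, so only boundedly many relations enter) gives $\varphi(x,y)=\gamma(x,y)$ off a bounded set. Letting $m$ be the branch point of the geodesics from $x_0$ to $x$ and from $x_0$ to $y$, path-additivity gives the exact identity $d_0\tilde\varphi(x,y)=\gamma(m,y)-\gamma(m,x)$; and because $m$ lies on the geodesic from $x$ to $y$ we have $d(m,x)\le R$, so reversing the bounded segment $m\to x$ via antisymmetry (again boundedly many terms) yields $\gamma(m,y)-\gamma(m,x)=\gamma(x,y)$ off a bounded set. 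Chaining the equalities gives $d_0\tilde\varphi(x,y)=\varphi(x,y)$ off a bounded set, as required. Note that for $T=\Z_{\ge 0}$ the branch point is always $x$ itself and no antisymmetry is needed, recovering Lemma~\ref{lem:h1z}.

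The main obstacle is precisely this bookkeeping: $\varphi$ is only antisymmetric and additive up to cocontrolled support, so I must ensure that the boundedly many correction terms used along a geodesic of length $\le R$ assemble into a genuinely bounded exceptional set of pairs. The geometric input that makes this work is that in a tree a pair at distance $\le R$ has its whole geodesic, as well as the branch point $m$ relative to the root, within distance $R$ of its endpoints; hence whenever a correction term is nonzero one of the involved vertices lies in $B_R$, forcing $x\in\Delta_R[B_R]$ and then $y\in\Delta_{2R}[B_R]$, both bounded. Once the claim holds we conclude $\varphi-d_0\tilde\varphi\in CX_b^1(T,A)$, so $[\varphi]$ is a coboundary in $CY_b^{\bullet}$, giving $HY_b^1(T,A)=0$ and therefore $\check H_{ct}^1(T;A)=0$.
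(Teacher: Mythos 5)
Your proposal is correct and follows essentially the same route as the paper's proof: pass to a discrete model of the tree (the paper uses $S=\{t\in T\mid d(t_0,t)\in\N_0\}$, you use the vertex set), define $\tilde\varphi$ as the $\varphi$-sum along the unique geodesic from a root, and show $\varphi-d_0\tilde\varphi$ has cocontrolled support via the cocycle condition on simplices of bounded width, with finiteness of $A$ guaranteeing $\tilde\varphi$ is blocky. Your explicit antisymmetry-and-telescoping bookkeeping, with the branch point $m$ lying on the geodesic from $x$ to $y$, is exactly what the paper compresses into the remark that $d_0\tilde\varphi(s,t)-\varphi(s,t)$ is a ``would-be cocycle in $\Delta^1_{n+2}$'' and hence has bounded support.
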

\begin{proof}
Designate an element $t_0\in T$ as the root of the tree and define
\[
 S=\{t\in T\mid d(t_0,t)\in\N_0\}.
\]
Let $\varphi\in CY^1_b(S,A)$ be a cocycle. Then for every $s\in S$ there is a unique $1$-path $a_0=t_0,\ldots,a_n=s$ joining $t_0$ to $s$. Define
\begin{align*}
 \tilde\varphi:S&\to A\\
 s&\mapsto \sum_{i=0}^{n-1}\varphi(a_i,a_{i+1})
\end{align*}
Let $n\ge 0$ be a number. If $s,t\in S$ and $d(s,t)=n$ then there exist paths $a_0,\ldots,a_k,c_0,\ldots,c_m$ joining $t_0$ to $t$ and $a_0,\ldots,a_k, b_0,\ldots,b_l$ joining $t_0$ to $s$ with $n=m+l+2$ or $n=m+l$ depending on whether $c_m,c_{m-1},\ldots,c_0,a_k,b_0,b_1,\ldots,b_l$ or $c_m,c_{m-1},\ldots,c_0,b_0,\ldots,b_l$ is a $1$-path joining $t$ to $s$.
Then
\[
 d_0\tilde\varphi(s,t)-\varphi(s,t)=\varphi(a_k,c_0)+\sum_{i=0}^{m-1}\varphi(c_i,c_{i+1})-\varphi(a_k,b_0)-\sum_{i=0}^{l-1}\varphi(b_i,b_{i+1})-\varphi(s,t)
\]
is a would-be cocycle in $\Delta^1_{n+2}$ thus has bounded support. Thus we showed $\varphi$ is a coboundary.

Since the inclusion $S\s T$ is coarsely surjective we can conclude $\check H_{ct}^1(T,A)=\check H_{ct}^1(S,A)=0$.

\end{proof}

\begin{lem}
 $\check H_{ct}^1(\Z^2,\Z/3\Z)\ge \Z/3\Z$.
\end{lem}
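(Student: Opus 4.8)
The plan is to use the cochain resolution: by Theorem~\ref{thm:introCochains} we have $\check H_{ct}^1(\Z^2,\Z/3\Z)=HY_b^1(\Z^2,\Z/3\Z)$, so it suffices to produce a single blocky $1$-cocycle of order $3$ which is not a coboundary. The cocycle I would use records, coarsely, the counterclockwise winding across the positive $x$-axis. Concretely, partition $\Z^2$ into the three blocks
\[
 Q_1=\{(x,y):x\ge 0,\ y\ge 0\},\quad Q_4=\{(x,y):x\ge 0,\ y<0\},\quad L=\{(x,y):x<0\},
\]
and define $\varphi\in C^1_{Q_1,Q_4,L}(\Z^2,\Z/3\Z)$ by setting $\varphi(a,b)=1$ when $a\in Q_4,\,b\in Q_1$, setting $\varphi(a,b)=-1$ when $a\in Q_1,\,b\in Q_4$, and $\varphi(a,b)=0$ on every other pair of blocks. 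By construction $\varphi$ is blocky, so it represents a class in $CY_b^1(\Z^2,\Z/3\Z)$.

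First I would check that $\varphi$ is a cocycle, i.e. that $\supp(d_1\varphi)\cap\Delta_R^2$ is bounded for every $R\ge 0$. The geometric point is that the three blocks meet pairwise only along the three coordinate rays, and all three meet only at the origin: a point of $Q_1$ within distance $R$ of a point of $Q_4$ must lie in the strip $\{x\ge 0,\ |y|\le R\}$, and a point of $L$ within $R$ of such points is then forced near the origin. Hence once a triple $(x_0,x_1,x_2)$ is pairwise $R$-close and lies outside a ball of radius $\sim 3R$ about $0$, its three entries occupy at most two adjacent blocks. On any two blocks $\varphi$ restricts to $d_0$ of an indicator (on $Q_1,Q_4$ it is $d_0\chi_{Q_1}$, on the other two pairs it is identically $0$), so $d_1\varphi=0$ there. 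Thus $\supp(d_1\varphi)\cap\Delta_R^2$ stays inside a bounded neighborhood of the origin, and $\varphi$ is a cocycle in $CY_b^1$.

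Next I would show $[\varphi]\ne 0$ by a loop-integration argument. Suppose $\varphi=d_0\psi+\eta$ in $C^1$ with $\psi$ a blocky $0$-cochain and $\eta\in CX_b^1$ cocontrolled, so that $\supp(\eta)\cap\Delta_2^1$ is bounded. Choosing a counterclockwise lattice loop $p_0,p_1,\ldots,p_m=p_0$ far enough from the origin to avoid that bounded set, encircling $0$ once and with $d(p_i,p_{i+1})\le 2$, the sum $\sum_i\varphi(p_i,p_{i+1})$ counts the signed crossings of the positive $x$-axis and equals $1$, since the loop crosses from $Q_4$ to $Q_1$ exactly once and all other transitions contribute $0$. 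On the other hand $\sum_i d_0\psi(p_i,p_{i+1})=0$ telescopes around the closed loop, and $\sum_i\eta(p_i,p_{i+1})=0$ because each pair lies outside the support region of $\eta$. This forces $1=0$ in $\Z/3\Z$, a contradiction, so $[\varphi]\ne 0$. Finally, since $\varphi$ is valued in $\Z/3\Z$ we have $3\varphi=0$, whence $[\varphi]$ has order exactly $3$ and generates a copy of $\Z/3\Z$ inside $\check H_{ct}^1(\Z^2,\Z/3\Z)$.

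The two steps needing genuine care are the cocycle verification and the loop integration, which are really the same phenomenon viewed twice. The hard part will be the cocycle step: one must argue precisely that three pairwise-close points far from the origin cannot spread across all three blocks at once, as this is exactly what confines $d_1\varphi$ to a bounded set. In the nontriviality step the delicate point is purely a matter of quantifiers, namely choosing the loop large enough to miss the bounded support of the cocontrolled error $\eta$ while still enclosing the origin a single time; both are elementary but carry the content of the argument.
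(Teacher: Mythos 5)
Your argument is correct, and although it follows the paper's overall template --- reduce to $HY_b^1(\Z^2,\Z/3\Z)$ via the cochain resolution and exhibit an explicit blocky $1$-cocycle that is not a coboundary --- both halves are executed by a genuinely different route. The paper partitions $\Z^2$ into the four quadrants $A_0,\ldots,A_3$ and specifies $\varphi$ by a $4\times4$ table; cocontrolledness of $d_1\varphi$ is checked on the finitely many block triples, the bad ones being absorbed because opposite quadrants are coarsely disjoint. Its nontriviality step is algebraic rigidity: in any decomposition $\varphi=d_0\psi+\tilde\varphi$ the cochain $\psi$ is forced to be constant on each quadrant (this uses that each quadrant is one-ended), the four constants are then pinned down up to a global shift $k$ by the non-cocontrolled adjacent pairs, and the resulting table for $d_0\psi$ differs from $\varphi$ on a pair such as $A_2\times A_3$ which is not cocontrolled. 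You instead use three blocks and the winding cocycle across the positive $x$-axis, and detect nontriviality by discrete integration over a large lattice loop: $\sum_i d_0\psi(p_i,p_{i+1})$ telescopes to zero for an \emph{arbitrary} $0$-cochain $\psi$, so you never need the one-endedness rigidity lemma at all; the cocontrolled error $\eta$ dies on the loop once it is chosen outside the bounded set $\supp\eta\cap\Delta_2^1$; and $\varphi$ itself sums to $1$ because a large counterclockwise square crosses from $Q_4$ to $Q_1$ exactly once. What each approach buys: the paper's computation is fully explicit and identifies all candidate $\psi$, which is informative in its own right, while your loop-holonomy argument is shorter, avoids the blockiness analysis of $\psi$ entirely, and makes the obstruction conceptual --- the class pairs nontrivially with a loop at infinity, which also makes transparent why torsion coefficients are essential. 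Your cocycle verification (three pairwise $R$-close points away from the origin occupy at most two blocks, and on any two blocks $\varphi$ is $d_0$ of an indicator or zero) is complete, and the final observation that any nonzero class killed by $3$ spans a copy of $\Z/3\Z$ correctly yields the stated inequality.
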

\begin{proof}
 We divide $\Z^2$ into 4 quadrants: $A_0=\Z_{\ge0}\times\Z_{\ge0},A_1=\Z_{\ge0}\times \Z_{<0},A_2=\Z_{<0}\times\Z_{<0},A_3=\Z_{<0}\times\Z_{\ge0}$. Fix points $0\in A_0,\ldots,3\in A_3$. We define a cocycle $\varphi\in CY^1_b(\Z^2,\Z/3\Z)$ by
 \[
 \varphi=\begin{array}{c|cccc}
   &0&1&2&3\\
   \hline
  0&0&1&2&1\\
  1&2&0&1&2\\
  2&2&2&0&1\\
  3&2&2&2&0
 \end{array}.
\]
Among other equations we obtain $d_1\varphi(0,2,0)=2-0+2=1$ and $d_1\varphi(1,3,1)=2-0+2=1$. Thus $A_0\times A_2\times A_0$ and $A_1\times A_3\times A_1$ lie in the support of $d_1\varphi$. Since $A_0\times A_2$ and $A_1\times A_3$ are cocontrolled this is okay. Checking the other finitely many equations we ob $d_1\varphi$ does indeed have cocontrolled support. Thus $\varphi$ is indeed a cocycle. We show $\varphi$ is not a coboundary. Suppose for contradiction that $\varphi=d_0\psi+\tilde \varphi$ with $\psi\in CY_b^0(\Z^2,\Z/3\Z)$ and $\tilde \varphi\in CX_b^1(\Z^2,\Z/2\Z)$. First we show $\psi\in C_{A_0,\ldots,A_3}(\Z^2,\Z/3\Z)$. Suppose for contradiction $A_0=A_{01}\sqcup A_{02}$ and $\psi|_{A_{01}}\equiv k$ while $\psi|_{A_{02}}$ does not take the value $k$. Then $A_{01}\times A_{02}\s \supp d_0\psi=\supp(\varphi-\tilde \varphi)$. Since $\varphi|_{A_0\times A_0}\equiv 0$ this implies $A_{01}\times A_{02}$ is contained in the support of $\tilde \varphi$ and therefore is cocontrolled. Since $A_0$ is oneended this is a contradiction. Now we construct $\psi$. Suppose $\psi(0)=k\in\Z/3\Z$. Since $A_0\times A_1$ is not cocontrolled we obtain $\psi(1)=\psi(1)-\psi(0)+\psi(0)=\varphi(0,1)+\psi(0)=2+k$ and similarly $\psi(3)=2+k$. Since $A_1\times A_2$ is not cocontrolled we obtain $\psi(2)=\psi(2)-\psi(1)+\psi(1)=\varphi(1,2)+\psi(1)=1+k$. Then
\[
 d_0\psi=\begin{array}{c|cccc}
           &0&1&2&3\\
           \hline
          0&0&1&2&1\\
          1&2&0&1&0\\
          2&1&2&0&2\\
          3&2&0&1&0
         \end{array}
\]
If we compare the tables we see that $d_0\psi-\varphi$ does not have cocontrolled support. Thus $\varphi$ is a proper cocycle.
\end{proof}

\section{Infinite coefficients}
The computations in Chapter~\ref{sec:computations} only work for finite coefficients. This chapter shows that the coefficient $\Z$ does not produce interesting cohomology groups.

If $X$ is a proper metric space and $A$ a metric space denote by $\mathcal C_h(X,A)$ the abelian group of Higson functions $\varphi:X\to A$ modulo functions with bounded support. Namely a bounded function $\varphi:X\to A$ is called Higson if for every entourage $E\s X\times X$ and every $\varepsilon>0$ there exists a compact subset $K\s X$ such that $(x,y)\in E\setminus(K\times K)$ implies $d(\varphi(x),\varphi(y))<\varepsilon$.

\begin{lem}
 If $X$ is a proper $R$-discrete for some $R>0$ metric space and $A$ a metric space then $U\mapsto \mathcal C_h(U,A)$ for $U\s X$ with the obvious restriction maps is a sheaf on $X_{ct}$.
\end{lem}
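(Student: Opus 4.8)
The plan is to verify the two sheaf axioms for the equalizer diagram attached to a coarse cover $U_1,\dots,U_n$ of a subset $U\subseteq X$. The crucial preliminary observation, which I would record first, is that because $X$ is proper and $R$-discrete every bounded subset is finite, and the same holds for each subspace $U_i$, since a subset of an $R$-discrete space is again $R$-discrete and proper. Hence throughout I may replace ``bounded support'' by ``finite support'' and take the compact set $K$ in the Higson condition to be finite. Moreover the restriction of a Higson function to a subset is again Higson: if $K$ witnesses the Higson condition on $U$ for an entourage $E'\subseteq V\times V$, then $K\cap V$ (a finite subset of $V$) witnesses it on $V$, because for $x,y\in V$ the clause $(x,y)\in E'\setminus(K\times K)$ is equivalent to $(x,y)\in E'\setminus((K\cap V)\times(K\cap V))$. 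Finite-support functions restrict to finite-support functions, so the restriction maps are well defined.

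For the separation axiom, suppose $\varphi\in\mathcal C_h(U,A)$ restricts to $0$ on each $U_i$, i.e. each $\varphi|_{U_i}$ has finite support. Applying the coarse cover condition to the entourage $\Delta_0$ gives $\Delta_0[U_i^c]=U_i^c$, so $U\setminus(U_1\cup\cdots\cup U_n)$ is bounded, hence finite. Since $\supp\varphi$ is the union of its intersections with the finitely many $U_i$ together with its intersection with the finite set $U\setminus\bigcup_iU_i$, it is a finite union of finite sets, so $\varphi$ has finite support and represents $0$ in $\mathcal C_h(U,A)$.

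For the gluing axiom I would start from Higson functions $\varphi_i$ on $U_i$ whose pairwise restrictions to $U_i\cap U_j$ agree up to finite support, and define $\varphi$ on $U$ by $\varphi(x)=\varphi_{i(x)}(x)$ with $i(x)=\min\{i:x\in U_i\}$, extended arbitrarily on the finite set $U\setminus\bigcup_iU_i$. The discrepancy sets $D_{jk}=\{z\in U_j\cap U_k:\varphi_j(z)\neq\varphi_k(z)\}$ are finite by hypothesis, so $\varphi|_{U_i}$ differs from $\varphi_i$ only on the finite set $\bigcup_jD_{ij}$; thus $\varphi|_{U_i}=\varphi_i$ in $\mathcal C_h(U_i,A)$, and uniqueness of the glued class is already secured by separation. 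It remains to prove that $\varphi$ is Higson, which I expect to be the main obstacle.

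Here the coarse cover property does the decisive work. Fix a symmetric entourage $E$ containing the diagonal and $\varepsilon>0$. The set $B:=\bigcap_iE[U_i^c]$ is bounded, hence finite, and for $x\notin B$ there is an index $i$ with $x\notin E[U_i^c]$, which forces every $E$-neighbour of $x$, and $x$ itself, to lie in a single $U_i$. Let $D=\bigcup_{j,k}D_{jk}$ and set $K=B\cup D\cup E[D]\cup K_1\cup\cdots\cup K_n$, where $K_i$ witnesses the Higson condition for $\varphi_i$ on the entourage $E\cap(U_i\times U_i)$; this $K$ is finite because $D$ and the $K_i$ are finite and $E[D]$ is bounded ($E$ has bounded displacement and $D$ is finite). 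For $(x,y)\in E$ with $x\notin K$ we then have $x,y\in U_i$ for a common $i$, and $x\notin E[D]$ forces $y\notin D$, so that $\varphi(x)=\varphi_i(x)$ and $\varphi(y)=\varphi_i(y)$; since $x\notin K_i$, the Higson condition for $\varphi_i$ gives $d(\varphi(x),\varphi(y))<\varepsilon$, and by symmetry the same holds when $y\notin K$. Thus $\varphi$ is Higson and glues the family. The hard part is precisely this last verification, namely reducing a cross-block comparison $d(\varphi(x),\varphi(y))$ to a single-block estimate; the coarse cover condition (ensuring that far out every $E$-ball is swallowed by one $U_i$) together with the finiteness coming from $R$-discreteness and properness is exactly what makes this reduction go through.
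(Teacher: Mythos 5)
Your proof is correct and follows essentially the same route as the paper's: separation via the boundedness of $(U_1\cup\cdots\cup U_n)^c$ coming from the coarse cover condition at $\Delta_0$, gluing by the first-match prescription, and verification of the Higson condition by using the coarse cover property to force every far-out $E$-pair into a single block $U_i$ (the paper phrases this as the decomposition $E=E|_{U_1^2}\cup\cdots\cup E|_{U_n^2}\cup E|_{B^2}$ with $B$ bounded, which is the same fact you extract pointwise from $B=\bigcap_i E[U_i^c]$). If anything you are slightly more careful than the paper, which treats the pairwise agreements $\varphi_i|_{U_j}=\varphi_j|_{U_i}$ as exact even though they hold only modulo bounded support; your enlargement of the witness set by $D\cup E[D]$ patches exactly this point.
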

\begin{proof}
 Let $U_1,\ldots,U_n$ be a coarse cover of a subspace $U\s X$.
 
 We prove the base identity axiom. Let $\varphi\in \mathcal C_h(U,A)$ be an element. If $\varphi|_{U_1},\ldots,\varphi|_{U_n}$ have bounded support $B_1,\ldots,B_n$ then $\varphi$ has support contained in the set
 \[
  B_1\cup\cdots\cup B_n\cup (U_1\cup\cdots\cup U_n)^c
 \]
which is a finite union of bounded sets and therefore itself bounded. Thus $\varphi$ has bounded support.

Now we prove the gluing axiom. Suppose there are functions $\varphi_1\in\mathcal C_h(U_1,A),\ldots,\varphi_n\in \mathcal C_h(U_n,A)$ with $\varphi_i|_{U_j}=\varphi_j|_{U_i}$ for every $i,j\in \{1,\ldots,n\}$. Then define a function
\begin{align*}
\varphi:U&\to A\\
x&\mapsto\begin{cases}
          \varphi_1(x) & x\in U_1\\
          \varphi_2(x) & x\in U_2\setminus U_1\\
          \vdots &\vdots\\
          \varphi_n(x) & x\in U_n\cap U_1^c\cap\cdots \cap U_{n-1}^c\\
          0 & x\in U_1^c\cap\cdots\cap U_n^c
         \end{cases}
\end{align*}
which restricts to $\varphi_i$ on each $U_i$. Here $0\in A$ is any choice of point. Now $\varphi$ is continuous since $U$  is $R$-discrete. The image of $\varphi $ 
\[
 \im\varphi=\im \varphi_1\cup\cdots\cup\im \varphi_n\cup\{0\}
\]
is bounded. Now we check the Higson property. If $E\s U^2$ is an entourage then
\[
 E=E|_{U_1^2}\cup\cdots\cup E|_{U_n^2}\cup E|_{B^2}
\]
where $B\subseteq U$ is bounded. Let $\varepsilon>0$ be a number. Then for each $i$ there exists a bounded subset $K_i\subseteq U_i$ with $|\varphi_i(x)-\varphi_i(y)|<\varepsilon$ for each $(x,y)\in E|_{U_i^2}\setminus K_i^2$. Now define $K=K_1\cup\cdots\cup K_n\cup B$. Then $|\varphi(x)-\varphi(y)|<\varepsilon$ for each $(x,y)\in E\setminus K^2$. Thus $\varphi$ satisfies the Higson property. 
\end{proof}

Denote by $\mathcal C_f(X,A)$ the abelian group of Freudenthal functions $\varphi:X\to A$ modulo functions with bounded support. Namely a bounded function $\varphi:X\to A$ is called Freudenthal if for every entourage $E\s X\times X$ there exists a compact subset $K\s X$ such that $(x,y)\in E\setminus(K\times K)$ implies $\varphi(x)=\varphi(y)$.

\begin{lem}
 If $X$ is a proper metric space and $A$ is a countable abelian group then the constant sheaf $A$ on $X_{ct}$ is isomorphic to $\mathcal C_h(\cdot,A)$ which is isomorphic to $\mathcal C_f(\cdot, A)$.
\end{lem}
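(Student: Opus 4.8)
The plan is to first fix a convenient metric on the coefficient group, then identify the two function presheaves, verify that they form a sheaf, and finally recognise this sheaf as the sheafification of the constant presheaf. Since $A$ is countable it carries a proper, left invariant metric, and after rescaling I may assume $d(a,b)\ge 1$ for all $a\ne b$; with respect to such a metric the bounded subsets of $A$ are exactly the finite ones, so a \emph{bounded} function $X\to A$ is precisely one with finite image. First I would show $\mathcal C_h(\cdot,A)=\mathcal C_f(\cdot,A)$ as presheaves. A Freudenthal function is trivially Higson, since $\varphi(x)=\varphi(y)$ gives $d(\varphi(x),\varphi(y))=0<\varepsilon$. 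Conversely, feeding $\varepsilon=\tfrac12$ into the Higson condition for an entourage $E$ produces a bounded $K$ with $d(\varphi(x),\varphi(y))<\tfrac12$, hence $\varphi(x)=\varphi(y)$, for all $(x,y)\in E\setminus(K\times K)$; this is the Freudenthal condition. Thus the two presheaves coincide, and it remains to identify the common presheaf $\mathcal C\coloneqq\mathcal C_h(\cdot,A)=\mathcal C_f(\cdot,A)$ with the constant sheaf $A_X$.

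Next I would check that $\mathcal C$ is a sheaf on $X_{ct}$. The preceding lemma supplies this whenever the space is $R$-discrete, but a general proper $X$ need not be. I would therefore choose a Delone subset $S\subseteq X$; since $S\hookrightarrow X$ is a coarse equivalence it induces an equivalence between the sheaf categories on $X_{ct}$ and on $S_{ct}$ under which coarse covers correspond. Restriction of Higson functions to $S$ is compatible with this equivalence and is an isomorphism modulo bounded support, because $S$ is coarsely dense and every Higson function on $S$ extends to one on $X$. Hence the sheaf property of $\mathcal C_h(\cdot,A)$ on $S_{ct}$, given by the previous lemma, transports to the sheaf property of $\mathcal C$ on $X_{ct}$.

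Finally I would compare $\mathcal C$ with the constant sheaf. Sending a constant $a\in A$ to the constant function $x\mapsto a$ defines a morphism of presheaves $\eta\colon A\to\mathcal C$ out of the constant presheaf, which by the universal property of sheafification factors through $\bar\eta\colon A_X\to\mathcal C$; it then suffices to prove that $\eta$ is a local isomorphism. For local surjectivity, take $\varphi\in\mathcal C(U)$. As $\varphi$ is bounded its image $\{a_1,\ldots,a_k\}$ is finite, so the level sets $U_j\coloneqq\varphi^{-1}(a_j)$ are finitely many, pairwise disjoint and cover $U$. The Freudenthal condition applied to $\Delta_R$ shows that for each $R$ the set $\{(x,y)\in\Delta_R\cap U^2:\varphi(x)\ne\varphi(y)\}$ is bounded, i.e. $(U\times U)\cap(\bigcup_j U_j\times U_j)^c$ is cocontrolled; so $U_1,\ldots,U_k$ is a coarse disjoint union and in particular a coarse cover of $U$, on each piece of which $\varphi$ is literally the constant $a_j$, hence in the image of $\eta$. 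For local injectivity, two constants $a,b$ inducing the same class in $\mathcal C(U)$ differ by a constant of bounded support, which forces $a=b$ when $U$ is unbounded, while bounded $U$ are covered by the empty family. Since $\mathcal C$ is a sheaf, these two facts give that $\bar\eta\colon A_X\to\mathcal C$ is an isomorphism, matching the description of $A_X(U)$ through finite coarse disjoint unions in Lemma~\ref{lem:h0}.

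The main obstacle is the local surjectivity step together with the role of countability: it is exactly the passage from \emph{boundedness} of a Freudenthal function to \emph{finiteness} of its image — available here only because the proper metric on the countable group $A$ makes bounded subsets finite — that guarantees the level sets form a \emph{finite} coarse cover. This is the structural reason why countable coefficients already suffice, whereas in Lemma~\ref{lem:h1z} finiteness of $A$ had to be invoked. A secondary technical point is the Delone reduction yielding the sheaf property for non-discrete proper spaces, which must be carried out compatibly with the correspondence of coarse covers.
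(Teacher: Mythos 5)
Your proof is correct, and it takes a genuinely different route to the same isomorphism. The computational core coincides with the paper's: after equipping the countable group $A$ with a proper, $1$-discrete metric — your explicit construction of such a metric is actually more careful than the paper, which merely invokes $1$-discreteness — the $\varepsilon=1/2$ trick gives $\mathcal C_h(\cdot,A)=\mathcal C_f(\cdot,A)$, and properness turns boundedness of a Freudenthal function into finiteness of its image, whose level sets form a finite coarse disjoint union. The difference is in how the comparison with $A_X$ is organized. The paper takes as input the concrete description of $A_X(U)$ by tuples $a_1\oplus\cdots\oplus a_n$ attached to coarse disjoint unions (Lemma~\ref{lem:h0}, \cite{Hartmann2017a}) and writes down explicit mutually inverse homomorphisms $\Phi,\Psi$; the sheaf property of $\mathcal C_h(\cdot,A)$ is never used there and falls out as a corollary. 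You avoid presupposing that description, instead factoring the constant-presheaf morphism $\eta$ through sheafification and verifying local injectivity and local surjectivity; your handling of bounded $U$ via the empty coarse cover is the right convention, consistent with $A_\ast(U)=0$ on finite sets later in the paper. The price of your route is that $\mathcal C$ must be known to be a sheaf in advance — a local isomorphism alone only yields $\mathcal C^\#\cong A_X$ — and you correctly noticed that the preceding lemma covers only $R$-discrete spaces. Your Delone transfer is workable (extend a Freudenthal function from the coarsely dense subset by nearest-point projection, and use coarse invariance of coarse covers), but it is avoidable: the paper's definition of a Higson function demands no continuity, and $R$-discreteness enters the proof of the preceding lemma only to secure continuity of the glued function, so for a discrete target $A$ that proof goes through verbatim on any proper $X$. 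In sum, the paper's argument is shorter given the external description of $A_X(U)$, while yours re-derives that description from the universal property of sheafification and correctly isolates where countability of $A$ is the decisive hypothesis.
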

\begin{proof}
 For each $U\s X$ we show the inclusion $\mathcal C_f(U,A)\to \mathcal C_h(U,A)$ is bijective. Let $\varphi\in \mathcal C_h(U,A)$ be an element. We show $\varphi$ is Freudenthal. Let $E\s U^2$ be an entourage and choose $\varepsilon=1/2$. Then there exists a bounded set $K_\varepsilon\s U$ with $d(\varphi(x),\varphi(y))<1/2$ for each $(x,y)\in E\setminus K^2$. Since $A$ is $1$-discrete this implies $\varphi(x)=\varphi(y)$ for each $(x,y)\in E\setminus K^2$. Thus $\varphi$ is Freudenthal.
 
 Now we show the constant sheaf $A$ is isomorphic to $\mathcal C_f(\cdot,A)$. We define a homomorphism $\Phi:A(U)\to \mathcal C_f(U,A)$. If $a\in A(U)$ then it can be written $a=a_1\oplus\cdots\oplus a_n$ corresponding to a coarse disjoint union $U_1\sqcup \cdots \sqcup U_n$. Then we define
 \begin{align*}
  \varphi_i:U_i&\to A\\
  x&\mapsto a_i
 \end{align*}
The $\varphi_i$ are Higson and glue to a Higson function $\Phi(a):U\to A$. If $b\in A(U)$ is another element represented by $b_1\oplus\cdots\oplus b_m$ corresponding to $V_1\sqcup\cdots\sqcup V_m$ then $a+b$ is represented by $\bigoplus_{ij} (a_i+b_j)$ corresponding to $(U_i\cap V_j)_{ij}$. Without loss of generality the $U_i$ are pairwise disjoint and cover $U$ and likewise the $V_j$ are pairwise disjoint and cover $U$. Then
\[
(\Phi(a)+\Phi(b))(x)=\begin{cases}
                      a_1 & x\in U_1\\
                      \vdots & \vdots\\
                      a_n & x\in U_n
                     \end{cases}+\begin{cases}
                      b_1 & x\in V_1\\
                      \vdots & \vdots\\
                      b_m & x\in V_m
                     \end{cases}=\begin{cases}
                      a_1+b_1 & x\in U_1\cap V_1\\
                      \vdots & \vdots\\
                      a_n+b_m & x\in U_n\cap V_m
                     \end{cases}=\Phi(a+b) 
\]
Thus $\Phi$ is a homomorphism. We show $\Phi$ is well defined. Suppose $c\in A(U)$ is represented by $0$ on $B^c$ where $B$ is finite. Then $\Phi(c)$ has bounded support. Thus $\Phi$ is welldefined.

Now we construct the inverse $\Psi:\mathcal C_f(U,A)\to A(U)$. If $\varphi\in \mathcal C_f(U,A)$ is Freudenthal then in particular its image $\im\varphi=\{a_1,\ldots,a_n\}$ is finite. Since $\varphi$ is Higson the $\varphi^{-1}(a_1),\ldots,\varphi^{-1}(a_n)$ are a pairwise coarsely disjoint union of $U$. Then define $\Psi(\varphi)$ to be the element represented by $a_1\oplus\cdots\oplus a_n$ corresponding to $\iip \varphi {a_1},\ldots,\iip \varphi{a_n}$. It is mapped by $\Phi$ to $\varphi$. Thus $\Phi$ is surjective. Now we show $\Phi$ is injective: If $\varphi$ has bounded support $B$ then $\Psi(\varphi)$ is represented by $0$ on $\iip \varphi {B^c}$.
\end{proof}

\begin{prop}
 If $X$ is a proper metric space the following sequence of sheaves on $X_{ct}$ is exact:
 \[
  0\to \mathcal C_f(\cdot,\Z)\to \mathcal C_h(\cdot,\R)\to \mathcal C_h(\cdot,S^1)\to 0
 \]
\end{prop}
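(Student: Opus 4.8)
The plan is to recognize that the entire sequence is induced by the short exact sequence of coefficient groups $0\to\Z\to\R\xrightarrow{\pi}S^1\to 0$, where $\pi:\R\to\R/\Z=S^1$ is the quotient and $\Z\hookrightarrow\R$ the inclusion, and then to check exactness of sheaves (in the sheafified-image sense of the survey chapter) separately at each of the three spots. First I would record that both maps are well-defined morphisms of sheaves. Write $\iota:\mathcal C_f(\cdot,\Z)\to\mathcal C_h(\cdot,\R)$ and $\rho:\mathcal C_h(\cdot,\R)\to\mathcal C_h(\cdot,S^1)$. Post-composing a Higson function $U\to\R$ with the $1$-Lipschitz map $\pi$ yields a Higson function $U\to S^1$; and a Freudenthal function $U\to\Z$ is in particular a Higson function $U\to\R$, since $\Z$ is $1$-discrete in $\R$. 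In both cases bounded support is preserved, so the maps descend to the quotients by bounded-support functions.

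For exactness at $\mathcal C_f(\cdot,\Z)$ I only need injectivity, which is immediate: if a Freudenthal function $\varphi:U\to\Z$ has bounded support when viewed in $\R$, then it has bounded support as a $\Z$-valued function, so its class is zero and the kernel sheaf of $\iota$ vanishes. Exactness at $\mathcal C_h(\cdot,\R)$ I expect to hold already on sections, so that no sheafification is needed. The composite $\rho\circ\iota$ is zero because $\pi$ kills $\Z$, giving $\im\iota\subseteq\ker\rho$. Conversely, if $[\varphi]\in\mathcal C_h(U,\R)$ satisfies $\rho[\varphi]=0$, then $\pi\circ\varphi$ has bounded support, so $\varphi$ takes values in $\Z$ outside some bounded $B$; there $\varphi$ is Higson with values in the $1$-discrete set $\Z$, so taking $\varepsilon=1/2$ in the Higson condition forces $\varphi$ to be \emph{Freudenthal} off $B$, and altering $\varphi$ on the (compact, by properness) set $B$ does not change its class. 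Hence $[\varphi]\in\im\iota$, and since the image presheaf already equals the sheaf $\ker\rho$ on every $U$, its sheafification is $\ker\rho$.

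The main obstacle is surjectivity at $\mathcal C_h(\cdot,S^1)$, where sheaf surjectivity requires only \emph{local} liftability with respect to coarse covers, not liftability on sections. Here I would fix a finite cover of $S^1$ by open arcs $V_1,\dots,V_n$, each of diameter less than $1/2$ so that $\pi$ admits an isometric section $\sigma_k:V_k\to\R$ onto an interval, and let $\delta>0$ be a Lebesgue number for this cover. Given a representative $\psi:U\to S^1$, set $U_k:=\psi^{-1}(V_k)$. The crucial claim is that $U_1,\dots,U_n$ is a coarse cover of $U$: for a symmetric entourage $E\subseteq U\times U$, the Higson property with $\varepsilon=\delta/3$ yields a bounded $K$ with $d_{S^1}(\psi(x),\psi(y))<\delta/3$ whenever $(x,y)\in E$ and $x,y\notin K$. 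Then for $x\notin E[K]$ every $y\in E[x]$ lies off $K$, so $\psi(E[x])$ has diameter $<\delta$ and hence sits inside some $V_k$; thus $E[x]\subseteq U_k$ and $x\notin E[U_k^c]$. This gives $E[U_1^c]\cap\cdots\cap E[U_n^c]\subseteq E[K]$, which is bounded, i.e.\ the coarse-cover condition. On each piece, $\sigma_k\circ(\psi|_{U_k}):U_k\to\R$ is a Higson lift of $\psi|_{U_k}$, since $\sigma_k$ is an isometry (so the Higson estimate transfers verbatim) with bounded image; therefore $\rho[\sigma_k\circ\psi|_{U_k}]=[\psi]|_{U_k}$. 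Thus $[\psi]$ lifts locally, which is exactly surjectivity of $\rho$ as a sheaf morphism. This coarse-cover construction, linking the topology of the chosen arc cover of $S^1$ to the Higson condition through the Lebesgue number, is the only genuinely nontrivial step; well-definedness and the first two exactness statements are essentially formal.
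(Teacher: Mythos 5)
Your proposal is correct and follows essentially the same route as the paper: exactness at the first two spots is handled formally (injectivity, and kernel of $\rho$ consists of $\Z$-valued hence Freudenthal functions up to bounded modification), while surjectivity at $\mathcal C_h(\cdot,S^1)$ is proved by pulling back an arc cover of $S^1$ admitting sections of $\pi$ to a coarse cover of $U$ via the Higson condition, then lifting locally. The only difference is cosmetic: the paper uses two explicit closed arcs $[0.25,1]+\Z$ and $[0.75,1.5]+\Z$ and deduces the coarse cover from the disjointness of the closures of their complements, whereas you use $n$ arcs with a Lebesgue-number argument --- the underlying mechanism is identical.
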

\begin{proof}
 Let $U\s X$ be a subset. 
 
 The map $\mathcal C_f(U,\Z)\to \mathcal C_h(U,\R)$ is induced by the inclusion $\Z\to \R$. This map is well-defined since every Freudenthal function is Higson. It is injective. Thus exactness at $\mathcal C_f(\cdot,\Z)$ is guaranteed.
 
 The map $\mathcal C_h(U,\R)\to \mathcal C_h(U,S^1)$ is induced by the quotient map $\R\to\R/\Z$. A Higson function is in the kernel of this map exactly when its image is contained in $\Z$. Thus exactness at $\mathcal C_h(\cdot,\R)$ is ensured.
 
 Now we show exactness at $\mathcal C_h(\cdot,S^1)$. Let $\varphi\in \mathcal C_h(\cdot,S^1)$ be a function. Its image $\R/\Z$ is covered by $V_1\coloneqq[0.25,1]+\Z,V_2\coloneqq[0.75,1.5]+\Z$. Since 
 \[
  \overline{V_1^c}\cap\overline{V_2^c}=([0,0.25]+\Z)\cap([0.5,0.75]+\Z)=\emptyset
 \]
we obtain that $\varphi^{-1}(V_1^c)\notclose \varphi^{-1}(V_2^c)$ are coarsely disjoint. Thus $U_1\coloneqq\varphi^{-1}(V_1),U_2\coloneqq\varphi^{-1}(V_2)$ are a coarse cover of $U$. Now we describe a lift $\varphi_1:U_1\to \R$ of $\varphi|_{U_1}$: If $x\in U_1$ then $\varphi_1(x)$ is defined to be the representative of $\varphi(x)$ in the interval $[0.25,1]$. A lift $\varphi_2:U_2\to \R$ of $\varphi|_{U_2}$ is obtained by defining $\varphi_2(x)$ to be the representative of $\varphi(x)$ in the interval $[0.75,1.5]$. Thus the right morphism in the diagram is surjective. 
\end{proof}

\begin{lem}
 If $X$ is a proper metric space the sheaf $\mathcal C_h(\cdot,\R)$ is flabby. 
\end{lem}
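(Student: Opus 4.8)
The plan is to prove flabbiness directly: for an inclusion $V\subseteq U$ of subsets of $X$ and a class in $\mathcal C_h(V,\R)$ represented by a bounded Higson function $\varphi\colon V\to\R$, I will produce a bounded Higson function $\tilde\varphi\colon U\to\R$ with $\tilde\varphi|_V=\varphi$ outside a bounded set, which is exactly surjectivity of the restriction $\mathcal C_h(U,\R)\to\mathcal C_h(V,\R)$. The naive candidate, extension by zero, fails the Higson property: if $x\in V$ and $y\in U\setminus V$ lie in a fixed entourage, the difference $\tilde\varphi(x)-\tilde\varphi(y)=\varphi(x)$ need not vanish at infinity, so the \emph{value of $\varphi$ at infinity} must genuinely be carried across into $U\setminus V$. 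This is where the Higson corona of Definition~\ref{defn:Higson corona} enters.

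First I would transport $\varphi$ to the corona. Writing $C_b^h(W)$ for the bounded Higson functions on a subset $W$ and $C_0(W)$ for those vanishing at infinity, the standard identification $C_b^h(W)/C_0(W)\cong C(\nu(W))$ lets me regard $\varphi$ as a continuous function $\varphi^\nu$ on $\nu(V)$. Since the inclusion $V\hookrightarrow U$ is coarse, it induces a closed embedding $\nu(V)\hookrightarrow\nu(U)$ with image the basic closed set $\closedop V$, so $\varphi^\nu$ becomes a continuous function on the closed subspace $\closedop V$ of the compact Hausdorff space $\nu(U)$. By the Tietze extension theorem it extends to some $\Psi\in C(\nu(U))$, and realizing $\Psi$ as the corona value of a bounded Higson function (using surjectivity of $C_b^h(U)\to C(\nu(U))$) yields $\psi\colon U\to\R$ whose restriction $\psi|_V$ has the same corona value as $\varphi$. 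Equivalently, $\eta\coloneqq\varphi-\psi|_V$ lies in $C_0(V)$.

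It remains to correct the $C_0$-discrepancy, and here extension by zero does work. Extending $\eta$ by $0$ on $U\setminus V$ gives a bounded function $\tilde\eta\colon U\to\R$ which is again Higson: for $(x,y)$ in a fixed entourage and outside a large compact set, each of $\tilde\eta(x),\tilde\eta(y)$ is either a value of $\eta$ (hence arbitrarily small, as $\eta$ vanishes at infinity) or $0$, so the oscillation is small. Setting $\tilde\varphi\coloneqq\psi+\tilde\eta$ gives a bounded Higson function on $U$ with $\tilde\varphi|_V=\psi|_V+\eta=\varphi$ exactly, proving the claim.

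The main obstacle is precisely the mismatch between the two quotients in play: the corona only records Higson functions modulo vanishing at infinity, whereas $\mathcal C_h$ quotients by the strictly smaller ideal of boundedly supported functions. Thus the Tietze step alone can only match $\varphi$ on $V$ up to a $C_0$-error, and the whole argument hinges on the observation that this residual $C_0$-error, unlike the value at infinity, can be transported by the trivial extension by zero. A secondary technical point, which I would either cite or reduce to by passing to a Delone set of $X$ (where every function is continuous and the corona formalism applies verbatim), is the assertion that $\nu(V)\to\nu(U)$ is a closed embedding onto $\closedop V$ compatible with restriction of Higson functions.
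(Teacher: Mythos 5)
Your proof is correct, but it runs at a different level than the paper's. The paper gives two arguments, both producing an \emph{exact} extension in one shot: first a $C^*$-algebraic one, citing Mendivil to get the equality $C_h(X)|_A=C_h(A)$ of function algebras (so any Higson function on a subset is literally the restriction of a global one); second a Tietze argument carried out on the Higson \emph{compactification} rather than the corona --- a representative $\varphi$ of a class in $\mathcal C_h(U,\R)$ extends continuously to $hU$, which is identified with the closure of $U$ in $hX$, Tietze extends over $hX$, and restricting back to $X$ gives a function agreeing with $\varphi$ on $U$ on the nose, since $U\s hU$. Because you instead apply Tietze on the corona $\nu(U)$, which only records behaviour at infinity, you can match $\varphi$ only modulo $C_0(V)$, and you must add the extra step of zero-extending the $C_0$-error $\eta$; that step is sound (given an entourage of width $R$ and $\varepsilon>0$, enlarge the compact set where $|\eta|\ge\varepsilon/2$ by $\Delta_R$, using properness, so that both entries of a pair outside it have $|\tilde\eta|<\varepsilon/2$), and your observation that the mismatch between the ideals $C_0$ and ``boundedly supported'' is exactly what zero-extension repairs is the right one. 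Note, though, that the ``secondary technical point'' you defer --- that $\nu(V)\hookrightarrow\nu(U)$ is a closed embedding onto $\closedop V$ compatible with restriction --- is not really secondary: it is equivalent to the closure identification $\overline{V}=hV$ in $hU$ that powers the paper's proofs, i.e.\ to the Mendivil input, so your argument does not circumvent that fact but repackages it. What your route buys is an explicit, self-contained handling of the correction term; what the paper's route buys is that by working on $hX\supseteq X$ the Tietze extension is already exact and no correction is needed.
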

\begin{proof}
Let $A\s X$ be a subset. Since $A\s hX$ is a subset the closure of $A$ in $hX$ is a compactification $\bar A$ generated by $C_h(X)|_A\coloneqq\{\varphi|_A:\varphi\in C_h(X)\}$. Now $\bar A$ is equivalent (as a compactification) to $hA$ and thus also generated by $C_h(A)$. Since both $C_h(X)|_A$ and $C_h(A)$ separate points from closed sets they are both contained in $C_h$, the algebra of bounded functions on $A$ which extend to $hA$ \cite[Proposition~2]{Mendivil1995}. The \cite[Proposition~2]{Mendivil1995} also states that $C_h\s C^*(A)$ is the smallest unital, closed $C^*$-algebra with this property. Since both $C_h(X)|_A$ and $C_h(A)$ are unital, closed the equality
\[
 C_h(X)|_A=C_h=C_h(A)
\]
holds.

We provide another proof using the Tietze extension theorem. If an element in $\mathcal C_h(U,\R)$ is represented by $\varphi\in \mathcal C_h(U)$ then it extends to $\bar\varphi$ on $hU$. By the Tietze extension theorem we can extend $\bar\varphi$ to a bounded function $\hat \varphi$ on $hX$. Then $\hat \varphi|_X$ represents an element in $\mathcal C_h(X,\R)$ that restricts to $\varphi$.
\end{proof}

\begin{ex}
We show $\mathcal C_h(\cdot,\R)$ is flabby on the specific example $\Z$ constructing a concrete global lift of a Higson function $\varphi:U\to \R$  on a subspace of $\Z$. If $z\in U^c$ then there are $z_-,z_+\in U$ with $z_-$ the largest number in $U$ with $z_-<z$ and $z_+$ the smallest number in $U$ with $z<z_+$. Define
\[
 \bar \varphi(z)=\frac{\varphi(z_-)(z_+-z)+\varphi(z_+)(z-z_-)}{z_+-z_-}.
\]
This function $\bar \varphi$ is Higson: If $\varepsilon>0$ then there exists an $N\in \N$ with $|\varphi(y)-\varphi(y')|<\varepsilon$ for every $y,y'\in U$ with $|y-y'|<2|\varphi|/\varepsilon$ and $y,y'>N$. If $z,z'\in U^c$ with $|z-z'|=1$ then 
\begin{align*}
 |\bar\varphi(z)-\bar\varphi(z')|
 &=\left|\frac{\varphi(z_-)(z_+-z)+\varphi(z_+)(z-z_-)}{z_+-z_-}-\frac{\varphi(z_-)(z_+-z')+\varphi(z_+)(z'-z_-)}{z_+-z_-}\right|\\
 &=\left|\frac{\varphi(z_-)(z'-z)+\varphi(z_+)(z-z')}{z_+-z_-}\right|\\
 &=\left|\frac{\varphi(z_+)-\varphi(z_-)}{z_+-z_-}\right|\\
 &<\begin{cases}
    \frac{2|\varphi|}{2|\varphi|/\varepsilon} & z_+-z_->2|\varphi|/\varepsilon\\
    \frac{\varepsilon}{z_+-z_-} & z_+-z_-\le 2|\varphi|/\varepsilon
   \end{cases}\\
&\le \varepsilon
\end{align*}
provided $z,z'>N$.

\end{ex}

\begin{rem}
By the long exact sequence in cohomology we obtain
\[
\check H_{ct}^1(X;A)=\frac{\mathcal C_h(X,S^1)}{\mathcal C_h(X,\R)}.
\]
If $X=\Z$ we can define
\begin{align*}
\varphi:\Z&\to\R\\
z&\mapsto\sum_{n=1}^{|z|}\frac 1n.
\end{align*}
This function satisfies the Higson condition but is not bounded. Post-composition with the projection $\R\to S^1$ gives a Higson function $\tilde \varphi:\Z\to S^1$ which does not have a lift. Compare this result with \cite{Keesling1994}.
\end{rem}

\begin{rem}
 It would be great if we could find an algorithm that computes coarse cohomology with constant coefficients of a finitely presented group. This does not work not even in degree $0$. If we could decide whether $\check H_{ct}^0(G;A)$ vanishes then we can decide whether $G$ is finite. This is in general not decidable.
\end{rem}

\section{The inverse image functor}

In this section we fix a coarse map $\alpha:X\to Y$ between metric spaces.

If $\sheafg$ is a sheaf on $Y_{ct}$ then the inverse image (or pullback sheaf) $\alpha^*\sheafg$ is the sheafification of the presheaf on $X_{ct}$ which assigns $U\s X$ with $\sheafg(\alpha(U))$.

Conversely if $\sheaff$ is a sheaf on $X_{ct}$ then the direct image $\alpha_*\sheaff$ is the sheaf on $Y_{ct}$ which assigns $\sheaff(\alpha^{-1}(V))$ to $V\s Y$.

\begin{lem}
 The functor $\alpha^*$ is left adjoint to $\alpha_*$. The functor $\alpha_*$ is left exact and the functor $\alpha^*$ is exact. The functor $\alpha_*$ maps injectives to injectives.
\end{lem}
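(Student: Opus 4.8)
The plan is to establish the four assertions in the order (i) the adjunction, (ii) left exactness of $\alpha_*$, (iii) exactness of $\alpha^*$, and (iv) preservation of injectives, since each later claim feeds on the earlier ones.

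First I would set up the adjunction. Write $\alpha^p$ for the presheaf pullback $U\mapsto\mathcal G(\alpha(U))$, so that $\alpha^*\mathcal G=(\alpha^p\mathcal G)^\#$. The adjunction between sheafification and the forgetful functor gives
\[
Hom_{Sh}(\alpha^*\mathcal G,\mathcal F)=Hom_{PSh}(\alpha^p\mathcal G,\mathcal F),
\]
so it suffices to match presheaf morphisms $\alpha^p\mathcal G\to\mathcal F$ with sheaf morphisms $\mathcal G\to\alpha_*\mathcal F$. A presheaf morphism is a compatible family $\theta_U\colon\mathcal G(\alpha(U))\to\mathcal F(U)$, and a morphism into $\alpha_*\mathcal F$ is a compatible family $\eta_V\colon\mathcal G(V)\to\mathcal F(\alpha^{-1}(V))$. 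I would send $\theta$ to the family $\eta_V$ obtained by first restricting $\mathcal G(V)\to\mathcal G(\alpha(\alpha^{-1}(V)))$ along the inclusion $\alpha(\alpha^{-1}(V))\subseteq V$ and then applying $\theta_{\alpha^{-1}(V)}$; conversely I would send $\eta$ to the family $\theta_U=\eta_{\alpha(U)}$ followed by the restriction $\mathcal F(\alpha^{-1}(\alpha(U)))\to\mathcal F(U)$ along $U\subseteq\alpha^{-1}(\alpha(U))$. Checking that these assignments are mutually inverse and natural is the routine bookkeeping that finishes (i). Implicit here is that $\alpha_*\mathcal F$ really is a sheaf, which follows once one observes that the preimages under a coarse map of a coarse cover again form a coarse cover: coarse uniformity pushes the defining entourage forward and coarse properness pulls the resulting bounded set back.

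For (ii), since $\alpha_*$ is a right adjoint it preserves all limits, hence kernels, so it is left exact; this is also visible directly because kernels of sheaves are computed sectionwise and $\alpha_*\mathcal F(V)=\mathcal F(\alpha^{-1}(V))$. For (iii), right exactness of $\alpha^*$ is automatic from (i), as a left adjoint preserves colimits. The content is therefore left exactness, and here I would factor $\alpha^*$ as the composite of the inclusion $Sh(Y_{ct})\hookrightarrow PSh(Y_{ct})$, the presheaf pullback $\alpha^p$, and sheafification $(\cdot)^\#$. The inclusion is left exact, $\alpha^p$ is exact because it is evaluation at $\alpha(U)$ and presheaf (co)kernels are computed objectwise, and sheafification is exact; a composite of left exact functors is left exact, so $\alpha^*$ preserves kernels. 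Combined with right exactness this yields exactness of $\alpha^*$.

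Finally (iv) is formal: given an injective sheaf $\mathcal I$ on $X_{ct}$, a monomorphism $\mathcal A\hookrightarrow\mathcal B$ on $Y_{ct}$, and a map $\mathcal A\to\alpha_*\mathcal I$, I would transpose along the adjunction to a map $\alpha^*\mathcal A\to\mathcal I$; exactness of $\alpha^*$ from (iii) makes $\alpha^*\mathcal A\hookrightarrow\alpha^*\mathcal B$ a monomorphism, injectivity of $\mathcal I$ extends this map over $\alpha^*\mathcal B$, and transposing back produces the desired extension $\mathcal B\to\alpha_*\mathcal I$. I expect the main obstacle to be precisely the left exactness in (iii): the naive hope that $\alpha^*$ is exact simply because both $\alpha^p$ and sheafification are exact stumbles on the fact that a short exact sequence of sheaves need not be exact as a sequence of presheaves, so the argument must route the failure of right exactness through the adjunction while retaining only the left exact inclusion in the composite.
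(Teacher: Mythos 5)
Your proposal is correct, but it takes a visibly different route from the paper. The paper outsources almost everything to Tamme's theory of morphisms of Grothendieck topologies: it observes that $\alpha^{-1}:Cat(Y_{ct})\to Cat(X_{ct})$ is a morphism of sites, quotes \cite[Proposition~I,3.6.2]{Tamme1994} for the adjunction, the left exactness of $\alpha_*$, and the implication ``$\alpha^*$ exact $\Rightarrow$ $\alpha_*$ preserves injectives'', and then quotes \cite[Proposition~I,3.6.7]{Tamme1994}, which reduces exactness of $\alpha^*$ to the purely site-theoretic check that $\alpha^{-1}$ preserves finite fibre products and final objects --- verified because preimages commute with intersections and $\alpha^{-1}(Y)=X$. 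You instead prove everything by hand: an explicit transposition for the adjunction via the inclusions $\alpha(\alpha^{-1}(V))\subseteq V$ and $U\subseteq\alpha^{-1}(\alpha(U))$ (which is exactly the unit/counit the paper computes separately in the next lemma for $A_Y$); left exactness of $\alpha_*$ from right adjointness; and left exactness of $\alpha^*$ via the factorization $(\cdot)^{\#}\circ\alpha^p\circ\iota$, using that $\alpha^p$ is evaluation at $\alpha(U)$ and hence objectwise exact. That last point is the genuine overlap in substance: the paper's appeal to Tamme's finite-limit criterion and your evaluation-exactness both rest on the same collapse, namely that $\alpha(U)$ is initial among the $V$ with $U\subseteq\alpha^{-1}(V)$, so the colimit defining the presheaf pullback degenerates. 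Your version buys self-containedness and makes the special simplicity of this poset site explicit; it also supplies a point the paper's definition silently assumes, namely that $\alpha_*\mathcal F$ is a sheaf because preimages of coarse covers under a coarse map are coarse covers (your one-line reason --- uniformity pushes the entourage forward, properness pulls the bounded set back --- is the correct argument). The paper's version buys brevity and identifies the abstract property of the site morphism responsible for exactness. Your closing caveat is also well placed: one cannot conclude exactness of $\alpha^*$ on sheaves directly from exactness of $\alpha^p$ on presheaves, since short exact sequences of sheaves are not objectwise exact; routing right exactness through the adjunction while keeping only the left exact inclusion $\iota$ in the composite is precisely the correct repair, and it is the same division of labor hidden inside Tamme's propositions.
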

\begin{proof}
 The functor
 \begin{align*}
  \ii \alpha:Cat(Y_{ct})&\to Cat(X_{ct})\\
  V&\mapsto \iip \alpha V
 \end{align*}
is a morphism of Grothendieck topologies and therefore gives rise to functors $(\ii \alpha)^p,(\ii \alpha)_p$ between categories of presheaves \cite[Chapter~I,2.3]{Tamme1994}. The functor $(\ii \alpha)^p$ maps a presheaf $\mathcal F$ on $X_{ct}$ to the presheaf $V\mapsto \sheaff(\iip \alpha V)$ on $Y_{ct}$. $(\ii \alpha)_p$ is defined in \cite[Theorem~I,2.3.1]{Tamme1994}. If $\sheafg$ is a presheaf on $ Y_{ct}$ and $U\s X$ we define $(\ii \alpha)_p\sheafg(U)$: Consider all $V\in Cat(Y_{ct})$ with $U\s \iip \alpha V$. They form a category $\mathcal I_U$. Then
\[
 (\ii \alpha)_p\sheafg(U)=\varinjlim_{V\in \mathcal I^{op}_U}\sheafg(V)=\sheafg(\alpha(U))
\]
since $\alpha(U)$ is the initial object in $\mathcal I_U$.

Then \cite[Chapter~I,3.6]{Tamme1994} discusses functors $(\ii \alpha)^s$ and $(\ii \alpha)_s$ between categories of sheaves. We obtain the direct image functor $\alpha_*=(\ii \alpha)^s$ and the inverse image functor $\alpha^*=(\alpha^{-1})_s$. Then \cite[Proposition~I,3.6.2]{Tamme1994} implies that $\alpha^*$ is left adjoint to $\alpha_*$, the functor $\alpha_*$ is left exact and if $\alpha^*$ is actually exact then the functor $\alpha_*$ maps injectives to injectives. It remains to show that $\alpha^*$ is exact. By \cite[Proposition~I,3.6.7]{Tamme1994} the functor $\alpha^*$ is exact if $\alpha^{-1}$ preserves finite fibre products and final objects. Indeed the inverse image of an intersection is an intersection of inverse images and the inverse image of the whole space is the whole space.
\end{proof}

\begin{prop}
 If we equip $\N$ with the topological coarse structure associated to the one-point compactification $\N^+$ of $\N$ we obtain a coarse space called $\ast$. This space $\ast$ is not metrizable but a final object for metric spaces. The constant sheaf on $\ast$ is flabby. 
\end{prop}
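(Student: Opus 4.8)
The plan is to treat the three assertions in turn, spending most of the effort on flabbiness of the constant sheaf, which is where the coarse geometry of $\ast$ actually enters. The first task is to make the coarse structure explicit. The topological coarse structure of a compactification is a coarse structure by the standard construction, so ``we obtain a coarse space'' is essentially by citation; what I need is a concrete description. By definition an $E\s\N\times\N$ is an entourage precisely when its closure in $\N^+\times\N^+$ meets the complement of $\N\times\N$ only in $(\infty,\infty)$, and since $\N$ is discrete this translates into the condition that every row $\{m:(n,m)\in E\}$ and every column $\{m:(m,n)\in E\}$ is finite. From this I would read off two facts used throughout: the bounded subsets of $\ast$ are exactly the finite subsets, and the structure is connected (every finite subset of $\N\times\N$ is an entourage), so our sheaf theory applies.

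For finality I would exhibit, for an arbitrary metric space $X$ with basepoint $x_0$, the map $f(x)=\lceil d(x_0,x)\rceil$. It is coarsely proper because the preimage of a finite set sits inside a metric ball, and coarsely uniform because $d(x,y)\le R$ forces $|f(x)-f(y)|\le R+1$, so $(f\times f)(\Delta_R)$ lies in a band with finite rows and columns. For uniqueness up to closeness, given two coarse maps $f,g\colon X\to\ast$, the row of $\{(f(x),g(x)):x\in X\}$ above $n$ is $g(f^{-1}(n))$; here $f^{-1}(n)$ is bounded since $\{n\}$ is bounded and $f$ is coarsely proper, and $g$ carries bounded sets to bounded, hence finite, sets. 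The columns are symmetric, so $f$ and $g$ are close, giving the terminal property.

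Non-metrizability I would deduce from the criterion that a connected coarse structure is metrizable iff it admits a countable cofinal family of entourages, via a diagonal argument showing no such family exists. Replacing a countable family by its finite unions gives an increasing cofinal chain $E_1\s E_2\s\cdots$; the finiteness of the $k$th column of $E_k$ lets me choose distinct $a_k$ with $(a_k,k)\notin E_k$, and then $E=\{(a_k,k):k\in\N\}$ has at most one entry in each row and column, so it is an entourage, yet $(a_j,j)\in E\setminus E_j$ shows $E$ is contained in no $E_j$, a contradiction. (Alternatively: in $\ast$ any two infinite subsets are close, whereas any unbounded metric space contains two coarsely disjoint infinite subsets.)

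The core is flabbiness of $A_\ast$, and the key geometric input, which I expect to be the main obstacle, is that in $\ast$ \emph{any two infinite subsets $S,T$ are close}: enumerating them and pairing terms produces an injectively indexed, hence row/column-finite, entourage $E$ with $(S\times T)\cap E$ unbounded, so $S\curlywedge T$. I would then argue that a coarse disjoint union of any $U\s\N$ contains at most one infinite member, since two disjoint infinite members would be close and so violate the coarse-cover condition. Using $A_\ast(U)=HY_b^0(U,A)$ from Lemma~\ref{lem:h0} (which holds for connected coarse spaces), this pins down the sections: the level sets of a cocycle form a coarse disjoint union, so modulo bounded support a cocycle is constant, whence $A_\ast(U)=A$ for infinite $U$ and $A_\ast(U)=0$ for finite $U$, the value being recorded on the unique cofinite infinite piece. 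Flabbiness is then forced: for $U\s V$ the restriction $A_\ast(V)\to A_\ast(U)$ is the zero map when $U$ is finite, and the identity $A\to A$ when $U$ is infinite, because the infinite piece of $V$ meets $U$ in a cofinite, hence infinite, subset. Once the ``one end'' phenomenon is established and the section functor identified, surjectivity of the restrictions is automatic.
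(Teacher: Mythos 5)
Your proposal is correct, and on the parts the paper actually argues it follows essentially the same route: your row/column-finiteness description of entourages is equivalent to the paper's criterion (for every $E'\subseteq E$, the first projection is finite iff the second is), your map $x\mapsto\lceil d(x_0,x)\rceil$ and the closeness argument via coarse properness match the paper's $\rho(x)=\lfloor d(x,x_0)\rfloor$, and your key geometric input --- pairing enumerations of two infinite subsets to get an entourage witnessing closeness --- is exactly the paper's bijection trick, leading to the same section formula ($A$ on infinite $U$, $0$ on finite $U$) and the same conclusion of flabbiness. There are two genuine differences worth recording. First, the paper's proof never addresses the non-metrizability claim at all; you supply two valid arguments (the diagonal argument against a countable cofinal family of entourages, which correctly uses that columns of each $E_k$ are finite and that the resulting $E=\{(a_k,k)\}$ with distinct $a_k$ has singleton rows and columns; and the alternative that any unbounded metric space contains two coarsely disjoint unbounded subsets, whereas in $\ast$ any two infinite subsets are close), so your write-up actually covers a claim of the Proposition that the paper leaves unproved. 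Second, for the section computation you detour through Lemma~\ref{lem:h0} and the identification $A_\ast(U)=HY_b^0(U,A)$, which requires the (acknowledged) extension of the cochain machinery to connected coarse spaces; the paper instead reads the sections off directly from the observation that every coarse cover of $U$ contains a cofinite element, which is marginally more self-contained but yields the same answer. Both routes to flabbiness are sound, and your final observation that a cofinite piece of $V$ meets an infinite $U\subseteq V$ in a cofinite subset of $U$ correctly justifies that the restriction maps are the identity on $A$.
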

\begin{proof}
 A set $C\s\N^+$ is closed if it is finite or contains $+$. Thus a subset $E\s \N\times \N$ is an entourage if for every subset $E'\s E$ the projection of $E'$ to the first factor is finite exactly when the projection of $E'$ to the second factor is finite.
 
 Let $X$ be a metric space and $x_0\in X$ a basepoint. We define a map 
 \begin{align*}
  \rho:X&\to \ast\\
  x&\mapsto\lfloor d(x,x_0)\rfloor.
 \end{align*}
 We show this map is coarsely uniform: If $R\ge0$ and $F\s \rho^{\times 2}(\Delta_R)$ a subset such that the projection to the first factor is finite then choose arbitrary $(x,y)\in \Delta_R$ with $(\rho(x),\rho(y))\in F$. Since the first factor of $F$ is finite there is some $S\ge 0$ with $\lfloor d(x,x_0)\rfloor \le S$. Then 
 \begin{align*}
  \lfloor d(y,x_0)\rfloor
  &\le d(y,x)+d(x,x_0)+1\\
  &\le R+S+2.
 \end{align*}
Thus the projection of $F$ to the second factor is finite. This implies $\rho$ is coarsely uniform. If $B\s \ast $ is bounded then there exists some $S\ge 0$ such that $b\in B$ implies $b\le S-1$. Then $\rho^{-1}(B)$ is contained in a ball of diameter $S$ around $x_0$. Thus $\rho$ is coarsely proper. This way we showed $\rho $ is a coarse map. Suppose $\varphi:X\to \ast $ is another coarse map. Let $H'\s (\rho\times \varphi)(\Delta_0)$ be a subset such that the projection of $H'$ to the first factor is finite. We have that $H'$ is of the form 
\[
 H'=\{(\rho(x),\varphi(x))\mid x\in A\}
\]
for some subset $A\s X$. Then the projection of $H'$ to the first factor is $\rho(A)$. Since $\rho$ is coarsely proper the set $A\s \rho^{-1}\circ \rho(A)$ is bounded. This implies $\varphi(A)$ is bounded which is the projection of $H'$ to the second factor. Since we only used that $\rho,\varphi$ are coarse maps we can use the same argument with the factors reversed. Thus we showed that $(\rho\times\varphi)(\Delta_0)$ is an entourage in $\ast$. This implies $\rho,\varphi$ are close, they represent the same coarse map. This way we showed that $\ast$ is a final object for metric spaces.

If $A,B\s \ast$ are infinite subspaces then there exists a bijection $\varphi:A\to B$. The set $E:=\{(a,\varphi(a))\mid a\in A\}$ is an entourage and  $(A\times B)\cap E=E$ is not bounded. Thus $A\times B$ is not cocontrolled. We conclude that a coarse cover of a subset $U\s\ast$ contains an element which is cofinite in $U$. Let $A$ be an abelian group. Then
\[
 A_\ast(U)=\begin{cases}
            A & U \mbox{ infinite}\\
            0 & U \mbox{ finite}
           \end{cases}
\]
This shows that $A_\ast$ is flabby.
\end{proof}

\begin{lem}
 If $A$ is an abelian group then $A_X=\alpha^*A_Y$. The unit of $\alpha^*,\alpha_*$ at $A_Y$ is given by
 \begin{align*}
  \eta_{A_Y}(V):A_Y(V)&\to A_X(\alpha^{-1}(V))\\
  (\varphi:V\to A)&\mapsto(\varphi\circ\alpha:\iip \alpha V\to A).
 \end{align*}

\end{lem}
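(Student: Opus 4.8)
The plan is to realize both assertions through a single explicit comparison morphism and then read the unit off the adjunction. Throughout I would use the definition of the inverse image, which identifies $\alpha^*\sheafg$ with the sheafification of the presheaf $U\mapsto\sheafg(\alpha(U))$, together with the formula $(\ii\alpha)_p\sheafg(U)=\sheafg(\alpha(U))$ from the previous lemma, and Lemma~\ref{lem:h0}, which lets me represent a section of a constant sheaf over a subset $W$ by a blocky function $W\to A$ whose coboundary $d_0$ has cocontrolled support, two such functions being identified when their difference has bounded support.

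First I would build a morphism of presheaves $\tau$ from $U\mapsto A_Y(\alpha(U))$ to $A_X$ on $X_{ct}$, defined on a section $\psi$ (a blocky function on $\alpha(U)$) by $\tau(U)(\psi)=\psi\circ\alpha|_U$. This is well-defined precisely because $\alpha$ is coarse: precomposition sends blocky functions to blocky functions, $d_0(\psi\circ\alpha)=(d_0\psi)\circ\alpha^{\times2}$ has cocontrolled support since $\alpha$ is coarsely uniform, and bounded support is preserved since $\alpha$ is coarsely proper. As $A_X$ is a sheaf, $\tau$ factors through sheafification to give $\bar\tau\colon\alpha^*A_Y\to A_X$. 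I would then prove $\bar\tau$ is an isomorphism by working in the abelian category of sheaves on $X_{ct}$, where it suffices to show $\bar\tau$ is both a monomorphism and an epimorphism; both are local statements along coarse covers. For the epimorphism property, a section $f\in A_X(U)$ is constant with value $a_i$ on the pieces $U_i$ of a coarse disjoint union of $U$; on each $U_i$ the constant function $a_i$ lies in $A_Y(\alpha(U_i))$ and $\tau(U_i)$ carries it to $f|_{U_i}$, so $f$ is locally in the image. For the monomorphism property, if $\psi\circ\alpha$ has bounded support on $U$, then coarse properness forces $\alpha(U_i)$ to be unbounded whenever $U_i$ is, so the value of $\psi$ on such pieces must vanish, and $\psi$ is locally zero. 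This yields $\alpha^*A_Y=A_X$.

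Finally I would identify the unit. The adjunction bijection $\mathrm{Hom}(\alpha^*A_Y,A_X)\cong\mathrm{Hom}(A_Y,\alpha_*A_X)$ factors through the sheafification adjunction and the presheaf adjunction $(\ii\alpha)_p\dashv(\ii\alpha)^p$, whose unit on a presheaf is the canonical restriction along $\alpha(\alpha^{-1}(V))\s V$. Tracing $\bar\tau$ through this bijection at a subset $V\s Y$ gives, for $\varphi\in A_Y(V)$, the composite of the restriction $\varphi\mapsto\varphi|_{\alpha(\alpha^{-1}(V))}$ with $\tau(\alpha^{-1}(V))$, which is $\varphi\mapsto(\varphi|_{\alpha(\alpha^{-1}(V))})\circ\alpha=\varphi\circ\alpha$. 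Since the unit $\eta_{A_Y}$ is by definition the image of the identity $\mathrm{id}_{\alpha^*A_Y}$ under this bijection, and under the identification $\bar\tau$ the identity corresponds to $\bar\tau$ itself, the unit is exactly the precomposition map $\varphi\mapsto\varphi\circ\alpha$.

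The formal steps—well-definedness of $\tau$, the factorization through sheafification, and the bookkeeping of the adjunction bijection—are routine once the presheaf-adjunction unit is recognized as restriction. The genuine work, and the step I expect to be the main obstacle, is the proof that $\bar\tau$ is an isomorphism: one must check carefully that the monomorphism and epimorphism conditions can be verified piecewise on a coarse cover, and that coarse properness of $\alpha$ is exactly what guarantees that unbounded blocks are not collapsed, so that no section is spuriously killed or missed after sheafification.
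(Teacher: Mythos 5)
Your proof is correct, but it reaches the identity $A_X=\alpha^*A_Y$ by a genuinely different route than the paper. The paper constructs no comparison morphism at all: it first establishes (in the preceding proposition) that the space $\ast$ is a final object for metric spaces, proves $\rho_Z^*A_\ast=A_Z$ for the unique coarse map $\rho_Z:Z\to\ast$ by observing that the presheaf $U\mapsto A_\ast(\rho_Z(U))$ equals $A$ on unbounded sets and $0$ on bounded ones and hence sheafifies to the constant sheaf, and then obtains the claim purely formally from functoriality of pullback, $\alpha^*A_Y=\alpha^*\rho_Y^*A_\ast=(\rho_Y\circ\alpha)^*A_\ast=\rho_X^*A_\ast=A_X$. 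Your explicit morphism $\tau$ with the mono/epi check buys self-containedness: you avoid $\ast$ and the closeness bookkeeping hidden in $(\rho_Y\circ\alpha)^*=\rho_X^*$, and your isomorphism $\bar\tau$ simultaneously \emph{proves} the description of the sheafification unit $\eta^2$ that the paper only justifies informally (``$(a_{V_i})_{\alpha^{-1}(V_i)}$ represents $\varphi$ on $U$''). For the unit itself the two arguments agree in substance: the paper likewise composes the presheaf-adjunction unit, which is restriction along $\alpha(\alpha^{-1}(V))\subseteq V$, with the sheafification unit given by precomposition with $\alpha$; your tracing of $\mathrm{id}_{\alpha^*A_Y}$ through the factored adjunction bijection is the same computation packaged through $\bar\tau$.

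One local correction to your monomorphism step: the property doing the work there is coarse uniformity, not coarse properness. Write $\psi\in A_Y(\alpha(U))$ as constants $a_j$ on a coarse disjoint union $(V_j)_j$ of $\alpha(U)$, with preimages $P_j=\alpha^{-1}(V_j)\cap U$ carrying $\psi\circ\alpha\equiv a_j$. Bounded support of $\psi\circ\alpha$ kills $a_j$ on every unbounded $P_j$ directly; what must still be ruled out is an unbounded block $V_j$ whose preimage $P_j$ is bounded, since such a block could carry a nonzero $a_j$ invisible to $\psi\circ\alpha$. Because $V_j\subseteq\alpha(U)$ forces $V_j=\alpha(P_j)$, it is the coarsely uniform (bornologous) property --- bounded sets have bounded images --- that shows a bounded $P_j$ yields a bounded, hence negligible, $V_j$. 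The fact you invoke, that properness makes $\alpha(U_i)$ unbounded whenever $U_i$ is, is true but points the wrong way for this purpose; properness is instead exactly what you correctly used in the well-definedness of $\tau$, where bounded support of $\psi$ must pull back to bounded support of $\psi\circ\alpha$. With this one-line repair the kernel presheaf vanishes on the nose, so $\tau$ is injective already before sheafification, and the rest of your argument stands as written.
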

\begin{proof}
If $Z$ is a metric space and $\rho_Z:Z\to \ast$ the unique coarse map we prove $A_Z=\rho_Z^*A_\ast$. This proves the claim since 
\[
\alpha^*A_Y=\alpha^*\circ\rho_Y^*A_\ast=(\rho_Y\circ\alpha)^*A_\ast=\rho_X^*A_\ast=A_X.
\]
The sheaf $\rho_Z^*A_\ast$ is the sheafification of the following presheaf
\[
 U\mapsto A_\ast(\rho(U))=\begin{cases}
                           A & U\mbox{ not bounded}\\
                           0 & U\mbox{ bounded}
                          \end{cases}.
\]
Now this is just the constant sheaf on $Z$.

Now we compute the unit of the adjunction $\alpha^*,\alpha_*$. We denote by $\alpha^{-1}$ the presheaf inverse image functor. Then the unit of the adjunction $\alpha^{-1},\alpha_*$ at $A_Y$ is given by
\begin{align*}
 \eta^1_{A_Y}(V):A_Y(V)&\to \alpha_*\alpha^{-1}A_Y(V)=A_Y(\alpha\circ\alpha^{-1}(V))\\
 \varphi&\mapsto \varphi|_{\alpha\circ\alpha^{-1}(W)}
\end{align*}
The unit of the adjunction sheafification $\#$ and inclusion $\iota$ of presheaves in sheaves at $\alpha^{-1}A_Y$ is given by
\begin{align*}
 \eta^2_{\alpha^{-1}A_Y}(U):\alpha^{-1}A_Y(U)=A_Y(\alpha(U))&\to \alpha^*A_Y(U)=A_X(U)\\
 (\varphi:\alpha(U)\to A)&\mapsto(\varphi\circ\alpha:U\to A)
\end{align*}
This makes sense since $\varphi$ assigns a value $a_{V_i}$ to $V_i\s \alpha(U)$ where $(V_i)_i$ is a coarse disjoint union of $\alpha(U)$. Since $\alpha$ is a coarse map the $\alpha^{-1}(V_i)$ form a coarse disjoint union of $U$. Then $(a_{V_i})_{\alpha^{-1}(V_i)}$ represents $\varphi$ on $U$ which in cocycle notation is $\varphi\circ \alpha|_U$. 
Now we compose the units:
\[
 A_Y\xrightarrow{\eta^1_{A_Y}}\alpha_*\alpha^{-1}A_Y\xrightarrow{\alpha_*(\eta^2_{\alpha^{-1}A_Y})}\alpha_*\alpha^*A_Y
\]
and obtain the desired result.
\end{proof}

\begin{thm}
 The map induced by the inverse image functor 
 \[
  \alpha^*:\check H_{ct}^q(Y,A_Y)\to \check H_{ct}^q(X,A_X)
 \]
coincides with the canonical map 
\[
 \alpha^*:HY^q_b(Y,A)\to HY^q_b(X,A).
\]
\end{thm}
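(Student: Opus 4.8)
The plan is to realise the precomposition cochain map as the global-sections image of a single morphism of sheaf complexes on $Y_{ct}$, and then to recognise that morphism as the datum computing the derived pullback. First I would build the sheaf-level map. For $V\s Y$ precomposition $\varphi\mapsto\varphi\circ\alpha^{\times(q+1)}$ carries $CY_b^q(V,A)$ into $CY_b^q(\alpha^{-1}(V),A)=(\alpha_*CY_b^q(\cdot,A))(V)$, so it is a morphism of sheaves on $Y_{ct}$
\[
 \Theta^q:CY_b^q(\cdot,A)_Y\to\alpha_*CY_b^q(\cdot,A)_X,
\]
where the subscripts record the ambient space. Because precomposition commutes with the coboundary, the $\Theta^q$ assemble into a morphism of complexes $\Theta^\bullet$, and the computation of the unit $\eta_{A_Y}$ in the previous lemma shows that the augmentation square
\[
 \xymatrix{
  A_Y\ar[r]\ar[d]_{\eta} & CY_b^0(\cdot,A)_Y\ar[d]^{\Theta^0}\\
  \alpha_*A_X\ar[r] & \alpha_*CY_b^0(\cdot,A)_X
 }
\]
commutes, so that $\Theta^\bullet$ lies over the adjunction unit $\eta:A_Y\to\alpha_*A_X=\alpha_*\alpha^*A_Y$. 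Evaluating at $V=Y$ and using $\Gamma_Y\circ\alpha_*=\Gamma_X$ (as $\alpha^{-1}(Y)=X$), the map $\Gamma_Y\Theta^q$ is literally $\varphi\mapsto\varphi\circ\alpha^{\times(q+1)}$; hence $H^q(\Gamma_Y\Theta^\bullet)$ is the canonical cochain map $HY_b^q(Y,A)\to HY_b^q(X,A)$.

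It remains to match $H^q(\Gamma_Y\Theta^\bullet)$ with the pullback coming from the inverse image functor. For this I would lean on three facts, each provable directly in $X_{ct}$: the functor $\alpha_*$ preserves flabbiness, since $\alpha_*\mathcal F(V)=\mathcal F(\alpha^{-1}(V))$ and $\alpha^{-1}(V')\s\alpha^{-1}(V)$ for $V'\s V$; flabby sheaves are acyclic; and $\alpha_*$ carries injectives to injectives (the preceding lemma). Thus $CY_b^\bullet(\cdot,A)_X$ is a flabby, hence $\Gamma_X$-acyclic, resolution of $A_X$ and computes $\check H_{ct}^q(X,A_X)$. The derived pullback is computed by applying the exact functor $\alpha^*$ to the flabby resolution of $A_Y$, comparing the resulting resolution $\alpha^*CY_b^\bullet(\cdot,A)_Y$ of $A_X$ to the flabby resolution $CY_b^\bullet(\cdot,A)_X$ by a chain map $\tilde\Theta^\bullet$ over $\mathrm{id}_{A_X}$, and composing with the unit-on-sections map $u$. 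The key point is that $\tilde\Theta^\bullet$ is available explicitly as the $\alpha^*\dashv\alpha_*$-adjoint of $\Theta^\bullet$; the triangle identity then yields $\Gamma_X\tilde\Theta^\bullet\circ u=\Gamma_Y\Theta^\bullet$, and on cohomology the left side is by construction the sheaf-theoretic $\alpha^*$.

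The main obstacle is homological bookkeeping rather than geometry. Since $\alpha^*$ of a flabby sheaf need not be flabby, the resolution $\alpha^*CY_b^\bullet(\cdot,A)_Y$ cannot be used directly to compute $\check H_{ct}^q(X,A_X)$, and one must route through the genuinely flabby resolution on $X_{ct}$ via $\tilde\Theta^\bullet$; verifying that the pullback is insensitive to this choice is cleanest if one runs the comparison against an injective resolution, which is legitimate precisely because $\alpha_*$ preserves injectives. Checking that the homotopy class of $\tilde\Theta^\bullet$ is indeed adjoint to $\Theta^\bullet$, and that the triangle identity survives passage to cohomology, is the technical heart of the argument, and is where the acyclicity facts above are spent.
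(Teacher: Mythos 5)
Your proposal is correct and is essentially the paper's proof: the paper invokes Iversen's Scolium~II.5.2 (checked to hold over a Grothendieck topology) with exactly your data --- the resolutions $CY_b^\bullet(\cdot,A)$ on $Y_{ct}$ and $X_{ct}$ and the precomposition morphism $\psi^q:CY_b^q(V,A)\to CY_b^q(\alpha^{-1}(V),A)$ lying over the unit $\eta_{A_Y}$ computed in the preceding lemma. Your $\Theta^\bullet$, its adjoint $\tilde\Theta^\bullet$, and the comparison against an injective resolution (legitimised by exactness of $\alpha^*$, by $\alpha_*$ preserving injectives, and by flabby implying acyclic) amount to an inline re-derivation of that Scolium rather than a different route.
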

\begin{proof}
We apply \cite[Scolium~II.5.2]{Iversen1984}. We checked the proof of this result also works for sheaves on a Grothendieck topology. We choose $f=\alpha$, $G=A_Y$ and $T^q=CY^q(\cdot,A)$ a resolution on $Y$. Then $\alpha^*G=A_X$ has a resolution $CY^q(\cdot,A)$ on $X$. The morphism of complexes is given by 
 \begin{align*}
  \psi^q:CY^q_b(V,A)&\to CY^q_b(\iip \alpha V,A)\\
  (\varphi:V^{q+1}\to A)&\mapsto (\varphi\circ \alpha^{\times (q+1)}:(\iip \alpha U)^{q+1}\to A)
 \end{align*}
 which makes the diagram
\[
 \xymatrix{
 A_Y(V)\ar[r]\ar[d]_{\eta_{A_Y}(V)}
 & CY^*_b(V,A)\ar[d]^\psi\\
 A_X(\iip \alpha V)\ar[r]
 &CY^*_b(\iip \alpha V,A)
 }
\]
commute.
\end{proof}

\section{Sheaves on the Higson corona}
This chapter proves Theorem~\ref{thm:introHigson}.
\begin{lem}
\label{lem:refinement}
 Let $X$ be a metric space. If $U_1,\ldots, U_n$ is a coarse cover of $X$ then there exists a coarse cover $V_1,\ldots,V_n$ of $X$ with $V_i\notclose U_i^c$ for every $i=1,\ldots,n$.
\end{lem}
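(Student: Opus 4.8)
The plan is to build the refinement $V_i$ explicitly from the distance functions to the complements $U_i^c$, shrinking each $U_i$ toward its ``coarse interior'' by a margin that grows without bound, so that the slack coming from the triangle inequality is absorbed. Write $d(x,U_i^c)$ for the distance from $x$ to $U_i^c$ and set $f(x):=\max_{1\le i\le n}d(x,U_i^c)$; we may assume each $U_i\neq X$, so that every $U_i^c$ is nonempty and $f$ is finite valued (if some $U_i=X$ the choice $V_i=X$ and $V_j=\emptyset$ for $j\neq i$ already works). Define
\[
 V_i:=\{x\in X\mid d(x,U_i^c)\ge \tfrac12 f(x)\}.
\]

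First I would record the two inputs that drive the argument. Since the $U_i$ coarsely cover $X$, for every $R\ge0$ the sublevel set $\{f\le R\}=\bigcap_i\{d(\cdot,U_i^c)\le R\}$ is contained in $\Delta_{R+1}[U_1^c]\cap\cdots\cap\Delta_{R+1}[U_n^c]$, which is bounded by the definition of coarse cover applied to the entourage $\Delta_{R+1}$; thus all sublevel sets of $f$ are bounded. Moreover $f$ is $1$-Lipschitz, being a finite maximum of the $1$-Lipschitz functions $d(\cdot,U_i^c)$. The cover property $V_1\cup\cdots\cup V_n=X$ is immediate, since each $x$ lies in $V_i$ for any index realizing $d(x,U_i^c)=f(x)$. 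The relation $V_i\notclose U_i^c$ is also quick: if $x\in V_i$ and $d(x,U_i^c)\le R$ then $\tfrac12 f(x)\le R$, so $x\in\{f\le 2R\}$; hence $\{x\in V_i\mid d(x,U_i^c)\le R\}$ is bounded for every $R$, which precludes an unbounded bounded-distance sequence in $V_i\times U_i^c$ and gives $V_i\notclose U_i^c$.

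The only delicate point is showing that $V_1,\ldots,V_n$ is again a coarse cover, i.e.\ that $\bigcap_i\Delta_R[V_i^c]$ is bounded for each $R$. Fix $y$ with $f(y)\ge 3R$ and choose $i_0$ with $d(y,U_{i_0}^c)=f(y)$. For every $z\in\Delta_R[y]$ the Lipschitz bounds give $d(z,U_{i_0}^c)\ge f(y)-R$ and $f(z)\le f(y)+R$, so
\[
 d(z,U_{i_0}^c)\ge f(y)-R\ge\tfrac12\bigl(f(y)+R\bigr)\ge\tfrac12 f(z),
\]
the middle inequality being exactly $f(y)\ge 3R$. Thus $z\in V_{i_0}$, so $\Delta_R[y]\s V_{i_0}$, which means $y\notin\Delta_R[V_{i_0}^c]$. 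Consequently $\bigcap_i\Delta_R[V_i^c]\s\{f<3R\}$, a bounded set, and the $V_i$ coarsely cover $X$.

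I expect this last step to be the main obstacle, and it is where the factor $\tfrac12$ is essential. The naive choice of assigning each point to the farthest complement (the argmax partition) fails, because the triangle inequality allows the maximizing index to change within an $R$-ball; a point lying on the boundary between two of the regions can then be $R$-close to several of the $V_i^c$ simultaneously, destroying the coarse-cover condition. Allowing any fixed fraction strictly below $1$ of $f$ as a threshold fattens the regions just enough that the defining inequality survives the $2R$ perturbation once $f(y)$ is large, which is precisely the complement of a bounded set; everything else is routine.
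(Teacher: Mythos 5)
Your proof is correct, and it reaches the conclusion by a genuinely different route than the paper. The paper's proof is structural: it invokes an external result (Lemma~15 of the author's earlier paper \cite{Hartmann2017b}) to shrink the coarse cover to a plain set cover $W_1,\ldots,W_n$ of $X$ with $W_i\notclose U_i^c$, interpolates in-between sets $C_i$ with $W_i\notclose C_i^c$ and $C_i\notclose U_i^c$, observes that each pair $\{W_i^c,C_i\}$ is a two-element coarse cover, intersects these $n$ coarse covers into a common refinement, and regroups the resulting pieces into the $V_i$. You instead give a self-contained metric construction: the function $f=\max_i d(\cdot,U_i^c)$ is $1$-Lipschitz, its sublevel sets are bounded precisely because the $U_i$ coarsely cover $X$, and thresholding at $\tfrac12 f$ yields sets $V_i=\{x\mid d(x,U_i^c)\ge\tfrac12 f(x)\}$ that are coarsely disjoint from $U_i^c$ while still forming a coarse cover; the key computation $f(y)-R\ge\tfrac12\bigl(f(y)+R\bigr)$ whenever $f(y)\ge 3R$ shows the whole $R$-ball around $y$ lands in $V_{i_0}$ for a maximizing index $i_0$, whence $\bigcap_i\Delta_R[V_i^c]\subseteq\{f<3R\}$, a bounded set. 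All the individual steps check out, including the $\Delta_{R+1}$ fudge for the possibly non-attained infimum, the reduction of arbitrary entourages to some $\Delta_R$, and the separate treatment of the degenerate case $U_i=X$. What each approach buys: the paper's argument is axiomatic and leans on previously developed refinement machinery for coarse covers, so it slots naturally into the surrounding theory, while yours avoids the external citation entirely, is quantitative and explicit (one can read off how far each $V_i$ retracts from $U_i$), and your closing remark about why the naive argmax partition fails is exactly the right diagnosis of where the factor $\tfrac12$ earns its keep.
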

\begin{proof}
 By \cite[Lemma~15]{Hartmann2017b} there exists a cover $W_1,\cdots,W_n$ of $X$ as a set such that $W_i\notclose U_i^c$ for every $i$. For every $i$ there exists an in between set $C_i$ with $W_i\notclose C_i^c$ and $C_i\notclose U_i^c$. Then for every $i$ the sets $A_i^1:=W_i^c,A_i^2:=C_i$ are a coarse cover. Taking the intersection over those coarse covers provides a coarse cover
 \[
  \mathcal B:=\{A_1^{\varepsilon_1}\cap\cdots\cap A_n^{\varepsilon_n}:\epsilon_i=1,2\}.
 \]
Now let $B:=A_1^{\varepsilon_1}\cap\cdots\cap A_n^{\varepsilon_n}\in\mathcal B$ be an element. If there is some $i$ with $\varepsilon_i=2$ then
\[
 B\subseteq C_i\notclose U_i^c
\]
and in the other case $\varepsilon_i=1$ for every $i$. Thus
\[
 B=W_1^c\cap\cdots \cap W_n^c=\left(\bigcup_i W_i\right)^c=\emptyset.
\]
Now we join appropriate elements of $\mathcal B$ and obtain the desired coarse cover:
\[
 V_i:=\bigcup_{(\varepsilon_1,\ldots,\varepsilon_n)\in\{1,2\}^n,\varepsilon_i=2}A_1^{\varepsilon_1}\cap\cdots\cap A_n^{\varepsilon_n}.
\]
\end{proof}

Given a sheaf $F$ on $X_{ct}$ we define a sheaf $F^\nu$ on $\nu(X)$: If both $A\notclose U^c, B\notclose U^c$ then $A\cup B\notclose U^c$. Thus $(A)_{A\notclose U^c}$ is a directed poset by inclusion. Now we define a sheaf $F^\nu$ on basic open subsets of $\nu(X)$. If $U\subseteq X$ is a subset then
 \[
  F^\nu(\closedop{U^c}^c):=\varprojlim_{A\notclose U^c}F(A).
 \]
 If $V\subseteq U$ is an inclusion of subspaces then $A\notclose V^c$ implies $A\notclose U^c$. Thus there is a well-defined restriction map $F^\nu(\closedop {U^c}^c)\to F^\nu(\closedop{V^c}^c)$ which maps $(\varphi_A)_{A\notclose U^c}$ to $(\varphi_A)_{A\notclose V^c}$. This makes $F^\nu$ a presheaf.

\begin{prop}
\label{prop:catofsh1}
 If $X$ is a proper metric space and $F$  a sheaf on $X_{ct}$ then $F^{\nu}$ is a sheaf on $\nu(X)$.
\end{prop}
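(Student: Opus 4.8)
The plan is to regard $F^\nu$ as a presheaf defined on the basis of basic open sets $(\closedop{U^c}^c)_{U\s X}$ and to verify the sheaf axioms there; the extension to a genuine sheaf on all of $\nu(X)$ is then automatic, since a sheaf defined on a basis closed under finite intersection extends uniquely. First I would record that this really is a basis closed under finite intersection: De~Morgan together with $\closedop A\cup\closedop B=\closedop{A\cup B}$ gives
\[
 \closedop{U^c}^c\cap\closedop{V^c}^c=\left(\closedop{U^c}\cup\closedop{V^c}\right)^c=\closedop{U^c\cup V^c}^c=\closedop{(U\cap V)^c}^c,
\]
and the restriction maps of $F^\nu$ are well defined because $V\s U$ forces $U^c\s V^c$, hence $\{A:A\notclose V^c\}\s\{A:A\notclose U^c\}$. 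The reformulation to keep in mind is that $A\notclose U^c$ is equivalent to $\closedop A\s\closedop{U^c}^c$, so $F^\nu(\closedop{U^c}^c)$ is the inverse limit of the $F(A)$ over those $A$ whose trace $\closedop A$ is contained in the open set; this is the coarse analogue of building sections from compactly contained pieces.

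The heart of the argument is a dictionary between open covers of basic opens in $\nu(X)$ and coarse covers in $X_{ct}$. Suppose $\closedop{U^c}^c=\bigcup_{j\in J}\closedop{U_j^c}^c$ and let $A\notclose U^c$ be an index, so $\closedop A\s\bigcup_j\closedop{U_j^c}^c$. Since $\closedop A$ is a closed, hence compact, subset of the compact space $\nu(X)$, finitely many members $\closedop{U_{j_1}^c}^c,\dots,\closedop{U_{j_k}^c}^c$ already cover $\closedop A$. I would then check that $\{U_{j_l}\cap A\}_l$ coarsely covers $A$ (equivalently $A\notclose(\bigcup_l U_{j_l})^c$, which follows from $\closedop A\cap\bigcap_l\closedop{U_{j_l}^c}=\emptyset$) and apply Lemma~\ref{lem:refinement} to shrink it to a coarse cover $V_1,\dots,V_k$ of $A$ with $V_l\notclose U_{j_l}^c$, i.e.\ $\closedop{V_l}\s\closedop{U_{j_l}^c}^c$. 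The refinement is what makes each $V_l$ a legitimate index for the restricted section over $\closedop{U_{j_l}^c}^c$.

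With this dictionary, separation and gluing for $F^\nu$ become transcriptions of the sheaf property of $F$ on $X_{ct}$. For separation, if $s=(\varphi_A)_A$ restricts to $0$ on every $\closedop{U_j^c}^c$, then for fixed $A$ the components satisfy $\varphi_A|_{V_l}=\varphi_{V_l}=0$ by compatibility and vanishing of the restriction; since $V_1,\dots,V_k$ coarsely cover $A$, the identity axiom for $F$ forces $\varphi_A=0$. For gluing, given compatible $s_j=(\varphi^j_B)_B$, I would for each $A$ take the $\varphi^{j_l}_{V_l}\in F(V_l)$, verify they agree on the overlaps $V_l\cap V_m$ using the compatibility $s_{j_l}|_{\closedop{(U_{j_l}\cap U_{j_m})^c}^c}=s_{j_m}|_{\closedop{(U_{j_l}\cap U_{j_m})^c}^c}$ together with the sheaf property of $F$, and glue them to a unique $\varphi_A\in F(A)$. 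The remaining routine checks are that $\varphi_A$ is independent of the chosen finite subcover and refinement, that $(\varphi_A)_A$ is compatible under the restrictions of $F$ (hence lies in the inverse limit defining $F^\nu(\closedop{U^c}^c)$), and that $s|_{\closedop{U_j^c}^c}=s_j$.

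I expect the main obstacle to be the correspondence step: justifying that a topological cover of $\closedop A$ by basic opens yields, after Lemma~\ref{lem:refinement}, a genuine coarse cover of $A$ by sets each coarsely contained in the corresponding $U_{j_l}$. This needs care in passing between closeness relative to $A$ and relative to $X$, and in translating $\closedop A\cap\bigcap_l\closedop{U_{j_l}^c}=\emptyset$ into the boundedness condition defining a coarse cover. Once the dictionary is established, both sheaf axioms for $F^\nu$ follow directly from the identity and gluing axioms for $F$.
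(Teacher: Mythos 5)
Your overall skeleton---verify the axioms on the basis of basic opens, use compactness to extract a finite subcover, refine via Lemma~\ref{lem:refinement} so that the refined sets become legitimate indices of the inverse limits, then transcribe the identity and gluing axioms of $F$ along coarse covers of each $A$---is exactly the paper's. But the step converting the topological cover into a coarse cover, which is where the real content sits, fails as you justify it. You claim that ``$\{U_{j_l}\cap A\}_l$ coarsely covers $A$'' is \emph{equivalent} to $A\notclose(\bigcup_l U_{j_l})^c$. The second condition is strictly weaker: take $A=X=\Z$, $U_1$ the even and $U_2$ the odd integers. Then $(U_1\cup U_2)^c=\emptyset$, so $A\notclose(U_1\cup U_2)^c$ holds vacuously, yet $\Delta_1[U_1^c]\cap\Delta_1[U_2^c]=\Z$ is unbounded, so $U_1,U_2$ is not a coarse cover. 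A coarse cover demands that $\bigcap_l E[U_{j_l}^c]$ be bounded for \emph{every} entourage $E$, where the witnessing points of $U_{j_l}^c$ may vary with $l$; your condition only excludes unbounded closeness to the single set $\bigcap_l U_{j_l}^c$. What is true is that the genuinely stronger hypothesis $\closedop A\cap\bigcap_l\closedop{U_{j_l}^c}=\emptyset$ (intersection of the traces, not the trace of the intersection) forces a coarse cover---but that is precisely the nontrivial content of \cite[Lemma~32]{Hartmann2019a}, which the paper invokes at this point and which does not follow from the two bulleted observations about $\closedop\cdot$; you would have to reprove it.

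The second gap is the one you flagged yourself but did not close: running Lemma~\ref{lem:refinement} inside the space $A$ yields $V_l$ coarsely disjoint from $A\setminus U_{j_l}$, not from the global complement $U_{j_l}^c=(X\setminus A)\cup(A\setminus U_{j_l})$; a subset of $A$ can be far from $A\setminus U_{j_l}$ yet close to $X\setminus A$, and then $V_l$ is not an admissible index for $F^\nu(\closedop{U_{j_l}^c}^c)$ after all. The paper resolves both problems with one move: it adjoins the basic open $(\closedop A)^c$ (which is $\closedop{W^c}^c$ for $W=A^c$) to obtain a cover of all of $\nu(X)$, applies \cite[Lemma~32]{Hartmann2019a} to get a coarse cover $U_1,\ldots,U_n,A^c$ of the whole space $X$, and then applies Lemma~\ref{lem:refinement} globally to get $V_1,\ldots,V_n,B$ with $V_i\notclose U_i^c$ taken in $X$ (so the $V_i$ are legitimate indices) and $B\notclose A$, whence $V_1,\ldots,V_n$ automatically form a coarse cover of $A$. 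With that replacement, your transcription of the separation and gluing arguments, including the overlap check on $V_l\cap V_m$ via $V_l\cap V_m\notclose U_{j_l}^c\cup U_{j_m}^c$, goes through as written.
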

\begin{proof}
 Let $\closedop{U^c}^c=\bigcup_i\closedop{U_i^c}^c$ be a cover of a basic open set by basic open sets.
  
Let $A\subseteq X$ be a subset with $A\notclose U^c$. Then
 \[
  \closedop A\cap \bigcap_i \closedop{U_i^c}=\closedop A\cap\closedop{U^c}=\emptyset
 \]
Thus $(\closedop{U_i^c}^c)_i,\closedop A^c$ is an open cover of $\nu(X)$. Since $\nu(X)$ is compact there exists a finite subcover $\closedop{U_1^c}^c\cup\cdots\cup\closedop{U_n^c}^c\cup\closedop A^c$. By~\cite[Lemma~32]{Hartmann2019a} the subsets $U_1,\ldots,U_n,A^c$ are a coarse cover of $X$. By Lemma~\ref{lem:refinement} there exists a finite coarse cover $V_1,\ldots,V_n,B$ of $X$ such that $V_i\notclose U_i^c$ for every $i$ and $B\notclose A$. Then $V_1,\ldots,V_n$ are a coarse cover of $A$.

We show the base identity axiom: Let $\phi,\psi\in F^\nu(\closedop{U^c}^c)$ be elements with $\phi|_{\closedop{U_i^c}^c}=\psi|_{\closedop{U_i^c}^c}$ for every $i$. Since $\phi_{V_i}=\psi_{V_i}$ for every $i$ the identity axiom on coarse covers implies $\phi_A=\psi_A$.
  
 Now we show the base gluablity axiom. Let $\phi_i\in F^\nu(\closedop{U_i^c}^c)$ be a section for every $i$ such that $\phi_i|_{\closedop{(U_i\cap U_j)^c}^c}=\phi_j|_{\closedop{(U_i\cap U_j)^c}^c}$ for every $i,j$. Then the $(\phi_i)_{V_i}$ glue to a section $\phi_A$ on $A$ by the gluablity axiom on coarse covers. 
\end{proof}

If $\alpha:F\to F'$ is a morphism of sheaves on $X_{ct}$ then we define for every basic open $\closedop{U^c}^c$:
\begin{align*}
 \alpha^\nu(\closedop{U^c}^c):F^\nu(\closedop{U^c}^c)&\to F'^\nu(\closedop{U^c}^c)\\
 (\varphi_A)_{A\notclose U^c}&\mapsto(\alpha(A)(\varphi_A))_{A\notclose U^c}.
\end{align*}
This definition makes sense since $B\s A$ implies that $(\alpha(A)(\varphi_A))|_B=\alpha(B)(\varphi_A|_B)$. Thus $(\alpha(A)(\varphi_A))_{A\notclose U^c}$ is an element in $\varprojlim_{A\notclose U^c}F'(A)$. By gluing along basic open covers we obtain for every open $\pi\s \nu(X)$ a map $\alpha^\nu(\pi):F^\nu(\pi)\to F'^\nu(\pi)$. We show $\alpha^\nu:F^\nu\to F'^\nu$ is a morphism of sheaves: If $V\s U$ is an inclusion of subsets and $(\varphi_A)_{A\notclose U^c}\in F^\nu(\closedop{U^c}^c)$ an element then
\begin{align*}
 \alpha^\nu(\closedop{V^c}^c)\circ\cdot|_{\closedop{V^c}^c}((\varphi_A)_{A\notclose U^c})
 &=\alpha^\nu(\closedop{V^c}^c)((\varphi_A)_{A\notclose V^c})\\
 &=(\alpha(A)(\varphi_A))_{A\notclose V^c})\\
 &=\cdot|_{\closedop{V^c}^c}((\alpha(A)(\varphi_A))_{A\notclose U^c}))\\
 &=\cdot|_{\closedop{V^c}^c}\circ\alpha^\nu(\closedop{U^c}^c)((\varphi_A)_{A\notclose U^c}).
\end{align*}
Moreover $id_F^\nu=id_{F^\nu}$ and $(\alpha\circ \beta)^\nu=\alpha^\nu\circ\beta^\nu$. Thus we have proved $\cdot^\nu$ is a functor. Namely if $\sheaves{X_{ct}}$ denotes the category of sheaves on $X_{ct}$ and $\sheaves{\nu(X)}$ denotes the category of sheaves on $\nu(X)$ then 
\[
\cdot^\nu:\sheaves{X_{ct}}\to \sheaves{\nu(X)}
\]
is a functor between categories of sheaves.

\begin{lem}
\label{lem:normalprop}
 Let $X$ be a proper metric space. If $U_1^c,\ldots, U_n^c,A\s X$ are subsets with $\closedop{U_1^c}\cap\cdots\cap\closedop{U_n^c}\cap \closedop A=\emptyset$ then there exists a subset $U^c\s X$ with
 \[
  \closedop{(U_1\cap U)^c}^c\cup\cdots\cup\closedop{(U_n\cap U)^c}^c=\closedop{U^c}^c
 \]
 an open cover and $U^c\notclose A$.
\end{lem}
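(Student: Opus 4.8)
The plan is to reformulate both the hypothesis and the conclusion in the language of coarse covers, and then to produce the separating set $U^c$ by a single application of the refinement Lemma~\ref{lem:refinement}. Writing $U_{n+1}:=A^c$ so that $\closedop{U_{n+1}^c}=\closedop{(A^c)^c}=\closedop A$, the hypothesis reads $\bigcap_{i=1}^{n+1}\closedop{U_i^c}=\emptyset$; taking complements gives $\bigcup_{i=1}^{n+1}\closedop{U_i^c}^c=\nu(X)$, which by \cite[Lemma~32]{Hartmann2019a} means precisely that $U_1,\dots,U_n,A^c$ is a coarse cover of $X$. On the conclusion side I would use $(U_i\cap U)^c=U_i^c\cup U^c$ together with the union rule $\closedop{C}\cup\closedop{D}=\closedop{C\cup D}$ to get $\closedop{(U_i\cap U)^c}=\closedop{U_i^c}\cup\closedop{U^c}$, and then De Morgan to obtain
\[
\bigcup_{i=1}^n\closedop{(U_i\cap U)^c}^c=\Bigl(\bigcup_{i=1}^n\closedop{U_i^c}^c\Bigr)\cap\closedop{U^c}^c.
\]
Thus the asserted equality with $\closedop{U^c}^c$ holds if and only if $\closedop{U^c}^c\s\bigcup_i\closedop{U_i^c}^c$, i.e. if and only if $K:=\bigcap_{i=1}^n\closedop{U_i^c}\s\closedop{U^c}$. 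In other words the whole lemma reduces to producing a subset $U^c\s X$ with $K\s\closedop{U^c}$ and $U^c\notclose A$.

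For the construction I would apply Lemma~\ref{lem:refinement} to the coarse cover $U_1,\dots,U_n,A^c$, obtaining a coarse cover $V_1,\dots,V_n,B$ of $X$ with $V_i\notclose U_i^c$ for $i\le n$ and $B\notclose (A^c)^c=A$. Then I simply set $U^c:=B$ (so $U=B^c$). The condition $U^c\notclose A$ is then immediate from $B\notclose A$. For the containment $K\s\closedop B$ I would argue entirely in the lattice of basic closed sets: from $V_i\notclose U_i^c$ we have $\closedop{V_i}\cap\closedop{U_i^c}=\emptyset$, hence $\closedop{U_i^c}\s\closedop{V_i}^c\s\closedop{V_i^c}$, the last inclusion using $\closedop{V_i}\cup\closedop{V_i^c}=\closedop X=\nu(X)$; and translating the coarse cover $V_1,\dots,V_n,B$ back through \cite[Lemma~32]{Hartmann2019a} gives $\bigcap_i\closedop{V_i^c}\cap\closedop{B^c}=\emptyset$, whence $\bigcap_i\closedop{V_i^c}\s\closedop{B^c}^c\s\closedop B$. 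Combining these, $K=\bigcap_i\closedop{U_i^c}\s\bigcap_i\closedop{V_i^c}\s\closedop B=\closedop{U^c}$, as required, and the display above then collapses to $\closedop{U^c}^c$.

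The genuinely load-bearing step — and the only place where the hypothesis is actually used — is the translation of the hypothesis into a coarse cover and the ensuing application of Lemma~\ref{lem:refinement}; everything else is formal. The two small identities $\closedop C^c\s\closedop{C^c}$ and $\closedop{C^c}^c\s\closedop C$, both immediate from $\closedop C\cup\closedop{C^c}=\nu(X)$, do all the remaining work. I expect the main care to be needed not in any deep argument but in keeping the two complement operations $\closedop{C}^c$ (complement in $\nu(X)$) and $\closedop{C^c}$ (basic closed set of the complement subset) rigorously apart, since the argument repeatedly passes between them.
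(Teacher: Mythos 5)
Your proof is correct, but it takes a genuinely different route from the paper's. The paper argues topologically on the corona: since $\nu(X)$ is compact Hausdorff it carries a unique compatible proximity, so the disjoint closed sets $\pi:=\closedop{U_1^c}\cap\cdots\cap\closedop{U_n^c}$ and $\closedop A$ satisfy $\pi\notclose \closedop A$, which yields $U^c,B\s X$ with $\pi\s\closedop{U^c}$, $\closedop A\s\closedop B$ and $U^c\notclose B$ (hence $U^c\notclose A$); the concluding lattice computation $\bigcup_i\closedop{(U_i\cap U)^c}^c=\pi^c\cap\closedop{U^c}^c=\closedop{U^c}^c$ is then the same one you carry out. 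You instead stay on the coarse side: translate the hypothesis into the coarse cover $U_1,\ldots,U_n,A^c$ of $X$ via \cite[Lemma~32]{Hartmann2019a}, apply Lemma~\ref{lem:refinement} once, and take $U^c:=B$. This is precisely the technique the paper itself deploys in the proof of Proposition~\ref{prop:catofsh1}, so your argument is more uniform with the surrounding text and avoids the abstract uniqueness-of-proximity input; the paper's proof buys brevity at the cost of that topological fact, while yours is more self-contained given Lemma~\ref{lem:refinement}.

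One caveat: you invoke Lemma~32 in the converse direction (coarse cover of $X$ implies the basic open sets cover $\nu(X)$), whereas this paper only ever cites it in the direction from an open cover of the corona to a coarse cover. If the cited lemma is an equivalence you are done; if not, the direction you need is true and follows inside your own framework: any coarse cover covers $X$ up to a bounded set (take $E=\Delta_0$ in the definition), and bounded sets have empty trace on $\nu(X)$, so applying Lemma~\ref{lem:refinement} once more to $V_1,\ldots,V_n,B$ gives $W_i\notclose V_i^c$ and $C\notclose B^c$ with $\nu(X)=\bigcup_i\closedop{W_i}\cup\closedop{C}\s\bigcup_i\closedop{V_i^c}^c\cup\closedop{B^c}^c$, which is exactly the emptiness $\bigcap_i\closedop{V_i^c}\cap\closedop{B^c}=\emptyset$ you need. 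In fact you can streamline further: since $V_1,\ldots,V_n,B$ already covers $X$ up to a bounded set, $\nu(X)=\bigcup_i\closedop{V_i}\cup\closedop{B}$, so $K\s\bigcap_i\closedop{V_i}^c\s\closedop{B}$ directly, bypassing both the converse of Lemma~32 and the intermediate inclusions $\closedop{V_i}^c\s\closedop{V_i^c}$.
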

\begin{proof}
 Since $\nu(X)$ is compact there only exists one proximity relation on $\nu(X)$ which induces the topology on $\nu(X)$. Thus the relation $\delta$ defined by $\pi_1\delta\pi_2$ if $\overline{\pi_1}\cap\overline{\pi_2}\not=\emptyset$ and the relation induced by $\close $ on the quotient coincide. Since both $\pi:=\closedop{U_1^c}\cap\cdots\cap\closedop{U_n^c}$ and $\closedop{A}$ are closed sets we obtain
 \[
  \overline\pi\cap \overline{\closedop A}=\pi\cap \closedop A=\emptyset.
 \]
Thus $\pi\notclose \closedop{A}$. Then there exist $U^c,B\s X$ with $\pi\s \closedop{U^c},\closedop A\s \closedop{B}$ and $U^c\notclose B$. This in particular implies that $U^c\notclose A$. Then
\begin{align*}
 \closedop{(U_1\cap U)^c}\cup\cdots\cup\closedop{(U_n\cap U)^c}
 &=\bigcup_i\closedop{U_1^c\cup U^c}^c\\
 &=\left(\bigcap_i \left(\closedop{U_i^c}\cup \closedop {U^c}\right)\right)^c\\
 &=\bigcup_i\closedop{U_i^c}^c\cap \closedop{U^c}^c\\
 &=\pi^c\cap\closedop{U^c}^c\\
 &=\closedop{U^c}^c.
\end{align*}
\end{proof}

Given a subset $A\s X$ of a proper metric the relations $U^c\notclose A$ and $V^c\notclose A$ imply $U^c\cup V^c\notclose A$. Thus $(U^c)_{U^c\notclose A}$ is a directed poset. If $G$ is a sheaf on $\nu(X)$ we define
\[
  \hat G(A):=\varinjlim_{A\notclose U^c}G(\closedop{U^c}^c)
\]
If $A\s B$ then $U^c\notclose B$ implies $U^c\notclose A$. Thus we can define a well defined restriction map $\hat G(B)\to \hat G(A)$ which maps $[\varphi_{U^c}]$ to $[\varphi_{U^c}]$. This makes $\hat G$ a presheaf on $X_{ct}$.

A sheaf $F$ on $X_{ct}$ is called reflective if for every subset $A\s X$ the canonical map 
\[
 \varinjlim_{A\notclose U^c} F(U)\to F(A)
\]
is an isomorphism.

\begin{prop}
\label{prop:catofsh2}
 Let $X$ be a proper metric space. If $G$ is a sheaf on $\nu(X)$ then $\hat G$ is a reflective sheaf on $X_{ct}$.
\end{prop}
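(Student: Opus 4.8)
The plan is to read off, for a subset $A\subseteq X$, what the colimit defining $\hat G(A)$ actually computes. The index set $\{U^c:A\notclose U^c\}$ ranges exactly over the subsets whose basic closed set misses $\closedop A$, so $\closedop{U^c}^c$ runs through basic open neighbourhoods of the compact set $\closedop A\subseteq\nu(X)$. By Lemma~\ref{lem:normalprop} (normality of $\nu(X)$) these are cofinal among all open neighbourhoods of $\closedop A$, so $\hat G(A)$ is the group of germs of sections of $G$ along $\closedop A$, and the restriction map $\hat G(A)\to\hat G(A')$ attached to $A'\subseteq A$ (hence $\closedop{A'}\subseteq\closedop A$) is germ restriction. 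The one device I would use repeatedly is the inclusion
\[
 \closedop{A_i}\cap\closedop{A_j^c}^c\subseteq\closedop{A_i\cap A_j},
\]
valid for arbitrary subsets: a point of $\closedop{A_j^c}^c$ is not close to $A_j^c$, so any net in $A_i$ converging to it in the Higson compactification is eventually deep inside $A_j$ and thus eventually lies in $A_i\cap A_j$.

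I would prove reflectivity first, as it concerns a single $A$. Writing out the double colimit $\varinjlim_{A\notclose U^c}\hat G(U)=\varinjlim_{A\notclose U^c}\varinjlim_{U\notclose W^c}G(\closedop{W^c}^c)$, I first note that $A\notclose U^c$ together with $U\notclose W^c$ forces $\closedop A\subseteq\closedop{U^c}^c\subseteq\closedop U\subseteq\closedop{W^c}^c$, hence $A\notclose W^c$; this sends each pair $(U,W^c)$ to the index $W^c$ of the colimit computing $\hat G(A)$ and realises the canonical map. For the reverse cofinality, given $W^c$ with $A\notclose W^c$ the closed sets $\closedop A,\closedop{W^c}$ are disjoint, so by the interpolation (normality) property of $\nu(X)$ used in Lemma~\ref{lem:refinement} and Lemma~\ref{lem:normalprop} there is $U$ with $A\notclose U^c$ and $U\notclose W^c$. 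The two directed systems are then mutually cofinal and the canonical map is an isomorphism.

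For the sheaf property, let $A_1,\dots,A_n$ be a coarse cover of $A$. The correspondence between coarse covers and open covers of the corona (\cite{Hartmann2019a}) turns this into the finite open cover $\{O_i\}$ of $\closedop A$ with $O_i:=\closedop{A_i^c}^c$; here $O_i\subseteq\closedop{A_i}$ and $O_i\cap O_j=\closedop{(A_i\cap A_j)^c}^c\subseteq\closedop{A_i\cap A_j}$. For the identity axiom, a germ $s\in\hat G(A)$ with $s|_{A_i}=0$ is represented by a section of $G$ vanishing on a neighbourhood of each $\closedop{A_i}$, hence on a neighbourhood of $\closedop A=\bigcup_i\closedop{A_i}$; Lemma~\ref{lem:normalprop} lets me shrink this to a basic open, so $s=0$. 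For gluing, represent compatible sections $s_i$ by $\sigma_i\in G(O_i')$ with $O_i'\supseteq\closedop{A_i}$; compatibility gives $\sigma_i=\sigma_j$ on a neighbourhood of $\closedop{A_i\cap A_j}$, in particular on $O_i\cap O_j$, so the $\sigma_i|_{O_i}$ glue by the sheaf property of $G$ to a section $\tau$ on $\bigcup_i O_i$.

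The step I expect to be the crux is checking that $\tau$ really restricts back to $s_i$, i.e. that $\tau=\sigma_i$ on a full neighbourhood of $\closedop{A_i}$ and not merely on the open piece $O_i$. The difficulty is that $\closedop{A_i}$ typically exceeds $O_i$ by its coarse boundary $\closedop{A_i}\cap\closedop{A_i^c}$, and there $\tau$ is assembled from the other $\sigma_j$; moreover $\closedop{A_i}\cap\closedop{A_j}$ can be strictly larger than $\closedop{A_i\cap A_j}$, so one cannot hope the $\sigma_i$ agree on all of $\closedop{A_i}\cap\closedop{A_j}$. This is resolved by the inclusion isolated in the first paragraph: for $\eta\in\closedop{A_i}$ choose $j$ with $\eta\in O_j$; then $\eta\in\closedop{A_i}\cap\closedop{A_j^c}^c\subseteq\closedop{A_i\cap A_j}$, so $\tau=\sigma_j=\sigma_i$ near $\eta$. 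Hence $\tau=\sigma_i$ on a neighbourhood of every point of $\closedop{A_i}$, giving $s|_{A_i}=s_i$. Together with the reflectivity already established this proves that $\hat G$ is a reflective sheaf; the only routine part left is the bookkeeping of descending unions of basic opens to single basic opens via Lemma~\ref{lem:normalprop}.
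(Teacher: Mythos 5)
Your proposal is correct, and it follows the paper's overall architecture: the same germ description of $\hat G(A)$ along $\closedop A$, the same use of Lemma~\ref{lem:normalprop} to trade a finite union of basic opens for a single basic open, and the same mutual-cofinality-via-interpolation argument for reflectivity (the paper's reflectivity step is exactly your interpolation of an in-between set $T$, as in the proof of Lemma~\ref{lem:refinement}). Where you genuinely depart from the paper is in the gluing axiom, and your instinct about the crux is sound. The paper represents $s_i$ by $(s_i)_{U_i^c}$, produces via Lemma~\ref{lem:normalprop} a cover $\closedop{(U_i\cap U)^c}^c$ of $\closedop{U^c}^c$, and then simply invokes the gluability axiom of $G$ --- without verifying agreement on the overlaps $\closedop{(U_i\cap U)^c}^c\cap\closedop{(U_j\cap U)^c}^c$. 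This is not automatic: the hypothesis only yields agreement on some basic neighbourhood of $\closedop{A_i\cap A_j}$, whereas those overlaps contain $\closedop{A_i}\cap\closedop{A_j}$, which can strictly exceed $\closedop{A_i\cap A_j}$. Your device closes this step: gluing over the canonical cover $O_i=\closedop{A_i^c}^c$, the overlaps $O_i\cap O_j=\closedop{(A_i\cap A_j)^c}^c$ lie inside \emph{every} basic neighbourhood $\closedop{W^c}^c$ of $\closedop{A_i\cap A_j}$, since $W^c\notclose A_i\cap A_j$ forces $\closedop{W^c}\subseteq\closedop{A_i\cap A_j}^c\subseteq\closedop{(A_i\cap A_j)^c}$; so germ compatibility transfers automatically, and the degenerate case is handled too (if $A_i\cap A_j$ is bounded then $O_i\cap O_j=\emptyset$). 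Your key inclusion $\closedop{A_i}\cap\closedop{A_j^c}^c\subseteq\closedop{A_i\cap A_j}$ is correct, though the clean derivation is $\closedop{A_i}=\closedop{A_i\cap A_j}\cup\closedop{A_i\cap A_j^c}$ from the union formula for basic closed sets, rather than your net heuristic. What your route costs is the forward direction of the coarse-cover/corona-cover correspondence (to know the $O_i$ cover $\closedop A$), i.e.\ the ``if and only if'' form of \cite[Lemma~32]{Hartmann2019a}, where the paper only needs the converse; and your final bookkeeping (finding a basic open neighbourhood of $\closedop{A_i}$ inside the open set where $\tau=\sigma_i$) uses that basic opens form a base, compactness, and Lemma~\ref{lem:normalprop}, exactly as you indicate. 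In sum: same skeleton, but your handling of the overlap compatibility and of restricting the glued section back to $s_i$ is a genuine refinement that makes rigorous a step the paper leaves implicit.
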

\begin{proof}
 Let $A_1,\ldots, A_n$ be a coarse cover of $A\subseteq X$. 
 
 We prove the identity axiom. Let $s\in \hat G(A)$ be a section with $s|_{A_i}=0$ for every $i$. Then there exists $U_i^c\notclose A_i$ with $s_{U_i^c}=t_{U_i^c}$ in $G(\closedop{U_i^c}^c)$. Since $\closedop{A_i}\cap\closedop{U_i^c}=\emptyset$ we obtain
 \begin{align*}
  \closedop{U_1^c}\cap\cdots\cap\closedop{U_n^c}\cap \closedop A
  &\s \closedop{A_1}^c\cap\cdots\cap\closedop{A_n}^c\cap \closedop A\\
  &=(\closedop {A_1}\cup\cdots\cup\closedop{A_n})^c\cap \closedop A\\
  &=\emptyset.
 \end{align*}
By Lemma~\ref{lem:normalprop} there is some $U\subseteq X$ with $\closedop{(U_1\cap U)^c}^c\cup\cdots\cup\closedop{(U_n\cap U)^c}^c= \closedop{U^c}^c$ an open cover and $U^c\notclose A$. Thus the identity axiom for open covers implies $s_{U^c}=0$. This proves $s=0$ in $\hat G(A)$.

Now we prove the gluablity axiom. Let $s_i\in \hat G(A_i)$ be a section for every $i$ with $s_i|_{A_j}=s_j|_{A_i}$ for every $i,j$. Suppose $s_i$ are represented by $(s_i)_{U_i^c}\in G(\closedop{U_i^c}^c)$ with $U_i^c\notclose A_i$. As in the first part of this proof there is some subset $U^c\subseteq X$ with
\[
 \closedop{(U_1\cap U)^c}^c\cup\cdots\cup\closedop{(U_n\cap U)^c}^c=\closedop{U^c}^c
\]
and $U^c\notclose A$. By the gluablity axiom on open covers the $(s_i)_{U_i^c}$ glue to a section $s_{U^c}\in G(\closedop{U^c}^c)$ which represents a section $s\in \hat G(A)$.

Now we show $\hat G$ is reflective. For every $A,U\s X$ with $A\notclose U^c$ there exists $T\s X$ with $A\notclose T^c$ and $T\notclose U^c$. Thus
\[
 \varinjlim_{A\notclose T^c}\hat G(T)=\varinjlim_{A\notclose T^c}\varinjlim_{T\notclose U^c}G(\closedop{U^c}^c)\to \varinjlim_{A\notclose U^c}G(\closedop{U^c}^c)
\]
is an isomorphism.
\end{proof}

If $\beta:G\to G'$ is a morphism of sheaves on $\nu(X)$ and $A\s X$ a subset then 
\begin{align*}
 \hat \beta(A):\hat G(A)&\to \hat G'(A)\\
 [\varphi_{U^c}]&\mapsto [\beta(\closedop{U^c}^c)(\varphi_{U^c})]
\end{align*}
is well defined since $V^c\s U^c$ implies 
\[
 \beta(\closedop{U^c}^c)(\varphi_{V^c}|_{\closedop{U^c}^c})=(\beta(\closedop{V^c}^c))(\varphi_{V^c}))|_{\closedop{U^c}^c}. 
\]
We show $(\hat \beta(A))_{A\s X}$ defines a morphism of sheaves on $X_{ct}$. If $B\s A$ then
\begin{align*}
 \beta(B)\circ\cdot|_B[\varphi_{U^c}]_A
 &=[\beta(\closedop{U^c}^c)(\varphi_{U^c})]_B\\
 &=\cdot|_B\circ\beta(A)[\varphi_{U^c}]_A.
\end{align*}
Moreover $\widehat{id_G}=id_{\hat G}$ and $\widehat{\alpha\circ\beta}=\hat\alpha\circ\hat\beta$. Thus we showed $\hat\cdot $ is a functor between categories of sheaves:
\[
 \hat\cdot:\sheaves{\nu(X)}\to \sheaves{X_{ct}}.
\]

In fact its image is contained in the full subcategory of reflective sheaves.
\begin{thm}
\label{thm:catofsheaves}
If $X$ is a proper metric space the category of reflective sheaves $\sheaves{X_{ct}}$ on $X$ is equivalent to the category of sheaves $\sheaves{\nu(X)}$ on $\nu(X)$ via $\cdot^\nu,\hat\cdot$.
\end{thm}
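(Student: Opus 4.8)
The plan is to show that the adjoint pair $\hat\cdot\dashv\cdot^\nu$ restricts to the asserted equivalence. Concretely I would exhibit a counit $\epsilon_F\colon \widehat{F^\nu}\to F$ and a unit $\eta_G\colon G\to (\hat G)^\nu$, and then prove that $\eta_G$ is an isomorphism for \emph{every} sheaf $G$ on $\nu(X)$, while $\epsilon_F$ is an isomorphism exactly for \emph{reflective} $F$. Since $\hat G$ is always reflective by Proposition~\ref{prop:catofsh2}, these two facts together give that $\cdot^\nu$ and $\hat\cdot$ are mutually inverse equivalences between $\sheaves{\nu(X)}$ and the full subcategory of reflective sheaves in $\sheaves{X_{ct}}$. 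On the defining (co)limits the maps are forced: on a subset $A$ the counit $\epsilon_F(A)\colon \varinjlim_{A\notclose U^c}\varprojlim_{B\notclose U^c}F(B)\to F(A)$ is induced by projecting each inverse system to its $B=A$ component (legitimate since $A\notclose U^c$ in every term), and on a basic open $\closedop{U^c}^c$ the unit $\eta_G(\closedop{U^c}^c)\colon G(\closedop{U^c}^c)\to \varprojlim_{A\notclose U^c}\hat G(A)$ is induced by the structure maps $G(\closedop{U^c}^c)\to \hat G(A)=\varinjlim_{A\notclose V^c}G(\closedop{V^c}^c)$ coming from the term $V=U$. Naturality and compatibility with restriction maps is a routine diagram chase once the maps are written down, and I would dispatch it first.

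For the counit I would use reflectivity together with the coarse sheaf axiom. The key observation is that $B\notclose U^c$ forces $B\cap U^c$ to be bounded, so $\{B\cap U\}$ is a one-element coarse cover of $B$ and hence the restriction $F(B)\cong F(B\cap U)$ is an isomorphism; composing with $F(U)\to F(B\cap U)$ yields maps $F(U)\to \varprojlim_{B\notclose U^c}F(B)=F^\nu(\closedop{U^c}^c)$ compatible with the colimit over $U$. By the very definition of reflectivity, $F(A)=\varinjlim_{A\notclose U^c}F(U)$, so these maps assemble to a candidate inverse $F(A)\to \widehat{F^\nu}(A)$ of $\epsilon_F(A)$; checking that both composites are the identity is again a directed (co)limit manipulation built on the fact that bounded perturbations are invisible to $F$. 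Conversely, if $\epsilon_F$ is an isomorphism then $F\cong \widehat{F^\nu}$ is reflective by Proposition~\ref{prop:catofsh2}, so reflectivity is precisely the range of validity of $\epsilon_F$.

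The unit is where the real work lies. Interpreting $A\notclose U^c$ as $\closedop A\s \closedop{U^c}^c$, the object $\hat G(A)=\varinjlim_{A\notclose V^c}G(\closedop{V^c}^c)$ is the colimit of the sections of $G$ over the basic open neighbourhoods of the closed set $\closedop A$; as $A$ ranges over $\{A:A\notclose U^c\}$ the closed sets $\closedop A$ form a directed family exhausting the open set $\closedop{U^c}^c$. Proving that $\eta_G$ is an isomorphism therefore amounts to recovering the sections of $G$ on an open set from its sections on an exhausting family of closed subsets, and \emph{this is the main obstacle}. I would control it exactly as in Proposition~\ref{prop:catofsh1}: compactness of $\nu(X)$ reduces every cover of $\closedop{U^c}^c$ to a finite one, Lemma~\ref{lem:normalprop} and Lemma~\ref{lem:refinement} convert the resulting finite open covers into coarse covers with the required separation $V_i\notclose U_i^c$, and the dictionary between coarse covers of $X$ and finite open covers of $\nu(X)$ then lets the sheaf axioms for $G$ supply injectivity of $\eta_G$ (identity axiom) and surjectivity (gluing).

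Assembling the pieces, the counit gives $\hat\cdot\circ\cdot^\nu\cong\mathrm{id}$ on reflective sheaves, the unit gives $\cdot^\nu\circ\hat\cdot\cong\mathrm{id}$ on $\sheaves{\nu(X)}$, and together with the naturality checks this is exactly the claimed equivalence of categories.
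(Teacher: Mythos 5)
Your skeleton is the same as the paper's: you define the unit $\eta_G\colon G\to(\hat G)^\nu$ and counit $\epsilon_F\colon\widehat{F^\nu}\to F$ by exactly the maps the paper uses, show $\eta_G$ is an isomorphism for all $G$ and $\epsilon_F$ for reflective $F$, and invoke Proposition~\ref{prop:catofsh2} for $\hat G$ being reflective. Your one-element coarse cover observation ($B\notclose U^c$ forces $B\cap U^c$ bounded, hence $F(B)\cong F(B\cap U)$, which defines the structure maps $F(U)\to F(B)$) is correct and in fact makes explicit something the paper leaves implicit. But both of your key verifications have genuine gaps. For the unit: the step ``compactness of $\nu(X)$ reduces every cover of $\closedop{U^c}^c$ to a finite one'' fails, because $\closedop{U^c}^c$ is an \emph{open} subset of the compact space $\nu(X)$ and need not be compact, so finite subcovers need not exist. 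Moreover your detour through Lemma~\ref{lem:refinement} and Lemma~\ref{lem:normalprop} to convert open covers into coarse covers points the wrong way here: that conversion is what one needs when verifying axioms of a sheaf on $X_{ct}$ (as in Proposition~\ref{prop:catofsh1}), whereas $G$ is an honest topological sheaf on $\nu(X)$ whose identity and gluing axioms apply to \emph{arbitrary} open covers. The paper's actual mechanism is different and needs no compactness: from the data one extracts, for each $A\notclose U^c$, a set $U_A^c\notclose A$, and then shows by a direct computation (using $\closedop{U_A^c}\subseteq\closedop{A}^c$ and $\bigcup_{A\notclose U^c}\closedop A\cup\closedop{U^c}=\nu(X)$) that $(\closedop{U_A^c}^c)_{A\notclose U^c}$ is an open cover of $\closedop{U^c}^c$, to which $G$'s axioms are applied directly. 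Your proposal neither proves this covering fact nor offers a substitute for the failed finite-subcover step.

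For the counit, the claim that ``checking that both composites are the identity is again a directed (co)limit manipulation'' conceals the real content. The composite $\epsilon_F\circ c=\mathrm{id}_{F(A)}$ is indeed formal, but $c\circ\epsilon_F=\mathrm{id}$ is equivalent to injectivity of $\epsilon_F(A)$, i.e.\ to the statement: a compatible family $(s_{A'})_{A'\notclose U^c}\in\varprojlim_{A'\notclose U^c}F(A')$ with $s_A=0$ already has $s_{A'}=0$ for \emph{all} $A'\notclose W^c$, for some $W^c\supseteq U^c$ with $A\notclose W^c$. This does not follow from limit bookkeeping: an element of the inverse limit need not arise by restricting a single section on $U$, so agreement of two families at the single index $A$ says nothing formal about their other components (think of $A$ and some $A'$ coarsely disjoint, both deep inside $U$). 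The paper spends a genuine, geometric argument exactly here --- the support of the corresponding section of $F^\nu$ is closed in $\nu(X)$ and misses $\closedop A$, whence an open $\closedop{V^c}^c\supseteq\closedop A$ on which the family vanishes --- and some such input (normality/interpolation of the $\notclose$ relation on the corona) is indispensable. As written, your proposal asserts the conclusion without the argument, so this step is a gap, not a routine check.
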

\begin{proof}
 Let $G$ be a sheaf on $\nu(X)$. Then for every $U\s X$ there is a morphism
 \begin{align*}
  \eta_G(\closedop{U^c}^c):G(\closedop {U^c}^c)&\to (\hat G)^\nu(\closedop{U^c}^c)\\
  s&\mapsto ([s]_U)_{A\notclose U^c}
 \end{align*}
which naturally defines a morphism of sheaves $\eta_G$. We show this map is bijective. Suppose $s\in G(\closedop{U^c}^c)$ is mapped by $\eta_G(\closedop{U^c}^c)$ to $0$. Then for every $A\notclose U^c$ there exists $U_A^c\notclose A$ with $s|_{U_A}=0$. Then
\begin{align*}
 \bigcap_{A\notclose U^c}\closedop{U_A^c}\cap\closedop{U^c}^c 
 &\s \bigcap_{A\notclose U^c}\closedop A^c\cap\closedop{U^c}^c\\
 &=\left(\bigcup_{A\notclose U^c}\closedop A\cup \closedop {U^c}\right)^c\\
 &=(\nu(X))^c\\
 &=\emptyset.
\end{align*}
Thus $(\closedop{U_A^c}^c)_{A\notclose U^c}$ is an open cover of $\closedop{U^c}^c$. The global axiom on open covers of $\nu(X)$ shows that $s=0$ on $\closedop{U^c}^c$. Suppose $([t_{A}]_{U_A})_{A\notclose U^c}$ is an element in $(\hat G)^\nu(\closedop{U^c}^c)$. Then, as before, $(\closedop{U_A^c}^c)_{A\notclose U^c}$ is an open cover of $\closedop{U^c}^c$. Then $(t_A)_{A\notclose U^c}\in \prod_{A\notclose U^c}G(\closedop{U_A^c}^c)$ is an element with
\begin{align*}
 \eta_G(\closedop{U_A^c}^c)(t_A)
 &=([t_A]_{U_A})_{A'\notclose U_A^c}\\
 &=\cdot|_{\closedop{U_A^c}^c}([t_{A'}]_{U_{A'}})_{A'\notclose U^c}
\end{align*}
 Thus $\eta_G$ is surjective. It is easy to see that $\eta:G\mapsto \eta_G$ defines a natural transformation. This way we showed that $\eta$ is a natural isomorphism between $id_{\sheaves{\nu(X)}}$ and $\cdot^\nu\circ\hat\cdot$.
 
 Now let $F$ be a sheaf on $X_{ct}$. Then for every $A\s X$ there is a map 
 \begin{align*}
  \epsilon_F(A):\widehat{(F^\nu)}(A)&\to F(A)\\
  [(s_{A'})_{A'\notclose U^c}]_U&\mapsto s_A.
 \end{align*}
This map is well defined since $[(s_{A'})_{A'\notclose U^c}]=0$ implies there is some $U^c\notclose A$ such that for every $A'\notclose U^c$ the section $s_{A'}=0$ vanishes. This in particular implies that $s_A=0$. Now we show $\epsilon_F(A)$ is injective. If $[(s_{A'})_{A'\notclose U}]_U$ maps to $0$ by $\varepsilon_F(A)$ then the support $\supp((s_{A'})_{A'\notclose U^c})$ is closed in $\nu(X)$. Thus there exists an open $\closedop{V^c}^c\z \closedop{A}$ on which $(s_{A'})_{A'\notclose V^c}$ vanishes. Thus $(s_{A'})_{A'\notclose U^c}$ represents the $0$ element. Now we show $\epsilon_F(A)$ is surjective if $F$ is reflective. If $s_A\in F(A)$ then there exists some $A\notclose U^c$ and $s\in F(U)$ such that $s|_A=s_A$. Then $[(s|_{A'})_{A'\notclose U^c}]_U$ maps by $\epsilon_F(A)$ to $s_A$ 
\end{proof}

\begin{thm}
 If $F$ is a reflective sheaf on $X_{ct}$ then $\check H_{ct}^q(X,F)=\check H^q(\nu(X),F^\nu)$. The right side denotes sheaf cohomology on $\nu(X)$.
\end{thm}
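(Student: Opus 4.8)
The plan is to transport an injective resolution from the corona side to $X_{ct}$ through the functor $\hat\cdot$ and to recognise the resulting complex as an acyclic resolution that computes $\check H^q_{ct}(X,F)$. The starting point is a reindexing observation: since $A\notclose U^c$ is equivalent to $\closedop A\subseteq\closedop{U^c}^c$, the group $\hat G(A)=\varinjlim_{A\notclose U^c}G(\closedop{U^c}^c)$ is just the colimit of $G$ over a filtered family of open neighbourhoods $\closedop{U^c}^c$ of the closed set $\closedop A\subseteq\nu(X)$. Two consequences are immediate. First, $\closedop X=\nu(X)$, so taking $A=X$ forces the index system to consist of the single open $\nu(X)$ and gives $\hat G(X)=G(\nu(X))$; hence $\Gamma_X\circ\hat\cdot=\Gamma_{\nu(X)}$ exactly. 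Second, the equivalence of Theorem~\ref{thm:catofsheaves} supplies a natural isomorphism $\widehat{F^\nu}\cong F$ whenever $F$ is reflective.

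Next I would verify that $\hat\cdot\colon\sheaves{\nu(X)}\to\sheaves{X_{ct}}$ is exact. Left exactness is free: $\hat G(A)$ is a filtered colimit of the left exact section functors $G\mapsto G(\closedop{U^c}^c)$, and filtered colimits of abelian groups are exact. The substance is right exactness, i.e.\ that a sheaf epimorphism $G\to G''$ on $\nu(X)$ induces an epimorphism $\hat G\to\hat{G''}$ of sheaves on $X_{ct}$. Here one must not ask for surjectivity of the sectionwise maps $\hat G(A)\to\hat{G''}(A)$ — that can fail — but only for local surjectivity with respect to coarse covers. Given a germ $s''\in\hat{G''}(A)$ represented on an open neighbourhood of $\closedop A$, local surjectivity of $G\to G''$ on $\nu(X)$ furnishes an open cover of some neighbourhood of $\closedop A$ over whose members $s''$ lifts; using compactness of $\nu(X)$ together with Lemma~\ref{lem:refinement} to convert this open cover into a coarse cover $A_1,\dots,A_n$ of $A$, one obtains lifts of $s''|_{A_i}$ in $\hat G(A_i)$. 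Thus $\hat G\to\hat{G''}$ is surjective as a morphism of sheaves. This passage between open covers of the compact space $\nu(X)$ and coarse covers of $X$ is the main obstacle of the proof and is where the geometric content sits.

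It then remains to feed injectives through $\hat\cdot$. Injective sheaves on the topological space $\nu(X)$ are flabby, and I claim $\hat G$ is flabby on $X_{ct}$ whenever $G$ is flabby. Indeed, for $B\s A$ the restriction $\hat G(A)\to\hat G(B)$ receives the map $\hat G(X)=G(\nu(X))\to\hat G(B)$, which is already surjective: any germ of $G$ near $\closedop B$ is represented on an open $W\supseteq\closedop B$, and flabbiness of $G$ lifts that representative to a global section of $G$, whose germ near $\closedop A$ restricts to the given one. By the lemma that flabby sheaves on $X_{ct}$ are acyclic, each such $\hat G$ is $\Gamma_X$-acyclic.

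Finally I would assemble these pieces. Choose an injective resolution $0\to F^\nu\to I^0\to I^1\to\cdots$ in $\sheaves{\nu(X)}$. Applying the exact functor $\hat\cdot$ and using $\widehat{F^\nu}\cong F$ yields a resolution $0\to F\to\hat I^0\to\hat I^1\to\cdots$ of $F$ by $\Gamma_X$-acyclic sheaves on $X_{ct}$. By the acyclic resolution lemma this computes $\check H^q_{ct}(X,F)$ as the cohomology of the complex $0\to\hat I^0(X)\to\hat I^1(X)\to\cdots$; but $\hat I^j(X)=I^j(\nu(X))$ by the first step, so this is precisely the complex computing $\check H^q(\nu(X),F^\nu)$. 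Hence $\check H^q_{ct}(X,F)=\check H^q(\nu(X),F^\nu)$, as claimed.
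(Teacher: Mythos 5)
Your proof is correct and follows essentially the same route as the paper: transport a resolution of $F^\nu$ through the functor $\hat\cdot$, show that $\hat\cdot$ is exact and carries flabby sheaves on $\nu(X)$ to flabby (hence acyclic) sheaves on $X_{ct}$, identify $\widehat{F^\nu}\cong F$ via Theorem~\ref{thm:catofsheaves}, and use $\hat G(X)=G(\nu(X))$ to compare global sections. The only variations are streamlinings rather than a different route: you take an injective resolution where the paper takes a flabby one, and you obtain one half of exactness formally (left exactness from filtered colimits of section functors, and $\operatorname{im}\hat\alpha\subseteq\ker\hat\beta$ from $\hat\beta\circ\hat\alpha=0$) where the paper verifies that inclusion by a hands-on germ argument via Lemma~\ref{lem:normalprop}, while the genuinely geometric step --- converting open covers of the compact corona into coarse covers using compactness and Lemma~\ref{lem:refinement} --- is identical in both.
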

\begin{proof}
We first show if $G$ is a flabby sheaf on $\nu(X)$ then $\hat G$ is a flabby sheaf on $X_{ct}$. For every $U\s X$ the restriction $G(\nu(X))\to G(\closedop{U^c}^c)$ is surjective. If $[s_U]_U\in \varinjlim_{A\notclose U^c} G(\closedop{U^c}^c)$ then there exists some $s_X\in G(\nu(X))=\hat G(X)$ with $s_X|_U=s_U$.

Now we show $\hat\cdot$ is an exact functor. Let
\[
 G_1\xrightarrow{\alpha}G_2\xrightarrow{\beta}G_3
\]
be an exact sequence of sheaves on $\nu(X)$. If $[s_U]\in\ker \hat\beta(A)$ then there exists $A\notclose V^c$ with $\beta(\closedop{V^c}^c(s_U)|_{\closedop{V^c}^c}=0$. Since $\ker\beta\s \im\alpha$ there is a cover $\bigcup\closedop{U_i^c}^c=\closedop{V^c}^c$ and $s_{U_i}\in G_1(\closedop{U_i^c}^c)$ with $\alpha(\closedop{U_i^c}^c)(s_{U_i})=s_V|_{\closedop{U_i^c}^c}$. Then $(\closedop{U_i^c}^c)_i$ cover $\closedop A$. Since $\closedop A$ is compact a finite subcover $\closedop{U_1^c}^c,\cdots, \closedop{U_n^c}^c$ will do. Then $U_1,\cdots,U_n$ form a coarse cover of $A$. By Lemma~\ref{lem:refinement} there exists a coarse cover $V_1,\cdots,V_n$ of $A$ with $V_i\notclose U_i^c$. Then $[s_{U_i}]\in \prod\hat G_1(V_i)$ maps to $[s_U]$ by $\hat \alpha$. Thus we have proved $\ker \hat\beta\s \im\hat \alpha$. If conversely $[\alpha(\closedop{U_i^c}^c)(s_{U_i})]_i\in\prod\hat G_2(A_i)$ with $A_i$ a coarse cover of $A$ and $A_i\notclose U_i^c$ represents an element in $\im\hat \alpha(A)$ then in particular $(\closedop {U_i^c}^c$ is an open cover containing $\closedop A$. By Lemma~\ref{lem:normalprop} there exists $A\notclose U^c$ with $\closedop{U^c}^c$ covered by $\closedop{U_i^c}^c$. Since $\ker\beta\z \im\alpha$ there exists $s_U\in\ker\beta(\closedop{U^c}^)$ with $s_U|_{\closedop{U_i^c}^c}=s_{U_i}$. Then $[s_U]_U\in \ker\hat\beta(A)$ has the property that $[s_U]_U|_{A_i}=\hat\alpha(A_i)([s_{U_i}]_{U_i})$. Thus $\ker\hat\beta\z\im\hat \alpha$. This way we proved $\ker \hat \beta=\im\hat \alpha$, the sequence $\hat G_1\to \hat G_2\to \hat G_3$ is exact at $\hat G_2$.

If $F$ is a reflective sheaf on $X_{ct}$ then there exists a flabby resolution 
 \[
  0\to F^\nu\to G_0\to G_1\to\cdots 
 \]
of sheaves on $\nu(X)$. Since $\hat\cdot$ is an exact functor we obtain an exact resolution of flabby sheaves
\[
 0\to \widehat{F^\nu}\to \hat G_0\to \hat G_1\to \cdots
\]
with an isomorphism $\widehat{F^\nu}\to F$. The global section functor on the reduced sequences gives the same result.
\end{proof}

\begin{prop}
 If $A$ is an abelian group and $X$ a proper metric space then $A_X\!^\nu=A_{\nu(X)}$ on $\nu(X)$ and $\hat A_{\nu(X)}=A_X$ on $X_{ct}$ are isomorpic. In particular $A_X$ is a reflective sheaf.                                                                     
\end{prop}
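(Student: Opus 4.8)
The plan is to reduce both claims to a single sheaf on $X_{ct}$. For a subset $A\s X$ let $\mathcal L(A)$ denote the group of locally constant, $A$-valued functions on the closed set $\closedop A\s\nu(X)$, with restriction along $\closedop{A'}\s\closedop A$ for $A'\s A$. I would show that both $A_X$ and $\widehat{A_{\nu(X)}}$ are naturally isomorphic to $\mathcal L$; the identity $\widehat{A_{\nu(X)}}\cong A_X$ then drops out, and the two displayed assertions together with reflectivity follow formally from the machinery already built in this chapter.

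First I would treat $\widehat{A_{\nu(X)}}$. By definition $\widehat{A_{\nu(X)}}(A)=\varinjlim_{A\notclose U^c}A_{\nu(X)}(\closedop{U^c}^c)$, and since $A_{\nu(X)}$ is a constant sheaf on the topological space $\nu(X)$, its value on an open set is exactly the group of locally constant functions there. Because $A\notclose U^c$ is equivalent to $\closedop A\s\closedop{U^c}^c$, the index runs over basic open neighbourhoods of the closed set $\closedop A$, and restriction to $\closedop A$ gives a map $\widehat{A_{\nu(X)}}(A)\to\mathcal L(A)$, which I claim is an isomorphism. For injectivity, if a locally constant $h$ on $\closedop{U^c}^c$ vanishes on $\closedop A$, then $\{h=0\}$ is open and contains $\closedop A$; covering $\closedop A$ by finitely many basic opens inside $\{h=0\}$ (compactness) and applying Lemma~\ref{lem:normalprop} produces $A\notclose V^c$ with $\closedop{V^c}^c\s\{h=0\}$, so $[h]=0$. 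For surjectivity, a locally constant $f$ on $\closedop A$ has finite image and partitions $\closedop A$ into finitely many clopen pieces $K_1,\dots,K_m$; these are pairwise disjoint compact sets, so normality of $\nu(X)$ separates them by disjoint open sets $O_i$, and $h=a_i$ on $O_i$ is locally constant on the neighbourhood $O_1\cup\cdots\cup O_m$ of $\closedop A$ with $h|_{\closedop A}=f$. Shrinking to a basic open neighbourhood by Lemma~\ref{lem:normalprop} again exhibits $f$ in the colimit. Both restriction maps being restriction of locally constant functions along $\closedop{A'}\s\closedop A$, this yields $\widehat{A_{\nu(X)}}\cong\mathcal L$ naturally.

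Next I would identify $A_X\cong\mathcal L$. A section of the constant sheaf over $A$ is represented by a coarse disjoint union $A=A_1\sqcup\cdots\sqcup A_n$ with labels $a_i$ (Lemma~\ref{lem:h0} and the zeroth-cohomology description). Sending this datum to the function equal to $a_i$ on $\closedop{A_i}\cap\closedop A$ is well defined, since $A_i\notclose A_j$ forces $\closedop{A_i}\cap\closedop{A_j}=\emptyset$ while $\closedop{A_1}\cup\cdots\cup\closedop{A_n}=\closedop A$, so the $\closedop{A_i}\cap\closedop A$ form a clopen partition of $\closedop A$. Conversely, by the correspondence between coarse covers and closed covers of the corona (\cite[Lemma~32]{Hartmann2019a}, as in Proposition~\ref{prop:catofsh1}) every clopen partition of $\closedop A$ lifts to a coarse disjoint union of $A$, so the assignment is bijective and compatible with restriction. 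Composing the two isomorphisms gives $\widehat{A_{\nu(X)}}\cong A_X$.

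Finally, reflectivity of $A_X$ is immediate once $\widehat{A_{\nu(X)}}\cong A_X$, since every sheaf in the image of $\widehat{\,\cdot\,}$ is reflective (Proposition~\ref{prop:catofsh2}). For the remaining identity, the equivalence of Theorem~\ref{thm:catofsheaves} supplies a natural isomorphism $A_{\nu(X)}\xrightarrow{\ \sim\ }(\widehat{A_{\nu(X)}})^\nu$; applying $\cdot^\nu$ to $\widehat{A_{\nu(X)}}\cong A_X$ then yields $A_X{}^\nu\cong(\widehat{A_{\nu(X)}})^\nu\cong A_{\nu(X)}$. I expect the main obstacle to be the identification $A_X\cong\mathcal L$, that is, matching the purely coarse description of constant-sheaf sections with locally constant functions on the corona, together with the extension step in the surjectivity argument for $\widehat{A_{\nu(X)}}$, where compactness of $\closedop A$, normality of $\nu(X)$, and Lemma~\ref{lem:normalprop} are all needed to pass from $\closedop A$ to a basic open neighbourhood.
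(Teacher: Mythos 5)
Your proposal is correct and follows essentially the same route as the paper: both proofs identify $\hat A_{\nu(X)}(A)$ and $A_X(A)$ with locally constant $A$-valued functions on $\closedop A$, using the $1{:}1$ correspondence between coarse disjoint unions of $A$ and clopen partitions of its corona, and both then deduce reflectivity from Proposition~\ref{prop:catofsh2} and obtain $A_X{}^\nu=A_{\nu(X)}$ by applying $\cdot^\nu$ via the equivalence of Theorem~\ref{thm:catofsheaves}. The only cosmetic difference is that where the paper invokes the standard identification $\varinjlim_{A\notclose U^c}A_{\nu(X)}(\closedop{U^c}^c)=\nu(i)^{-1}A_{\nu(X)}(\closedop A)$ for the closed subset $\nu(A)\s \nu(X)$ and realizes sections of $A_X$ as continuous extensions of Higson functions, you verify the same identifications by hand, using compactness of $\closedop A$, normality of $\nu(X)$ and Lemma~\ref{lem:normalprop}.
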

\begin{proof}
If $B\s X$ is a subset and $i:B\to X$ denotes the inclusion then $\nu(i):\nu(B)\to \nu(X)$ is an inclusion of a closed subset. We have $\hat A_{\nu(X)}(B)=\varinjlim_{B\notclose U^c} A_{\nu(X)}(\closedop{U^c}^c)=\nu(i)^{-1}A_{\nu(X)}(\closedop B)=A_{\nu(B)}(\nu(B))$. There is a bijective map $A_X(B)\to A_{\nu(B)}(\nu(B))$ defined as follows: A section in $A_X(B)$ is represented by a Higson function $\varphi:B\to A$ where $A$ is equipped with the word length metric. This function can be extended to the boundary $\nu(B)$ since it is Higson. Then this function is a continuous map $\nu(B)\to A$ where $A$ is equipped with the discrete topology. Since a coarse disjoint union of $B$ is 1:1 with disjoint unions of $\nu(B)$ by clopen sets we obtain a bijection. This tells us $A_X=\hat A_{\nu(X)}$. Applying the $\cdot^\nu$ functor we obtain a bijection $A_X^\nu=\hat A_{\nu(X)}^\nu=A_{\nu(X)}$.
\end{proof}

\begin{thm}
\label{thm:asdimvanish}
 If $X$ is a proper metric space with $\asdim(X)=n$ then $\check H^q_{ct}(X,\mathcal F)=0$ for every reflective sheaf $\mathcal F$ and $q>n$.
\end{thm}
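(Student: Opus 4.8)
The plan is to push the computation to the Higson corona and then invoke classical dimension theory for sheaves on compact Hausdorff spaces. Since $\mathcal F$ is reflective, the preceding theorem yields
\[
 \check H^q_{ct}(X,\mathcal F)=\check H^q(\nu(X),\mathcal F^\nu),
\]
so it suffices to prove that $\check H^q(\nu(X),\mathcal G)=0$ for every sheaf $\mathcal G$ on $\nu(X)$ and every $q>n$. As $\nu(X)$ is compact Hausdorff, hence paracompact and normal, this follows from the classical vanishing theorem (Godement, Bredon): on a paracompact space of Lebesgue covering dimension at most $n$ every sheaf is acyclic in degrees $>n$. Thus the entire statement reduces to the purely topological estimate $\dim\nu(X)\le n$.

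To obtain $\dim\nu(X)\le\asdim(X)=n$ I would use the dictionary between coarse covers of $X$ and open covers of $\nu(X)$ already exploited in this chapter. Every finite open cover of the compact space $\nu(X)$ refines to one by basic open sets $\closedop{U_j^c}^c$, and by the cited correspondence the sets $U_1,\ldots,U_k$ then form a coarse cover of $X$. Being a coarse cover means that for each $S\ge 0$ the set $\Delta_S[U_1^c]\cap\cdots\cap\Delta_S[U_k^c]$ is bounded; outside this bounded exceptional set every $S$-ball lies in some $U_j$, so the cover has an arbitrarily large ``Lebesgue number at infinity.'' A witness for $\asdim(X)\le n$ provides, at a suitable scale, a uniformly bounded cover split into $n+1$ families, each disjoint at that scale; after grouping and transporting to the boundary this produces an open refinement of $\{\closedop{U_j^c}^c\}$ of multiplicity at most $n+1$. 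Since the open cover was arbitrary, $\dim\nu(X)\le n$. This is the inequality $\dim\nu(X)\le\asdim(X)$ of Dranishnikov, which I would cite rather than reprove in full.

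The main obstacle is precisely this dimension inequality. The delicate point is that asymptotic dimension is a statement at a \emph{fixed} finite scale: it yields families that are merely $R$-disjoint, whereas distinctness of the corresponding closed sets $\closedop{V}$ in $\nu(X)$ — that is, $V\not\curlywedge V'$ — requires separation at \emph{all} scales. Moreover the individual members of an asymptotic cover are bounded, so each has empty trace $\closedop{V}=\emptyset$ and contributes nothing to the corona on its own; one must first aggregate and thicken the members before passing to the boundary. Reconciling the finite-scale disjointness underlying asymptotic dimension with the all-scales proximity governing $\nu(X)$ is the crux, and is handled by an inverse-system (anti-\v Cech) argument in the cited literature. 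Once $\dim\nu(X)\le n$ is established, the reduction of the first paragraph closes the proof.
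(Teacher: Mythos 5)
Your proposal is correct and takes essentially the same route as the paper: reduce to the corona via the reflective-sheaf comparison theorem, invoke Godement's vanishing theorem on the paracompact space $\nu(X)$, and cite Dranishnikov's inequality $\dim(\nu X)\le\asdim(X)$ rather than reproving it. Your additional sketch of why the dimension inequality is delicate is accurate but supplementary; the paper likewise disposes of that step by citation.
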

\begin{proof}
 The space $\nu(X)$ is paracompact since it is compact. By \cite[Chapitre~II.5.12]{Godement1958} it is sufficient to show that the covering dimension of $\nu(X)$ does not exceed $n$. By\cite[Theorem~1.1]{Dranishnikov1998} we obtain $\dim(\nu X)\le \asdim(X)$. Thus the result follows.
\end{proof}

This result can be used to finalize our computations in Chaper~\ref{sec:computations}.

\begin{thm}
\label{thm:hqztree}
 If $A$ is a finite abelian group then 
 \[
  \check H_{ct}^q(\Z_{\ge0};A)=\begin{cases}
                              A & q=0\\
                              0 &\mbox{otherwise.}
                             \end{cases}
 \]
If $F_n$ denotes the free group with $n<\infty$ generators and $A$ is finite again then
\[
 \check H_{ct}^q(F_n;A)=\begin{cases}
                         \bigoplus_\N A & q=0\\
                         0 & \mbox{otherwise.}
                        \end{cases}
\]
\end{thm}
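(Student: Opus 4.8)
The plan is to assemble the three computational inputs already established---the degree-$0$ value governed by the number of ends, the vanishing of $\check H^1_{ct}$, and the vanishing above the asymptotic dimension---so that together they pin down every cohomology group, degree by degree.

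First I would handle $\Z_{\ge 0}$. It is a proper metric space (closed balls are finite) and is one-ended, so $e(\Z_{\ge 0})=1$; the zeroth-cohomology/ends formula from the survey then gives $\check H^0_{ct}(\Z_{\ge 0};A)=A^{e(\Z_{\ge 0})}=A$. In degree $1$, Lemma~\ref{lem:h1z} yields $\check H^1_{ct}(\Z_{\ge 0};A)=0$ for finite $A$ directly. For degrees $q\ge 2$ I would invoke Theorem~\ref{thm:asdimvanish}: the constant sheaf $A_X$ is reflective (established in the proposition preceding that theorem), and $\Z_{\ge 0}$, being an unbounded, coarsely connected subspace of $\Z$, has $\asdim(\Z_{\ge 0})=1$ by monotonicity of asymptotic dimension under subspaces; hence $\check H^q_{ct}(\Z_{\ge 0};A)=0$ for $q>1$. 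Combining the three ranges closes the first claim.

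For $F_n$ with $n\ge 2$ (so that $F_n$ has infinitely many ends) I would reduce to a tree by coarse invariance. The Cayley graph $T$ of $F_n$ with respect to a free generating set is a simplicial tree, and the inclusion of the vertex set $F_n\hookrightarrow T$ is a coarse equivalence; since close coarse maps induce the same homomorphism in cohomology (the functoriality established earlier), a coarse equivalence induces an isomorphism, so $\check H^q_{ct}(F_n;A)\cong\check H^q_{ct}(T;A)$. The same three-step argument now applies: $T$ has infinitely many ends, so the ends formula gives $\check H^0_{ct}(F_n;A)=\bigoplus_\N A$; Theorem~\ref{thm:h1tree} gives $\check H^1_{ct}(F_n;A)=0$; and since $\asdim(T)=1$ and $A_X$ is reflective, Theorem~\ref{thm:asdimvanish} forces $\check H^q_{ct}(F_n;A)=0$ for $q\ge 2$.

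Since the individual inputs are already proved, the real content is checking hypotheses. The step requiring the most care is verifying the asymptotic-dimension bound $\asdim=1$ for both spaces---monotonicity under the subspace $\Z_{\ge 0}\subseteq\Z$, and the classical fact that a tree has asymptotic dimension $1$---together with properness, since these are exactly what Theorem~\ref{thm:asdimvanish} consumes. I also want to make sure the passage from $F_n$ to its tree Cayley graph is a genuine coarse equivalence, so that the coarse invariance of $\check H^*_{ct}$ applies. None of these is computationally heavy, but they must all line up for the case distinction to resolve.
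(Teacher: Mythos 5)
Your proposal is correct and follows essentially the same route as the paper's proof: degree $0$ from the ends formula, degree $1$ from Lemma~\ref{lem:h1z} and Theorem~\ref{thm:h1tree} respectively, and vanishing in degrees $q\ge 2$ from Theorem~\ref{thm:asdimvanish} together with $\asdim=1$ for $\Z_{\ge0}$ and for trees. You merely make explicit some hypothesis checks the paper leaves implicit (reflectivity of $A_X$, the coarse equivalence between $F_n$ and its Cayley tree, and monotonicity of $\asdim$), which is fine.
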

\begin{proof}
 The cohomology in degree $0$ is clear since $\Z_{\ge0}$ has one end and $F_n$ has infinitely many ends. In degree $1$ cohomology with finite coefficients vanishes by Lemma~\ref{lem:h1z} and Theorem~\ref{thm:h1tree}, respectively. Now both $\Z_{\ge 0}$ and trees have asymptotic dimension $1$ \cite{Nowak2012}. Then Theorem~\ref{thm:asdimvanish} implies the higher cohomology groups vanish.
\end{proof}

\section{Coarse homotopy invariance}
This chapter proves Theorem~\ref{thm:introHomotopy}.
\begin{lem}
\label{lem:immediate}
 If $\alpha_0,\ldots,\alpha_q:X\to Y$ are coarse maps which are close to each other and $\varphi\in CX_b^q(Y,A)$ is a cochain then
 \begin{enumerate}
  \item $\varphi\circ (\alpha_0\times\cdots\times \alpha_q)$ is a cochain;
  \item the composition of $(y_0,\ldots,y_{q-1})\mapsto (y_0,\ldots,y_i,y_i,\ldots,y_{q-1})$ with the map $\varphi$ is a cochain in $CX_b^{q-1}(Y,A)$;
  \item the composition of $(y_0,\ldots,y_{q-2})\mapsto (y_0,\ldots,y_i,y_i,\ldots,y_j,y_j,\ldots,y_{q-2})$ with $\varphi$ is a cochain in $CX_b^{q-2}(Y,A)$.
 \end{enumerate}
\end{lem}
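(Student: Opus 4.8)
The plan is to verify, for each of the three composite functions, the two defining properties of an element of $CX_b^{\bullet}(Y,A)$: that it is \emph{blocky} (constant on products of blocks from some finite partition of the domain) and that its support is \emph{cocontrolled}. I would handle the three items in parallel, since parts (2) and (3) are the special cases in which the product of close coarse maps is replaced by a degeneracy-type map doubling one or two coordinates; once (1) is understood the other two are formally identical but easier.

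For blockiness I would argue as follows. Write $\varphi\in C^q_{A_1,\dots,A_n}(Y,A)$ for a finite partition $A_1\sqcup\cdots\sqcup A_n=Y$ on whose products $\varphi$ is constant. Then $\varphi\circ(\alpha_0\times\cdots\times\alpha_q)$ is constant on every product $\alpha_0^{-1}(A_{i_0})\times\cdots\times\alpha_q^{-1}(A_{i_q})$. To exhibit a single finite partition of $X$, I take the common refinement of the $q+1$ partitions $\{\alpha_j^{-1}(A_i)\}_i$; it has at most $n^{q+1}$ pieces and the composite is constant on products of its blocks. For the degeneracy maps in (2) and (3) the same partition $A_1,\dots,A_n$ works directly, since repeating a coordinate merely repeats a block index.

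For cocontrolled support the key technical point is a single estimate: the relevant map sends $\Delta_R^{q}$ into $\Delta_S^{q}$ for a suitable $S=S(R)$. In case (1), given $(x_0,\dots,x_q)\in\Delta_R^q$, I would bound
\[
 d(\alpha_i(x_i),\alpha_j(x_j)) \le d(\alpha_i(x_i),\alpha_i(x_j)) + d(\alpha_i(x_j),\alpha_j(x_j)),
\]
controlling the first summand by coarse uniformity of $\alpha_i$ (from $d(x_i,x_j)\le R$) and the second by closeness of $\alpha_i$ and $\alpha_j$ (so that $\alpha_i\times\alpha_j(\Delta_0)$ is an entourage in $Y$). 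For the degeneracies of (2) and (3) this is immediate, as doubling a coordinate only adds pairs at distance $0$, so $\Delta_R^{q-1}$ (resp. $\Delta_R^{q-2}$) maps into $\Delta_R^{q}$. Granting the estimate, if a point lies both in the support of the composite and in some $\Delta_R$, its image lies in $\supp\varphi\cap\Delta_S^q$, which is bounded because $\varphi$ has cocontrolled support; its projections to the factors are therefore bounded, and coarse properness of each $\alpha_j$ pulls these bounded sets back to bounded sets in $X$ (in cases (2) and (3) each source coordinate already occurs among the target coordinates, so boundedness transfers directly). Hence the support of the composite meets every $\Delta_R$ in a bounded set.

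The main obstacle is the distance estimate in case (1): it is the only step that genuinely uses all three hypotheses simultaneously — coarse uniformity to move a distance bound through a single $\alpha_i$, closeness to pass between the distinct maps $\alpha_i$ and $\alpha_j$, and finally coarse properness to convert boundedness of images back into boundedness of preimages. Parts (2) and (3) then follow by repeating the argument with the degeneracy maps in place of $\alpha_0\times\cdots\times\alpha_q$, for which the estimate is trivial.
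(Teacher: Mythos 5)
Your proposal is correct and takes essentially the same route as the paper's proof: blockiness via the refined partition $\alpha_0^{-1}(A_{i_0})\cap\cdots\cap\alpha_q^{-1}(A_{i_q})$ for (1) and the unchanged partition for the degeneracies, and cocontrolled support by showing the map carries $\Delta_R^q$ into a controlled set (coarse uniformity plus closeness), intersecting with $\supp\varphi$ to get a bounded set, and pulling boundedness back through coarse properness (trivially for $\delta_i$). The only cosmetic difference is that you spell out the triangle-inequality estimate $d(\alpha_i(x_i),\alpha_j(x_j))\le d(\alpha_i(x_i),\alpha_i(x_j))+d(\alpha_i(x_j),\alpha_j(x_j))$, which the paper leaves implicit in its assertion that $(\alpha_0\times\cdots\times\alpha_q)(\Delta_R)$ meets $\supp\varphi$ in a bounded set.
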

\begin{proof}
 We prove 1. first.  Suppose $D\s Y^{q+1}$ is cocontrolled. We show $(\alpha_0\times\cdots\times\alpha_q)^{-1}(D)$ is cocontrolled. If $R\ge 0$ then $(\alpha_0\times\cdots\times \alpha_q)(\Delta_R)$ is cocontrolled since $\alpha_0,\ldots,\alpha_q$ are coarse and close to each other. Thus there exists some $0\in Y$ and $S\ge 0$ with
 \[
  D\cap (\alpha_0\times\cdots\times \alpha_q)(\Delta_R)\s (\Delta_S[0])^{q+1}
 \]
Then
\begin{align*}
 (\alpha_0\times\cdots\times \alpha_q)^{-1}(D)\cap \Delta_R
 &\s (\alpha_0\times\cdots\times \alpha_q)^{-1}(D)\cap (\alpha_0\times\cdots\times \alpha_q)^{-1}\circ (\alpha_0\times\cdots\times \alpha_q)(\Delta_R)\\
 &\s (\alpha_0\times\cdots\times \alpha_q)^{-1}(\Delta_S[0]^{q+1})
\end{align*}
is bounded. Thus $(\alpha_0\times\cdots\times \alpha_q)^{-1}(D)$ is cocontrolled. Now $\supp\varphi$ is cocontrolled. Then
\begin{align*}
 \supp (\varphi\circ (\alpha_0\times\cdots\times \alpha_q))
 &= (\alpha_0\times\cdots\times \alpha_q)^{-1}(\supp \varphi|_{\im(\alpha_0\times\cdots\times \alpha_q)})\\
 &=(\alpha_0\times\cdots\times \alpha_q)^{-1}(\supp\varphi)
\end{align*}
is cocontrolled. If $\varphi\in C_{A_1,\ldots,A_n}(Y,A)$ then $\varphi\circ (\alpha_0\times\cdots\times \alpha_q)\in C_{\alpha_0^{-1}(A_{i_0})\cap\cdots\cap \alpha_q^{-1}(A_{i_q})}(X,A)$. Thus $\varphi\circ(\alpha_0\times\cdots\times \alpha_q)$ is blocky. This way we showed that $\varphi\circ(\alpha_0\times\cdots\times \alpha_q)$ is a cochain. 

Now we prove 2. Name the map $(y_0,\ldots,y_{q-1})\mapsto (y_0,\ldots,y_i,y_i,\ldots,y_{q-1})$ by $\delta_i$. If $R\ge 0$ then there exist $0\in Y, S\ge0$ such that $\supp \varphi|_{\Delta_R}\s (\Delta_S[0])^{q+1}$. Now $\delta_i(\Delta_R^{q-1})\s \Delta_R^q$ and $\delta_i^{-1}((\Delta_S[0])^{q+1})\s (\Delta_S[0])^q$. We use this to prove
\begin{align*}
 \supp(\varphi\circ \delta_i)|_{\Delta_R^{q-1}}
 &=\delta_i^{-1}(\supp\varphi|_{\delta_i(\Delta_R^{q-1})})\\
 &\s \delta_i^{-1}(\supp\varphi|_{\Delta_R^q})\\
 &\s \delta_i^{-1}((\Delta_S[0])^{q+1})\\
 &\s (\Delta_S[0])^q.
 \end{align*}
Thus $\varphi\circ\delta_i$ has cocontrolled support. Moreover if $\varphi\in C^q_{A_1,\ldots,A_n}(Y,A)$ then we have $\varphi\circ\delta_i\in C^{q-1}_{A_1,\ldots,A_n}(Y,A)$. Thus $\varphi \circ\delta_i$ is blocky. This way we showed $\varphi\circ \delta_i$ defines a cochain.

The proof of 3. is similar to the proof of 2. and left to the reader.
\end{proof}

\begin{lem}
\label{lem:technicallemma1}
 If $I$ is a metric space, $h_0,h_1,h_2:I\to I$ coarse maps which are close to each other and $\varphi\in CX_b^q(I,A)$ is a cocycle then 
 \begin{align*}
  \sum_{i=0}^{q-1}(-1)^i( &\varphi(h_0(z_0),\ldots,h_0(z_i),h_1(z_i),\ldots,h_1(z_{q-1}))+\varphi(h_1(z_0),\ldots,h_1(z_i),h_2(z_i),\ldots,h_2(z_{q-1}))\\
  &-\varphi(h_0(z_0),\ldots,h_0(z_i),h_2(z_i),\ldots,h_2(z_{q-1})))
 \end{align*}
 is a coboundary in $CX_b^{q-1}(I,A)$.
\end{lem}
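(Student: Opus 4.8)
The plan is to read the displayed expression as $(P_{01}+P_{12}-P_{02})\varphi$, where $P_{ab}$ is the first-order prism operator attached to the close pair $h_a,h_b$,
\[
 (P_{ab}\varphi)(z_0,\ldots,z_{q-1})=\sum_{i=0}^{q-1}(-1)^i\varphi(h_a(z_0),\ldots,h_a(z_i),h_b(z_i),\ldots,h_b(z_{q-1})),
\]
and then to produce it as the coboundary of a single second-order prism operator. These $P_{ab}$ are exactly the operators used in the chain homotopy customized from \cite[Proposition~5.12]{Roe2003}, so $d_{q-1}P_{ab}+P_{ab}d_q=h_b^*-h_a^*$; since $\varphi$ is a cocycle this only tells us that $(P_{01}+P_{12}-P_{02})\varphi$ is a \emph{cocycle}, which is not enough because $CX_b^\bullet(I,A)$ need not be exact. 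Hence I must exhibit an explicit preimage under $d_{q-2}$.

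First I would define the second-order prism $Q\colon CX_b^q(I,A)\to CX_b^{q-2}(I,A)$ by
\[
 (Q\varphi)(z_0,\ldots,z_{q-2})=\sum_{0\le i\le j\le q-2}(-1)^{i+j}\varphi(h_0(z_0),\ldots,h_0(z_i),h_1(z_i),\ldots,h_1(z_j),h_2(z_j),\ldots,h_2(z_{q-2})).
\]
Each summand arises from $\varphi$ by first precomposing with the tuple of maps applying $h_0$ on the initial block of coordinates, $h_1$ on the middle block and $h_2$ on the terminal block, and then precomposing that with the doubling map repeating the $i$th and $j$th coordinates. Because $h_0,h_1,h_2$ are mutually close coarse maps, this tuple consists of $q+1$ coarse maps close to one another, so part~1 of Lemma~\ref{lem:immediate} makes the first precomposition a cochain in $CX_b^q(I,A)$, and the subsequent doubling is exactly the situation treated in part~3 of Lemma~\ref{lem:immediate}, yielding a cochain in $CX_b^{q-2}(I,A)$. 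A finite sum of cochains being a cochain, $Q\varphi$ is a well-defined element of $CX_b^{q-2}(I,A)$.

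The heart of the matter is the purely combinatorial stacked-prism identity
\[
 d_{q-2}(Q\varphi)+Q(d_q\varphi)=P_{01}\varphi+P_{12}\varphi-P_{02}\varphi,
\]
which does not see the coarse structure at all. I would verify it by expanding $d_{q-2}(Q\varphi)(z_0,\ldots,z_{q-1})=\sum_k(-1)^k(Q\varphi)(z_0,\ldots,\hat z_k,\ldots,z_{q-1})$ and sorting the terms according to the position of the deleted vertex relative to the two seams at $i$ and $j$: deletions strictly interior to one of the three $h$-blocks cancel in adjacent pairs or against the terms of $Q(d_q\varphi)$, whereas the deletions collapsing a seam (the diagonal cases $i=j$ and the boundary cases) survive and reassemble, carrying the orientation signs of $\partial\Delta^2$, into the three edge prisms $P_{01}$, $P_{12}$ and $-P_{02}$. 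Since $\varphi$ is a cocycle we have $d_q\varphi=0$, hence $Q(d_q\varphi)=0$, and the identity collapses to $d_{q-2}(Q\varphi)=P_{01}\varphi+P_{12}\varphi-P_{02}\varphi$. Thus the displayed expression is the coboundary of $Q\varphi$ (up to the global sign determined in the computation) and is therefore a coboundary in $CX_b^{q-1}(I,A)$; in the degenerate range $q\le 1$, where $Q$ lands in the zero group, the same identity forces the expression to vanish by the cocycle condition, which is again a coboundary.

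The main obstacle is the sign bookkeeping in the stacked-prism identity. With three interpolation blocks rather than two, the telescoping cancellation is one level more intricate than in \cite[Proposition~5.12]{Roe2003}, and the delicate point is to confirm that the seam contributions (at $i=j$ and at the two ends of each block) assemble into precisely the edges of $\partial\Delta^2$ with signs $+,+,-$ while every remaining term cancels. Once this identity is pinned down, everything else---the cochain property of $Q\varphi$ and the reduction using $d_q\varphi=0$---is routine given Lemma~\ref{lem:immediate}.
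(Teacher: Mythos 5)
Your proposal is correct and is essentially the paper's own proof: the paper's $\chi_{ij}$, summed with the signs $(-1)^{i+j}$, is exactly your second-order prism $Q$, its cochain property is justified by the same appeal to parts~1 and~3 of Lemma~\ref{lem:immediate}, and the paper establishes the result by expanding $\sum_{0\le i\le j\le q-1}(-1)^{i+j}d_q\varphi(h_0(z_0),\ldots,h_1(z_i),\ldots,h_2(z_j),\ldots)=0$ and sorting the deleted-vertex terms into the three edge prisms plus $d_{q-2}\bigl(\sum_{i\le j}(-1)^{i+j}\chi_{ij}\bigr)$ --- which is precisely your stacked-prism identity specialized to a cocycle, the only discrepancy being that the computation actually yields $Qd_q-d_{q-2}Q=P_{01}+P_{12}-P_{02}$ rather than $d_{q-2}Q+Qd_q$, a global sign you explicitly flagged and which is immaterial to the coboundary conclusion. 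Your added remark handling the degenerate range $q\le 1$ (where the cocycle identity applied to $(h_0(z),h_1(z),h_2(z))$ makes the expression vanish) is correct and goes slightly beyond the paper, which only uses the lemma for $q\ge 2$.
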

\begin{proof}
First we define for $0\le i\le j\le q-2$ a map
\begin{align*}
 \chi_{ij}:I^{q-1}&\to A\\
 (x_0,\ldots,x_{q-2})&\mapsto \varphi(h_0(x_0),\ldots,h_0(x_i),h_1(x_i),\ldots,h_1(x_j),h_2(x_j),\ldots,h_2(x_{q-2}))
\end{align*}
This map defines a cochain by Lemma~\ref{lem:immediate}. 

If $(z_0,\ldots,z_{k-1})\in I^q$ then $(\hat z_k)$ is short for $(z_0,\ldots,\hat z_k,\ldots,z_{k-1})\in I^{q-1}$. We compute 
 \begin{align*}
  0
  &=\sum_{0\le i\le j\le q-1}(-1)^{i+j}d_q\varphi(h_0(z_0),\ldots,h_0(z_i),h_1(z_i),\ldots,h_1(z_j),h_2(z_j),\ldots,h_2(z_{q-1}))\\
  &=\sum_{0\le i\le j\le q-1}\\
  &(\sum_{k=0}^i (-1)^{i+j+k}\varphi(h_0(z_0),\ldots,\widehat{h_0(z_k)},\ldots,h_0(z_i),h_1(z_i),\ldots,h_1(z_j),h_2(z_j),\ldots,h_2(z_{q-1}))\\
  &+\sum_{k=i}^j(-1)^{i+j+k+1}\varphi(h_0(z_0),\ldots,h_0(z_i),h_1(z_i),\ldots,\widehat{h_1(z_k)},\ldots,h_1(z_j),h_2(z_j),\ldots,h_2(z_{q-1}))\\
  &+\sum_{k=j}^{q-1}(-1)^{i+j}\varphi(h_0(z_0),\ldots,h_0(z_i),h_1(z_i),\ldots,h_1(z_j),h_2(z_j),\ldots,\widehat{h_2(z_k)},\ldots,h_2(z_{q-1})))\\
  &=\sum_{0\le i\le j\le q-1}\\
  &(\sum_{k=0}^{i-1}(-1)^{i+j+k}\chi_{i-1,j-1}(\hat z_k)\\
  &+(-1)^{i+j+i}\varphi(h_0(z_0),\ldots,h_0(z_{i-1}),h_1(z_i),\ldots,h_1(z_j),h_2(z_j),\ldots,h_2(z_{q-1}))^{\boxed{$$1$$}}\\
  &+(-1)^{i+j+i+1}\varphi(h_0(z_0),\ldots,h_0(z_i),h_1(z_{i+1}),\ldots,h_1(z_j),h_2(z_j),\ldots,h_2(z_{q-1}))^{\boxed{$$2$$}}\\
  &+\sum_{k=i+1}^{j-1}(-1)^{i+j+k+1}\chi_{i,j-1}(\hat z_k)\\
  &+(-1)^{i+j+j+1}\varphi(h_0(z_0),\ldots,h_0(z_i),h_1(z_i)\ldots,h_1(z_{j-1}),h_2(z_j),\ldots,h_2(z_{q-1}))^{\boxed{$$3$$}}\\
  &+(-1)^{i+j+j} \varphi(h_0(z_0),\ldots,h_0(z_i),h_1(z_i),\ldots,h_1(z_j),h_2(z_{j+1}),\ldots,h_2(z_{q-1}))^{\boxed{$$4$$}}\\
  &+\sum_{k=j+1}^{q-1}(-1)^{i+j+k}\chi_{ij}(\hat z_k))
 \end{align*}
We arrived at a sum where the terms marked with ${\boxed{$$1$$}},{\boxed{$$2$$}},{\boxed{$$3$$}},{\boxed{$$4$$}}$ either contribute the desired terms or cancel each other out. The terms with $\chi_{ij}$ add to a coboundary.

We first look at the terms ${\boxed{$$1$$}},{\boxed{$$2$$}},{\boxed{$$3$$}},{\boxed{$$4$$}}$. If $0\le i<j\le q-1$ the term $\boxed{$$1$$}$ for $i+1,j$ cancels with the term $\boxed{$$2$$}$ for $i,j$. If $0\le i<j\le q-1$ the term $\boxed{$$3$$}$ for $i,j$ cancels with the term $\boxed{$$4$$}$ for $i,j-1$. We did not yet count the terms $\boxed{$$1$$}$ for $i=0,0\le j\le{q-1}$ which give $(-1)^j\varphi(h_1(z_0),\ldots,h_1(z_j),h_2(z_j),\ldots,h_2(z_{q-1}))$. The terms $\boxed{$$4$$}$ for $j=q-1,0\le i\le q-1$ contribute $(-1)^i\varphi(h_0(z_0),\ldots,h_0(z_i),h_1(z_i),\ldots,h_1(z_{q-1}))$. Finally if $0\le i=j\le q-1$ then $\boxed{$$2$$}=\boxed{$$3$$}$ is counted only once and contributes $(-1)^{i+i+i+1} \varphi(h_0(z_0),\ldots,h_0(z_i),h_2(z_i),\ldots,h_2(z_{q-1}))$.

It remains to show that the other terms contribute a coboundary:
\begin{align*}
\cdots
 &=\sum_{0\le i\le j\le q-1}
 (\sum_{k=0}^{i-1}(-1)^{i+j+k}\chi_{i-1,j-1}(\hat z_k)
 +\sum_{k=i+1}^{j-1}(-1)^{i+j+k+1}\chi_{i,j-1}(\hat z_k)\\
 &+\sum_{k=j+1}^{q-1}(-1)^{i+j+k}\chi_{ij}(\hat z_k))\\
 &=\sum_{0\le i\le j\le q-2}(\sum_{k=0}^i(-1)^{i+j+k}\chi_{i,j}(\hat z_k) +\sum_{k=i+1}^{j}(-1)^{i+j+k}\chi_{i,j}(\hat z_k)\\
 &+\sum_{k=j+1}^{q-1}(-1)^{i+j+k}\chi_{ij}(\hat z_k))\\
 &=d_{q-2}(\sum_{0\le i\le j\le q-2}(-1)^{i+j}\chi_{ij})(z_0,\ldots,z_{q-1}).
\end{align*}

\end{proof}

\begin{lem}
\label{lem:technicallemma2}
 If $I$ is a metric space, $\varphi\in CX_b^q(I,A)$ a cochain and $(h_t)_t$ a family of coarse maps $I\to I$ with the properties
 \begin{enumerate}
  \item $d(z_0,z_1)\ge d(h_t(z_0),h_t(z_1))$ for every $z_0,z_1\in I,t$;
  \item $d(z_0,0)=d(h_t(z_0),0)$ for every $z_0\in I,t$ and some $0\in I$;
 \end{enumerate}
 then for every $R\ge0$ there exists $S\ge 0$ such that
 \[
  \supp \varphi(h_{t_0}(\cdot),\ldots,h_{t_q}(\cdot))|_{\Delta^q_R}\s (\Delta_S[0])^{q+1}.
 \]
 independent of $t_0,\ldots,t_q$.
\end{lem}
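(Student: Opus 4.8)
The plan is to reformulate the conclusion and then exhibit the relevant tuples as living inside a single controlled Vietoris--Rips simplex whose radius is independent of the parameters, after which the cocontrolled support of $\varphi$ does the work. Write $w_i:=h_{t_i}(z_i)$. Since property 2 gives $d(z_i,0)=d(w_i,0)$, the asserted inclusion $\supp\varphi(h_{t_0}(\cdot),\ldots,h_{t_q}(\cdot))|_{\Delta_R^q}\s(\Delta_S[0])^{q+1}$ is equivalent to the statement: whenever $(z_0,\ldots,z_q)\in\Delta_R^q$ and $\varphi(w_0,\ldots,w_q)\neq 0$, each image point satisfies $d(w_i,0)\le S$. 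So it suffices to produce, uniformly in $t_0,\ldots,t_q$, a radius $R'$ with $(w_0,\ldots,w_q)\in\Delta_{R'}^q$, and then to read off $S$ from the support of $\varphi$.

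First I would record the single-parameter estimate coming from property 1: if all $t_i$ equal a common $t$, then non-expansiveness gives $d(h_t(z_i),h_t(z_j))\le d(z_i,z_j)\le R$, so $(h_t(z_0),\ldots,h_t(z_q))\in\Delta_R^q$ outright. The genuine difficulty is the mixed evaluation with distinct $t_i$: applying different maps to nearby points can separate their images (distinct rotations of $\R^2$ satisfy 1 and 2 yet spread points around a circle), so a priori $(w_0,\ldots,w_q)$ need not be controlled at all. This is the main obstacle, and it is resolved by the fact that the members of the homotopy family are \emph{uniformly} close --- the same closeness already needed for $\varphi(h_{t_0}(\cdot),\ldots,h_{t_q}(\cdot))$ to be a cochain, via Lemma~\ref{lem:immediate}(1). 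Concretely, letting $C\ge 0$ bound $d(h_t(z),h_{t'}(z))$ over all $z$ and all parameters, I would estimate
\[
 d(w_i,w_j)=d(h_{t_i}(z_i),h_{t_j}(z_j))\le d(h_{t_i}(z_i),h_{t_i}(z_j))+d(h_{t_i}(z_j),h_{t_j}(z_j))\le R+C,
\]
using property 1 on the first summand and closeness on the second. Hence $(w_0,\ldots,w_q)\in\Delta_{R+C}^q$, with $R'=R+C$ depending only on $R$ and the family, not on the chosen $t_i$.

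Finally I would invoke cocontrolled support. Since $\varphi\in CX_b^q(I,A)$, the set $\supp\varphi\cap\Delta_{R+C}^q$ is bounded, so there is $S\ge 0$ with $\supp\varphi\cap\Delta_{R+C}^q\s(\Delta_S[0])^{q+1}$, and $S$ depends only on $R$, $C$ and $\varphi$. For any admissible configuration the tuple $(w_0,\ldots,w_q)$ lies in $\supp\varphi\cap\Delta_{R+C}^q$, whence $d(w_i,0)\le S$ for each $i$; transporting back through property 2 gives $d(z_i,0)=d(w_i,0)\le S$, which is exactly the claimed inclusion, uniformly in $t_0,\ldots,t_q$. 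The only step that is not purely formal is securing the uniform closeness constant $C$ of the whole family, which is where property 1 and the coarseness of the homotopy combine; property 2 plays no role beyond the final transfer of the ball estimate from the images $w_i$ back to the arguments $z_i$.
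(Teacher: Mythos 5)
Your overall skeleton matches the paper's: control the image tuple $(w_0,\ldots,w_q)$ inside some $\Delta_{R'}^q$ uniformly in the parameters, apply cocontrolled support of $\varphi$ to trap it in $(\Delta_S[0])^{q+1}$, then pull the ball back through property 2 (the paper's last step, $(h_{t_0}\times\cdots\times h_{t_q})^{-1}((\Delta_S[0])^{q+1})=(\Delta_S[0])^{q+1}$, is exactly your transfer $d(z_i,0)=d(w_i,0)\le S$). The genuine gap is in how you secure $R'$. You posit a single constant $C$ with $d(h_t(z),h_{t'}(z))\le C$ for all $z$ \emph{and all parameters}, and claim this is ``the same closeness already needed'' for the pullback to be a cochain via Lemma~\ref{lem:immediate}(1). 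That conflates two different quantifications: Lemma~\ref{lem:immediate} only requires that for each \emph{fixed} tuple $(t_0,\ldots,t_q)$ the maps be pairwise close, with a constant allowed to depend on the tuple, whereas your estimate $d(w_i,w_j)\le R+C$ needs one constant uniform over all tuples --- strictly stronger, not implied by hypotheses 1--2, and in fact false for the very family the lemma is applied to: for the maps $h_t$ of Proposition~\ref{prop:intervalacyclic} one has $\sup_z d(h_0(z),h_t(z))=2t$ (take $z=(x,y)$ with $y>t$), so no global $C$ exists. As written, your argument proves a modified lemma whose extra hypothesis the application violates.

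The comparison with the paper is still instructive, because your diagnosis of the mixed-parameter difficulty is sound. The paper's proof runs through the inclusion $\supp(\varphi\circ(h_{t_0}\times\cdots\times h_{t_q}))|_{\Delta_R^q}\subseteq(h_{t_0}\times\cdots\times h_{t_q})^{-1}(\supp\varphi|_{\Delta_R^q})$, which silently uses $(h_{t_0}\times\cdots\times h_{t_q})(\Delta_R^q)\subseteq\Delta_R^q$; property 1 justifies this only when $t_0=\cdots=t_q$, and your rotation example correctly shows that properties 1--2 alone cannot control genuinely mixed tuples. So the statement, read with arbitrary mixed parameters, does not follow from its stated hypotheses, and the paper's proof is complete only in the equal-parameter case --- which happens to be the case actually needed for the summands $h_n^*\psi_1=\psi_1\circ h_n^{\times q}$ (all slots carry the same $n$, and there no closeness constant is needed at all). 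Where mixed parameters do occur (the $\chi$-terms built from $h_n$ and $h_{n+1}=h_1\circ h_n$), consecutive maps are uniformly close with constant $2$ independent of $n$, and there your triangle inequality $d(w_i,w_j)\le d(h_{t_i}(z_i),h_{t_i}(z_j))+d(h_{t_i}(z_j),h_{t_j}(z_j))\le R+C$ is precisely the right local repair. The correct fix is thus to restrict the quantifier --- equal parameters, or parameters ranging over tuples on which the family is uniformly close with a constant independent of the tuple --- rather than to assert a global $C$, which neither the hypotheses nor the application supply.
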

\begin{proof}
 If $R\ge 0$ then $\supp \varphi|_{\Delta_R}$ is bounded, namely contained in $(\Delta_S[0])^{q+1}$ for some $S\ge 0$. 
\begin{align*}
 \supp (\varphi\circ (h_{t_0}\times\cdots\times h_{t_q}))|_{\Delta^q_R}
 &= (h_{t_0}\times \cdots \times h_{t_q})^{-1}(\supp \varphi|_{(h_{t_0}\times \cdots \times h_{t_q})(\Delta_R^q)})\\
 &\s (h_{t_0}\times \cdots \times h_{t_q})^{-1}(\supp \varphi|_{\Delta_R^q})\\
 &\s (h_{t_0}\times \cdots \times h_{t_q})^{-1}((\Delta_S[0])^{q+1})\\
 &= (\Delta_S[0])^{q+1}.
\end{align*}
\end{proof}

The proof of Theorem~\ref{thm:homotopyinvariance} can be illustrated by an example. Proposition~\ref{prop:intervalacyclic} carries out the essential step of the proof for $X=\Z_{\ge 0}$. 
\begin{prop}
\label{prop:intervalacyclic}
If $I_0:=\Z_{\ge0}\times\Z_{\ge0}\s \Z^2$ the projection
\begin{align*}
 \pi:I_0&\to \Z_{\ge 0}\\
 (x,y)&\mapsto x+y
\end{align*}
induces an isomorphism in cohomology $\pi^*$ inverse to the induced map $\iota^*$ associated to the inclusion $\iota:x\mapsto (x,0)$.
\end{prop}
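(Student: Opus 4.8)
The plan is to establish the two composites $\iota^*\circ\pi^*=\mathrm{id}$ and $\pi^*\circ\iota^*=\mathrm{id}$ separately. The first is free: since $\pi\circ\iota=\mathrm{id}_{\Z_{\ge0}}$ (indeed $\pi(\iota(x))=\pi(x,0)=x$), functoriality of $(\cdot)^*$ gives $\iota^*\circ\pi^*=(\pi\circ\iota)^*=\mathrm{id}$ on $HY_b^*(\Z_{\ge0})$. All the work goes into the second relation, which amounts to showing that the composite $\iota\circ\pi\colon I_0\to I_0$, $(x,y)\mapsto(x+y,0)$, induces the identity on $HY_b^*(I_0)=\check H_{ct}^*(I_0)$. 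The natural idea is to build a cochain homotopy between $(\iota\pi)^*$ and $\mathrm{id}$ on $CY_b^*(I_0,A)$. One cannot merely invoke the close-maps lemma, however, since $d((x+y,0),(x,y))=2y$ is unbounded, so $\iota\pi$ and $\mathrm{id}_{I_0}$ are \emph{not} close; circumventing exactly this obstruction is what the homotopy machinery is for.

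First I would introduce the sliding family $h_t\colon I_0\to I_0$, $h_t(x,y)=(\min(x+t,x+y),\max(y-t,0))$, which pushes a point $t$ steps down the anti-diagonal towards the $x$-axis, so that $h_0=\mathrm{id}$ and $h_t(x,y)=\iota\pi(x,y)$ once $t\ge y$. A direct check in the $\ell^1$-metric shows each $h_t$ is non-expanding and preserves distance to the origin, $d(h_t(z),0)=d(z,0)$; thus the family (and its limit $\iota\pi$) satisfies the hypotheses of Lemma~\ref{lem:technicallemma2}. I then define the prism operator $P\colon C^q(I_0,A)\to C^{q-1}(I_0,A)$ by
\[
 P\varphi(z_0,\ldots,z_{q-1})=\sum_{i=0}^{q-1}(-1)^i\varphi(z_0,\ldots,z_i,\iota\pi(z_i),\ldots,\iota\pi(z_{q-1})).
\]
As a finite sum of blocky functions, $P\varphi$ is an honest element of $C^{q-1}(I_0,A)$, so $P$ is unproblematic on the full cochain group.

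The main obstacle is to show that $P$ preserves cocontrolled support, i.e.\ $P(CX_b^q)\s CX_b^{q-1}$; this is precisely what Lemma~\ref{lem:technicallemma2} delivers. Writing $\delta_i\colon I_0^{q}\to I_0^{q+1}$ for the map doubling the $i$th entry, the $i$th summand of $P\varphi$ is $\varphi\circ(\mathrm{id}^{i+1}\times(\iota\pi)^{q-i})\circ\delta_i$. Fix $R\ge0$ and a tuple $(z_0,\ldots,z_{q-1})\in\Delta_R^{q-1}$ on which this summand is nonzero. Then $\delta_i$ carries it into $\Delta_R^{q}$, and choosing the times large enough that $\iota\pi$ agrees with the corresponding $h_t$ on the tuple at hand, the uniform estimate of Lemma~\ref{lem:technicallemma2} (with $S=S(R)$ independent of the times, hence of $i$) forces the resulting $(q+1)$-tuple into $(\Delta_S[0])^{q+1}$. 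Because distance to the origin is preserved by $\mathrm{id}$ and by $\iota\pi$, this pins every $z_j$ into $\Delta_S[0]$, so the support of each summand meets $\Delta_R^{q-1}$ in a bounded set; summing over the finitely many $i$ gives $P\varphi\in CX_b^{q-1}$. Hence $P$ descends to $\bar P\colon CY_b^q(I_0,A)\to CY_b^{q-1}(I_0,A)$.

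Finally, the simplicial prism identity $d_{q-1}P+Pd_q=(\iota\pi)^*-\mathrm{id}$ holds pointwise on $C^*(I_0,A)$: this is the purely combinatorial computation already carried out in the close-maps lemma, which never uses that the two maps are close. Since $P$, $(\iota\pi)^*$ and $\mathrm{id}$ all preserve $CX_b^*$ (the latter two because $\iota\pi$ is coarse), the identity descends to $CY_b^*(I_0,A)$, exhibiting $\bar P$ as a cochain homotopy between $(\iota\pi)^*$ and the identity. Therefore $(\iota\pi)^*=\pi^*\circ\iota^*=\mathrm{id}$ on $HY_b^*(I_0)$, and together with $\iota^*\circ\pi^*=\mathrm{id}$ this shows $\pi^*$ and $\iota^*$ are mutually inverse isomorphisms. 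Note that Lemma~\ref{lem:technicallemma1}, the transitivity of prisms, is not needed for this single projection--inclusion pair; it enters only when iterating the construction for the general homotopy theorem.
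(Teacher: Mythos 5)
Your proof is correct, and it reaches the paper's conclusion by a genuinely leaner route within the same framework. The paper uses the same sliding family $h_t$ and arrives at the same final prism formula, but it argues cocycle by cocycle: for a cocycle $\varphi$ it forms the auxiliary sum $\tilde\psi=\sum_{t=0}^{\max(y_j)-1}h_t^*\psi_1$ of close-map prisms, shows $\tilde\psi$ is cocontrolled via Lemma~\ref{lem:technicallemma2}, then invokes the transitivity identity of Lemma~\ref{lem:technicallemma1} to telescope $\tilde\psi$ into $\psi_{\max(y_j)}+\sum_t d_{q-2}\chi_t$, and only thereby deduces that the limiting prism $\psi_{\max(y_j)}$ (your $P\varphi$) is cocontrolled; it then verifies $d_{q-1}\psi=p^*\varphi-\varphi$ using $d_q\varphi=0$ and a facet-by-facet comparison of the tuple-dependent index, treating $q\ge2$ this way and $q=0$ by a separate one-endedness argument. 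You instead prove directly that $P$ maps $CX_b^q$ into $CX_b^{q-1}$, via the pointwise observation that on any fixed tuple $p$ agrees with $h_t$ once $t\ge\max_j y_j$, combined with the time-uniform bound $S=S(R)$ of Lemma~\ref{lem:technicallemma2}; this is legitimate, and it is no stronger a use of that lemma than the paper's own, since the paper likewise applies it to the correction terms $d_{q-2}\chi_t$, whose time tuples $(0,\ldots,0,t,\ldots,t,t+1,\ldots,t+1)$ have unbounded spread. What your organization buys: $P$ becomes a genuine chain homotopy on all of $CY_b^*(I_0;A)$ — the identity $d_{q-1}P+Pd_q=(\iota\pi)^*-\mathrm{id}$ is indeed purely combinatorial, closeness in the paper's close-maps lemma being used only for well-definedness — so you need not restrict to strict cocycles, you dispense with Lemma~\ref{lem:technicallemma1} and the telescoping altogether, and all degrees are covered uniformly (in degree $0$ the identity reads $P(d_0\varphi)=p^*\varphi-\varphi$), whereas the paper restricts its main computation to $q\ge 2$ and handles $q=0$ separately. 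Two micro-remarks: your appeal to preservation of distance to the origin to pin down the $z_j$ is redundant, since the conclusion of Lemma~\ref{lem:technicallemma2} already bounds the source tuple $\delta_i(z)$ rather than its image; and your closing claim that Lemma~\ref{lem:technicallemma1} is genuinely needed for the general homotopy theorem is questionable, since your direct argument transfers verbatim to $X\ast I$ with the family $h_n$ there, so the telescoping could be bypassed in that proof as well.
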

\begin{proof}
In the following proof $z_i$ is short for $\begin{pmatrix}x_i\\y_i\end{pmatrix}$ and $\bar z$ abbreviates $\left(\begin{pmatrix}x_0\\y_0\end{pmatrix},\ldots,\begin{pmatrix}x_q\\y_q\end{pmatrix}\right)$ or $\left(\begin{pmatrix}x_0\\y_0\end{pmatrix},\ldots,\begin{pmatrix}x_{q-1}\\y_{q-1}\end{pmatrix}\right)$.

We show the map 
\begin{align*}
 p:=\iota\circ\pi:I_0&\to I_0\\
 (x,y)&\mapsto (x+y,0)
\end{align*}
induces the same map $p^*$ in cohomology as the identity on $I_0$.

For $t\in \N_0$ we define an auxilary map 
\begin{align*}
h_t:I_0&\to I_0\\
(x,y)&\mapsto\begin{cases}
              (x+t,y-t) & t<y\\
              (x+y,0) & t\ge y.
             \end{cases}
\end{align*}
We obtain $p(x,y)=h_y(x,y)$. The $(h_t)_t$ satisfy the conditions of Lemma~\ref{lem:technicallemma1} and Lemma~\ref{lem:technicallemma2}.

Suppose $q\ge 2$. Let $\varphi\in CX^q_b(I_0,A)$ be a cocyle. Then $(h_t^*-id_{I_0}^*)\varphi\in \im d_{q-1}$ since $h_t,id_{I_0}$ are close. Thus there is some $\psi_t\in CX_b^{q-1}(I_0,A)$ with $d_{q-1}\psi_t=h_t^*\varphi-\varphi$. Namely 
\[
 \psi_t(z_0,\ldots,z_{q-1})=\sum_{i=0}^{q-1}(-1)^i\varphi(z_0,\ldots,z_i,h_t(z_i),\ldots,h_t(z_{q-1})).
\]
Then define the map
\[
\tilde\psi\left(\begin{pmatrix}x_0\\y_0\end{pmatrix},\ldots,\begin{pmatrix}x_{q-1}\\y_{q-1}\end{pmatrix}\right)
=\sum_{t=0}^{\max(y_j)-1}(h_t^*\psi_1)\left(\begin{pmatrix}x_0\\y_0\end{pmatrix},\ldots,\begin{pmatrix}x_{q-1}\\y_{q-1}\end{pmatrix}\right).
\]
This map is well-defined since for each fixed point in $I_0^q$, only finitely many terms in the above sum are defined. If $R\ge0$ then Lemma~\ref{lem:technicallemma2} implies that there exists some $S\ge0$ such that each summand of $\psi|_{\Delta_R^{q-1}}$ has support contained in $(\Delta_S[0])^q$. This implies that $\supp (\tilde\psi|_{\Delta_R^{q-1}})\s (\Delta_S[0])^q$. Thus $\tilde\psi$ has cocontrolled support. Now $\tilde \psi$ may or may not be blocky. We have to go the extra step to produce a map with cocontrolled support which is also blocky. To obtain such a map we are going to add a coboundary.

By Lemma~\ref{lem:technicallemma1} we obtain 
\begin{align*}
 h_0^*\psi_1(\bar z)+h_1^*\psi_1(\bar z)
 &=\sum_{i=0}^{q-1}(-1)^i\varphi(h_0(z_0),\ldots,h_0(z_i),h_1(z_i),\ldots,h_1(z_{q-1}))\\
 &+\sum_{i=0}^{q-1}(-1)^i\varphi(h_1(z_0),\ldots,h_1(z_i),h_2(z_i),\ldots,h_2(z_{q-1}))\\
 &=\sum_{i=0}^{q-1}(-1)^i\varphi(h_0(z_0),\ldots,h_0(z_i),h_2(z_i),\ldots,h_2(z_{q-1}))+d_{q-2}\chi_1(\bar z)\\
 &=\psi_2(\bar z)+d_{q-2}\chi_1(\bar z).
\end{align*}
in the next step we obtain
\begin{align*}
 \psi_2(\bar z)+h_2^*\psi_1(\bar z)
 &=\sum_{i=0}^{q-1}(-1)^i\varphi(h_0(z_0),\ldots,h_0(z_i),h_2(z_i),\ldots,h_2(z_{q-1}))\\
 &+\sum_{i=0}^{q-1}(-1)^i\varphi(h_2(z_0),\ldots,h_2(z_i),h_3(z_i),\ldots,h_3(z_{q-1}))\\
 &=\sum_{i=0}^{q-1}(-1)^i\varphi(h_0(z_0),\ldots,h_0(z_i),h_3(z_i),\ldots,h_3(z_{q-1}))+d_{q-2}\chi_2(\bar z)\\
 &=\psi_3(\bar z)+d_{q-2}\chi_2(\bar z).
\end{align*}
Successively we obtain
\[
\tilde\psi(\bar z)
=\psi_{\max(y_j)}\left(\begin{pmatrix}x_0\\y_0\end{pmatrix},\ldots,\begin{pmatrix}x_{q-1}\\y_{q-1}\end{pmatrix}\right)+\sum_{t=1}^{\max(y_j)-1}d_{q-2}\chi_t\left(\begin{pmatrix}x_0\\y_0\end{pmatrix},\ldots,\begin{pmatrix}x_{q-1}\\y_{q-1}\end{pmatrix}\right)
\]
By the proof of Lemma~\ref{lem:technicallemma1} the map $d_{q-2}\chi_t$ satisfies the conditions of Lemma~\ref{lem:technicallemma2}. Thus the sum $\sum_{t=1}^{\max(y_j)-1}d_{q-2}\chi_t$ has cocontrolled support. This implies that the map
\[
 \psi\left(\begin{pmatrix}x_0\\y_0\end{pmatrix},\ldots,\begin{pmatrix}x_{q-1}\\y_{q-1}\end{pmatrix}\right)
 :=\psi_{\max(y_i)}\left(\begin{pmatrix}x_0\\y_0\end{pmatrix},\ldots,\begin{pmatrix}x_{q-1}\\y_{q-1}\end{pmatrix}\right)
\]
has cocontrolled support. We have
\[
 \psi(z_0,\ldots,z_{q-1})=\sum_{i=0}^{q-1}(-1)^i\varphi(z_0,\ldots,z_i,p(z_i),\ldots,p(z_{q-1})).
\]
Thus $\psi$ is blocky, namely if $\varphi\in C^q_{A_1,\ldots, A_n}(I_0,A)$ then $\psi\in C^{q-1}_{p^{-1}A_i\cap A_j}(I_0,A)$. This way we have proved that $\psi$ is a cochain.

Lastly we have
\begin{align*}
 (d_{q-1}\psi)(\bar z)
&=\psi\left(\begin{pmatrix}x_1\\y_1\end{pmatrix},\ldots,\begin{pmatrix}x_q\\y_q\end{pmatrix}\right)-\cdots\pm \psi\left(\begin{pmatrix}x_0\\y_0\end{pmatrix},\ldots,\begin{pmatrix}x_{q-1}\\y_{q-1}\end{pmatrix}\right)\\
&=\psi_{\max_iy_i}\left(\begin{pmatrix}x_1\\y_1\end{pmatrix},\ldots,\begin{pmatrix}x_q\\y_q\end{pmatrix}\right)-\cdots\pm \psi_{\max_iy_i}\left(\begin{pmatrix}x_0\\y_0\end{pmatrix},\ldots,\begin{pmatrix}x_{q-1}\\y_{q-1}\end{pmatrix}\right)\\
&=(d_{q-1}\psi_{\max_iy_i})(\bar z)\\
&=h_{\max_i y_i}^*\varphi(\bar z)-\varphi(\bar z)\\
&=(p^*\varphi-\varphi)(\bar z)
 \end{align*}
Thus $p^*\varphi-\varphi$ defines a coboundary. This way we showed $p^*$ induces the identity on $\check H_{ct}^q(I_0,A)$ for $q\ge 1$. It remains to show the statement for $q=0$.

Since $I_0$ is one-ended a cocycle $\varphi\in CY^0(I_0,A)$ is represented by a constant $a\in A$ function on $I_0$ except on a bounded set. Then $p^*\varphi$ is constant $a\in A$ except on a bounded set. Thus $p^*$ is the same map as the identity on $HY^0(I_0,A)$.
\end{proof}

Now denote $I:=\R_{\ge 0}\times \R_{\ge 0}$ and equip this space with the Manhattan metric. Namely if $(s_1,t_1),(s_2,t_2)\in I$ then
\[
 d((s_1,t_1),(s_2,t_2))=|s_2-s_1|+|t_2-t_1|.
\]
If $X$ is a metric space and $x_0\in X$ a point then the \emph{asymptotic product of $X$ and $I$} is defined to be
\[
 X\ast I=\{(x,i)\in X\times I\mid d(x,x_0)=d(i,(0,0))\}
\]
The paper \cite{Hartmann2019b} shows that $X\ast I$ is the pullback of $d(\cdot,x_0)$ and $d(\cdot,(0,0))$. Moreover we can define a well-defined homotopy theory: If $X$ is a metric space define maps
\begin{align*}
\begin{aligned}
 \iota_0:X&\to X\ast I\\
 x&\mapsto (x,(d(x,x_0),0))
 \end{aligned}
 &
 \begin{aligned}
 \iota_1:X&\to X\ast I\\
 x&\mapsto (x,(0,d(x,x_0))) 
 \end{aligned}
\end{align*}

\begin{defn}
 If $\alpha,\beta:X\to Y$ are two coarse maps they are \emph{coarsely homotopic} if there exists a coarse map $h:X\ast I\to Y$ with $h\circ \iota_0=\alpha$ and $h\circ\iota_1=\beta$.
\end{defn}

The paper \cite{Hartmann2019b} shows coarse homotopy is an equivalence relation and compares this theory with other homotopy theories on the coarse category. 

\begin{lem}
\label{lem:0degreestep}
 If $X$ is a metric space then $e(X\ast I)=e(X)$.
\end{lem}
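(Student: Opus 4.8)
The plan is to reduce the statement to an isomorphism of zeroth cohomology groups and to produce that isomorphism from the projection and its canonical section. Recall that for every abelian group $A$ we have $\check H_{ct}^0(X;A_X)=HY_b^0(X,A)=A(X)$, and that $A(X)$ is a sum of copies of $A$ indexed by the ends of $X$. Hence it suffices to exhibit, for $A=\Z$, a natural isomorphism $\Z(X)\cong\Z(X\ast I)$ and to read off the number of ends by comparing ranks (treating the countable case $\bigoplus_\N\Z$ separately).

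First I would consider the projection $\pi:X\ast I\to X$, $(x,i)\mapsto x$, and the inclusion $\iota_0:X\to X\ast I$, $x\mapsto(x,(d(x,x_0),0))$. Both are coarse maps and $\pi\circ\iota_0=\mathrm{id}_X$ by construction. By functoriality of the cochain complex they induce homomorphisms $\pi^*:A(X)\to A(X\ast I)$ and $\iota_0^*:A(X\ast I)\to A(X)$ on $HY_b^0$, and $\pi\circ\iota_0=\mathrm{id}_X$ gives $\iota_0^*\circ\pi^*=\mathrm{id}_{A(X)}$. It then remains to prove that $p:=\iota_0\circ\pi$, which sends $(x,i)$ to the corner $(x,(d(x,x_0),0))$ of its fibre, induces the identity $p^*=\pi^*\circ\iota_0^*$ on $A(X\ast I)=HY_b^0(X\ast I,A)$; this yields $\pi^*\circ\iota_0^*=\mathrm{id}$ as well, so $\pi^*$ and $\iota_0^*$ are mutually inverse.

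The heart of the argument, and the step I expect to be the main obstacle, is this last claim. Let $\varphi\in CY_b^0(X\ast I,A)$ be a cocycle, i.e. a blocky function whose coboundary $d_0\varphi$ has cocontrolled support. The essential geometric observation is that the distance of a point $(x,i)$ from the basepoint $(x_0,(0,0))$ depends only on $r:=d(x,x_0)=d(i,(0,0))$, so an entire fibre $\pi^{-1}(x)$ sits at one fixed distance from the basepoint; in particular each fibre is either entirely inside or entirely outside any given bounded set. Fixing $R$ large enough that adjacent points of the anti-diagonal segment $\pi^{-1}(x)$ are within distance $R$ in $X\ast I$ (their $X$-coordinates agree, so this is just their $I$-distance), cocontrolledness of $d_0\varphi$ provides a bound $S$ such that any adjacent pair on which $\varphi$ differs must lie within $S$ of the basepoint. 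For $r>S$ the whole fibre avoids this region, so chaining along the segment from $i$ to the corner shows $\varphi$ is constant on $\pi^{-1}(x)$; hence $\varphi(p(x,i))=\varphi(x,i)$. Thus $p^*\varphi-\varphi$ is supported on the bounded set $\{r\le S\}$, so it vanishes in $CY_b^0=C^0/CX_b^0$ and $p^*\varphi=\varphi$ in $HY_b^0$.

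Putting the pieces together, $\pi^*$ and $\iota_0^*$ are inverse isomorphisms, so $A(X)\cong A(X\ast I)$ for every $A$; taking $A=\Z$ and comparing ranks gives $e(X\ast I)=e(X)$. The routine points left to check are that $\pi$ and $\iota_0$ are coarse, the fineness of the subdivision of the fibre, and the blocky bookkeeping; the genuinely delicate point is the fibrewise chaining, which crucially uses that fibres sit at a constant distance from the basepoint.
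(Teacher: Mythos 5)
Your proof is correct, and its geometric heart coincides with the paper's: both hinge on the observation that a fibre $\pi^{-1}(x)=\{x\}\ast I$ sits at one fixed distance $r=d(x,x_0)$ from the basepoint, so that chaining along the fibre in small steps and invoking cocontrolledness forces a $0$-cocycle (in the paper: a coarse disjoint decomposition) to be constant (in the paper: unmixed) on every fibre outside a bounded set. The packaging differs, though. The paper argues directly with coarse disjoint unions: surjectivity of the coarse map $\pi$ gives $e(X\ast I)\ge e(X)$, and conversely a decomposition $X\ast I=A\sqcup B$ has bounded mixed-fibre set $\{x\mid\exists i,j\in I:(x,i)\in A,(x,j)\in B\}$, after which $\pi(A),\pi(B)$ is a coarse disjoint union of $X$. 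You instead prove the stronger cohomological statement that $\pi^*:HY_b^0(X,A)\to HY_b^0(X\ast I,A)$ is an isomorphism with inverse $\iota_0^*$ (via $p^*=\mathrm{id}$ on $0$-cocycles), and then recover the equality of ends by comparing $\Z(X)\cong\Z(X\ast I)$ --- a deduction you handle correctly, since $\bigoplus_\N\Z$ is not finitely generated, so the infinite case cannot be confused with a finite one. Your route buys something concrete: the statement $p^*=\mathrm{id}$ on $HY_b^0(X\ast I,A)$ is precisely what the degree-$0$ step of Theorem~\ref{thm:homotopyinvariance} needs, and the paper itself re-reads the present lemma in exactly your terms there (``only boundedly many $x\in X$ such that $\varphi$ has mixed values on $\{x\}\ast I$''); the cost is the extra translation through Lemma~\ref{lem:h0}, which you should cite explicitly for the identifications $HY_b^0=A(\cdot)$ and the formula counting ends. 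One small imprecision: your parenthetical claim that ``each fibre is either entirely inside or entirely outside any given bounded set'' is literally false, since bounded sets need not be unions of fibres; what you actually use, and what is true, is that every bounded set is contained in $\{r\le S\}$ for some $S$, and the complement of such a set is fibre-saturated --- as used in your chaining step, this is harmless.
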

\begin{proof}
 Denote by $\pi$ the projection of $X\ast I$ to the first factor. Since $\pi$ is a surjective coarse map, the inequality $e(X\ast I)\ge e(X)$ follows easily.
 
 Now suppose $X\ast I=A\sqcup B$ is a coarse disjoint union. This means $A,B$ are disjoint and form a coarse cover of $X$. Namely the set $\Delta_R[A]\cap \Delta_R[B]$ is bounded for every $R\ge 0$. Then the set $\{x\in X\mid\exists i,j\in I:(x,i)\in A,(x,j)\in B\}$ is bounded. Without loss of generality we assume it is empty. Thus for fixed $x\in X$ either $(x,i)\in A$ for every $i\in I$ or $(x,i)\in B$ for every $i\in I$. 
 
 Now we show $\pi(A),\pi(B)$ form a coarse disjoint union. They are disjoint by the assumption. Let $R\ge 0$ be a number and let $x\in \pi(A),y\in \pi(B)$ be two points with $d(x,y)\le R$. Then $(x,0)\in A,(y,0)\in B$ with $d((x,0),(y,0))\le R$. Then the set $((x,0),(y,0))$ is bounded which implies $(x,y)$ is bounded.
\end{proof}

\begin{thm}
\label{thm:homotopyinvariance}
 If two maps $\alpha,\beta:X\to Y$ are coarsely homotopic then they induce the same map in cohomology.
\end{thm}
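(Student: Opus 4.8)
The plan is to factor the two induced maps through the asymptotic product, reduce the whole statement to the assertion that the projection $\pi:X\ast I\to X$ induces an isomorphism in cohomology, and obtain that isomorphism from a fibrewise version of Proposition~\ref{prop:intervalacyclic}.

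First I would exploit the homotopy directly. By hypothesis there is a coarse map $h:X\ast I\to Y$ with $h\circ\iota_0=\alpha$ and $h\circ\iota_1=\beta$, so contravariant functoriality gives $\alpha^*=\iota_0^*\circ h^*$ and $\beta^*=\iota_1^*\circ h^*$ on $\check H_{ct}^q(Y;A)$. It therefore suffices to show that the two sections induce the same map $\iota_0^*=\iota_1^*:\check H_{ct}^q(X\ast I;A)\to\check H_{ct}^q(X;A)$. Both satisfy $\pi\circ\iota_0=\pi\circ\iota_1=id_X$, where $\pi$ is the projection of Lemma~\ref{lem:0degreestep}, so $\iota_0^*\circ\pi^*=\iota_1^*\circ\pi^*=id$. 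If in addition $\pi^*\circ\iota_0^*=id$, then $\pi^*$ is invertible with inverse $\iota_0^*$, and
\[
\iota_1^*=\iota_1^*\circ(\pi^*\circ\iota_0^*)=(\iota_1^*\circ\pi^*)\circ\iota_0^*=\iota_0^*,
\]
completing the proof. Since $\pi^*\circ\iota_0^*=(\iota_0\circ\pi)^*$, everything reduces to the single claim that $p:=\iota_0\circ\pi:X\ast I\to X\ast I$ induces the identity in cohomology.

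The map $p$ sends a point $(x,(s,t))$ to $(x,(s+t,0))$, since $s+t=d(x,x_0)$ on the asymptotic product; that is, $p$ collapses each fibre of $\pi$ onto its endpoint over $t=0$ while fixing the $X$-coordinate. This is exactly the claim settled in Proposition~\ref{prop:intervalacyclic} for the essential case $X=\Z_{\ge0}$, where the quadrant $I_0=\Z_{\ge0}\times\Z_{\ge0}$ with $\pi(x,y)=x+y$ models the fibration $X\ast I\to X$. I would replay that proof with the sweeping family acting fibrewise: for $\tau\ge0$ set
\[
h_\tau(x,(s,t))=(x,(s+\min(\tau,t),\,t-\min(\tau,t))),
\]
so that $h_\tau$ fixes the $X$-coordinate, preserves $s+t$ and hence the distance to the basepoint, is distance-nonincreasing in the Manhattan metric, and coincides with $p$ on a given point as soon as $\tau$ exceeds its $t$-coordinate. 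For each fixed $\tau$ the map $h_\tau$ displaces points by at most $2\tau$, so it is close to $id$; hence any finite subfamily is pairwise close and feeds Lemma~\ref{lem:technicallemma1}, while conditions (1) and (2) of Lemma~\ref{lem:technicallemma2} hold by construction. With these inputs the telescoping cochain homotopy of Proposition~\ref{prop:intervalacyclic}, namely $\psi(z_0,\dots,z_{q-1})=\sum_i(-1)^i\varphi(z_0,\dots,z_i,p(z_i),\dots,p(z_{q-1}))$ with $d_{q-1}\psi=p^*\varphi-\varphi$, transfers unchanged to $CX_b^q(X\ast I;A)$ for $q\ge2$, so $p^*=id$ on $HX_b^q(X\ast I;A)$ and, through $HX_b^{q+1}=HY_b^q$, on $\check H_{ct}^q(X\ast I;A)$ for every $q\ge1$. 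In degree $0$ the map $\pi$ induces a bijection of ends by Lemma~\ref{lem:0degreestep}, so $p=\iota_0\circ\pi$ acts as the identity on $\check H_{ct}^0(X\ast I;A)$ as well.

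The step I expect to be the real obstacle is not the combinatorial identity, which is literally the one already checked for $I_0$, but the verification that the fibrewise $h_\tau$ behaves uniformly over the base: one must confirm that $h_\tau$ is a genuine self-coarse-map of the pullback $X\ast I$ (well-definedness needs $s+t=d(x,x_0)$ preserved, coarse properness uses that $x$ is fixed) and, more delicately, that the support bound $S$ produced by Lemma~\ref{lem:technicallemma2} depends only on $R$ and not on the base point $x$ indexing the fibre. Once this uniformity is secured, the cocontrolled-support and blockiness estimates go through verbatim and no further computation is needed.
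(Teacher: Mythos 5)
Your proposal is correct and takes essentially the same route as the paper's own proof: the identical reduction to showing that $p=\iota_0\circ\pi$ induces the identity, the same sweeping family (your $h_\tau$ is exactly the paper's $h_n$), the same telescoping cochain homotopy fed by Lemmas~\ref{lem:technicallemma1} and~\ref{lem:technicallemma2}, and the same degree-$0$ step via Lemma~\ref{lem:0degreestep}. The uniformity over the base that you flag as the delicate point is precisely what Lemma~\ref{lem:technicallemma2} supplies, since its bound $S$ depends only on $R$ and not on the parameters $t_0,\ldots,t_q$, so no further verification is needed.
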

\begin{proof}
 We just need to show that the projection $\pi:X\ast I\to X$ which sends an element $(x,i)$ to $x$ induces an isomorphism in cohomology. Indeed since $\pi\circ\iota_0=id_X=\pi\circ \iota_1$ the maps $\iota_0^*,\iota_1^*$ are both the unique inverse to $\pi^*$. Then $\alpha=h\circ\iota_0$ and $\beta=h\circ\iota_1$ induce the same map in cohomology.
 
 Now Proposition~\ref{prop:intervalacyclic} already showed $\pi^*$ is an isomorphism with $\Z_{\ge0}$ in place of $X$ and $I_0$ in place of $I$. The same proof can be transfered to this situation where we use Lemma~\ref{lem:0degreestep} for the step in degree $0$.
 
 Namely we proceed as follows. In the following proof $z_i$ is short for $(x_i,(s_i,t_i))$ and $\bar z$ abbreviates $(z_0,\ldots,z_q)$ or $(z_0,\ldots,z_{q-1})$.

We show the map 
\begin{align*}
 p:=\iota_0\circ\pi:X\ast I&\to X\ast I\\
 (x,(s,t))&\mapsto (x,(s+t,0))
\end{align*}
induces the same map $p^*$ in cohomology as the identity on $X\ast I$.

For $n\in \N_0$ we define an auxilary map 
\begin{align*}
h_n:X\ast I&\to X\ast I\\
(x,(s,t))&\mapsto\begin{cases}
              (x,(s+n,t-n)) & n<t\\
              (x,(s+t,0)) & n\ge t.
             \end{cases}
\end{align*}
We obtain $p(x,(s,t))=(x,(s+t,0))=h_{\lfloor t\rfloor+1}(x,(s,t))$. The $(h_n)_n$ satisfy the conditions of Lemma~\ref{lem:technicallemma1} and Lemma~\ref{lem:technicallemma2}.

Suppose $q\ge 2$. Let $\varphi\in CX^q_b(X\ast I,A)$ be a cocyle. Then $(h_n^*-id_{X\ast I}^*)\varphi\in \im d_{q-1}$ since $h_n,id_{X\ast I}$ are close. Thus there is some $\psi_n\in CX_b^{q-1}(X\ast I,A)$ with $d_{q-1}\psi_n=h_n^*\varphi-\varphi$. Namely 
\[
 \psi_n(z_0,\ldots,z_{q-1})=\sum_{i=0}^{q-1}(-1)^i\varphi(z_0,\ldots,z_i,h_n(z_i),\ldots,h_n(z_{q-1})).
\]
Then define the map
\[
\tilde\psi((x_0,(s_0,t_0)),\ldots,(x_{q-1},(s_{q-1},t_{q-1})))
=\sum_{n=0}^{\lfloor\max(t_j)\rfloor}(h_n^*\psi_1)((x_0,(s_0,t_0)),\ldots,(x_{q-1},(s_{q-1},t_{q-1}))).
\]
This map is well-defined since for each fixed point in $(X\ast I)^q$, only finitely many terms in the above sum are defined. If $R\ge0$ then Lemma~\ref{lem:technicallemma2} implies that there exists some $S\ge0$ such that each summand of $\psi|_{\Delta_R^{q-1}}$ has support contained in $(\Delta_S[0])^q$. This implies that $\supp (\tilde\psi|_{\Delta_R^{q-1}})\s (\Delta_S[0])^q$. Thus $\tilde\psi$ has cocontrolled support. Now $\tilde \psi$ may or may not be blocky. We have to go the extra step to produce a map with cocontrolled support which is also blocky. To obtain such a map we are going to add a coboundary.

By Lemma~\ref{lem:technicallemma1} we obtain 
\begin{align*}
 h_0^*\psi_1(\bar z)+h_1^*\psi_1(\bar z)
 &=\sum_{i=0}^{q-1}(-1)^i\varphi(h_0(z_0),\ldots,h_0(z_i),h_1(z_i),\ldots,h_1(z_{q-1}))\\
 &+\sum_{i=0}^{q-1}(-1)^i\varphi(h_1(z_0),\ldots,h_1(z_i),h_2(z_i),\ldots,h_2(z_{q-1}))\\
 &=\sum_{i=0}^{q-1}(-1)^i\varphi(h_0(z_0),\ldots,h_0(z_i),h_2(z_i),\ldots,h_2(z_{q-1}))+d_{q-2}\chi_1(\bar z)\\
 &=\psi_2(\bar z)+d_{q-2}\chi_1(\bar z).
\end{align*}
in the next step we obtain
\begin{align*}
 \psi_2(\bar z)+h_2^*\psi_1(\bar z)
 &=\sum_{i=0}^{q-1}(-1)^i\varphi(h_0(z_0),\ldots,h_0(z_i),h_2(z_i),\ldots,h_2(z_{q-1}))\\
 &+\sum_{i=0}^{q-1}(-1)^i\varphi(h_2(z_0),\ldots,h_2(z_i),h_3(z_i),\ldots,h_3(z_{q-1}))\\
 &=\sum_{i=0}^{q-1}(-1)^i\varphi(h_0(z_0),\ldots,h_0(z_i),h_3(z_i),\ldots,h_3(z_{q-1}))+d_{q-2}\chi_2(\bar z)\\
 &=\psi_3(\bar z)+d_{q-2}\chi_2(\bar z).
\end{align*}
Successively we obtain
\begin{align*}
\tilde\psi(\bar z)
&=\psi_{\lfloor\max(t_j)\rfloor+1}((x_0,(s_0,t_0)),\ldots,(x_{q-1},(s_{q-1},t_{q-1})))\\
&+\sum_{n=1}^{\lfloor\max(t_j)\rfloor}d_{q-2}\chi_n((x_0,(s_0,t_0)),\ldots,(x_{q-1},(s_{q-1},t_{q-1}))
\end{align*}
By the proof of Lemma~\ref{lem:technicallemma1} the map $d_{q-2}\chi_t$ satisfies the conditions of Lemma~\ref{lem:technicallemma2}. Thus the sum $\sum_{t=1}^{\max(y_j)-1}d_{q-2}\chi_t$ has cocontrolled support. This implies that the map
\[
 \psi((x_0,(s_0,t_0)),\ldots,(x_{q-1},(s_{q-1},t_{q-1})))
 :=\psi_{\lfloor\max(t_j)\rfloor+1}((x_0,(s_0,t_0)),\ldots,(x_{q-1},(s_{q-1},t_{q-1})))
\]
has cocontrolled support. We have
\[
 \psi(z_0,\ldots,z_{q-1})=\sum_{i=0}^{q-1}(-1)^i\varphi(z_0,\ldots,z_i,p(z_i),\ldots,p(z_{q-1})).
\]
Thus $\psi$ is blocky, namely if $\varphi\in C^q_{A_1,\ldots, A_n}(X\ast I,A)$ then $\psi\in C^{q-1}_{p^{-1}A_i\cap A_j}(X\ast I,A)$. This way we have proved that $\psi$ is a cochain.

Lastly we have
\begin{align*}
 (d_{q-1}\psi)(\bar z)
&=\psi((x_1,(s_1,t_1)),\ldots,(x_q,(s_q,t_q)))-\cdots\pm \psi((x_0,(s_0,t_0)),\ldots,(x_{q-1},(s_{q-1},t_{q-1})))\\
&=\psi_{\lfloor \max(t_j)\rfloor+1}((x_1,(s_1,t_1)),\ldots,(x_q,(s_q,t_q)))-\cdots\\
&\pm \psi_{\lfloor \max(t_j)\rfloor+1}((x_0,(s_0,t_0)),\ldots,(x_{q-1},(s_{q-1},t_{q-1})))\\
&=(d_{q-1}\psi_{\lfloor \max(t_j)\rfloor+1})(\bar z)\\
&=h_{\lfloor \max(t_j)\rfloor+1}^*\varphi(\bar z)-\varphi(\bar z)\\
&=(p^*\varphi-\varphi)(\bar z)
 \end{align*}
Thus $p^*\varphi-\varphi$ defines a coboundary. This way we showed $p^*$ induces the identity on $\check H_{ct}^q(X\ast I,A)$ for $q\ge 1$. It remains to show the statement for $q=0$.

The proof of Lemma~\ref{lem:0degreestep} shows that $p^*$ induces the identity on $HY_b^0(X\ast I,A)$. Namely if $\varphi\in \ker dY_0^b$ then there are only boundedly many $x\in X$ such that $\varphi$ has mixed values on $\{x\}\ast I$. Thus $\varphi$ is the same map as $(x,(s,t))\mapsto \varphi(x,(s+t,0))$ up to bounded error.
\end{proof}

 A metric space $X$ is called \emph{coarsely contractible} if the map $d(x_0,\cdot)$ is a coarse homotopy equivalence.

\begin{lem}
 If $X$ is a coarsely contractible metric space then $\check H_{ct}^q(X,A)=0$ for every $q>0$ and finite abelian group $A$.
\end{lem}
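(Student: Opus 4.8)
The plan is to reduce the statement to the vanishing already established for $\Z_{\ge 0}$, using coarse homotopy invariance as the transport mechanism. By the definition of coarse contractibility the map $r:=d(x_0,\cdot):X\to\R_{\ge 0}$ is a coarse homotopy equivalence, so there is a coarse map $s:\R_{\ge 0}\to X$ with $s\circ r$ coarsely homotopic to $id_X$ and $r\circ s$ coarsely homotopic to $id_{\R_{\ge 0}}$. Everything then reduces to knowing that such an equivalence induces an isomorphism on $\check H_{ct}^q(\cdot;A)$.

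First I would upgrade Theorem~\ref{thm:homotopyinvariance} from ``homotopic maps agree'' to ``a coarse homotopy equivalence induces an isomorphism''. This is purely formal. Functoriality of the induced maps gives $(s\circ r)^*=r^*\circ s^*$ and $(r\circ s)^*=s^*\circ r^*$, while Theorem~\ref{thm:homotopyinvariance} applied to the two homotopic pairs $(s\circ r,\,id_X)$ and $(r\circ s,\,id_{\R_{\ge 0}})$ yields $r^*\circ s^*=id$ on $\check H_{ct}^q(X;A)$ and $s^*\circ r^*=id$ on $\check H_{ct}^q(\R_{\ge 0};A)$. Hence $r^*:\check H_{ct}^q(\R_{\ge 0};A)\to\check H_{ct}^q(X;A)$ is an isomorphism for every $q$.

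Next I would identify $\R_{\ge 0}$ with $\Z_{\ge 0}$ in cohomology. The inclusion $\Z_{\ge 0}\s\R_{\ge 0}$ is a coarse equivalence, and since close maps induce the same map in cohomology (the first Lemma of the Functoriality section), functoriality shows that a coarse equivalence induces an isomorphism; thus $\check H_{ct}^q(\R_{\ge 0};A)\cong\check H_{ct}^q(\Z_{\ge 0};A)$. Combining the two isomorphisms gives $\check H_{ct}^q(X;A)\cong\check H_{ct}^q(\Z_{\ge 0};A)$ for all $q$, and by Theorem~\ref{thm:hqztree} the right-hand side vanishes for $q>0$ when $A$ is a finite abelian group. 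This is exactly the claim.

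There is no genuine obstacle here, only a point to state carefully: the formal passage from Theorem~\ref{thm:homotopyinvariance} to the assertion that homotopy equivalences induce cohomology isomorphisms. This in turn rests on the fact, part of the homotopy theory of \cite{Hartmann2019b}, that the coarse homotopy relation is preserved under pre- and post-composition, so that the composite homotopies used above are legitimate; everything else (the coarse equivalence of $\Z_{\ge 0}$ and $\R_{\ge 0}$, and the vanishing for $\Z_{\ge 0}$) is already in hand.
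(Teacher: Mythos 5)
Your proposal is correct and follows essentially the same route as the paper: the paper's proof likewise observes that $d(x_0,\cdot)$ is a coarse homotopy equivalence, invokes Theorem~\ref{thm:homotopyinvariance} to conclude it induces an isomorphism in cohomology, and then transfers the acyclicity of $\Z_{\ge 0}$ from Theorem~\ref{thm:hqztree}. Your write-up merely makes explicit two steps the paper leaves implicit --- the formal upgrade from homotopy invariance to the isomorphism statement, and the coarse equivalence $\Z_{\ge 0}\hookrightarrow\R_{\ge 0}$ --- which is a harmless (indeed helpful) elaboration, not a different argument.
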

\begin{proof}
 By definition the map $d(x_0,\cdot)$ is a coarse homotopy equivalence. Therefore it induces an isomorphism in cohomology by Theorem~\ref{thm:homotopyinvariance}. Since $\Z_{\ge 0}$ is acyclic so is $X$ by Theorem~\ref{thm:hqztree}.
\end{proof}

\section{Cohomology of free abelian groups}
This chapter proves Theorem~\ref{thm:introZn}.

\begin{lem}
\label{lem:flasqueacyclic}
 If $X$ is a CAT$(0)$ space then $X\times\R_{\ge0}$ is coarsely contractible.
\end{lem}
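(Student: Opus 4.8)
The plan is to unwind the definition. Writing $Y=X\times\R_{\ge 0}$ with the Manhattan (sum) metric and basepoint $y_0=(x_0,0)$, I must show that the distance function $f:=d(y_0,\cdot):Y\to\R_{\ge0}$ is a coarse homotopy equivalence. The obvious candidate for a homotopy inverse is the geodesic ray $g:\R_{\ge0}\to Y$, $g(r)=(x_0,r)$. Both $f$ and $g$ are visibly coarse ($f$ is $1$-Lipschitz and $f^{-1}([0,M])$ is the bounded ball $B(y_0,M)$; $g$ is an isometric embedding), and $f\circ g=\mathrm{id}_{\R_{\ge0}}$ on the nose. Hence everything reduces to producing a coarse homotopy between $\mathrm{id}_Y$ and $p:=g\circ f$, where $p(x,s)=(x_0,\,d_X(x_0,x)+s)$.

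To build this homotopy I would use the CAT$(0)$ geodesic contraction of the $X$-factor toward $x_0$, compensating in the $\R_{\ge0}$-factor so as to preserve the distance to $y_0$. Writing $\rho=d_X(x_0,x)$ and letting $c_x:[0,\rho]\to X$ be the unique geodesic from $x_0$ to $x$, set, for $b\ge 0$,
\[
 h_b(x,s)=\bigl(c_x(\max(\rho-b,0)),\,s+\min(b,\rho)\bigr),
\]
and define $h:Y\ast I\to Y$ by $h\bigl((x,s),(a,b)\bigr)=h_b(x,s)$. A direct check gives $h\circ\iota_0=\mathrm{id}_Y$ (take $b=0$) and $h\circ\iota_1=p$ (take $b=d(y_0,(x,s))=\rho+s\ge\rho$, which caps the contraction at $x_0$). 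By construction
\[
 d\bigl(y_0,h_b(x,s)\bigr)=\max(\rho-b,0)+\bigl(s+\min(b,\rho)\bigr)=\rho+s=d\bigl(y_0,(x,s)\bigr),
\]
so $h$ preserves the distance to $y_0$; this immediately yields coarse properness, since $h^{-1}\bigl(B(y_0,M)\bigr)$ lands in a bounded subset of $Y\ast I$.

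It remains to verify that $h$ is coarsely uniform, and this is the step where the CAT$(0)$ hypothesis does the real work and which I expect to be the main obstacle. Fix $R\ge 0$ and two points of $Y\ast I$ within distance $R$; then $d_X(x,x')\le R$, $|s-s'|\le R$ and $|b-b'|\le R$. Splitting $d\bigl(h_b(x,s),h_{b'}(x',s')\bigr)\le d\bigl(h_b(x,s),h_b(x',s')\bigr)+d\bigl(h_b(x',s'),h_{b'}(x',s')\bigr)$, the second term is at most $2|b-b'|\le 2R$ because moving $b$ shifts each coordinate of $h_b(x',s')$ by at most $|b-b'|$. For the first term the $\R_{\ge0}$-coordinate is controlled by $|s-s'|+|\rho-\rho'|\le 2R$ since $|\rho-\rho'|\le d_X(x,x')$. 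The $X$-coordinate is the heart of the matter: comparing the contracted points $c_x(u),c_{x'}(u)$ (equal arc-length $u=\max(\rho-b,0)$ from $x_0$) against the Euclidean comparison triangle $\bar\triangle(x_0,x,x')$, the CAT$(0)$ inequality gives $d_X\bigl(c_x(u),c_{x'}(u)\bigr)\le|\bar q\,\bar q'|$, and the law of cosines together with
\[
 2\rho\rho'(1-\cos\alpha)=d_X(x,x')^2-(\rho-\rho')^2\le d_X(x,x')^2
\]
(where $\alpha$ is the comparison angle at $x_0$) bounds $|\bar q\,\bar q'|^2=2u^2(1-\cos\alpha)$ by $d_X(x,x')^2$ as soon as $u\le\min(\rho,\rho')$. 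Taking $\rho\le\rho'$ without loss one has $u\le\rho\le\rho'$, and a further triangle step along the single geodesic $c_{x'}$ gives $d_X\bigl(c_x(u),c_{x'}(u')\bigr)\le d_X(x,x')+(u'-u)\le 2\,d_X(x,x')$. Altogether $d(h(\cdot),h(\cdot))$ is bounded by a fixed linear function of $R$, so $h$ is coarsely uniform and therefore a coarse map.

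Combining the two estimates, $h$ realizes a coarse homotopy $\mathrm{id}_Y\simeq p=g\circ f$; together with $f\circ g=\mathrm{id}_{\R_{\ge0}}$ this shows $f=d(y_0,\cdot)$ is a coarse homotopy equivalence, i.e.\ $Y=X\times\R_{\ge0}$ is coarsely contractible. The only genuinely geometric input is the comparison estimate $d_X\bigl(c_x(u),c_{x'}(u)\bigr)\le d_X(x,x')$ for equal arc-length contractions of two geodesics issuing from $x_0$, which is a manifestation of convexity of the metric in CAT$(0)$ spaces; the rest is Manhattan-metric bookkeeping of the same flavour as in Proposition~\ref{prop:intervalacyclic}.
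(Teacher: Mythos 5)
Your proof is correct and follows essentially the same route as the paper: the same pair $\pi=d(y_0,\cdot)$ and $\iota:r\mapsto(x_0,r)$ with $\pi\circ\iota=\mathrm{id}$ exactly, and a coarse homotopy $\mathrm{id}\simeq\iota\circ\pi$ built by contracting the $X$-factor along CAT$(0)$ geodesics toward $x_0$ while compensating in the $\R_{\ge0}$-factor. The only difference is bookkeeping: the paper parametrizes the contraction proportionally by $\hat t=t/(s+t)$ and invokes convexity $d(\gamma_x(t),\gamma_y(t))\le d(x,y)$, whereas your capped unit-speed version preserves $d(y_0,\cdot)$ on the nose (making properness immediate) at the cost of deriving the equal-arc-length comparison from the law of cosines --- and in fact yields cleaner global estimates than the paper's ``for $d(x_0,x_1)$ large compared to $R$'' bounds.
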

\begin{proof}
 We define two maps
 \begin{align*}
\begin{aligned}
 \pi:X\times \R_{\ge0}&\to \R_{\ge0}\\
 (x,i)&\mapsto d(x,x_0)+i
 \end{aligned}
 &
 \begin{aligned}
 \iota:\R_{\ge 0}&\to X\times \R_{\ge0}\\
 i&\mapsto (x_0,i) 
 \end{aligned}
\end{align*}
and show that they are coarse homotopy inverses.

Since $X$ is CAT$(0)$ there exists for every $x\in X$ a geodesic $\gamma_x:[0,1]\to X$ joining $x_0$ to $x$. The inequality $d(\gamma_x(t),\gamma_y(t))\le d(x,y)$ holds for every $t\in[0,1]$ by the curvature condition.

We define 
\begin{align*}
 h:(X\times \R_{\ge0})\ast I&\to X\times \R_{\ge0}\\
 ((x,i),(s,t))&\mapsto (\gamma_x(\hat t),i+\hat s d(x,x_0)).
\end{align*}
Here $\hat s=\frac s{s+t}$ and $\hat t=\frac t{s+t}$. The map $h$ joins $\iota\circ \pi$ to $id_{X\times \Z_{\ge0}}$. It remains to show that $h$ is coarse. If $R\ge 0$ and $d(((x_1,i_1),(s_1,t_1)),((x_2,i_2),(s_2,t_2))\le R$ then 
\begin{align*}
 d(\gamma_{x_1}(\hat t_1),\gamma_{x_2}(\hat t_2))
 &\le d(\gamma_{x_1}(\hat t_1),\gamma_{x_1}(\hat t_2))+d(\gamma_{x_1}(\hat t_2),\gamma_{x_2}(\hat t_2))\\
 &\le |\hat t_1-\hat t_2|d(x_0,x_1)+d(x_1,x_2)\\
 &= \left|\frac{t_1}{s_1+t_1}-\frac{t_2}{s_2+t_2}\right|d(x_0,x_1)+d(x_1,x_2)\\
 &\le \left| \frac {t_1}{d(x_0,x_1)}-\frac{t_2}{d(x_0,x_1)-R}\right|d(x_0,x_1)+R\\
 &=\left|\frac{(d(x_0,x_1)-R)t_1-d(x_0,x_1)t_2}{d(x_0,x_1)-R}\right|+R\\
 &\le \frac{2R}{1-\frac R{d(x_0,x_1)}}+R\\
 &\le 4R+R
\end{align*}
for $d(x_0,x_1)$ large compared to $R$. Then
\begin{align*}
|i_1&+\hat s_1d(x_1,x_0)-(i_2+\hat s_2d(x_2,x_0))|\\
&\le |i_1-i_2|+|\hat s_1 d(x_1,x_0)-\hat s_1 d(x_2,x_0)|+|\hat s_1d(x_2,x_0)-\hat s_2 d(x_2,x_0)|\\
&\le R+ \hat s_1 d(x_1,x_2)+4R\\
&\le 6 R
\end{align*}
for $d(x_0,x_2)$ large compared to $R$. Thus $h$ is coarsely uniform.

If $S\ge 0$ and $((x,i),(s,t))\in h^{-1}(\Delta_S[(x_0,0)])$ then
\begin{align}
\label{eq:xt}
 \hat t d(x,x_0)=\hat t d(\gamma_x(1),\gamma_x(0))= d(\gamma_x(\hat t),\gamma_x(0))\le S
\end{align}
and 
\begin{align}
\label{eq:xs}
 \hat s d(x,x_0)\le S.
\end{align}
The inequalities~\ref{eq:xt} and~\ref{eq:xs} add to an inequality $d(x,x_0)\le S$. This inequality and $|i|\le S$ shows that $h$ is coarsely proper. This way we have showed that $h$ is a coarse map.

\end{proof}

Lemma~\ref{lem:flasqueacyclic} in particular implies that $\R_{\ge0}\times \R^{n-1}$ is a coarsely contractible subspace of $\R^n$. In fact $\R^i\times \R_{\ge 0}\times \R^{n-1-i}$ and $\R^i\times\R_{<0}\times \R^{n-1-i}$ are coarsely contractible subspaces of $\R^n$ and so is every finite intersection of them.

\begin{lem}
\label{lem:leraycover}
 If $\sheaff$ is a sheaf on a metric space $X$ and $(U_i)_i$ a Leray cover of $X$, namely a coarse cover such that every finite intersection $U_{i_0}\cap\cdots \cap U_{i_q}$ is $\sheaff$-acyclic, then
 \[
  \check H_{ct}^q(X,\sheaff)=\check H^q((U_i)_i,\sheaff).
 \]
The right side denotes \v Cech-cohomology of the cover $(U_i)_i$. 
\end{lem}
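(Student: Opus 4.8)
The plan is to run the classical double-complex proof of Leray's theorem inside the site $X_{ct}$. First I would pick an injective resolution $0\to\mathcal F\to\mathcal I^0\to\mathcal I^1\to\cdots$; the category of sheaves on $X_{ct}$ has enough injectives (this is already implicit in the definition of $\check H_{ct}^*$ as a right derived functor), and by Lemma~\ref{lem:injective->flabby} together with the lemma that flabby sheaves are acyclic, every $\mathcal I^q$ is flabby and acyclic. Writing $\mathcal U=\{U_1,\dots,U_n\}$ for the (finite) coarse cover, I would then assemble the Čech double complex $K^{p,q}=\prod_{i_0<\cdots<i_p}\mathcal I^q(U_{i_0}\cap\cdots\cap U_{i_p})$, with the vertical differential induced by the resolution and the horizontal Čech differential $\delta$. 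The whole proof consists of computing the cohomology of the total complex $\mathrm{Tot}(K)$ by its two filtrations (a bounded first-quadrant double complex, since $\mathcal U$ is finite) and comparing.

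Taking vertical cohomology first: for fixed $(i_0,\dots,i_p)$ the column is $\mathcal I^\bullet(U_{i_0}\cap\cdots\cap U_{i_p})$. Restriction to the subsets of a fixed $V\subseteq X$ is an exact functor carrying flabby sheaves to flabby sheaves, so $\mathcal I^\bullet$ restricts to an acyclic resolution of $\mathcal F$ on $V_{ct}$; hence this column computes $\check H_{ct}^\bullet(U_{i_0}\cap\cdots\cap U_{i_p},\mathcal F)$. Because the cover is Leray, these intersections are $\mathcal F$-acyclic, so the column cohomology is concentrated in degree $0$ and equals $\mathcal F(U_{i_0}\cap\cdots\cap U_{i_p})$ (finite products commute with cohomology, and the cover is finite). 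By the standard degenerate-double-complex argument $\mathrm{Tot}(K)$ is then quasi-isomorphic to the Čech cochain complex of $\mathcal F$, giving $H^\bullet(\mathrm{Tot}(K))=\check H^\bullet(\mathcal U,\mathcal F)$.

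Taking horizontal cohomology first: for fixed $q$ the row is the Čech complex $\check C^\bullet(\mathcal U,\mathcal I^q)$ of the injective sheaf $\mathcal I^q$. The key input is that an injective sheaf has vanishing higher Čech cohomology, so each row has cohomology $\mathcal I^q(X)$ in degree $0$ and $0$ in positive degrees. Again by degeneration $\mathrm{Tot}(K)$ is quasi-isomorphic to $\mathcal I^\bullet(X)$, whence $H^\bullet(\mathrm{Tot}(K))=H^\bullet(\mathcal I^\bullet(X))=\check H_{ct}^\bullet(X,\mathcal F)$, the last equality because $\mathcal I^\bullet$ is an acyclic resolution. Comparing the two computations yields $\check H^q(\mathcal U,\mathcal F)=\check H_{ct}^q(X,\mathcal F)$.

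The main obstacle is the Čech-acyclicity of injective sheaves used in the third paragraph. I would establish it exactly in the spirit of Lemma~\ref{lem:injective->flabby}: for $V\subseteq X$ let $\Z_{V,X}^\#$ be the sheafification of the presheaf which is $\Z$ on subsets of $V$ and $0$ otherwise, so that $\mathrm{Hom}_{Sh}(\Z_{V,X}^\#,\mathcal I)=\mathcal I(V)$. Assembling these into the augmented Čech complex of sheaves $\cdots\to\bigoplus_{i_0<i_1}\Z_{U_{i_0}\cap U_{i_1},X}^\#\to\bigoplus_{i_0}\Z_{U_{i_0},X}^\#\to\Z_{X,X}^\#\to 0$ and checking that it is exact, I would apply the exact functor $\mathrm{Hom}_{Sh}(-,\mathcal I)$ to conclude that $\check C^\bullet(\mathcal U,\mathcal I)$ is exact in positive degrees. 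Verifying exactness of this sheafified Čech complex is where the covering property of the coarse cover $\mathcal U$ (via Lemma~\ref{lem:cccocontrolled}) genuinely enters and is the technical heart of the argument; everything else—existence of the resolution, the two degeneration statements, and the comparison—is formal.
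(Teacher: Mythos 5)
Your proof is correct, but it takes a genuinely different route from the paper. The paper mimics Hartshorne's Theorem III.4.5: it embeds $\mathcal F$ in a flabby sheaf $\mathcal G$, forms the quotient $\mathcal F_1$, compares the long exact sequences in \v Cech cohomology of the cover and in $\check H_{ct}^*$, and then induction plus dimension shifting (after checking the intersections remain $\mathcal F_1$-acyclic) yields the isomorphism degree by degree. That argument silently relies on the assertion that a flabby sheaf has vanishing positive \v Cech cohomology on a finite coarse cover --- stated without proof in the paper; the case $n=2$ is exactly the exactness argument in the proof of Theorem~\ref{thm:mv}. You instead run the classical Cartan--Leray double-complex degeneration with an injective resolution, which concentrates all the difficulty in a single lemma: injective sheaves on $X_{ct}$ have vanishing higher \v Cech cohomology. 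Your proposed proof of that lemma via the sheafified complex of the sheaves $\Z_{V,X}^\#$ is the standard site-theoretic argument and does go through here: evaluated on any $V$ contained in some $U_i$ the augmented complex is the chain complex of a nonempty full simplex, hence contractible, and since the sets $V\cap U_1,\ldots,V\cap U_n$ coarsely cover any $V\subseteq X$ (coarse covers restrict to subsets --- this, rather than Lemma~\ref{lem:cccocontrolled} itself, is the precise ingredient; it follows directly from $E[(V\cap U_i)^c\cap V]\subseteq E[U_i^c]$), the homology presheaves die after sheafification. Your vertical-first computation also needs the small verifications you gesture at --- restriction to $W_{ct}$ is exact because covers of subsets of $W$ agree in both sites, and it preserves flabbiness, so the paper's acyclic-resolution lemma applies over each intersection --- and these all check out. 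On balance, the paper's route is more elementary and reuses its flabby-sheaf machinery, while yours is more robust: it avoids the unproved flabby \v Cech-vanishing statement, works uniformly on any site and for arbitrary covers, and makes the role of the covering axiom explicit, at the price of heavier homological algebra.
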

\begin{proof}
For sheaves on a topological space there exist a number of proofs for this result. We mimic the proof of \cite[Theorem~III.4.5]{Hartshorne1977}.

 Embed $\sheaff$ in a flabby sheaf $\sheafg$ an take the quotient $\sheaff_1$. Then there is a short exact sequence of sheaves
 \begin{align}
 \label{eq:sessheaves}
  0\to\sheaff\to \sheafg\to \sheaff_1\to 0.
 \end{align}
 Since $\check H^1(U_{i_0}\cap\cdots\cap U_{i_q},\sheaff)=0$ there is a short exact sequence of abelian groups
 \[
  0\to\sheaff(U_{i_0}\cap\cdots\cap U_{i_q})\to \sheafg(U_{i_0}\cap\cdots\cap U_{i_q})\to \sheaff_1(U_{i_0}\cap\cdots\cap U_{i_q})\to 0.
 \]
Taking products we obtain an exact sequence of \v Cech-cocomplexes
\[
  0\to\check C^*((U_i)_i,\sheaff)\to\check C^*((U_i)_i,\sheafg)\to \check C^*((U_i)_i,\sheaff_1)\to 0.
\]
This results in a long exact sequence of \v Cech cohomology. Since $\mathcal G$ is flabby, its \v Cech cohomology vanishes for $q>0$. This way we get an exact sequence
\begin{align}
\label{eq:les1}
 0\to \check H^0((U_i)_i,\sheaff)\to \check H^0((U_i)_i,\sheafg)\to \check H^0((U_i)_i,\sheaff_1)\to \check H^1((U_i)_i,\sheaff)\to 0.
\end{align}
and isomorphisms
\begin{align}
\label{eq:iso}
 \check H^q((U_i)_i,\sheaff_1)=\check H^{q+1}((U_i)_i,\sheaff)
\end{align}
for each $q\ge 1$. Associated to the exact sequence of sheaves~\ref{eq:sessheaves} there is an exact sequence
\begin{align}
\label{eq:les2}
 0\to \check H^0_{ct}(X,\sheaff)\to \check H^0_{ct}(X,\sheafg)\to \check H^0_{ct}(X,\sheaff_1)\to \check H^1_{ct}(X,\sheaff)\to 0.
\end{align}
Since $\check H^0((U_i)_i,\sheafh)=\check H
_{ct}^0(X,\sheafh)$ for any sheaf $\sheafh$ we can compare the exact sequences~\ref{eq:les1} and~\ref{eq:les2} and obtain
\[
 \check H^1((U_i)_i,\sheaff)=\check H^1_{ct}(X,\sheaff).
\]
Now the long exact sequence in cohomology for \ref{eq:sessheaves} and $U_{i_0}\cap \cdots\cap U_{i_q}$ being $\mathcal F$-acyclic implies that $U_{i_0}\cap \cdots\cap U_{i_q}$ is $\sheaff_1$-acyclic. Thus $\sheaff_1$ satisfies the conditions of this Lemma. This way we use induction and isomorphisms~\ref{eq:iso} to obtain the result for $q>1$.
\end{proof}

\begin{thm}
  We can compute cohomology:
 \[
  \check H^q_{ct}(\Z^n;A)=\begin{cases}
                      A\oplus A & n=1,q=0\\
                      A &n\not=1,q=0\vee q=n-1\\
                      0 &\mbox{otherwise}.
                     \end{cases}
 \]
 if $A$ is a finite abelian group.
\end{thm}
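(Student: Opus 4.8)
The plan is to transport the computation to $\R^n$ and then exhibit a Leray cover whose coarse nerve is the boundary of the $n$-dimensional cross-polytope, i.e.\ a triangulation of $S^{n-1}$. First I would record that the inclusion $\Z^n\hookrightarrow\R^n$ is a coarse equivalence, so that the functoriality of the inverse image functor together with the fact that close maps induce the same map in cohomology yields an isomorphism $\check H_{ct}^q(\Z^n;A)\cong\check H_{ct}^q(\R^n;A)$. From this point on I work with $\R^n$ and the finite abelian group $A$.

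Next I would introduce the $2n$ half-spaces $U_i^{+}=\{x\in\R^n:x_i\ge 0\}$ and $U_i^{-}=\{x\in\R^n:x_i<0\}$ for $i=1,\dots,n$. I would verify these form a coarse cover: for each $R\ge 0$ one has $\Delta_R[(U_i^+)^c]\cap\Delta_R[(U_i^-)^c]=\{|x_i|<R\}$ up to bounded error, and intersecting over all $i$ gives the bounded box $\{|x_i|<R\ \forall i\}$. By the remark following Lemma~\ref{lem:flasqueacyclic}, each $U_i^\pm$ and each of their finite intersections taken over \emph{pairwise distinct} coordinates is coarsely contractible, hence acyclic and one-ended for finite $A$ (by the lemma that coarsely contractible spaces are acyclic for finite coefficients); an intersection that uses one coordinate with both signs is literally empty, so the sheaf assigns $A_X(\emptyset)=0$ and it is trivially acyclic. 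Thus $(U_i^\pm)_i$ is a Leray cover of $\R^n$ for the constant sheaf $A_X$, and Lemma~\ref{lem:leraycover} gives $\check H_{ct}^q(\R^n;A)=\check H^q((U_i^\pm)_i,A_X)$.

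Then I would identify this \v Cech complex combinatorially. A family among the $U_i^\pm$ has unbounded intersection exactly when it contains at most one sign per coordinate, in which case the intersection is a convex orthant-cone with $A_X(\cdot)=A$ (one end); families using both signs of some coordinate have empty intersection and contribute $0$. Hence the \v Cech cochain groups are copies of $A$ indexed by the faces of the simplicial complex on the $2n$ vertices $\{U_i^\pm\}$ whose simplices are the partial sign vectors, with all restriction maps equal to $\mathrm{id}_A$ (two nested unbounded cones are both one-ended, so a global constant restricts to itself) or to $0$ (restriction into an empty intersection). This is precisely the simplicial cochain complex of $\partial(\text{cross-polytope})\cong S^{n-1}$ with coefficients in $A$, so $\check H_{ct}^q(\R^n;A)=H_{sing}^q(S^{n-1};A)$. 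Reading off the cohomology of spheres ($H^0(S^0)=A\oplus A$ and $0$ above; $H^0(S^{n-1})=H^{n-1}(S^{n-1})=A$ with all other groups $0$ for $n\ge 2$) gives the stated formula, with $n=1$ separated out only because $H^0(S^0)=A\oplus A$.

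The main obstacle is the combinatorial identification in the last step: one must check that the coarse nerve of the half-space cover really is the boundary of the cross-polytope (vertices $\pm e_i$, one simplex per partial sign vector, $2^n$ top faces of dimension $n-1$) and that the \v Cech differential coincides with the simplicial coboundary, which hinges on the two restriction-map computations above. The acyclicity of the intersections is the other load-bearing point, but it is handed to us by the remark on coarse contractibility of orthant-cones together with the coarse-homotopy-invariance machinery; the only care needed there is to treat the empty (both-sign) intersections separately, which is immediate since $A_X(\emptyset)=0$.
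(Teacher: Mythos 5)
Your proposal is correct and follows essentially the same route as the paper: pass to $\R^n$ by coarse equivalence, take the Leray cover by the $2n$ coordinate half-spaces whose intersections are coarsely contractible (hence acyclic and one-ended for finite $A$), and apply Lemma~\ref{lem:leraycover} to reduce to the \v Cech complex of the cover. The only cosmetic differences are that you identify the nerve directly as the boundary of the cross-polytope (with the restriction maps checked explicitly) rather than comparing with the hemisphere cover $V_i^\pm$ of $S^{n-1}$ as the paper does, and that your argument handles $n=1$ uniformly via $H^0(S^0)=A\oplus A$ where the paper instead quotes Theorem~\ref{thm:hqztree}.
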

\begin{proof}
If $n=1$ this result is already Theorem~\ref{thm:hqztree}.

Suppose $n\ge 2$. We compute cohomology of $\R^n$. The result for $\Z^n$ follows since the spaces $\Z^n$ and $\R^n$ are coarsely equivalent. For $i=1,\ldots,n$ define $U^+_{i}:=\R^{i-1}\times \R_{\ge0}\times \R^{n-i}$ and $U^-_{i}:=\R^{i-1}\times \R_{<0}\times \R^{n-i}$. A finite intersection of those halfspaces is coarsely contractible by Lemma~\ref{lem:flasqueacyclic}. We show the $(U^+_{i},U^-_{i})_i$ form a coarse cover. Let $\bar x:=(x_1,\ldots,x_n)\in \bigcap_{i=1}^n(\Delta_R[(U^+_{i})^c]\cap \Delta_R[(U^-_{i})^c])$ be a point. We show $d(\bar x,(0,\ldots,0))\le nR$. If $i=1,\ldots,n$ then $\bar x\in \Delta_R[(U^-_{i})^c]$ implies $x_i\ge -R$. And $\bar x\in \Delta_R[(U^+_{i})^c]$ implies $x_i\le R$. Together they imply $d(x_i,0)\le R$ and in all together the result. Thus $\mathcal U:=(U_i^+,U_i^-)_i$ forms a Leray cover.

By Lemma~\ref{lem:leraycover} the metric cohomology of $\R^n$ is the \v Cech cohomology of $\mathcal U$. In the topological world the sphere $S^{n-1}$ admits a Leray cover by $V_i^+:=\{(x_1,\ldots,x_n)\in S^{n-1}|x_i>0\}$ and $V_i^-:=\{(x_1,\ldots,x_n)\in S^{n-1}|x_i<0\}$. The combinatorical information of this cover is the same as that of $\mathcal U$. Thus both covers have the same nerve. Since the nerve contains all the cohomological information we just proved that $\R^n$ (as a metric space) has the same cohomology as $S^{n-1}$ (as a topological space). This proves the claim.
\end{proof}

There is another method we can use to compute the cohomology of $\R^n$. If $X$ is a CAT(0) metric space then the \emph{coarse cone over $X$} is given by $X\times \R_{\ge0}$. Lemma~\ref{lem:flasqueacyclic} tells us that the coarse cone is coarsely contractible. Now the \emph{coarse suspension} of a CAT(0) metric space $X$ is given by $X\times \R$.

\begin{lem}
 If $X$ is a CAT(0) metric space then the coarse suspension shifts coarse sheaf cohomology by one degree, namely $\check H_{ct}^q(X,A)\cong\check H_{ct}^{q+1}(X\times \R,A)$ for $q\ge 1$.
 
\end{lem}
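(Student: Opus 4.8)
The plan is to pass to the Higson corona and run a Mayer--Vietoris argument there, where the two coarse cones become honest closed subspaces covering $\nu(X\times\R)$. Throughout I assume $X$ (hence $X\times\R$ and the cones $X\times\R_{\ge0}$, $X\times\R_{\le0}$) is proper, which holds for the CAT$(0)$ spaces under consideration. By the theorem identifying coarse sheaf cohomology of a reflective sheaf with sheaf cohomology of the corona, together with the fact that the constant sheaf is reflective, it suffices to prove
\[
 \check H^q(\nu(X);A)\cong\check H^{q+1}(\nu(X\times\R);A)\qquad(q\ge1),
\]
after which $\check H^q_{ct}(X;A)=\check H^q(\nu(X);A)$ and $\check H^{q+1}_{ct}(X\times\R;A)=\check H^{q+1}(\nu(X\times\R);A)$ give the claim.

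First I would set up the closed cover. Since $(X\times\R_{\ge0})\cup(X\times\R_{\le0})=X\times\R$, the identity $\closedop A\cup\closedop B=\closedop{A\cup B}$ shows that $\nu(X\times\R)$ is covered by the two closed sets $P:=\closedop{X\times\R_{\ge0}}$ and $Q:=\closedop{X\times\R_{\le0}}$. Using $\closedop A=\bar A\cap\nu(X\times\R)$, each of these is the corona of the corresponding subspace, so $P=\nu(X\times\R_{\ge0})$ and $Q=\nu(X\times\R_{\le0})$, while the equator yields $P\cap Q=\closedop{X\times\{0\}}=\nu(X)$, where I identify $X\times\{0\}$ (and any slab $X\times[-R,R]$) with $X$ coarsely. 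Next I would record that the cones are acyclic: $X\times\R_{\ge0}$ is the coarse cone over $X$, coarsely contractible by Lemma~\ref{lem:flasqueacyclic}, hence acyclic in positive degrees for the coefficients under consideration; transporting this through the corona identification gives $\check H^q(P;A)=\check H^q(Q;A)=0$ for $q\ge1$, with $\check H^0=A$.

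Finally I would feed the closed cover $\nu(X\times\R)=P\cup Q$ into the Mayer--Vietoris sequence for sheaf cohomology on the compact Hausdorff, hence normal and paracompact, space $\nu(X\times\R)$. Acyclicity of $P$ and $Q$ collapses the sequence, so for every $q\ge1$ the connecting homomorphism furnishes $\check H^q(P\cap Q;A)\cong\check H^{q+1}(\nu(X\times\R);A)$, that is $\check H^q(\nu(X);A)\cong\check H^{q+1}(\nu(X\times\R);A)$; translating back gives the lemma. The restriction $q\ge1$ is exactly the range in which the degree-zero terms of $P$, $Q$, $P\cap Q$ drop out, matching the statement and explaining why $q=0$ (where the number of ends intervenes) is excluded. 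The main obstacle is the identification in the middle paragraph: proving carefully that the two closed cone-coronas meet precisely in $\nu(X)$, i.e.\ $\closedop{X\times\R_{\ge0}}\cap\closedop{X\times\R_{\le0}}=\nu(X)$, and that the inclusions induce the closed embeddings $\nu(X\times\R_{\ge0})\hookrightarrow\nu(X\times\R)$ as claimed; one must also verify that Mayer--Vietoris is available for a two-element \emph{closed} cover, which is where normality of the compact space $\nu(X\times\R)$ enters.
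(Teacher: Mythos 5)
Your reduction to the corona is a genuinely different route from the paper's. The paper stays on the coarse side throughout: it covers $X\times\R$ by the two \emph{tilted} sets $U_1=\{(x,t)\mid t\le d(x_0,x)\}$ and $U_2=\{(x,t)\mid t\ge -d(x,x_0)\}$, verifies directly that these form a coarse cover, constructs explicit coarse homotopy equivalences $U_1,U_2\simeq X\times\R_{\ge0}$ (coarsely contractible by Lemma~\ref{lem:flasqueacyclic}) and $U_1\cap U_2\simeq X$, and then applies the coarse Mayer--Vietoris sequence of Theorem~\ref{thm:mv}. The tilting is not cosmetic: your straight half-cylinders $X\times\R_{\ge0}$ and $X\times\R_{\le0}$ do \emph{not} coarsely cover $X\times\R$, since $\Delta_R[X\times\R_{<0}]\cap\Delta_R[X\times\R_{>0}]\z X\times(-R,R)$ is unbounded whenever $X$ is unbounded; so the paper's Mayer--Vietoris is unavailable for your decomposition, and passing to $\nu(X\times\R)$, where $\closedop A\cup\closedop B=\closedop{A\cup B}$ makes the two cone-coronas an honest closed cover, is exactly what rescues it. That is an attractive trade: you exchange the paper's explicit homotopies $h_1,h_2$ for a purely topological closed Mayer--Vietoris on a compact Hausdorff space, at the cost of importing the corona machinery (Theorem~\ref{thm:introHigson}).

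There are, however, two genuine gaps. First, the step you yourself flag as the main obstacle really is one, and you do not close it: the identity $\closedop{X\times\R_{\ge0}}\cap\closedop{X\times\R_{\le0}}=\closedop{X\times\{0\}}$ is a coarse-excision statement about the Higson corona. It is true here --- the decomposition is coarsely excisive, since $\Delta_R[X\times\R_{\ge0}]\cap\Delta_R[X\times\R_{\le0}]=X\times[-R,R]$ lies at finite Hausdorff distance from $X\times\{0\}$ --- but it does not follow from the only two facts about $\closedop\cdot$ the paper records ($\closedop A\cap\closedop B=\emptyset$ iff $A\notclose B$, and $\closedop A\cup\closedop B=\closedop{A\cup B}$). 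One must rule out corona points lying in the closure of both halves while escaping every slab $X\times[-R,R]$, which needs either separating Higson functions or an ultrafilter argument, plus the invariance $\closedop{X\times[-R,R]}=\closedop{X\times\{0\}}$; note that the very geometry that forced the paper to tilt its cover resurfaces precisely in this intersection computation. Second, properness: your argument runs entirely through the corona machinery, which in this paper is developed only for proper metric spaces, whereas the lemma is stated, and the paper's proof works, for an arbitrary CAT$(0)$ metric space --- and CAT$(0)$ does not imply proper (an infinite-dimensional Hilbert space is CAT$(0)$). So as written your proof establishes a strictly narrower statement. A smaller, shared caveat that is not a gap relative to the paper: acyclicity of the cones rests on the coarsely-contractible lemma, which is proved only for finite coefficients, so both your argument and the paper's implicitly assume $A$ finite.
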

\begin{proof}
 We cover $X\times \R$ by two sets $U_1=\{(x,t)|t\le d(x_0,x)\}$ and $U_2=\{(x,t)|t\ge-d(x,x_0)\}$. They form a coarse cover: if $(y,s)\in U_1^c,(x,t)\in U_2^c$ with $d((y,s),(x,t))\le R$ then $|s-t|\le R$, $s>d(x_0,y)$ and $t<-d(x_0,x)$. Since $s$ is positive and $t$ is negative we obtain $|s|,|t|\le R$. Furthermore $d(x_0,y)<s\le R$ and $d(x_0,x)\le |t|\le R$. Thus $U_1,U_2$ coarsely cover $X$.
 
 There are coarse homotopy equivalences $U_1,U_2\simeq X\times \R_{\ge0}$ and $U_1\cap U_2\simeq X$. Namely the inclusion $i_1:X\times \R_{\ge0}\to U_2$ has a coarse homotopy inverse 
 \begin{align*}
  p_1:U_2&\to X\times \R_{\ge0}\\
  (x,t)&\mapsto\begin{cases}
                (x,t) & t\ge0\\
                (x,0) & t<0
               \end{cases}
 \end{align*}
The coarse homotopy connecting $i_1\circ p_1$ with $id_{U_2}$ is given by
\begin{align*}
 h_1:U_2\ast I&\to U_2\\
 ((x,t),(i,j))&\mapsto \begin{cases}
                        (x,\hat i t) & t<0\\
                        (x,t) & t\ge0.
                       \end{cases}
\end{align*}
Here $\hat i:=\frac i {i+j}$. We show $h$ is coarse: If $d(((x,t),(i,j)),((y,s),(n,m)))\le R$ then in particular $d(x,y)\le R$ and 
\[
d(\hat it,\hat ns)\le d(\hat it,\hat is)+d(\hat is,\hat ns)\le 2R.
\]
Thus $h_1$ is coarsely uniform. If $d((x,\hat it),(x_0,0))\le S$ then $d(x,x_0)\le S$ and if $t<0$ then $|t|\le d(x,x_0)\le S$ or if $t\ge0$ then $d(t,0)\le S$. Thus $h_1$ is coarsely proper.

The coarse homotopy equivalence connecting $X$ with $U_1\cap U_2$ is given by $i_2:x\mapsto (x,0)$ and its inverse is
\begin{align*}
 p_2:U_1\cap U_2&\to X\\
 (x,t)&\mapsto x.
\end{align*}
The coarse homotopy joining $i_2\circ p_2$ to $id_{U_1\cap U_2}$ is given by
\begin{align*}
 h_2:(U_1\cap U_2)\ast I &\to U_1\cap U_2\\
 ((x,t),(i,j))&\mapsto (x,\hat i t).
\end{align*}
We prove $h_2$ is coarsely proper, the property coarsely uniform can be shown similarly as for $h_1$. If $d((x,\hat it),(x_0,0))\le S$ then $d(x,x_0)\le S$ and $|t|\le d(x,x_0)\le S$.  
 
 Then the long exact sequence of Theorem~\ref{thm:mv} gives us
 \[
  \check H^q_{ct}(X,A)\cong \check H_{ct}^q(U_1\cap U_2,A)\cong \check H_{ct}^{q+1}(U_1\cup U_2,A)=\check H_{ct}^{q+1}(X\times \R,A)
 \]
in degree $q\ge 1$.
\end{proof}

\begin{rem}
 The suspension functor has a natural adjoint, the loop space. If $X$ is a metric space then the \emph{loop space of $X$}, $\Omega X$ consists of coarse maps $\R\to X$. A subset $E\s \Omega X\times \Omega X$ is an entourage if for every $R\ge 0$ the set $\{(\varphi(r),\psi(r'))\mid (\varphi,\psi)\in E,|r-r'|\le R\}$ is an entourage in $Y$. Note that $\Omega Y$ defined this way does not have a connected coarse structure. Then there exists a natural isomorphism 
 \[
  Hom(X\times \R,Y)=Hom(X,\Omega Y).
 \]
 Suppose coarse maps $\R^n\to Y$ denote the $n-1$th coarse homotopy group of $Y$. If we insert $\R^n$ for $X$ in the adjoint relation then we can see that the loop space shifts coarse homotopy groups down a dimension.
\end{rem}

\bibliographystyle{halpha-abbrv}
\bibliography{mybib}

\address

\end{document}